\newtheorem{thm}{Theorem}[section]
\newtheorem{prop}[thm]{Proposition}
\newtheorem{lem}[thm]{Lemma}
\newtheorem{cor}[thm]{Corollary}
\theoremstyle{definition}
\newtheorem{dfn}[thm]{Definition}
\newtheorem{exa}[thm]{Example}
\theoremstyle{remark}
\newtheorem{rmk}[thm]{Remark}
\newtheorem{stp}[thm]{Setup}
\numberwithin{equation}{section}
\newcounter{counterd}
\newcommand{\R}{\mathbb{R}}
\newcommand{\Q}{\mathbb{Q}}
\newcommand{\Z}{\mathbb{Z}}
\newcommand{\Lg}{\mathbb{L}}
\newcommand{\sm}{\setminus}
\newcommand{\ol}{\overline}
\DeclareMathOperator\Orb{Orb}
\DeclareMathOperator\Arf{Arf}
\DeclareMathOperator\Id{Id}
\DeclareMathOperator\cs{cs}
\DeclareMathOperator\pt{pt}
\DeclareMathOperator\Stab{Stab}
\DeclareMathOperator\Homeo{Homeo}
\DeclareMathOperator\CAT{CAT}
\DeclareMathOperator\DIFF{DIFF}
\DeclareMathOperator\Diff{Diff}
\DeclareMathOperator\TOP{TOP}
\DeclareMathOperator\PL{PL}
\DeclareMathOperator\tO{O}
\DeclareMathOperator\ks{ks}
\DeclareMathOperator\pr{pr}
\DeclareMathOperator\red{red}
\DeclareMathOperator\Image{im}
\DeclareMathOperator\coker{coker}
\DeclareMathOperator\Sq{Sq}
\DeclareMathOperator\ab{ab}
\DeclareMathOperator\Mic{Mic}
\DeclareMathOperator\sig{sig}
\DeclareMathOperator\PD{PD}
\DeclareMathOperator\RS{RS}
\DeclareMathOperator{\BTOP}{\B\!\TOP}
\DeclareMathOperator{\BCAT}{\B\!\CAT}
\DeclareMathOperator{\BO}{\B\!\tO}
\DeclareMathOperator{\BG}{\B G}
\newcommand{\B}{\mathcal{B}}
\title[The Casson-Sullivan invariant for homeomorphisms of $4$-manifolds]{The Casson-Sullivan invariant for homeomorphisms of $4$-manifolds}
\author{Daniel A.P. Galvin}
\address{University of Texas at Austin, US}
\email[Daniel A.P. Galvin]{daniel.galvin@austin.utexas.edu}
\def\subjclassname{\textup{2020} Mathematics Subject Classification}
\let\csname subjclassname@1991\endcsname=\subjclassname
\subjclass{
	57K40, 
	57R10
}
\keywords{4-manifolds, surgery theory, pseudo-isotopies, smoothing theory}
\begin{document}
	
\begin{abstract}
	We investigate the realisability of the Casson-Sullivan invariant for homeomorphisms of smooth $4$-manifolds, which is the obstruction to a homeomorphism being stably pseudo-isotopic to a diffeomorphism, valued in the third cohomology of the source manifold with $\Z/2$-coefficients.  We prove that for all pairs of orientable, homeomorphic, smooth $4$-manifolds this invariant can be realised fully after stabilising with a single $S^2\times S^2$.  As an application, we obtain that topologically isotopic surfaces in a smooth, simply-connected $4$-manifold become smoothly isotopic after sufficient external stabilisations.  We further demonstrate cases where this invariant can be realised fully without stabilisation for self-homeomorphisms, which includes for manifolds with finite cyclic fundamental group.  This method allows us to produce many examples of homeomorphisms which are not stably pseudo-isotopic to any diffeomorphism but are homotopic to the identity.  Finally, we reinterpret these results in terms of finding examples of smooth structures on $4$-manifolds which are diffeomorphic but not stably pseudo-isotopic.
\end{abstract}
	
\maketitle

\section{Introduction}

\subsection{Results}

The Casson-Sullivan invariant of a homeomorphism $f\colon X \to X'$ between compact, smooth $4$-manifolds $X$ and $X'$, written $\cs(f)\in H^3(X,\partial X;\Z/2)$, is an obstruction to $f$ being isotopic to a diffeomorphism relative to the boundary.  It is defined as the Kirby-Siebenmann invariant of the mapping cylinder of~$f$.  In \zcref{sbs:casson_sullivan_basic_properties} we will establish the properties of this invariant, but we give a few briefly here before stating the results.  We have that $\cs(f)=0$ if $f$ is pseudo-isotopic to a diffeomorphism of $X$ (i.e.\ if $f$ is pseudo-smoothable) and further that $\cs(f)=\cs(g)$ if $f$ is pseudo-isotopic to $g$ (\zcref{prop:cs_pseudo-isotopy_inv}).  In fact, $\cs$ defines a crossed homomorphism $\widetilde{\pi}_0\Homeo(X,\partial X)\to H^3(X,\partial X;\Z/2)$, where $\widetilde{\pi}_0\Homeo(X,\partial X)$ denotes the pseudo-mapping class group of $X$ (\zcref{prop:cs_homomorphism}).

We now state the results of the paper.  We start with the fact that, for orientable manifolds, the Casson-Sullivan invariant is fully realisable stably.

\begin{thm}\label{thm:stable_realisation_theorem}
	Let $X$ and $X'$ be compact, connected, smooth, orientable $4$-manifolds such that $X\cong X_0\#(S^2\times S^2)$ and $X'\cong X'_0\# (S^2\times S^2)$ where $X_0\approx X'_0$, and let $\eta\in H^3(X,\partial X;\Z/2)$.  Then there exists a homeomorphism $f\colon X\to X'$ with $\cs(f)=\eta$.
\end{thm}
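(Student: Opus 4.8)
The plan is first to reduce, using the formal structure of $\cs$, to realising every class by a self-homeomorphism of $X$ homotopic to the identity, and then to produce such self-homeomorphisms by smoothing theory on the five-dimensional mapping cylinder, with the single $S^2\times S^2$ summand providing the surgery-theoretic room and the $s$-cobordism input that the construction needs.

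\emph{Step 1: reduction to self-homeomorphisms homotopic to the identity.} Since $X_0\approx X'_0$, connect-summing a homeomorphism $X_0\to X'_0$ with the identity of $S^2\times S^2$ yields a homeomorphism $g\colon X\to X'$; write $\cs(g)=\eta_0\in H^3(X,\partial X;\Z/2)$. Every homeomorphism $X\to X'$ has the form $g\circ h$ for a self-homeomorphism $h$ of $X$, and because $\cs$ is a crossed homomorphism (\Cref{prop:cs_homomorphism}) we get $\cs(g\circ h)=\cs(h)+\eta_0$ and $\cs(h_1\circ h_2)=\cs(h_1)+\cs(h_2)$ whenever $h,h_1,h_2$ are homotopic to the identity (the action of such maps on $H^3$ being trivial). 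It therefore suffices to realise every class of $H^3(X,\partial X;\Z/2)$ as $\cs(h)$ for some self-homeomorphism $h$ of $X$ homotopic to the identity; then $f=g\circ h$ with $\cs(h)=\eta-\eta_0$ does the job. Moreover, by the additivity just noted, it is enough to realise the classes in a generating set. By Poincar\'e--Lefschetz duality $H^3(X,\partial X;\Z/2)\cong H_1(X;\Z/2)$, and as $H_1(X;\Z)\to H_1(X;\Z/2)$ is onto, a convenient generating set is the set of Poincar\'e duals of smoothly embedded circles in $X$.

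\emph{Step 2: the smoothing-theoretic realisation, and the main obstacle.} Fix an embedded circle $\gamma$ and put $\zeta=\PD[\gamma]$. Unwinding the definition of $\cs$, a self-homeomorphism $h$ with $\cs(h)=\zeta$ is recorded by its mapping cylinder $C_h$: this is homeomorphic to $X\times I$, carries the standard product smooth structure near $\partial(X\times I)$, but is smooth only away from an interior slice $X\times\{1/2\}$, across which the two standard smooth half-cylinders are identified by $h$; and $\cs(h)=\ks(C_h)$ is precisely the obstruction to smoothing across that slice, which lies in $H^4(C_h,\partial C_h;\Z/2)\cong H^3(X,\partial X;\Z/2)$. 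Since $\TOP/\tO$ is $2$-connected with $\pi_3(\TOP/\tO)=\Z/2$ and $\pi_4(\TOP/\tO)=0$, this is the only obstruction to lifting the topological tangent microbundle of the five-dimensional manifold $C_h$ to $\BO$ relative to the given partial lift, so realising $\zeta$ becomes a lifting/surgery problem with values in $H^*(-;\pi_3(\TOP/\tO))\cong H^*(-;\Z/2)$. One constructs the required five-manifold --- equivalently, the required self-homeomorphism --- by a combination of topological surgery theory, smoothing theory, and the $s$-cobordism theorem for $4$-manifolds; $h$ is then read off from the product structure on the cylinder, is homotopic to the identity by construction, and has $\cs(h)=\zeta$ by the way the obstruction was installed. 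I expect this to be the hardest step, and it is exactly here that the hypothesis $X\cong X_0\#(S^2\times S^2)$ is essential: the $S^2\times S^2$ factor supplies the embedded spheres needed both to put the relevant surgery problem in the stable range (Whitney discs) and to ensure that the topological $s$-cobordism produced is a genuine product, so that the construction really outputs a homeomorphism rather than only a homotopy equivalence. Without this stabilisation one obtains the analogous realisation statement only under a ``good fundamental group'' hypothesis on $X$.

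\emph{Step 3: assembly.} Choose embedded circles $\gamma_1,\dots,\gamma_k$ in $X$ with $\sum_i\PD[\gamma_i]=\eta-\eta_0$, realise each $\PD[\gamma_i]$ by a self-homeomorphism $h_i$ of $X$ homotopic to the identity as in Step~2 --- the same $S^2\times S^2$ summand may be reused for every $i$, since each $h_i$ is homotopic to the identity --- and set $h=h_1\circ\cdots\circ h_k$ and $f=g\circ h$; then $\cs(f)=\eta_0+\sum_i\PD[\gamma_i]=\eta$. Apart from Step~2, everything here --- the crossed-homomorphism bookkeeping, the duality computation, and the additivity of $\cs$ on homeomorphisms homotopic to the identity --- is formal given the properties of $\cs$ collected in \Cref{sbs:casson_sullivan_basic_properties}.
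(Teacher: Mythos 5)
Your Steps 1 and 3 are fine but purely formal; the entire content of the theorem is your Step 2, and as sketched it does not go through. Constructing the mapping cylinder by surgery on $X\times I$ rel boundary requires two inputs that the single $S^2\times S^2$ summand does not supply. First, you need a normal invariant whose image under $\xi_*$ is the prescribed class in $H^4(X\times I,\partial;\Z/2)$ \emph{and} whose surgery obstruction in $L_5^s(\Z[\pi_1X])$ vanishes, so that it lifts to the structure set. Connect-summing $X_0$ with $S^2\times S^2$ changes neither $\pi_1$ nor the factorisation of the surgery obstruction map through the assembly map over $\mathcal{B}\pi_1$, so the stabilisation gives you no leverage here; the needed vanishing is exactly the Casson-Sullivan realisability condition of \Cref{def:cs_realisation_condition}, which is a genuine hypothesis in the unstable \Cref{thm:unstable_realisation_theorem} and is not known for arbitrary $\pi_1$. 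Second, to convert the resulting element of $\mathcal{S}^s_{\TOP}(X\times I,\partial)$ into a mapping cylinder of a homeomorphism you need the relative topological $s$-cobordism theorem with $4$-dimensional ends, which requires $\pi_1(X)$ good. Your claim that the $S^2\times S^2$ factor supplies the Whitney discs making the $s$-cobordism a genuine product is unsubstantiated: one stabilisation of the ends does not make topological Whitney disc embedding available for arbitrary fundamental groups, and the stable $s$-cobordism theorem needs an uncontrolled number of stabilisations, which would only realise $\eta$ on $X\#(\#^kS^2\times S^2)$ for some unknown $k$, not on $X=X_0\#(S^2\times S^2)$ as the theorem asserts.

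The paper avoids both problems by never doing surgery on $X$ at all. It constructs one model self-homeomorphism $\sigma$ of $(S^1\times S^3)\#(S^2\times S^2)$ with non-trivial Casson-Sullivan invariant (\Cref{prop_cs_existence_s1xs^3}), where the fundamental group is $\Z$ (good) and where, by Lee's theorem (\Cref{thm:lee}) that the generator of $L_5(\Z[\Z])$ is realised by a $2\times2$ matrix, a single $S^2\times S^2$ suffices; the non-triviality of $\cs(\sigma)$ is extracted by tracking the relative Kirby-Siebenmann invariant through the surgery steps against $S^1\times E_8$. It then proves a connect-sum-along-a-circle formula for $\cs$ (\Cref{thm:connect_sum}) and implants the model into $X_0$ along a single embedded circle dual to $\eta-\eta_0$ (\Cref{thm:stable_realisation_techincal}); this is how arbitrary, possibly non-good, $\pi_1(X)$ is handled. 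If you want to salvage your outline, Step 2 has to be replaced by something of this local-model-plus-gluing type, together with a proof of the gluing formula for $\cs$, neither of which is routine.
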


\zcref{thm:stable_realisation_theorem} will follow immediately from the more technical \zcref{thm:stable_realisation_techincal}.  As an application of \zcref{thm:stable_realisation_techincal} we obtain the following result concerning stable isotopy of surfaces in $4$-manifolds.

\begin{thm}\label{thm:stable_isotopy}
	Let $X$ be a connected, compact, simply-connected, smooth $4$-manifold and let $\Sigma_1,\Sigma_2\subset X$ be a pair of smoothly, properly embedded surfaces which are topologically isotopic relative to their boundaries.  Then there exists $n\geq 0$ such that $\Sigma_1$ and $\Sigma_2$ are smoothly isotopic relative to their boundaries in $X\#(\#^n S^2\times S^2)$, where the connected-sums are taken away from $\Sigma_1\cup \Sigma_2$.
\end{thm}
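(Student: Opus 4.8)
The plan is to realise the topological isotopy by an ambient homeomorphism of $X$ that is topologically isotopic to the identity and carries $\Sigma_1$ to $\Sigma_2$, to observe that its Casson--Sullivan invariant vanishes, and then to apply \Cref{thm:stable_realisation_techincal} to the \emph{exterior} of the surface in order to replace this homeomorphism by a diffeomorphism after a stabilisation supported away from $\Sigma_1\cup\Sigma_2$.

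First, topological isotopy extension for locally flat (here smooth) surfaces in a $4$-manifold promotes the given isotopy to an ambient isotopy $(F_t)_{t\in[0,1]}$ of $X$ with $F_0=\mathrm{id}_X$, with $F_t|_{\partial X}=\mathrm{id}$ throughout, and with $F_1(\Sigma_1)=\Sigma_2$; set $F:=F_1$. Since topologically isotopic properly embedded surfaces represent the same class in $H_2(X,\partial X;\Z)$, the self-intersections, and hence the oriented normal disk bundles, of $\Sigma_1$ and $\Sigma_2$ agree; by uniqueness of tubular neighbourhoods we may then isotope $F$ so that it restricts to a bundle diffeomorphism $\nu(\Sigma_1)\xrightarrow{\ \cong\ }\nu(\Sigma_2)$ of chosen smooth tubular neighbourhoods. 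Consequently $F$ restricts to a homeomorphism $\phi\colon E_1\to E_2$ of the exteriors $E_i:=X\setminus\mathrm{int}\,\nu(\Sigma_i)$ which is a diffeomorphism on the boundary, so that all failure of $F$ to be smooth is concentrated in $E_1$.

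By pseudo-isotopy invariance (\Cref{prop:cs_pseudo-isotopy_inv}) we get $\cs(F)=\cs(\mathrm{id}_X)=0$. The mapping cylinder of $F$ is the union of the mapping cylinder of $\phi$ and the smooth mapping cylinder of the bundle diffeomorphism $\nu(\Sigma_1)\to\nu(\Sigma_2)$; since the Kirby--Siebenmann obstruction restricts naturally to codimension-zero submanifolds and vanishes on the smooth second piece, it is detected on the first, and we deduce that the Casson--Sullivan invariant of $\phi$ --- which is a diffeomorphism on $\partial E_1$ --- also vanishes. Feeding $\phi\colon E_1\approx E_2$ into \Cref{thm:stable_realisation_techincal} with the zero class, we conclude that after a single external stabilisation, performed at a point of $E_1$ lying outside the (at most $3$-dimensional) track of the isotopy and hence away from the surfaces, the homeomorphism $\phi\#\mathrm{id}_{S^2\times S^2}\colon\widehat{E}_1\to\widehat{E}_2$ is pseudo-isotopic rel boundary to a diffeomorphism $\psi$. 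Re-gluing the bundle diffeomorphism of the tubular neighbourhoods yields a diffeomorphism $\Psi:=\psi\cup F|_{\nu(\Sigma_1)}$ of $\widehat{X}:=X\#(S^2\times S^2)$, with the connect-sum region disjoint from $\Sigma_1\cup\Sigma_2$, which satisfies $\Psi(\Sigma_1)=\Sigma_2$ and is topologically pseudo-isotopic rel boundary to $F\#\mathrm{id}$, which in turn is topologically isotopic rel boundary to $\mathrm{id}_{\widehat{X}}$.

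It remains to replace $\Psi$ by a diffeomorphism that is \emph{smoothly} isotopic to the identity rel boundary, at the cost of further external stabilisations. Since $\widehat{X}$ is simply connected, topological pseudo-isotopy implies topological isotopy, so $\Psi$ is topologically isotopic rel boundary to $\mathrm{id}_{\widehat{X}}$; and, by the known stable isotopy phenomenon in dimension four, a self-diffeomorphism of a simply-connected smooth $4$-manifold that is topologically isotopic to the identity becomes smoothly isotopic to the identity rel boundary after finitely many further connect-sums with $S^2\times S^2$, which we again perform inside the exterior and away from $\Sigma_1\cup\Sigma_2$. Applying the resulting smooth ambient isotopy to $\Sigma_1$ inside $X\#(\#^n S^2\times S^2)$, for a suitable $n$, then gives the required smooth isotopy rel boundary from $\Sigma_1$ to $\Sigma_2$. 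The heart of the argument --- and the place where \Cref{thm:stable_realisation_techincal} does the essential work --- is precisely this passage from a homeomorphism with vanishing Casson--Sullivan invariant to a genuine diffeomorphism, and thereafter from a topological to a smooth isotopy of that diffeomorphism, all while keeping every stabilisation disjoint from $\Sigma_1\cup\Sigma_2$.
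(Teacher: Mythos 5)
There is a genuine gap at the central step of your argument: the claim that the Casson--Sullivan invariant of the restricted homeomorphism $\phi\colon E_1\to E_2$ of the exteriors vanishes. Vanishing of $\cs(F)$ (equivalently of $\ks(M_F,\partial M_F)$, which here is automatic since $X$ is simply connected) does not imply $\cs(\phi)=0$. The class $\varpi\cs(\phi)$ lives in $H^4(M_\phi,\partial M_\phi;\Z/2)\cong H^3(E_1,\partial E_1;\Z/2)\cong H_1(E_1;\Z/2)$, and the exterior $E_1$ need not be simply connected, so this group is typically non-zero. The decomposition of $M_F$ into $M_\phi$ and the smooth mapping cylinder over the tubular neighbourhoods only gives that $\varpi\cs(\phi)$ maps to $\ks(M_F,\partial M_F)=0$ under the natural map $H^4(M_F,\partial M_F\cup M_{\bar\nu\Sigma_1};\Z/2)\to H^4(M_F,\partial M_F;\Z/2)$, and that map has non-trivial kernel in general: a smoothing of $M_F$ relative to $\partial M_F$ need not restrict to the chosen product smooth structure along the mapping cylinder of $\partial\bar\nu\Sigma_1$, so you cannot cut it to smooth $M_\phi$ relative to its full boundary. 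This is precisely why the paper's \Cref{lem:exteriors_stable_diffeo} allows $\cs(G)\neq 0$ and kills it by connect-summing with the homeomorphism $\sigma$ along a curve dual to $\cs(G)$ via \Cref{thm:stable_realisation_techincal}; that correction then forces a non-trivial action on the $H_2$ of the new $S^2\times S^2$ summand (the matrix $A$), which the paper must record, undo by composing with the diffeomorphism $a$, and feed into a direct verification of the variation and relative-spin-structure hypotheses of \Cref{thm:saeki_stable_isotopy}. None of this machinery appears in your proposal, and it cannot be avoided, because the case $\cs(\phi)\neq 0$ genuinely occurs.

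Two smaller points. First, even if $\cs(\phi)=0$ held, \Cref{thm:stable_realisation_techincal} ``with the zero class'' is not what produces a diffeomorphism; that step is \Cref{prop:cs_stable_2} (Freedman--Quinn), which requires an unspecified number $k$ of stabilisations, not a single one. Second, your final appeal to ``topological pseudo-isotopy implies topological isotopy'' for the simply-connected manifold-with-boundary $\widehat X$ is stronger than what is needed and is not how the paper proceeds: the paper instead checks directly, using the control on the support of the modification, that $\widehat F$ (after composing with $a$) has trivial variation and trivial action on relative spin structures, so that \Cref{thm:saeki_stable_isotopy} applies. That part of your outline could probably be repaired along the paper's lines, but the vanishing claim for $\cs(\phi)$ cannot, and with it the proposal collapses.
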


Results similar to \zcref{thm:stable_isotopy} have been referred to previously (see the introductions in \cite{auckly_kim_melvin_ruberman_2014,auckly_kim_melvin_ruberman_schwartz_2019,hayden_kang_mukherjee}), but, to the best knowledge of the author, a proof of a result like this has never appeared in the literature.  In the above references, it seems to be implicit that the complement of the surfaces be simply-connected, but we will need no such condition.

We make some remarks on these two theorems.  First, two remarks on \zcref{thm:stable_realisation_theorem}.

\begin{rmk}
	\zcref{thm:stable_realisation_theorem}, combined with \zcref{prop:cs_stable_2} (due to Freedman-Quinn), recovers Gompf's result \cite{gompf_1984} that homeomorphic compact, connected, smooth, orientable $4$-manifolds are stably diffeomorphic.  The same reasoning also shows that \zcref{thm:stable_realisation_theorem} cannot be extended to the non-orientable case, as there exist compact, connected, smooth, non-orientable $4$-manifolds which are homeomorphic but are not stably diffeomorphic (see \cite{kreck_1984}, \cite{cappell_shaneson_1976}).  If one considers only self-homeomorphisms of non-orientable smooth manifolds, then a result like \zcref{thm:stable_realisation_theorem} may still hold.
\end{rmk}

\begin{rmk}
	For the class of $4$-manifolds considered in \zcref{thm:stable_realisation_theorem}, we cannot remove the stabilisation assumption in \zcref{thm:stable_realisation_theorem}.  For example, the Casson-Sullivan invariant is not realisable for $X=S^1\times S^3=X'$ (see \zcref{lem:sw_s1xs3}).  
\end{rmk}

And a remark on \zcref{thm:stable_isotopy}.

\begin{rmk}\label{rmk:stable_isotopy}
	If the surface exteriors have trivial or cyclic fundamental groups, we can pair \zcref{thm:stable_isotopy} with results concerning when homologous surfaces of the same genus whose exteriors have isomorphic fundamental groups are (stably) topologically isotopic (see \cite{lee_willczynski_1990,hambleton_kreck_1993,lee_wilczynski_1993, boyer_1993}). This gives, in some cases, that smoothly embedded homologous surfaces with the same boundaries and of the same genus and whose exteriors have isomorphic fundamental groups are stably smoothly isotopic.  See \zcref{cor:stable_isotopy} for the precise statement.  This (in some sense) generalises the result in \cite{auckly_kim_melvin_ruberman_schwartz_2019}, although the result there is stronger in the sense that only one stabilisation is required.
\end{rmk}

The methods we use to prove \zcref{thm:stable_realisation_theorem} lead us the following interesting example, which demonstrates that the smoothability of a self-homeomorphism of a smooth manifold depends on the isotopy class of the smooth structure (see \zcref{sec:isotopy_smooth_structures}).

\begin{cor}\label{cor:swap_not_smoothable}
	Let $X=(S^1\times S^3)\#(S^1\times S^3)\#(S^2\times S^2)$ with the standard smooth structure and let $g\colon X\to X$ be the diffeomorphism which swaps the two $S^1\times S^3$ connected-summands and is the identity on the $S^2\times S^2$ connected-summand.  Then there exists a smooth structure $\mathscr{S}'$ on $X$, which is diffeomorphic to the standard smooth structure, but is such that $g$ is not stably pseudo-smoothable with respect to $\mathscr{S}'$.  In particular, $g$ is not smoothable with respect to $\mathscr{S}'$ ($g$ is not isotopic to a diffeomorphism).
\end{cor}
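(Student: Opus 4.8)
The plan is to realise $\mathscr{S}'$ as the pullback $h^\ast\mathscr{S}$ of the standard smooth structure $\mathscr{S}$ along a self-homeomorphism $h$ of $X$ with a suitably chosen nonzero Casson--Sullivan invariant, and then to deduce $\cs(g)\neq 0$ with respect to $\mathscr{S}'$ by a naturality computation.

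First I would record the algebraic input. As $X=(S^1\times S^3)\#(S^1\times S^3)\#(S^2\times S^2)$ is a closed, connected, oriented $4$-manifold, $H^3(X;\Z/2)\cong(\Z/2)^2$; fix the basis $\{b_1,b_2\}$ in which $b_i$ is the Poincar\'e dual of the core circle of the $i$-th $S^1\times S^3$ summand. Since $g$ interchanges the two $S^1\times S^3$ summands and is the identity on the $S^2\times S^2$ summand, the automorphism $g^\ast$ that $g$ induces on $H^3(X;\Z/2)$ is the transposition $b_1\leftrightarrow b_2$; hence $\Id+g^\ast$ has image and kernel both equal to $\langle b_1+b_2\rangle$, so that $(\Id+g^\ast)(b_1)=b_1+b_2\neq 0$ while $(\Id+g^\ast)$ annihilates $\langle b_1+b_2\rangle$.

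Next, I would apply \Cref{thm:stable_realisation_theorem}, with $X=X'$ and $X_0=X_0'=(S^1\times S^3)\#(S^1\times S^3)$, to produce a self-homeomorphism $h\colon(X,\mathscr{S})\to(X,\mathscr{S})$ with $\cs(h)=b_1$, and set $\mathscr{S}':=h^\ast\mathscr{S}$. By construction $h$ is a diffeomorphism $(X,\mathscr{S}')\to(X,\mathscr{S})$, so $\mathscr{S}'$ is diffeomorphic to $\mathscr{S}$, as the statement requires. Let $j:=\Id_X\colon(X,\mathscr{S}')\to(X,\mathscr{S})$, which is a homeomorphism of smooth $4$-manifolds, and write $g_{\mathscr{S}}$, $g_{\mathscr{S}'}$ for $g$ regarded as a self-homeomorphism of $(X,\mathscr{S})$, $(X,\mathscr{S}')$ respectively. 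Using the composition formula $\cs(\beta\circ\alpha)=\cs(\alpha)+\alpha^\ast\cs(\beta)$ for $\cs$ (the formula underlying the crossed-homomorphism property of \Cref{prop:cs_homomorphism}) together with the vanishing of $\cs$ on diffeomorphisms, I would first compute $\cs(j)=b_1$: the underlying map of $j\circ h^{-1}$ is the self-homeomorphism $h^{-1}$ of $(X,\mathscr{S})$ while $h^{-1}\colon(X,\mathscr{S})\to(X,\mathscr{S}')$ is a diffeomorphism, so the composition formula gives $(h^{-1})^\ast\cs(j)=\cs\bigl(h^{-1}\colon(X,\mathscr{S})\to(X,\mathscr{S})\bigr)=(h^{-1})^\ast\cs(h)$, whence $\cs(j)=\cs(h)=b_1$ since $(h^{-1})^\ast$ is an isomorphism.

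Applying the composition formula again, now to the equality $j\circ g_{\mathscr{S}'}=g_{\mathscr{S}}\circ j$ of homeomorphisms $(X,\mathscr{S}')\to(X,\mathscr{S})$ — both have underlying map $g$ — and using $\cs(g_{\mathscr{S}})=0$, I obtain
\[
\cs_{\mathscr{S}'}(g)=(\Id+g^\ast)\,\cs(j)=(\Id+g^\ast)(b_1)=b_1+b_2\neq 0 .
\]
Since the Casson--Sullivan invariant is a stable invariant (\Cref{prop:cs_stable_2}) that vanishes on homeomorphisms pseudo-isotopic to a diffeomorphism (\Cref{prop:cs_pseudo-isotopy_inv}), the non-vanishing of $\cs_{\mathscr{S}'}(g)$ shows that no stabilisation $g\#(\#^{n}S^2\times S^2)$ is pseudo-isotopic to a diffeomorphism of the corresponding stabilisation of $(X,\mathscr{S}')$; that is, $g$ is not stably pseudo-smoothable with respect to $\mathscr{S}'$, hence in particular neither pseudo-smoothable nor isotopic to a diffeomorphism. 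The step I expect to require the most care is the bookkeeping of source and target smooth structures, and of the induced maps on $H^3(X;\Z/2)$, in the composition formula for $\cs$ when it is applied to homeomorphisms between different smooth structures on the same underlying manifold; one should also note that the target class must be taken outside $\ker(\Id+g^\ast)$ — hence $b_1$ rather than $b_1+b_2$ — since a class in that kernel would give only $\cs_{\mathscr{S}'}(g)=0$.
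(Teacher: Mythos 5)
Your proposal is correct and follows essentially the same route as the paper: there, $\mathscr{S}'=f^*\mathscr{S}$ for the explicit homeomorphism $f=\Id_{S^1\times S^3}\#\,\sigma$ (a special case of the realisation you invoke abstractly via \Cref{thm:stable_realisation_theorem}), and the computation $\cs_{\mathscr{S}'}(g)=\cs(f)+g^*\cs(f)=(1,1)\neq 0$ is carried out with \Cref{prop:cs_dependence_ss} and \Cref{thm:actual_connect_sum}, which is exactly your $(\Id+g^*)(b_1)=b_1+b_2$ obtained via the composition formula applied to $j=\Id_X$ (and your remark about choosing the class outside $\ker(\Id+g^*)$ is precisely what the paper's choice of $f$ implements). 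The only nitpick is the final citation: stability of $\cs$ under stabilisation, needed to pass from $\cs_{\mathscr{S}'}(g)\neq 0$ to non-stable-pseudo-smoothability, is \Cref{prop:cs_stable_1}, not \Cref{prop:cs_stable_2}.
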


We now restrict to considering self-homeomorphisms.  The next result concerns a class of $4$-manifolds where it is possible to remove the stabilisation assumption in \zcref{thm:stable_realisation_theorem}.  This theorem will be stated in terms of a certain ``realisability condition", which is defined fully in \zcref{dfn:cs_realisation_condition}.  Instead of giving an informal definition of this condition, we will state some classes of $4$-manifolds for which it applies.

The Casson-Sullivan realisability condition is satisfied for $4$-manifolds whose fundamental groups are in the following classes.
\begin{enumerate}[(i)]
	\item Finite cyclic groups $\Z/n$.
	\item Groups of the form $\Z/(2^n)\times \Z/2$.
	\item Groups of the form $(\Z/2)^n$.
	\item Dihedral groups $D_n$.
\end{enumerate}

The above list is not exhaustive (for more information see \zcref{sbs:realisation_condition_examples}).  It should also be noted that the condition holds for all manifolds with odd order fundamental groups, however, this class is not interesting as in these cases $H^3(X,\partial X;\Z/2)=0$ and hence the Casson-Sullivan invariant is trivially realisable.

\begin{thm}\label{thm:unstable_realisation_theorem}
	Let $X$ be a compact, connected, smooth, orientable $4$-manifold with $\pi_1(X)$ a good group such that $X$ satisfies the Casson-Sullivan realisability condition $($\zcref{dfn:cs_realisation_condition}$)$.  Then for every class $\eta\in H^3(X,\partial X;\Z/2)$ there exists a homeomorphism $f\colon X\to X$ with $\cs(f)=\eta$.
\end{thm}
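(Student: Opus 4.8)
The plan is to produce the required homeomorphisms by Wall realisation in the topological surgery exact sequence of $X\times I$, to identify their Casson--Sullivan invariants with the realising surgery obstructions, and then to invoke the realisability condition of \Cref{def:cs_realisation_condition}, which is set up precisely so that these invariants exhaust $H^3(X,\partial X;\Z/2)$. First I would reduce to a statement about a subgroup: by Poincar\'e--Lefschetz duality $H^3(X,\partial X;\Z/2)\cong H_1(X;\Z/2)$, and a self-homeomorphism homotopic to the identity acts trivially on this group, so by \Cref{prop:cs_homomorphism} the restriction of $\cs$ to those pseudo-isotopy classes that are homotopic to the identity rel $\partial X$ is an honest homomorphism into $H^3(X,\partial X;\Z/2)$, and it is enough to show this homomorphism is onto. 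As $\pi_1(X)$ is good, Freedman--Quinn topological surgery and the $5$-dimensional topological $s$-cobordism theorem apply to $X\times I$: a pseudo-isotopy class of homeomorphism of $X$ rel $\partial X$ is recorded by an element of the rel-boundary structure set $\mathcal S^{\TOP}_\partial(X\times I)$, and conversely every element of this structure set is, by the $s$-cobordism theorem, realised by some self-homeomorphism of $X$ rel $\partial X$. This structure set sits in the surgery exact sequence
\[
L_6(\Z\pi_1 X)\xrightarrow{\ \omega\ }\mathcal S^{\TOP}_\partial(X\times I)\longrightarrow\mathcal N^{\TOP}_\partial(X\times I)\longrightarrow L_5(\Z\pi_1 X),
\]
with $\omega$ the Wall realisation action based at the product structure.

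Next I would do the construction and compute. Given $\lambda\in L_6(\Z\pi_1 X)$, Wall realisation yields a normal bordism rel $\partial$ from the product structure to a simple self-homotopy-equivalence of $X\times I$ whose total space is an $s$-cobordism; applying the $s$-cobordism theorem to that total space reads off a self-homeomorphism $f_\lambda$ of $X$, homotopic to the identity rel $\partial X$. Since $\omega(\lambda)$ has trivial normal invariant, the relative Kirby--Siebenmann class of the mapping cylinder of $f_\lambda$ is forced to come entirely from $\lambda$, and unwinding the definition of $\cs$ and its basic properties (\Cref{prop:cs_pseudo-isotopy_inv}, \Cref{sbs:casson_sullivan_basic_properties}) should give a formula $\cs(f_\lambda)=\Phi(\lambda)$ for an explicit homomorphism $\Phi\colon L_6(\Z\pi_1 X)\to H^3(X,\partial X;\Z/2)$ assembled from the Arf/Kervaire part of the surgery obstruction, the classifying map $X\to B\pi_1 X$ and Poincar\'e duality. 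A consistency check: $\Phi$ should kill the image of $L_6(\Z)$ (matching the vanishing of $\cs$ in the simply-connected case), so for $\pi_1(X)=\Z$ it is zero, in agreement with the non-realisability of \Cref{lem:sw_s1xs3}. Since $\Image\Phi$ lies in the image of $\cs$, and \Cref{def:cs_realisation_condition} is exactly the hypothesis that $\Phi$ --- together with whatever contribution the normal-invariant directions of $\mathcal S^{\TOP}_\partial(X\times I)$ make to $\cs$ --- surjects onto $H^3(X,\partial X;\Z/2)$, every class $\eta$ is then of the form $\cs(f_\lambda)$, proving the theorem.

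I expect the genuinely delicate step to be the identification $\cs(f_\lambda)=\Phi(\lambda)$: one must control how Wall realisation interacts with Kirby--Siebenmann obstructions and check that the $s$-cobordism-theorem identification of the total space contributes none of its own (this is where goodness of $\pi_1(X)$ and careful Whitehead-torsion bookkeeping are used), and then describe $\Phi$ concretely enough that the surjectivity computations of \Cref{sbs:realisation_condition_examples} for $\Z/n$, $\Z/(2^n)\times\Z/2$, $(\Z/2)^n$ and the dihedral groups $D_n$ can be carried out. By contrast, in the stabilised setting of \Cref{thm:stable_realisation_theorem} the extra $S^2\times S^2$ summand gives enough room to realise all of $H^3(X,\partial X;\Z/2)$ directly, which is why no analogous condition is needed there.
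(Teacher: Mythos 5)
There is a genuine gap, and it is fatal to the proposed mechanism. Elements of $\mathcal{S}^s_{\TOP}(X\times I,\partial)$ obtained by Wall realisation of $\lambda\in L_6(\Z[\pi_1 X])$ acting on the product basepoint have, by exactness of the surgery sequence at the structure set, \emph{trivial} normal invariant. But for a structure $N$ on $X\times I$ rel the smooth boundary data, the relative Kirby--Siebenmann class of $N$ (equivalently of the mapping cylinder $M_{f}$ produced by the $s$-cobordism theorem) is exactly $\xi_*$ applied to the normal invariant of $N$ (this is \Cref{lem:mapping_cylinder_casson_sullivan}). So your claim that ``the relative Kirby--Siebenmann class of the mapping cylinder of $f_\lambda$ is forced to come entirely from $\lambda$'' is backwards: trivial normal invariant forces $\ks(M_{f_\lambda})=0$, hence $\cs(f_\lambda)=0$ for every $\lambda$, and the homomorphism $\Phi$ you hope to construct is identically zero. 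Relatedly, you have misread \Cref{def:cs_realisation_condition}: it says nothing about $L_6$ or about surjectivity of any map out of it; it is a condition on the surgery obstruction map $\theta\colon H^2(X\times I,\partial;\Z/2)\oplus H^4(X\times I,\partial;\Z)\to L_5(\Z[\pi_1X])$ on \emph{normal invariants}, namely that each $y\in H^4$ can be paired with some $x\in H^2$ so that $\theta(x,y)=0$.

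The correct direction of travel is the opposite one, through the normal invariants rather than the $L_6$-action. Given $\eta$, one uses \Cref{lem:normal_invariants_isomorphism} and \Cref{lem:exact_sequence_commutative_diagram} (where the $\Sq^2$ term is shown to vanish, so $\xi_*$ is $(x,y)\mapsto\red_2(y)$), together with surjectivity of $\red_2\colon H^4(X\times I,\partial;\Z)\to H^4(X\times I,\partial;\Z/2)$ (universal coefficients and the five lemma), to choose $y$ with $\varpi^{-1}\red_2(y)=\eta$; the realisability condition then supplies $x$ with $\theta(x,y)=0$, so the normal invariant $N_\eta=(x,y)$ lifts to $\mathcal{S}^s_{\TOP}(X\times I,\partial)$; the relative $s$-cobordism theorem (this is where goodness of $\pi_1(X)$ enters, as in \Cref{prop:mapping_cylinder_construction}) identifies the lift with a mapping cylinder $M_f$, and \Cref{lem:mapping_cylinder_casson_sullivan} gives $\cs(f)=\varpi^{-1}\xi_*(N_\eta)=\eta$. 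Your preliminary reduction via \Cref{prop:cs_homomorphism} is harmless but unnecessary, and your consistency check for $\pi_1\cong\Z$ concerns a different mechanism ($L_5(\Z[\Z])\cong\Z$ obstructing the choice of $x$), not the vanishing of an $L_6$-contribution.
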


\begin{rmk}\label{rmk:good}
	By `good' in \zcref{thm:unstable_realisation_theorem} we mean in the sense of Freedman-Quinn (see \cite[Chapter 2.9]{freedman_quinn_1990} or \cite[\S19]{behrens_kalmar_kim_powell_ray_2021} for a definition).  It is known that the set of good groups includes elementary amenable groups, as well as groups of sub-exponential growth.  In particular, all finite groups are good, so the groups listed above as satisfying the Casson-Sullivan realisability condition are also good.  However, it should be noted that we do not show that the realisability condition implies that the group is good.
\end{rmk}

As a corollary to the proof of this theorem we obtain the following.

\begin{thm}\label{thm:non_pseudo_smoothable_but_hom_smoothable}
	Let $X$ be a compact, connected, smooth, orientable $4$-manifold with $\pi_1(X)$ a good group such that $X$ satisfies the Casson-Sullivan realisability condition $($\zcref{dfn:cs_realisation_condition}$)$.  Then there exists a family of homeomorphisms $\{f_\eta\colon X\to X\mid 0\neq\eta \in H^3(X,\partial X;\Z/2)\}$ all distinct up to pseudo-isotopy (relative to the boundary) such that each element $f_\eta$ is not stably pseudo-smoothable but each~$f_\eta$ is homotopic to the identity map.
\end{thm}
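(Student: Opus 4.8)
The plan is to use the very homeomorphisms produced inside the proof of \Cref{thm:unstable_realisation_theorem}, and to observe that they automatically enjoy the two extra features claimed here. The reason this is a corollary to that \emph{proof} rather than to its statement is that the one fact I need to extract from the construction — not from the bare statement — is that the realising homeomorphisms can be taken homotopic to the identity. So the first step is to record that, for each $0\neq\eta\in H^3(X,\partial X;\Z/2)$, the construction yields $f_\eta\colon X\to X$ with $\cs(f_\eta)=\eta$ that is moreover homotopic to $\Id_X$ (in fact rel $\partial X$). In the framework natural here this is essentially automatic: one realises $\eta$ by a topological $s$-cobordism $W_\eta$ from $X$ to $X$ rel $\partial X\times[0,1]$ with $\ks(W_\eta)\in H^4(W_\eta,\partial W_\eta;\Z/2)\cong H^3(X,\partial X;\Z/2)$ equal to $\eta$; since $\pi_1(X)$ is good the topological $s$-cobordism theorem identifies $W_\eta\cong X\times[0,1]$, and one lets $f_\eta$ be the resulting monodromy. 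Because $W_\eta$ deformation retracts onto each of its two ends, the two end-inclusions $X\hookrightarrow W_\eta$ are homotopic, so $f_\eta\simeq\Id_X$ rel $\partial X$; and since the mapping cylinder of $f_\eta$ is homeomorphic to $W_\eta$, one has $\cs(f_\eta)=\ks(W_\eta)=\eta$. This is the only step carrying genuine content, and I expect it to amount to unwinding the construction already underlying \Cref{thm:unstable_realisation_theorem} once the condition of \Cref{def:cs_realisation_condition} is in hand.

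Given this, no $f_\eta$ is stably pseudo-smoothable. Indeed, suppose for some $n\geq 0$ that $f_\eta\#\Id_{\#^n(S^2\times S^2)}$ were pseudo-isotopic to a diffeomorphism of $X\#(\#^n S^2\times S^2)$. Then $\cs(f_\eta\#\Id_{\#^n(S^2\times S^2)})=0$, since the Casson--Sullivan invariant vanishes on pseudo-smoothable homeomorphisms (\Cref{sbs:casson_sullivan_basic_properties}). On the other hand, connected sum with $S^2\times S^2$ leaves $H_1(\,\cdot\,;\Z/2)$, hence by Poincar\'e--Lefschetz duality $H^3(\,\cdot\,,\partial;\Z/2)$, unchanged, and under the induced canonical isomorphism $H^3(X\#(\#^n S^2\times S^2),\partial;\Z/2)\cong H^3(X,\partial X;\Z/2)$ the class $\cs(f_\eta\#\Id_{\#^n(S^2\times S^2)})$ corresponds to $\cs(f_\eta)=\eta$ (stability of $\cs$; cf.\ \Cref{prop:cs_stable_2} and \Cref{sbs:casson_sullivan_basic_properties}). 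Hence $\eta=0$, contradicting $\eta\neq 0$. In particular each $f_\eta$ is not pseudo-smoothable, let alone isotopic to a diffeomorphism.

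Finally, the $f_\eta$ are pairwise distinct up to pseudo-isotopy rel $\partial X$, and each is distinct from $\Id_X$: by \Cref{prop:cs_pseudo-isotopy_inv}, if $f_\eta$ and $f_{\eta'}$ were pseudo-isotopic rel $\partial X$ then $\eta=\cs(f_\eta)=\cs(f_{\eta'})=\eta'$, while $\cs(\Id_X)=0\neq\eta$. Combined with the first paragraph this shows that $\{f_\eta\}_{0\neq\eta}$ has all the required properties, the only non-formal ingredient being the verification there that the homeomorphisms realising \Cref{thm:unstable_realisation_theorem} can be chosen homotopic to the identity — which I expect to be mild, since everything substantive has already been spent on the realisability condition.
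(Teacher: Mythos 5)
Your overall skeleton matches the paper's: take the homeomorphisms produced in the proof of \Cref{thm:unstable_realisation_theorem}, use $\cs(f_\eta)=\eta\neq 0$ together with \Cref{prop:cs_pseudo-isotopy_inv} and \Cref{prop:cs_stable_1} to rule out stable pseudo-smoothability and to get pairwise distinctness, and extract the homotopy to the identity from the construction. (Minor point: the stabilisation formula you invoke is \Cref{prop:cs_stable_1}, not \Cref{prop:cs_stable_2}.) However, the one step carrying genuine content --- that $f_\eta$ is homotopic to $\Id_X$ --- is exactly where your justification breaks down. The inference ``$W_\eta$ deformation retracts onto each of its two ends, hence the two end-inclusions $X\hookrightarrow W_\eta$ are homotopic, hence the monodromy is homotopic to the identity'' is false for a general $s$-cobordism from $X$ to $X$ rel $\partial X$: the mapping cylinder of \emph{any} self-homeomorphism $g$ is a product cobordism (in particular an $s$-cobordism, possibly with $\ks=0$), it deformation retracts onto both ends, yet its monodromy is $g$, which need not be homotopic to the identity; composing the inclusion of one end with the retraction onto the other produces $g$, not $\Id_X$. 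So knowing only that $W_\eta$ is an $s$-cobordism from $X$ to $X$ with $\ks(W_\eta)=\eta$ does not give $f_\eta\simeq\Id_X$, and as stated your first paragraph does not prove the homotopy claim.

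What saves the argument --- and what the paper actually uses --- is that $W_\eta$ is not an arbitrary $s$-cobordism but an element of the simple structure set $\mathcal{S}^s_{\TOP}(X\times I,\partial)$ produced in \Cref{prop:mapping_cylinder_construction}. It therefore comes equipped with a (simple) homotopy equivalence
\[
\Phi\colon \bigl(W_\eta; \partial_+W_\eta,\partial_-W_\eta\bigr)\longrightarrow \bigl(X\times I; X\times\{0\},X\times\{1\}\bigr)
\]
restricting to homeomorphisms on the ends, and these restrictions are precisely the identifications of the ends with $X$ used to define $f_\eta$ from the product structure supplied by the $s$-cobordism theorem. Under the resulting identification $W_\eta\approx M_{f_\eta}$, the map $\Phi$ restricts to $\Id_X$ on one end and to $f_\eta$ on the other, so post-composing with the projection $X\times I\to X$ yields a homotopy $f_\eta\simeq\Id_X$. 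In other words, the homotopy to the identity comes from the structure-set data (the reference homotopy equivalence to $X\times I$ rel boundary), not from the mere existence of deformation retractions onto the ends; without citing that data, the step you flag as ``essentially automatic'' is a genuine gap.
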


\begin{rmk}
	We quickly demonstrate that the class of manifolds for which \zcref{thm:non_pseudo_smoothable_but_hom_smoothable} applies non-trivially is non-empty.  Recall that for any finitely generated group $G$ there exists a closed, connected, smooth, oriented $4$-manifold $X$ with $\pi_1(X)\cong G$. For $n\geq 2$ let $X$ be such a $4$-manifold for $G=\Z/n$ (which is in case (i) above).  For $n$ even we have that $H^3(X;\Z/2)\cong \Z/2$ and hence \zcref{thm:non_pseudo_smoothable_but_hom_smoothable} applies non-trivially to $X$.
\end{rmk}

We will also discuss the consequences of \zcref{thm:stable_realisation_theorem} and \zcref{thm:unstable_realisation_theorem} in terms of non-isotopic but diffeomorphic smooth structures on $4$-manifolds.  All of the concepts will be defined in \zcref{sec:isotopy_smooth_structures}.  We have the following theorems.

\begin{thm}\label{thm:_stable_non_stably_pseudo_iso}
	Let $X$ be a $4$-manifold such that $X\cong X'\#(S^2\times S^2)$ for some compact, connected, smooth, orientable $4$-manifold $X'$ and let $\mathscr{S}$ denote the smooth structure on $X$.  Then for every non-zero $\eta \in H^3(X,\partial X;\Z/2)$ there exists a smooth structure $\mathscr{S}_\eta$ on $X$ which is not stably pseudo-isotopic (relative to the boundary) to $\mathscr{S}$ but $X_\mathscr{S}$ is diffeomorphic to $X_{\mathscr{S}_\eta}$.  Furthermore, the elements of this family of smooth structures $\{\mathscr{S}_\eta\}$ are pairwise distinct up to stable pseudo-isotopy.
\end{thm}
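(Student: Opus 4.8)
The plan is to realise each cohomology class as the Casson--Sullivan invariant of a self-homeomorphism of $X$ and then transport the smooth structure along it. Fix $\eta\in H^3(X,\partial X;\Z/2)$, allowing also $\eta=0$ for bookkeeping purposes. Since $X$ is homeomorphic to itself and $X\cong X'\#(S^2\times S^2)$, \Cref{thm:stable_realisation_theorem} provides a self-homeomorphism $f_\eta\colon X\to X$ with $\cs(f_\eta)=\eta$; for $\eta=0$ we simply take $f_0:=\Id_X$. Let $\mathscr S$ denote the given smooth structure on $X$ and set $\mathscr S_\eta:=f_\eta^\ast\mathscr S$, the pull-back smooth structure on the underlying topological manifold, so that by construction $f_\eta\colon X_{\mathscr S_\eta}\to X_{\mathscr S}$ is a diffeomorphism and $\mathscr S_0=\mathscr S$. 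In particular $X_{\mathscr S}$ and $X_{\mathscr S_\eta}$ are diffeomorphic, which is the second assertion of the theorem. It therefore remains to prove the following: for $\eta\neq\eta'$ in $H^3(X,\partial X;\Z/2)$ the structures $\mathscr S_\eta$ and $\mathscr S_{\eta'}$ are not stably pseudo-isotopic relative to the boundary. Taking $\eta'=0$ then recovers the first assertion and general $\eta\neq\eta'$ gives the stated pairwise distinctness.

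So suppose, for a contradiction, that $\mathscr S_\eta$ and $\mathscr S_{\eta'}$ were stably pseudo-isotopic relative to the boundary for some $\eta\neq\eta'$. Unwinding the definitions from \Cref{sec:isotopy_smooth_structures}, this says that the comparison homeomorphism $c\colon X_{\mathscr S_\eta}\to X_{\mathscr S_{\eta'}}$ with underlying map $\Id_X$ becomes, after interior connect-sum with some $\#^n S^2\times S^2$, pseudo-isotopic (rel boundary) to a diffeomorphism. Pre- and post-composing this stable pseudo-isotopy with the (stabilised) diffeomorphisms $f_\eta^{-1}$ and $f_{\eta'}$ — that is, replacing $c$ by $f_{\eta'}\circ c\circ f_\eta^{-1}$, whose underlying map is the self-homeomorphism $g:=f_{\eta'}\circ f_\eta^{-1}$ of $X$ — shows that $g\colon X_{\mathscr S}\to X_{\mathscr S}$ is stably pseudo-smoothable. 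By the pseudo-isotopy invariance and stable invariance of the Casson--Sullivan invariant (\Cref{prop:cs_pseudo-isotopy_inv} and \Cref{prop:cs_stable_2}), together with the vanishing of $\cs$ on diffeomorphisms, this forces $\cs(g)=0$.

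On the other hand, the homomorphism property of $\cs$ (\Cref{prop:cs_homomorphism}) — which in particular expresses $\cs$ of an inverse in terms of $\cs$ of the original map — gives, after a short computation, $\cs(g)=\cs(f_{\eta'}\circ f_\eta^{-1})=(f_\eta^{-1})^{\ast}(\eta+\eta')$, the image of $\eta+\eta'$ under an automorphism of $H^3(X,\partial X;\Z/2)$. Since $\eta\neq\eta'$ we have $\eta+\eta'\neq 0$, hence $\cs(g)\neq 0$, contradicting the previous paragraph.

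The geometry here is minimal; I expect the only genuine work to be bookkeeping, and there are two points to be careful about. The first is matching the definition of stable pseudo-isotopy of smooth structures set up in \Cref{sec:isotopy_smooth_structures} with the statement that $g$ is stably pseudo-smoothable — this is exactly the kind of translation that section is designed to supply. The second is the indeterminacy in the composition formula for $\cs$: although the automorphism $(f_\eta^{-1})^{\ast}$ genuinely depends on $f_\eta$, it is an isomorphism, so it has no effect on whether $\cs(g)$ vanishes. In particular no control over the action of $f_\eta$ on cohomology — and hence no ``realisability condition'' in the sense of the unstable \Cref{thm:unstable_realisation_theorem} — is needed here. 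Everything else is formal once \Cref{thm:stable_realisation_theorem} and the basic properties of $\cs$ are available.
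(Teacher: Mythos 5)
Your proposal is correct and takes essentially the same route as the paper: realise $\eta$ as $\cs(f_\eta)$ via \Cref{thm:stable_realisation_theorem}, set $\mathscr{S}_\eta=f_\eta^*(\mathscr{S})$, and deduce distinctness from the stability and pseudo-isotopy invariance of $\cs$ together with the crossed-homomorphism formula — the paper merely packages your explicit computation with $g=f_{\eta'}\circ f_\eta^{-1}$ into the bijection of \Cref{cor:stable_iso_to_iso}. One small citation slip: the fact that stable pseudo-smoothability of $g$ forces $\cs(g)=0$ uses \Cref{prop:cs_stable_1} (the stabilisation formula) together with \Cref{prop:cs_pseudo-isotopy_inv}, not \Cref{prop:cs_stable_2}, which is the converse direction.
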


\begin{thm}\label{thm:_unstable_non_stably_pseudo_iso}
	Let $X$ be a compact, connected, smooth, orientable $4$-manifold with $\pi_1(X)$ a good group which satisfies the Casson-Sullivan realisation condition $($\zcref{dfn:cs_realisation_condition}$)$.  Let $\mathscr{S}$ denote the smooth structure on $X$.  Then for every non-zero $\eta \in H^3(X,\partial X;\Z/2)$ there exists a smooth structure $\mathscr{S}_\eta$ on $X$ which is not stably pseudo-isotopic to $\mathscr{S}$ but $X_\mathscr{S}$ is diffeomorphic to $X_{\mathscr{S}_\eta}$.  Furthermore, the elements of this family of smooth structures $\{\mathscr{S}_\eta\}$ are pairwise distinct up to stable pseudo-isotopy.
\end{thm}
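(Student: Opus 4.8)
The plan is to deduce \Cref{thm:_unstable_non_stably_pseudo_iso} from \Cref{thm:non_pseudo_smoothable_but_hom_smoothable} (equivalently from \Cref{thm:unstable_realisation_theorem}) together with the dictionary between self-homeomorphisms and smooth structures developed in \Cref{sec:isotopy_smooth_structures}. The key translation is the standard one: given a smooth $4$-manifold $X$ with smooth structure $\mathscr{S}$ and a self-homeomorphism $f\colon X\to X$, one pulls back the smooth structure along $f$ to obtain a new smooth structure $f^*\mathscr{S}$; by construction $f$ itself is a diffeomorphism $X_{f^*\mathscr{S}}\to X_{\mathscr{S}}$, so these two smooth structures are always diffeomorphic. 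The content is that $f^*\mathscr{S}$ is (stably) pseudo-isotopic to $\mathscr{S}$ if and only if $f$ is (stably) pseudo-smoothable, and that two pullback structures $f^*\mathscr{S}$, $g^*\mathscr{S}$ are stably pseudo-isotopic if and only if $f$ and $g$ differ by a stably pseudo-smoothable homeomorphism — these equivalences should already be recorded in \Cref{sec:isotopy_smooth_structures}, and I would simply cite them.

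With that dictionary in hand, the proof runs as follows. First I would invoke \Cref{thm:unstable_realisation_theorem}: since $X$ is compact, connected, smooth, orientable, with $\pi_1(X)$ good and satisfying the Casson-Sullivan realisability condition, every class $\eta\in H^3(X,\partial X;\Z/2)$ is realised as $\cs(f_\eta)$ for some self-homeomorphism $f_\eta\colon X\to X$ (and, as in \Cref{thm:non_pseudo_smoothable_but_hom_smoothable}, one may take the $f_\eta$ homotopic to the identity, though that is not needed here). For $\eta\neq 0$ we have $\cs(f_\eta)\neq 0$; since $\cs$ is a stable-pseudo-isotopy invariant vanishing on stably pseudo-smoothable homeomorphisms, $f_\eta$ is not stably pseudo-smoothable. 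Define $\mathscr{S}_\eta := f_\eta^*\mathscr{S}$. Then $X_{\mathscr{S}}\cong X_{\mathscr{S}_\eta}$ via $f_\eta$, but by the dictionary $\mathscr{S}_\eta$ is not stably pseudo-isotopic to $\mathscr{S}$. For the pairwise-distinctness claim, suppose $\mathscr{S}_\eta$ and $\mathscr{S}_{\eta'}$ were stably pseudo-isotopic; then $f_{\eta'}\circ f_\eta^{-1}$ (or $f_\eta^{-1}\circ f_{\eta'}$, depending on the variance in the dictionary) would be stably pseudo-smoothable, so its Casson-Sullivan invariant would vanish. Using the crossed-homomorphism property of $\cs$ from \Cref{prop:cs_homomorphism} — specifically that $\cs(f_{\eta'}\circ f_\eta^{-1})$ is computed from $\cs(f_{\eta'})=\eta'$ and $\cs(f_\eta)=\eta$ via the induced map $(f_\eta^{-1})^*$ on $H^3(-,\partial;\Z/2)$ — one gets a relation forcing $\eta=\eta'$ (when the $f_\eta$ are chosen homotopic to the identity the crossed term is trivial and this is literally $\eta'-\eta=0$).

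The one genuine subtlety, and the step I would be most careful about, is bookkeeping the variance and the stabilisation in the dictionary of \Cref{sec:isotopy_smooth_structures}: one must make sure that "stably pseudo-isotopic smooth structures" corresponds exactly to "differ by a homeomorphism that becomes pseudo-smoothable after connect-summing with copies of $S^2\times S^2$," and that the equivalence relation is the same one under which \Cref{thm:non_pseudo_smoothable_but_hom_smoothable} asserts the $f_\eta$ are distinct. Provided \Cref{sec:isotopy_smooth_structures} sets this up as expected, no further work is needed — \Cref{thm:_unstable_non_stably_pseudo_iso} is a direct reinterpretation of \Cref{thm:non_pseudo_smoothable_but_hom_smoothable}, exactly as \Cref{thm:_stable_non_stably_pseudo_iso} is of \Cref{thm:stable_realisation_theorem}. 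I would therefore keep the proof short: cite the translation lemma, apply \Cref{thm:unstable_realisation_theorem} to produce the $f_\eta$, set $\mathscr{S}_\eta=f_\eta^*\mathscr{S}$, and read off both the diffeomorphism $X_{\mathscr{S}}\cong X_{\mathscr{S}_\eta}$ and the non-stable-pseudo-isotopy and pairwise-distinctness statements from $\cs$ being a crossed homomorphism detecting stable pseudo-smoothability.
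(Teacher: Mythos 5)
Your proposal is correct and follows essentially the same route as the paper: the paper also applies \Cref{thm:unstable_realisation_theorem} to produce $f$ with $\cs(f)=\eta$, sets $\mathscr{S}_\eta=f^*(\mathscr{S})$, and reads off non-stable-pseudo-isotopy and pairwise distinctness from the dictionary of \Cref{sec:isotopy_smooth_structures} (the stable analogues of \Cref{prop:iso_iff_smoothable} and \Cref{cor:stable_iso_to_iso}) together with the fact that $\cs$ obstructs stable pseudo-smoothability. Your explicit use of the crossed-homomorphism identity for the distinctness step just spells out what the paper leaves terse; no substantive difference.
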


\subsection{Background}
\subsubsection{Non-smoothable homeomorphisms}
Non-smoothable homeomorphisms have been studied extensively for simply-connected $4$-manifolds.  The first examples were due to Friedman-Morgan \cite{friedman_morgan_1988} on Dolgachev surfaces, detected using Donaldson invariants.  Since then, many examples have been found, all detected using gauge-theoretic invariants.  For a brief survey of these results, see \cite[Section 1.3]{galvin_ladu_2025}.

Outside of the simply-connected case, much less was known.  Cappell-Shaneson-Lee \cite{cappell_shaneson_1971,lee_1970} and Scharlemann-Akbulut \cite{scharlemann_1976,akbulut_1999} produced examples of homotopy equivalences $f\colon (S^1\times S^3)\# (S^2\times S^2)\to (S^1\times S^3)\#(S^2\times S^2)$ which are not homotopic to diffeomorphisms.  It was shown by Wang \cite[Chapter 6.2]{wang_93} that the Cappell-Shaneson-Lee construction could be improved to finding a non-smoothable homeomorphism $f\colon (S^1\times S^3)\# (S^2\times S^2)\to (S^1\times S^3)\#(S^2\times S^2)$ (the reader should note that Wang states that this implies the existence of an exotic smooth structure on $(S^1\times S^3)\#(S^2\times S^2)$, when it actually gives a non-isotopic but diffeomorphic smooth structure; see \zcref{sec:isotopy_smooth_structures} for more information).  In \zcref{sec:stable_realisation} we will show that this homeomorphism has non-trivial Casson-Sullivan invariant.
\subsubsection{History of the Casson-Sullivan invariant}
The Manifold Hauptvermutung is the following conjecture: `every homeomorphism $f\colon M \to N$ between two $\PL$ manifolds is homotopic to a $\PL$-homeomorphism'.  There is also the related conjecture, called the Isotopy Manifold Hauptvermutung: `every homeomorphism $f\colon M \to N$ between two $\PL$ manifolds is isotopic to a $\PL$-homeomorphism'.  The work of Casson and Sullivan \cite{casson_1967,sullivan_1967} showed that the Isotopy Manifold Hauptvermutung is true for simply-connected $n$-manifolds of dimension $n\geq 5$ provided that $H^3(N;\Z/2)=0$, by considering a certain obstruction class~$\omega\in H^3(N;\Z/2)$.  In fact, they also showed that the (homotopy) Manifold Hauptvermutung is true for simply-connected $n$-manifolds of dimension $n\geq 5$ provided that $H^4(N;\Z)$ contains no $2$-torsion.  Later work of Kirby-Siebenmann \cite{kirby_siebenmann_1977}, crucially using the classification of $\PL$ homotopy tori by Hsiang-Shaneson and Wall \cite{hsiang_shaneson_1971,wall_1970}, showed that the Manifold Hauptvermutung is false in general in all dimensions $n\geq 5$, and, in particular, showed that the Casson-Sullivan class is precisely the obstruction to the Isotopy Manifold Hauptvermutung.  This also gave a very slick definition of the Casson-Sullivan invariant of a homeomorphism as the Kirby-Siebenmann invariant of the mapping cylinder of the given homeomorphism.

The Casson-Sullivan invariant can similarly be defined for $4$-manifolds, where, since there is no difference between the $\PL$ and smooth categories (for our purposes), we can consider it as an obstruction to smoothing homeomorphisms.  Freedman-Quinn \cite{freedman_quinn_1990} showed that the Casson-Sullivan invariant is the unique obstruction to stably pseudo-smoothing a homeomorphism (see \zcref{prop:cs_stable_2}).

\subsection{Outline}
The contents of this paper are organised as follows.  In \zcref{sec:invariant_properties} we define the Casson-Sullivan invariant and establish its fundamental properties, including proving a certain ``connected-sum along a circle" formula for it (\zcref{thm:connect_sum}).  In \zcref{sec:stable_realisation} we prove the technical stable realisation result \zcref{thm:stable_realisation_techincal}, which will immediately imply \zcref{thm:stable_realisation_theorem}.  We then prove \zcref{cor:swap_not_smoothable} in \zcref{sbs:interesting_example}.  In \zcref{sec:unstable_realisation} we prove the unstable realisation result, \zcref{thm:unstable_realisation_theorem}, and discuss the consequence, \zcref{thm:non_pseudo_smoothable_but_hom_smoothable}.  We will also discuss some partial realisation results in \zcref{sbs:partial_realisation}.  \zcref{sec:stable_isotopy} concerns stable smooth isotopy of surfaces, and is where we will prove \zcref{thm:stable_isotopy}, as an application of \zcref{thm:stable_realisation_techincal}.  In \zcref{sec:isotopy_smooth_structures} we discuss how to interpret these results in terms of finding non-isotopic but diffeomorphic smooth structures on $4$-manifolds.

\subsection{Acknowledgements}
I would like to thank Mark Powell for suggesting this project, and for his generous assistance throughout the process of writing this paper.  I would also like to thank Stefan Friedl, Matthias Kreck, Alexander Kupers, Ana Lecuona, Csaba Nagy, Patrick Orson, Arunima Ray, Daniel Ruberman, Peter Teichner and Simona Vesel\'{a} for various helpful conversations and comments.  I am also very grateful to the anonymous referee for his/her thorough report and pertinent comments.  I am grateful to the Max Planck Institute for Mathematics in Bonn, where I was staying during a portion of my writing of this paper, and also to the University of Glasgow, whom I was financially supported by.

\subsection{Notation}
Some notation before we begin.  For $X$ and $Y$ (smooth) manifolds we write $X\cong Y$ when $X$ is diffeomorphic to $Y$, $X\approx Y$ when $X$ is homeomorphic to $Y$, and $X\simeq Y$ when $X$ is homotopy equivalent to $Y$.  We will denote the connected-sum operation on manifolds by $\#$, and the $k$-fold connected-sum of a manifold $X$ as $\#^kX$.  For a submanifold $\Sigma\subset X$ we will denote its open tubular neighbourhood by $\nu\Sigma$, and its closed tubular neighbourhood by $\ol{\nu}\Sigma$.  For a pair of spaces $(K_1,K_2)$ and a based space $S$ with basepoint $\ast$ we will write $[(K_1,K_2),(S,\ast)]$ for the set of (relative) homotopy classes of maps of pairs.  We will often omit the basepoint and simply write $[(K_1,K_2),S]$.

\section{The Casson-Sullivan invariant}\label{sec:invariant_properties}
In this section we define the Casson-Sullivan invariant and prove some of its fundamental properties.  The Casson-Sullivan invariant is named for Andrew Casson and Dennis Sullivan, and the canonical reference is the collection of papers edited by Andrew Ranicki \cite{ranicki_1996}.

We begin in \zcref{sbs:microbundles_classifying_spaces} by stating the relevant theory about microbundles and classifying spaces which we will need to define the invariant in \zcref{sbs:casson_sullivan_definition}.  We then establish the Casson-Sullivan invariant's fundamental properties in \zcref{sbs:casson_sullivan_basic_properties}, before finishing this section by proving a ``connected-sum along a circle" formula for the invariant in \zcref{sbs:casson_sullivan_connect_sum}.

\subsection{Microbundles and classifying spaces}\label{sbs:microbundles_classifying_spaces}

Let $\TOP(k)=\{g\colon\R^k\xrightarrow{\approx}\R^k\vert f(0)=0\}$ and let $\tO(k)$ be the group of orthogonal $k$-dimensional matrices.  Then there are obvious inclusions $\TOP(k)\hookrightarrow \TOP(k+1)$ and $\tO(k)\hookrightarrow \tO(k+1)$, and we denote the corresponding direct limits as $\TOP$ and $\tO$, respectively.  We will use the notation $\CAT$ to stand in for $\TOP$ and $\tO$.  The classifying spaces $\BTOP$ and $\BO$ then classify stable $\R^n$ fibre bundles and stable vector bundles, respectively.  The universal stable vector bundle has an underlying stable topological $\R^n$ fibre bundle and its classifying map will be denoted as $\xi\colon \BO\to\BTOP$.  Similarly, let $\mathcal{B}_{\DIFF}$ and $\mathcal{B}_{\TOP}$ denote the classifying spaces of stable $\DIFF$-microbundles and stable $\TOP$-microbundles, respectively (see {\cite[Essay IV, \S10]{kirby_siebenmann_1977}}), and let $\xi'$ denote the classifying map of the universal $\DIFF$-microbundle's underlying $\TOP$-microbundle.  
\begin{lem}\label{lem:bundle_to_microbundle}
	Let $\CAT$ stand in for $\TOP$ or $\tO$.  Let $u_{\CAT}\colon \BCAT\to \B_{\CAT}$ denote the classifying map of the universal stable $\CAT$-bundle's underlying stable $\CAT$-microbundle.  Then $u_{\CAT}$ is a homotopy equivalence.
\end{lem}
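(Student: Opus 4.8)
The plan is to show that the canonical map from stable $\CAT$-bundles to stable $\CAT$-microbundles is a homotopy equivalence by exhibiting it as (up to homotopy) the map induced on classifying spaces by a weak equivalence of the underlying topological monoids, or equivalently by directly comparing the two classifying spaces via a zig-zag of fibrations. Concretely, I would first recall (from \cite[Essay IV]{kirby_siebenmann_1977}) the Kister--Mazur theorem: every topological $\R^k$-microbundle contains a $\TOP(k)$-bundle, unique up to isomorphism, and the same statement holds in the smooth category with $\Diff$-microbundles and $\tO(k)$-bundles (where one uses that a $\DIFF$-microbundle is locally the projection $\R^{n+k}\to\R^n$ and that the structure "group" of germs of diffeomorphisms fixing the zero-section deformation retracts onto $\tO(k)$, or at least that the inclusion of honest bundles induces a bijection on isomorphism classes over every base). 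This already gives a bijection $\pi_0$ on classifying spaces — i.e. $u_\CAT$ induces a bijection on isomorphism classes of stable bundles/microbundles over any CW complex, hence in particular over spheres, which is the statement that $u_\CAT$ is a weak homotopy equivalence on homotopy groups after one checks naturality under the suspension/stabilisation maps.

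The cleanest route is: Kister--Mazur (and its smooth analogue) says that for each $k$ the inclusion functor $\{\CAT(k)\text{-bundles}\}\hookrightarrow\{\CAT\text{-microbundles of rank }k\}$ induces a bijection on isomorphism classes over every paracompact base, and moreover this is compatible with Whitney sum / stabilisation $(-)\oplus\R$. Passing to the stable range and taking classifying spaces, the map $u_\CAT\colon\BCAT\to\B_\CAT$ therefore induces a bijection $[B,\BCAT]\to[B,\B_\CAT]$ for all CW complexes $B$; taking $B=S^n$ and $B=S^n\times I$ (to get injectivity/surjectivity of $\pi_n$ with basepoints handled, using that both spaces are connected and simple, being classifying spaces of group-like monoids) shows $u_\CAT$ is a weak homotopy equivalence. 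Since $\BCAT$ and $\B_\CAT$ both have the homotopy type of CW complexes (they are classifying spaces of well-pointed topological monoids, or one invokes Milnor's theorem), Whitehead's theorem upgrades this to a genuine homotopy equivalence.

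For the $\TOP$ case one could alternatively cite that this is essentially the content of \cite[Essay IV, \S10]{kirby_siebenmann_1977} directly, that $\BTOP(k)\to\B_{\TOP(k)}$ is a homotopy equivalence, stabilising in $k$; the $\tO$ case is classical and much softer since a $\DIFF$-microbundle over a manifold admits a compatible Riemannian metric on a neighbourhood of its zero section, producing a vector bundle, and two such choices are connected through a contractible space of metrics. I would treat $\TOP$ and $\tO$ uniformly by the Kister--Mazur argument and remark that $\tO$ is classical.

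The main obstacle is the smooth Kister--Mazur statement: one must be slightly careful that "$\DIFF$-microbundle" as defined in \cite[Essay IV, \S10]{kirby_siebenmann_1977} genuinely has the property that every such microbundle over a paracompact base contains a vector bundle, unique up to isomorphism, and that this containment is natural enough to be compatible with stabilisation. This is where I would spend the effort — pinning down the precise definition being used and either quoting the smooth Kister--Mazur result or proving it via the tubular-neighbourhood / metric argument sketched above — everything else ($\pi_0$-bijection $\Rightarrow$ weak equivalence $\Rightarrow$ homotopy equivalence via CW type) is formal.
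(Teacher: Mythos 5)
Your proposal is correct and rests on the same key input as the paper: the Kister--Mazur theorem (in both the $\TOP$ and smooth settings), giving a natural bijection between isomorphism classes of stable $\CAT$-bundles and stable $\CAT$-microbundles over any CW complex, i.e.\ a natural bijection $(u_{\CAT})_*\colon [X,\BCAT]\to[X,\B_{\CAT}]$. Where you differ is in the final step: you pass to $X=S^n$ to deduce that $u_{\CAT}$ is a weak homotopy equivalence (handling basepoints via simplicity of the spaces) and then invoke Whitehead's theorem, which forces you to verify that both $\BCAT$ and $\B_{\CAT}$ have CW homotopy type. The paper instead runs a Yoneda-style diagram chase: it applies the natural bijection with $X=\BCAT$ and $X=\B_{\CAT}$ themselves, obtaining from naturality a class $f\in[\B_{\CAT},\BCAT]$ with $(u_{\CAT})^*(f)=\Id_{\BCAT}$ and $(u_{\CAT})_*(f)=\Id_{\B_{\CAT}}$, so $f$ is an explicit homotopy inverse. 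This buys a shorter argument that sidesteps Whitehead, simplicity, and basepoint bookkeeping (though of course it still implicitly uses that the bijection applies with the classifying spaces as source). Your extra care about the smooth Kister--Mazur statement is reasonable but is dispatched in the paper simply by citing the standard references; it is not where the substance of the lemma lies.
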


\begin{proof}
	The Kister-Mazur theorem \cite{kister_1964, kuiper_lashof_1966, siebenmann_guillou_hahl_1973} gives that isomorphism classes of stable $\CAT$ bundles over a CW-complex $X$ are in one-to-one correspondence with isomorphism classes of stable $\CAT$-microbundles over $X$.  This means that there is a natural bijection $\kappa_{\CAT}\colon[X,\BCAT]\leftrightarrow [X,\mathcal{B}_{\CAT}]$ defined as the composition of the natural bijections
	\[
	[X,\BCAT]\to \CAT(X) \to \Mic_{\CAT}(X) \to [X,\B_{\CAT}]
	\] where $\CAT(X)$ denotes the set of isomorphism classes of stable $\CAT$-bundles over $X$ (i.e.\ stable $\R^n$ fibre bundles if $\CAT=\TOP$ and stable vector bundles if $\CAT=\DIFF$) and $\Mic_{\CAT}(X)$ denotes the set of isomorphism classes of stable $\CAT$-microbundles over $X$.  In fact, by the definition we can conclude that $\kappa_{\CAT}=(u_{\CAT})_*$.  Then naturality gives the following commutative diagram.
	\[
	\begin{tikzcd}
		{[\BCAT,\BCAT]} \arrow[r,"(u_{\CAT})_*"] & {[\BCAT,\mathcal{B}_{\CAT}]} \\
		{[\mathcal{B}_{\CAT},\BCAT]} \arrow[r,"(u_{\CAT})_*"] \arrow[u,"(u_{\CAT})^*"] & {[\mathcal{B}_{\CAT},\mathcal{B}_{\CAT}]} \arrow[u,"(u_{\CAT})^*"]
	\end{tikzcd}
	\]
	By a simple diagram chase one can see that there exists an element $f\in[\B_{\CAT},\BCAT]$ such that $(u_{\CAT})^*(f)=\Id_{\BCAT}$ and such that $(u_{\CAT})_*(f)=\Id_{\B_{\CAT}}$.  Hence $f$ is the homotopy inverse of $u_{\CAT}$, completing the proof.
\end{proof}

We have the following square which is a homotopy pullback\[
\begin{tikzcd}
	\BO \arrow[r,"u_{\DIFF}"] \arrow[d,"\xi"] & \B_{\DIFF} \arrow[d,"\xi'"] \\
	\BTOP \arrow[r,"u_{\TOP}"] & \B_{\TOP}
\end{tikzcd}
\]and it follows that the homotopy fibres of $\xi$ and $\xi'$ are homotopy equivalent.  We will denote this space as $\TOP\!/\!\tO$.  Boardman-Vogt \cite{boardman_vogt_1968} showed that we can `deloop' this fibre to obtain a space $\mathcal{B}(\TOP\!/\!\tO)$ and that we can extend $\xi$ to the right to obtain the fibration sequence
\[
\TOP\!/\!\tO \to \mathcal{B}\!\tO\xrightarrow{\xi} \mathcal{B}\!\TOP \xrightarrow{p} \mathcal{B}(\TOP\!/\!\tO).
\]  
By the above exposition, we can dispense with considering $\B_{\TOP}$ and $\B_{\DIFF}$ and we will consider the classifying map of a $\TOP$-microbundle to be a map to $\BTOP$ and the classifying map of a $\DIFF$-microbundle to be a map to $\BO$.  This is helpful as much is known about the homotopy types of $\BO$, $\BTOP$ and $\TOP\!/\!\tO$, whereas it is useful to work solely with $\CAT$-microbundles, rather than passing between microbundles and vector bundles depending on the category.

\subsection{Definition of the Casson-Sullivan invariant}\label{sbs:casson_sullivan_definition}

Let $X$ be an $n$-dimensional topological manifold with (potentially empty) boundary $\partial X$.  Further assume that we already have a smooth structure on the boundary $\partial X$.  The manifold $X$ admits a stable topological tangent microbundle $t_X\colon X\to\BTOP$.  The first obstruction for $X$ to be smoothable is to be able to lift $t_X$ to a stable $\DIFF$-microbundle $\tau_X\colon X\to \mathcal{B}\!\tO$ which extends the lift $\tau_{\partial X}$ which is already defined on $\partial X$, which we have since we already supposed the existence of a smooth structure on the boundary.  We have the following diagram
\begin{equation*}\label{dia:ks_def}
\begin{tikzcd}
	& & \mathcal{B}\!\tO \arrow[d,"\xi"] \\
	\partial X \arrow[urr,"\tau_{\partial X}", bend left] \arrow[r] & X \arrow[ur, dotted, "\tau_X"] \arrow[r,"t_X"'] & \mathcal{B}\!\TOP \arrow[d,"p"] \\
	& & \mathcal{B}(\TOP\!/\!\tO) 
\end{tikzcd}
\end{equation*}
It is known \cite[Essay IV, \S 10.12]{kirby_siebenmann_1977} that there is a $6$-connected map $\TOP\!/\!\tO\to K(\Z/2,3)$, and this means we can consider the composite as $p\circ t_X$ as a map $X\to K(\Z/2,4)$ since $X$ is $4$-dimensional.  The lift $\tau_{\partial X}$ corresponds to a null-homotopy $h_t(\tau_{\partial X})$ of $p\circ (t_{X}\!\vert_{\partial X})$ and, since the inclusion $\partial X\to X$ is a cofibration, this homotopy extends to a homotopy \[\widetilde{h}_t(\tau_{\partial X})\colon X\times I \to \B (\TOP\!/\!\tO)\] such that $\widetilde{h}_0(\tau_{\partial X})=p\circ t_X$ and such that $\widetilde{h}_1(\tau_{\partial X})$ defines an element \[\widetilde{h}_1(\tau_{\partial X})\in[(X,\partial X),(K(\Z/2,4),\ast)]\cong H^4(X,\partial X;\Z/2).\]

\begin{dfn}\label{dfn:kirby_siebenmann_invariant}
	Let $(X,\partial X)$ be as above.   We define the \emph{Kirby-Siebenmann invariant} \[\ks(X,\partial X):=\widetilde{h}_1(\tau_{\partial X})\in H^4(X,\partial X;\Z/2).\]
\end{dfn}

\begin{thm}[{\cite[Essay IV, \S10]{kirby_siebenmann_1977}, \cite[Corollary 8.3D]{freedman_quinn_1990}}]\label{thm:ks}
	The stable tangent microbundle of $X^n$ for $4\leq n\leq 7$, written as $t_X\colon X\to \mathcal{B}\TOP$, lifts to a stable tangent bundle $\tau_X$ extending the already specified lift on $\partial X$ if and only if $\ks(X)=0\in H^4(X,\partial X;\Z/2)$. 
\end{thm}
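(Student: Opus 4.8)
The plan is to treat the statement as an exercise in obstruction theory applied to the fibration sequence
\[
\TOP\!/\!\tO \to \mathcal{B}\!\tO \xrightarrow{\xi} \mathcal{B}\!\TOP \xrightarrow{p} \mathcal{B}(\TOP\!/\!\tO)
\]
recalled in \Cref{sbs:microbundles_classifying_spaces}, reducing it to the known low-degree homotopy of $\TOP\!/\!\tO$ together with, in the endpoint case $n=4$, the four-dimensional smoothing theory of Freedman--Quinn. The first, purely formal, observation is that since $\xi$ exhibits $\mathcal{B}\!\tO$ as the homotopy fibre of $p$, producing a lift $\tau_X$ of $t_X$ through $\xi$ which restricts to the given $\tau_{\partial X}$ on $\partial X$ is exactly the same problem as extending the null-homotopy $h_t(\tau_{\partial X})$ of $p\circ(t_X|_{\partial X})$ to a null-homotopy of $p\circ t_X$ over all of $X$. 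Granting this, one direction is immediate: if such a $\tau_X$ exists, then composing the canonical null-homotopy of $p\circ\xi$ with $\tau_X$ produces the required extended null-homotopy, so $\ks(X)=\widetilde{h}_1(\tau_{\partial X})=0$ directly from \Cref{def:kirby_siebenmann_invariant}.

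For the converse I would run the obstruction theory for extending $h_t(\tau_{\partial X})$ over the skeleta of $X$ relative to $\partial X$: the successive obstructions lie in $H^{i}(X,\partial X;\pi_i(\mathcal{B}(\TOP\!/\!\tO)))$ for $1\le i\le n$. Using Kirby--Siebenmann's identification $\TOP\!/\!\PL\simeq K(\Z/2,3)$ together with the vanishing $\pi_j(\PL\!/\!\tO)=\Theta_j=0$ for $j\le 6$, the fibration $\PL\!/\!\tO\to\TOP\!/\!\tO\to\TOP\!/\!\PL$ gives $\pi_j(\TOP\!/\!\tO)=0$ for $j\le 6$ with $j\ne 3$ and $\pi_3(\TOP\!/\!\tO)=\Z/2$; delooping, $\pi_i(\mathcal{B}(\TOP\!/\!\tO))=\Z/2$ for $i=4$ and $\pi_i(\mathcal{B}(\TOP\!/\!\tO))=0$ for $1\le i\le 7$ with $i\ne 4$. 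With the hypothesis $n\le 7$ this leaves only one obstruction, living in $H^4(X,\partial X;\Z/2)$, and it is tautological from the constructions that this obstruction is the class $\ks(X)$ of \Cref{def:kirby_siebenmann_invariant} --- both are read off from $p\circ t_X$ and its boundary null-homotopy along the map $\mathcal{B}(\TOP\!/\!\tO)\to K(\Z/2,4)$, which is an isomorphism on $\pi_i$ for $i\le 6$. Hence $\ks(X)=0$ forces the extended null-homotopy to exist, and unwinding the first paragraph produces $\tau_X$.

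For $n\ge 5$ this argument is essentially \cite[Essay~IV, \S10]{kirby_siebenmann_1977}; for $n\le 3$ the statement is vacuous since then $H^4(X,\partial X;\Z/2)=0$ and everything is smoothable; and the bound $n\le 7$ is essential, since $\pi_7(\TOP\!/\!\tO)$ (governed by $\Theta_7=\Z/28$) would contribute a genuine secondary obstruction as soon as $\dim X\ge 8$. The one place where input beyond this formal package is needed is $n=4$, where the smoothing theory justifying the passage between the homotopy-lifting problem above and the geometric statement rests ultimately on Freedman's disk embedding theorem; for this I would cite \cite[Corollary~8.3D]{freedman_quinn_1990}. I expect this dimension-four smoothing input, and not any of the obstruction-theoretic bookkeeping, to be the only genuinely difficult ingredient --- the one remaining point to be careful about being that the unique surviving obstruction is \emph{literally} the class of \Cref{def:kirby_siebenmann_invariant} and not merely some other element of the same group.
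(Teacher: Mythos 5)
The paper offers no proof of \Cref{thm:ks} at all: it is imported as a black box from Kirby--Siebenmann and Freedman--Quinn, so there is no internal argument to compare against. Your sketch is the standard obstruction-theoretic proof underlying those references, and it is essentially correct: lifts of $t_X$ through $\xi$ extending $\tau_{\partial X}$ correspond to null-homotopies of $p\circ t_X$ relative to the given one on $\partial X$; the coefficient groups $\pi_i(\mathcal{B}(\TOP\!/\!\tO))$ vanish for $1\le i\le 7$, $i\neq 4$, by $\TOP\!/\!\PL\simeq K(\Z/2,3)$ together with the $6$-connectivity of $\PL\!/\!\tO$ (cf.\ \Cref{rmk:PL}); and the unique surviving obstruction is, by unwinding \Cref{def:kirby_siebenmann_invariant}, literally the class $\ks(X)$, so no identification beyond bookkeeping is needed.

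Two caveats, neither fatal. First, your skeleton-by-skeleton argument tacitly uses a relative CW structure on $(X,\partial X)$ of dimension $\le n$. For $5\le n\le 7$ this comes from topological handle decompositions, but a topological $4$-manifold need not admit one, so at $n=4$ you must either defer to Freedman--Quinn (as you and the paper do) or note that, working with an arbitrary CW approximation of the compact ANR pair, all obstructions beyond the primary one lie in $H^i(X,\partial X;\pi_i(\mathcal{B}(\TOP\!/\!\tO)))$ with $i>4$, and these groups vanish because the compact $4$-manifold pair has cohomological dimension $4$; with that observation the statement as phrased needs no genuinely four-dimensional geometric input. Second, and relatedly, your stated reason for invoking \cite{freedman_quinn_1990} --- ``the passage between the homotopy-lifting problem and the geometric statement'' --- is slightly misplaced: \Cref{thm:ks} is purely the bundle-lifting statement, and the geometric upgrade is exactly what fails in dimension four, as the remark immediately following the theorem in the paper explains; the disk embedding theorem enters only for the stable geometric consequences (e.g.\ \Cref{prop:cs_stable_2}), not for this theorem.
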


\begin{rmk}
	In the high-dimensional case ($n\geq 5$) this is the first in a sequence of obstructions (and for $n=5,6,7$ it is the only one), and the vanishing of all of these obstructions implies the existence of a smooth structure on $X$ extending the given one on $\partial X$.  However, for $n=4$ we do not have the corresponding geometric outcome if $\ks(X)=0$.  For example, the manifold $E_8\# E_8$ has vanishing Kirby-Siebenmann invariant but does not admit a smooth structure.  Instead, one gets that $\ks(X)=0$ implies that there exists some $k\geq 0$ such that $X \#^k (S^2\times S^2)$ admits a smooth structure \cite[Section 8.6]{freedman_quinn_1990}.
\end{rmk}

\begin{rmk}\label{rmk:PL}
	The Kirby-Siebenmann invariant is usually defined as the obstruction to lifting the stable $\TOP$-microbundle to a stable $\PL$-microbundle, where $\PL$ denotes the piecewise linear category.  However, in the dimensions that we will examine there is no difference between the $\DIFF$ and $\PL$ categories, so we will ignore this.  Indeed, there is a $6$-connected map from $\TOP\!/\!\tO$ to $\TOP/\PL$, and $\TOP/\PL$ is homotopy equivalent to $K(\Z/2,3)$.  We will say no more about the $\PL$ category in this paper.
\end{rmk}

\begin{dfn}
	Let $X$ be a $4$-manifold with (potentially empty) boundary $\partial X$.  We say that $X$ is \emph{formally smoothable} if there exists a lift of the stable tangent microbundle $t_X\colon X\to \BTOP$ to a stable $\DIFF$-microbundle $\tau_X\colon X\to \BO$ extending the already specified lift on $\partial X$.  We call any such lift a \emph{formal smooth structure}.  Equivalently (by \zcref{thm:ks}), $X$ is formally smoothable if the Kirby-Siebenmann invariant $\ks(X)=0\in H^4(X,\partial X;\Z/2)$.  We say that $X$ is \emph{formally smooth} if it is equipped with a choice of lift $\tau_X$.
\end{dfn}

\begin{rmk}\label{rmk:formal_vs_informal}
	A smooth structure on a topological manifold determines a canonical formal smooth structure after choosing a Riemannian metric on the manifold.  This is proved by first showing that after choosing a Riemannian metric we have a canonical bundle isomorphism between a smooth manifold's tangent bundle and its normal bundle in the diagonal embedding (see \cite[Lemma 11.5]{milnor_stasheff_1974}), and then observing that the normal bundle of the diagonal embedding's underlying topological microbundle has a canonical microbundle isomorphism to the tangent microbundle of the manifold. Since the space of Riemannian metrics on a smooth manifold is contractible, this means that a smooth structure on a topological manifold determines an essentially unique formal smooth structure.
\end{rmk}

We wish to define the Casson-Sullivan invariant as the Kirby-Siebenmann invariant of the mapping cylinder of a homeomorphism, but there is a subtlety that must be addressed first.  Let $f\colon X\to X'$ be a homeomorphism of (formally) smooth manifolds.  Let $t_X$, $t_{X'}$  denote the classifying maps of the $\TOP$-microbundles of $X$ and ${X'}$ and let $\tau_X$ and $\tau_{X'}$ denote their corresponding lifts to $\BO$ given by their (formal) smooth structures.  The pullback $f^*(\tau_{X'})=\tau_{X'}\circ f$ is not a lift of $t_X$ immediately (rather, it is a lift of $t_{X'}\circ f$), but there is a homotopy which is unique up to homotopy to make it a lift of $t_X$.

\begin{lem}\label{lem:microbundle_differential}
	There is a homotopy $h(f)_t\colon X\times I\to \BTOP$ such that $h(f)_0=t_{X'}\circ f$ and $h(f)_1=t_X$, which is unique up to homotopy.
\end{lem}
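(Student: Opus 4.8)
The plan is to use that the underlying topological tangent microbundle of a manifold is a \emph{homeomorphism} invariant, and canonically so; the (formal) smooth structures on $X$ and $X'$ are irrelevant to this lemma. Recall that $t_X$ is classified by the microbundle $X \xrightarrow{\ \Delta_X\ } X\times X \xrightarrow{\ \pr_1\ } X$ with diagonal zero-section. Given $f\colon X\to X'$, the product homeomorphism $f\times f\colon X\times X\to X'\times X'$ carries $\Delta_X$ onto $\Delta_{X'}$ and satisfies $\pr_1^{X'}\circ(f\times f)=f\circ\pr_1^{X}$, so it is an isomorphism of microbundles from $t_X$ onto the pullback $f^*(t_{X'})$ over $X$. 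Whitney-summing with trivial microbundles preserves this, so we obtain a \emph{canonical} isomorphism $\Phi_f\colon t_X \xrightarrow{\ \cong\ } f^*(t_{X'})$ of stable $\TOP$-microbundles over $X$, depending only on $f$.

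Next I would pass to classifying maps. By the discussion following \Cref{lem:bundle_to_microbundle} we may take the classifying map of a stable $\TOP$-microbundle to be a map into $\BTOP$; then $t_X\colon X\to\BTOP$ classifies the stable tangent microbundle of $X$ and $t_{X'}\circ f\colon X\to\BTOP$ classifies $f^*(t_{X'})$. Because $\BTOP$ classifies stable $\TOP$-microbundles, the isomorphism $\Phi_f$ yields a homotopy $h(f)_t\colon X\times I\to\BTOP$ with $h(f)_0=t_{X'}\circ f$ and $h(f)_1=t_X$: this is the asserted homotopy.

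For uniqueness I would use that this classification is homotopy-coherent: pulling back the universal microbundle identifies $\mathrm{Map}(X,\BTOP)$ with a classifying space of the groupoid of stable $\TOP$-microbundles over $X$ and their isomorphisms, so that microbundles correspond to points, isomorphisms to homotopy classes of paths, and homotopies of isomorphisms to homotopies of paths rel endpoints. Since $\Phi_f$ depends only on $f$, the homotopy $h(f)_t$ it determines is well-defined up to a homotopy of homotopies rel the endpoint maps $X\times\{0\}$ and $X\times\{1\}$, which is what is meant by ``unique up to homotopy''. One cannot expect more: homotopies with these endpoints form a torsor over $\pi_1(\mathrm{Map}(X,\BTOP),t_X)$, i.e.\ over automorphisms of the stable tangent microbundle, which is generally non-trivial, and it is the \emph{choice} of the canonical $\Phi_f$ that rigidifies the situation.

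The step to be careful about is this homotopy-coherence input. It is standard for the usual $\BTOP$, but if one wishes to establish it by hand it suffices to argue by obstruction theory on $X\times I$, which has the homotopy type of a $4$-dimensional CW complex: a specified microbundle isomorphism over $X$ determines, and is determined up to homotopy by, the class of the corresponding path of classifying maps rel endpoints. Everything else in the proof is formal, so I expect no serious difficulty beyond fixing conventions.
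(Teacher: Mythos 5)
Your argument is correct and is essentially the paper's: both produce the canonical stable microbundle isomorphism $t_X\cong f^*(t_{X'})$ induced by $f$ and then use the classification of stable $\TOP$-microbundles by $\BTOP$ to convert it into a homotopy of classifying maps, with uniqueness obtained by running the same classification one level up. The ``homotopy-coherence'' input you leave as standard is exactly what the paper makes precise by forming the mapping cylinder microbundle over $X\times I$ and citing Kirby--Siebenmann, Essay IV, Proposition 8.1 (and again over $(X\times I)\times I$ for the uniqueness statement).
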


\begin{proof}
	The homeomorphism $f$ induces a canonical microbundle isomorphism between the microbundles $t_X$ and the pullback bundle $f^{*}(t_X)$.  Use this isomorphism to form the mapping cylinder of microbundles on $X\times I$ and denote this by $\mathfrak{X}(f)$.  By Kirby-Siebenmann \cite[Essay IV, Proposition 8.1]{kirby_siebenmann_1977}, $\TOP$-microbundles over $X\times I$ which restrict to $t_X$ and $f^*(t_X)$ on either end are in one-to-one correspondence with homotopy classes of maps $X\times I\to \BTOP$ restricting to the classifying maps $t_X$ and $t_{X'}\circ f$.  Since $\mathfrak{X}$ is such a $\TOP$-microbundle, we get a well-defined up to (relative) homotopy map $h'(f)\colon X\times I \to \BTOP$ such that $h'(f)_0=t_X$ and $h'(f)_1=t_{X'}\circ f$, i.e.\ a homotopy between these classifying maps that is well-defined up to (relative) homotopy.  Taking the reverse homotopy gives the desired homotopy from $t_{X'}\circ f$ to~$t_X$.
	
	To get uniqueness of this homotopy up to homotopy, one can again use Kirby-Siebenman \cite[Essay IV, Proposition 8.1]{kirby_siebenmann_1977} on $(X\times I) \times I$, and then use the same argument as above.
\end{proof}

Since the map $\xi\colon \BO\to \BTOP$ is a fibration, we can lift the homotopy $h(f)_t$ from \zcref{lem:microbundle_differential} to a homotopy $\widetilde{h}(f)_t$ such that $\widetilde{h}(f)_0=\tau_{X'}\circ f=f^*(\tau_{X'})$ and such that $\widetilde{h}(f)_1$ is a lift of $t_X$.  Since this homotopy is unique up to homotopy, we will cease to mention this and instead whenever we write $f^*(\tau_{X'})$ it should be taken to mean that we have homotoped this such that it is a lift of $t_X$ (in the unique way).  In fact, one can extend the argument in the proof of \zcref{lem:microbundle_differential} to see that all higher homotopies are unique up to homotopy, etc., and hence there is an essentially unique classifying map for the tangent microbundle.  We will not need this stronger statement.

\begin{rmk}\label{rmk:ks_naturality}
	\zcref{lem:microbundle_differential} gives us immediately that the Kirby-Siebenmann invariant is natural with respect to homeomorphisms.  More precisely, it implies that for a homeomorphism of topological manifolds $f\colon X\to {X'}$  we have that $\ks(X)=f^*\ks({X'})$, since the homotopy classes of maps representing these cohomology classes are clearly homotopic by \zcref{lem:microbundle_differential}.  By an analogous argument, the Kirby-Siebenmann invariant is also natural with respect to inclusion of open submanifolds.
\end{rmk}

\begin{rmk}
	In what follows we will assume that our homeomorphisms $f\colon X\to X'$ between (formally) smooth manifolds already restrict to (formal) diffeomorphisms on the boundary.  This we can do freely since every homeomorphism of 3-manifolds is isotopic (in an essentially unique way) to a diffeomorphism by Cerf (relying on Hatcher's proof of the Smale conjecture) \cite{cerf_1959,hatcher_1983}.  We will not comment on this further.
\end{rmk}

We now define the Casson-Sullivan invariant.

\begin{dfn}\label{dfn:casson_sullivan_invariant}
	Let $X$ and $X'$ be $n$-dimensional (formally) smooth manifolds with (potentially empty) boundaries $\partial X\cong\partial X'$ and let $f\colon X\to X'$ be a homeomorphism restricting to a diffeomorphism on the boundary.  Let $M_f$ be the mapping cylinder 
	\[
	M_f:= \frac{(X\times I)\sqcup X'}{(\{x\}\times\{1\})\sim f(x)}
	\]
	and note that $\tau_X\cup_{\partial} f^*(\tau_X)'$ defines a lift of $t_{M_f}$ on $\partial M_f$.  The reduced suspension (of pairs) construction gives an isomorphism
	\begin{equation}\label{eq:varpi}
		\varpi\colon H^3(X,\partial X;\Z/2) \xrightarrow{\cong} H^4(M_f,\partial M_f;\Z/2).
	\end{equation}
	We then define the \emph{Casson-Sullivan invariant} $\cs(f)$ as
	\[
	\cs(f):= \varpi^{-1}(\ks (M_f,\partial M_f))\in H^3(X,\partial X;\Z/2).
	\]
\end{dfn}

We establish an equivalent characterisation of $\varpi$ which will sometimes be useful.

\begin{lem}
	The following diagram concerning $\varpi$ commutes
	\[
	\begin{tikzcd}
		H^3(X,\partial X;\Z/2) \arrow[r,"\varpi"] \arrow[d,"\PD"] & H^4(X\times I,\partial(X\times I);\Z/2) \arrow[d,"\PD"] \\
		H_1(X;\Z/2) \arrow[r,"i_*"] & H_1(X\times I;\Z/2)
	\end{tikzcd}
	\]
	and hence $\varpi = \PD^{-1}\circ i_* \circ \PD$, where $i\colon X\times\{0\}\to X\times I$ denotes the inclusion map.
\end{lem}

\begin{proof}
	We claim the following diagram commutes (with $\Z/2$ coefficients suppressed)
	\[
	\begin{tikzcd}
		H^3(\partial(X\times I),(\partial X \times I)\cup X\times\{1\}) \arrow[r,"\delta","\cong"'] \arrow[d,"j_*","\cong"'] & H^4(X\times I,\partial(X\times I)) \arrow[dd,"\PD","\cong"'] \\
		H^3(X,\partial X) \arrow[d,"\PD","\cong"'] & \\
		H_1(X\times \{0\}) \arrow[r,"i_*","\cong"'] & H_1(X\times I)
	\end{tikzcd}
	\]
	where $\delta$ denotes the connecting homomorphism in the long exact sequence in cohomology of the triple $(X\times I, \partial(X\times I),(\partial X\times I)\cup X\times\{1\})$ and $j$ denotes the inclusion map of pairs \[(X\times\{0\},\partial X)\to (\partial (X\times I),(\partial X\times I)\cup X\times\{1\}).\]
	One may readily verify that the maps $\delta$ and $j_*$ are isomorphisms by considering the other two adjacent terms in the long exact sequence and excision, respectively.  Commutativity can be obtained using the correct version of naturality of relative cap product.  The proof is finished by noting that $\varpi=\delta\circ (j_*)^{-1}$, since this is precisely the map that induces the (reduced) suspension isomorphism of pairs.
\end{proof}

\begin{rmk}
	It will often be useful for us to think of $\cs(f)$ instead as the element \[\varpi(\cs(f))\in H^4(M_f,\partial M_f;\Z/2).\]  In the rest of this paper, we will reserve the symbol $\varpi$ to always mean the isomorphism given in (\ref{eq:varpi}) so that is clear when we are going between these two elements.
\end{rmk}

\begin{rmk}\label{rmk:cs_description}
	We now explicitly describe the homotopy class of the map corresponding to the Casson-Sullivan invariant in the following way which will be useful for proofs.  Take the map~$p\circ \tau_{X\times I}$ and then use the null-homotopies defined by $\tau_X$ and $f^*\tau_X'$ (glued over the null-homotopy for $\partial X \times I$) to construct a relative map $(X\times I,\partial(X\times I))\to (\B(\TOP\!/\!\tO),\ast)$, which is relatively homotopic to $\varpi\cs(f)$ by construction.  We will refer to this relative class as the homotopy class of $p\circ \tau_{X\times I}$ extended by the null-homotopies $\tau_X$ and $f^*\tau_X$.
\end{rmk}

In this paper we will consider only homeomorphisms of $4$-manifolds.  In this case it is clear by Poincar\'{e} duality that the Casson-Sullivan invariant vanishes for all homeomorphisms if $X$ is simply-connected. 

\subsection{Dependence of the Casson-Sullivan invariant on smooth structures}\label{sbs:cs_dependence_smooth_structures}

This subsection is devoted to describing the extent to which the Casson-Sullivan invariant depends on the choice of a (formal) smooth structure.  We will begin by giving a different definition for the Casson-Sullivan invariant which more naturally exhibits it as an element of $H^3(X,\partial X;\Z/2)$.

Let $f\colon X\to X'$ be a homeomorphism and let $X$ be a (formally) smoothable $4$-manifold with smooth boundary $\partial X$ (endow $\partial X'$ with a smooth structure using $f$).  For any choice of lifts $\tau_X$, $\tau_{X'}$ of the topological tangent bundles $t_X$, $t_{X'}$ we have the following (very non-commutative) diagram, augmented from \zcref{dia:ks_def}.  
\begin{equation*}\label{dia:cs_def}
	\begin{tikzcd}[column sep=large,row sep=large]
		& & \TOP\!/\!\tO \arrow[d,"\iota"] \\
		& & \mathcal{B}\!\tO \arrow[d,"\xi"] \\
		\partial X \arrow[urr,"\tau_{\partial X}", bend left, near start, end anchor={[]north west}] \arrow[r] & X \arrow[uur,"{\delta(f,\tau_X,\tau_{X'})}", dashrightarrow, bend left, crossing over, near end, start anchor={[xshift=6]north west}, end anchor={[yshift=1]west}] \arrow[ur, "\tau_X", bend left] \arrow[ur, "f^*(\tau_{X'})", bend right, inner sep=0.25ex] \arrow[r,"t_X"', bend right] & \mathcal{B}\!\TOP \arrow[d,"p"] \\
		& & \mathcal{B}(\TOP\!/\!\tO) 
	\end{tikzcd}
\end{equation*}
We will now work to define $\delta(f,\tau_X,\tau_{X'})$ in the diagram, but roughly one should think of it as the `difference' between the lifts $\tau_X$ and $f^*(\tau_{X'})$.  First, note that we have an action of $[X,\partial X;\TOP\!/\!\tO]$ on homotopy classes of lifts of the stable tangent microbundle to $\BO$, defined using the $H$-space structure on $\BO$ which we will denote by the symbol $\oplus$.

\begin{lem}\label{lem:lifts_action}
	Let $(X,\partial X)$ be a topological $4$-manifold with boundary and stable tangent microbundle $t_X$.  There is a well-defined action of $[(X,\partial X), \TOP\!/\!\tO]$ on lifts of the stable tangent microbundle to $\BO$ rel.\ $\tau_{\partial X}$ defined by \[ \delta \cdot \tau_X:=\tau_X\oplus \iota\delta\]
	and furthermore this action is free and transitive.
\end{lem}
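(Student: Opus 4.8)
The plan is to exhibit the set of homotopy classes of lifts of $t_X$ extending $\tau_{\partial X}$ — where ``homotopy'' means \emph{through} lifts extending $\tau_{\partial X}$, i.e.\ $\pi_0$ of the space of sections of the pulled-back fibration $t_X^{\ast}\BO\to X$ relative to $\partial X$ — as a torsor under $[X,\partial X;\TOP\!/\!\tO]$, with torsor structure given by the stated formula $\delta\cdot\tau_X=\tau_X\oplus\iota\delta$. Two preliminary remarks make the formula meaningful: first, $\xi\circ\iota\simeq\ast$, being two consecutive maps of the fibration sequence $\TOP\!/\!\tO\xrightarrow{\iota}\BO\xrightarrow{\xi}\BTOP$, so since $(\BO,\oplus)$ is a grouplike $H$-space with the trivial bundle as unit and $\xi$ is an $H$-map, $\xi\circ(\tau_X\oplus\iota\delta)\simeq\xi\tau_X\oplus\xi\iota\delta\simeq t_X$, and $\delta$ restricts to the constant map on $\partial X$, so $\tau_X\oplus\iota\delta$ is again a lift extending $\tau_{\partial X}$; second, $[X,\partial X;\TOP\!/\!\tO]$ is a group because $\TOP\!/\!\tO$ is a loop space (Boardman--Vogt). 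Well-definedness of the assignment on homotopy classes, and the group-action axioms, are then formal consequences of the $H$-space structure on $\BO$.

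For transitivity, given two lifts $\tau_X,\tau'_X$ extending $\tau_{\partial X}$, I form the $\oplus$-difference $\tau'_X\ominus\tau_X\colon X\to\BO$ using a homotopy inverse for $(\BO,\oplus)$. On $\partial X$ it carries the canonical null-homotopy coming from grouplikeness, and $\xi\circ(\tau'_X\ominus\tau_X)\simeq t_X\ominus t_X$ is canonically null-homotopic, compatibly over $\partial X$. Using that $\xi$ is a fibration, I lift this null-homotopy to a homotopy in $\BO$ starting at $\tau'_X\ominus\tau_X$; its endpoint lies over $\ast$, hence lands in the fibre $\TOP\!/\!\tO$, i.e.\ equals $\iota\delta$ for some $\delta\colon(X,\partial X)\to(\TOP\!/\!\tO,\ast)$, and $\iota\delta\simeq\tau'_X\ominus\tau_X$ rel $\partial X$. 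Adding $\tau_X$ back gives $\delta\cdot\tau_X=\tau_X\oplus\iota\delta\simeq\tau'_X$, and the homotopy can be arranged to go through lifts (its $\xi$-image stays at $t_X$ up to the canonical homotopies above).

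For freeness, suppose $\delta\cdot\tau_X$ is homotopic to $\tau_X$ through lifts rel $\partial X$. Applying $\ominus\tau_X$ and using homotopy-commutativity and grouplikeness of $(\BO,\oplus)$ converts this into a null-homotopy of $\iota\delta$ rel $\partial X$ whose image under $\xi$ agrees, rel endpoints, with the constant null-homotopy of the trivial bundle. A relative homotopy-lifting argument for $\TOP\!/\!\tO\to\BO\to\BTOP$ over $X\times I$ — lifting over $X\times\{0\}$ by $\delta$ and over $\partial X\times I$ by the constant map — then produces a null-homotopy of $\delta$ rel $\partial X$, so $\delta=0$ in $[X,\partial X;\TOP\!/\!\tO]$. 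Conceptually, transitivity and freeness together are the standard statement that the vertical homotopy classes of sections of a principal fibration form a torsor over the homotopy classes of maps into its structure ``group'': one identifies lifts of $t_X$ extending $\tau_{\partial X}$ with null-homotopies of $p\circ t_X$ extending $h_t(\tau_{\partial X})$, i.e.\ with sections of the principal $\TOP\!/\!\tO$-fibration $t_X^{\ast}\BO=\operatorname{hofib}(p\circ t_X)\to X$, after checking that the principal action is exactly $\oplus\iota(-)$ (which holds since $\iota$ is an $H$-map and the fibrewise $\oplus\iota(-)$-translation realises $\xi$ as a principal fibration with classifying map $p$).

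The main obstacle is bookkeeping rather than conceptual: one must keep track of the several null-homotopies invoked (of $t_X\ominus t_X$, of $\xi\iota\delta$, and those coming from homotopy-commutativity of $\oplus$) and choose them compatibly so that the relative lifting arguments genuinely respect the boundary condition. Relatedly, it is essential that ``lift'' be interpreted as an element of $\pi_0$ of the relevant section space and not merely as a homotopy class of map $X\to\BO$ lying over $t_X$ up to unconstrained homotopy; with the latter, coarser notion the action would fail to be free, since $\iota_{\ast}\colon[X,\partial X;\TOP\!/\!\tO]\to[X,\partial X;\BO]$ need not be injective. Finally, transitivity presupposes that a lift exists at all, which holds whenever $X$ is formally smoothable (\Cref{thm:ks}); if no lift exists the statement is vacuous.
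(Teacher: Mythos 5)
Your argument is correct, but it takes a different route from the paper: where you reprove the torsor statement by hand, the paper simply invokes Baues \cite[Theorem 1.3.8]{baues_1977}, which supplies exactly the bijection $\delta\mapsto\tau_X\oplus\iota\delta$ from $[X,\partial X;\TOP\!/\!\tO]$ to the set $[X,\partial X;\BO,\ast]_{t_X}$ of vertical homotopy classes of lifts (lifts up to homotopy through lifts, rel $\partial X$), and then observes that freeness and transitivity are immediate from bijectivity. Your proposal is essentially a self-contained proof of that cited result in this special case: well-definedness via $\xi\circ\iota\simeq\ast$, the grouplike $H$-space structure on $\BO$ and $\xi$ being an $H$-map; transitivity by forming $\tau'_X\ominus\tau_X$ and sliding it into the fibre using the homotopy lifting property of $\xi$; freeness by a relative lifting argument; and the identification of lifts with sections of the principal fibration $t_X^*\BO\to X$. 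What the paper's citation buys is precisely the bookkeeping you flag as the main obstacle — the compatible choices of the various null-homotopies and the verification that all homotopies can be taken through lifts and rel $\partial X$ — which Baues' framework of principal fibrations and the associated ``difference'' construction handles once and for all. What your version buys is transparency: it makes explicit why the action is $\oplus\,\iota(-)$, and it correctly isolates the point (also implicit in the paper's notation $[X,\partial X;\BO,\ast]_{t_X}$) that freeness genuinely requires ``lift'' to mean a vertical homotopy class rather than a class in $[X,\partial X;\BO]$ lying over $t_X$, since $\iota_*$ need not be injective. Your closing caveat that transitivity is vacuous when no lift exists is consistent with how the lemma is used in the paper, where $X$ is (formally) smoothable. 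So there is no gap, only a sketch-level treatment of homotopy coherences that the paper outsources to the reference.
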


\begin{proof}
	Let $\tau_X\colon X\to \BO$ be a lift of $t_X$ extending $\tau_{\partial X}$ and let $[X,\partial X;\BO]_{t_X}$ denote homotopy classes of lifts of $t_X$ extending $\tau_{\partial X}$ rel.\ boundary (i.e.\ lifts of $t_X$ up to homotopy through lifts of $t_X$ restricting to $\tau_{\partial X}$ on $\partial X$).  By \cite[Theorem 1.3.8]{baues_1977}, and the surrounding discussion, there is a bijection 
	\[
	[(X,\partial X), \TOP\!/\!\tO] \to [X,\partial X;\BO]_{t_X}
	\]
	given by $\delta \mapsto \tau_X\oplus\iota\delta$.  This induces an action via the $H$-space structure on $\TOP\!/\!\tO$ and it being free and transitive follows by the above map being a bijection.
\end{proof}

We can now define $\delta(f,\tau_X,\tau_{X'})$.  

\begin{dfn}
	By \zcref{lem:lifts_action} the lifts $\tau_X$ and $f^*(\tau_{X'})$ determine a unique element which we denote as $\delta(f,\tau_X,\tau_{X'})$ in $[(X,\partial X), \TOP\!/\!\tO]$ such that $\delta(f,\tau_X,\tau_{X'})\cdot\tau_X=f^*(\tau_{X'})$.
\end{dfn}

\begin{prop}\label{prop:cs_dependence_ss}
	Let $X$ be a formally smoothable topological $4$-manifold and $f\colon X\to X'$ a homeomorphism.  Let $\tau_X$ and $\widetilde{\tau}_X$ be two lifts of the stable tangent microbundle of $X$, let $\tau_{X'}$ and $\widetilde{\tau}_{X'}$ be two lifts of the stable tangent microbundle of $X'$, let $a\in [(X,\partial X), \TOP\!/\!\tO]$ be the unique element given by \zcref{lem:lifts_action} such that $a\cdot \tau_X=\widetilde{\tau}_X$ and similarly let $b$ be such that $b\cdot \tau_{X'}=\widetilde{\tau}_{X'}$.  Then
	\[
	a+\delta(f,\widetilde{\tau}_X,\widetilde{\tau}_{X'}) = f^*(b) + \delta(f,\tau_X,\tau_{X'}).
	\]
	Hence it follows that if $X=X'$, $\tau_X=\tau_X'$, $\widetilde{\tau_{X}}=\widetilde{\tau}_{X'}$, and $f$ acts trivially on $H^3(X,\partial X;\Z/2)$ then $\delta(f)$ is defined and does not depend on the choice of lift of the stable tangent microbundle of $X$.    
\end{prop}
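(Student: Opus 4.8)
The plan is to compute the unique element $c\in[X,\partial X;\TOP\!/\!\tO]$ with $c\cdot\tau_X = f^*(\widetilde\tau_{X'})$ in two ways --- once by routing through the lift $\tau_{X'}$ of $t_{X'}$ and once by routing through the lift $\widetilde\tau_X$ of $t_X$ --- and then to equate the two answers using that the action in \Cref{lem:lifts_action} is free. Before that I would record two compatibilities of the $H$-space structures involved. First, pullback along $f$ commutes with the Whitney sum $\oplus$ on $\BO$ and with the map $\iota$, so that $f^*(\tau\oplus\iota\delta)\simeq f^*(\tau)\oplus\iota\, f^*(\delta)$ for any lift $\tau$ and any $\delta\in[X,\partial X;\TOP\!/\!\tO]$; here one also checks compatibility with the canonical homotopy of \Cref{lem:microbundle_differential} that turns a pullback into a lift of $t_X$. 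Second, $\iota\colon\TOP\!/\!\tO\to\BO$ is an $H$-map --- it is the fibre inclusion of the infinite-loop fibration $\xi$ after the Boardman--Vogt delooping --- so that $\tau\oplus\iota(\delta_1+\delta_2)\simeq(\tau\oplus\iota\delta_1)\oplus\iota\delta_2$, where $+$ is the abelian group operation on $[X,\partial X;\TOP\!/\!\tO]$; equivalently, $\delta\cdot\tau:=\tau\oplus\iota\delta$ really is an action of this group.

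Granting these, the computation is immediate. Routing through $\tau_{X'}$: since $\widetilde\tau_{X'}=b\cdot\tau_{X'}=\tau_{X'}\oplus\iota b$, we get $f^*(\widetilde\tau_{X'})=f^*(\tau_{X'})\oplus\iota\, f^*(b)$, and substituting $f^*(\tau_{X'})=\delta(f,\tau_X,\tau_{X'})\cdot\tau_X=\tau_X\oplus\iota\,\delta(f,\tau_X,\tau_{X'})$ yields $f^*(\widetilde\tau_{X'})=\tau_X\oplus\iota\big(\delta(f,\tau_X,\tau_{X'})+f^*(b)\big)$; hence $c=\delta(f,\tau_X,\tau_{X'})+f^*(b)$. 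Routing through $\widetilde\tau_X$: $f^*(\widetilde\tau_{X'})=\delta(f,\widetilde\tau_X,\widetilde\tau_{X'})\cdot\widetilde\tau_X=\widetilde\tau_X\oplus\iota\,\delta(f,\widetilde\tau_X,\widetilde\tau_{X'})=(\tau_X\oplus\iota a)\oplus\iota\,\delta(f,\widetilde\tau_X,\widetilde\tau_{X'})=\tau_X\oplus\iota\big(a+\delta(f,\widetilde\tau_X,\widetilde\tau_{X'})\big)$; hence $c=a+\delta(f,\widetilde\tau_X,\widetilde\tau_{X'})$. Freeness of the action forces $a+\delta(f,\widetilde\tau_X,\widetilde\tau_{X'})=f^*(b)+\delta(f,\tau_X,\tau_{X'})$, which is the asserted identity.

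For the final statement, specialise to $X=X'$, $\tau_X=\tau_{X'}$, $\widetilde\tau_X=\widetilde\tau_{X'}$. Uniqueness in \Cref{lem:lifts_action} then forces $b=a$, so the identity becomes $a+\delta(f,\widetilde\tau_X,\widetilde\tau_X)=f^*(a)+\delta(f,\tau_X,\tau_X)$. Finally, the $6$-connected map $\TOP\!/\!\tO\to K(\Z/2,3)$ induces a bijection $[X,\partial X;\TOP\!/\!\tO]\xrightarrow{\ \cong\ }H^3(X,\partial X;\Z/2)$ (using $\dim(X/\partial X)=4\le 5$) which is natural in the pair $(X,\partial X)$ and therefore intertwines the two $f^*$'s; so the hypothesis that $f$ acts trivially on $H^3(X,\partial X;\Z/2)$ shows that $f^*$ is the identity on $[X,\partial X;\TOP\!/\!\tO]$, in particular $f^*(a)=a$. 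Cancelling $a$ in the abelian group gives $\delta(f,\widetilde\tau_X,\widetilde\tau_X)=\delta(f,\tau_X,\tau_X)$, so this common value (call it $\delta(f)$) is independent of the choice of lift.

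I expect the one real difficulty to be the bookkeeping in the first paragraph: checking that Whitney sum on $\BO$, the group structure on $[X,\partial X;\TOP\!/\!\tO]$, the map $\iota$, and the canonical ``homotope to a lift of $t_X$'' procedure are mutually compatible up to homotopy, so that the manipulations above are legitimate. This follows formally once $\xi$ is recognised as a fibration of infinite loop spaces with fibre $\TOP\!/\!\tO$, combined with \Cref{lem:microbundle_differential}; everything after that is a purely formal manipulation of a free abelian group action.
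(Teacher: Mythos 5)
Your proposal is correct and follows essentially the same route as the paper: both arguments show that $a+\delta(f,\widetilde\tau_X,\widetilde\tau_{X'})$ and $f^*(b)+\delta(f,\tau_X,\tau_{X'})$ act on $\tau_X$ to give $f^*(\widetilde\tau_{X'})$ and then invoke freeness of the action from \Cref{lem:lifts_action}. Your version simply makes explicit the $H$-space compatibilities (pullback along $f$ versus $\oplus$ and $\iota$) and the naturality of the identification $[X,\partial X;\TOP\!/\!\tO]\cong H^3(X,\partial X;\Z/2)$ used in the final statement, which the paper leaves as brief remarks.
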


\begin{proof}
	We have that 
	\[
	(a+\delta(f,\widetilde{\tau}_X,\widetilde{\tau}_{X'}))\cdot\tau_X=\delta(f,\widetilde{\tau}_X,\widetilde{\tau}_{X'})\cdot(a\cdot\tau_X)=\delta(f,\widetilde{\tau}_X,\widetilde{\tau}_{X'})\cdot\widetilde{\tau}_X=f^*\widetilde{\tau}_{X'},
	\] where the first equality is given by using the action, the second from the definition of $a$, and the third from the definition of $\delta(f,\widetilde{\tau}_X,\widetilde{\tau}_{X'})$.  We also have that 
	\[
	(f^*(b) + \delta(f,\tau_X,\tau_{X'}))\cdot \tau_X = f^*(b)\cdot (\delta(f,\tau_X,\tau_{X'})\cdot\tau_X)= f^*(b)\cdot f^*\tau_{X'}=f^*(b\cdot \tau_{X'})=f^*\widetilde{\tau}_{X'},
	\] where the first equality is again given by using the action, the second from the definition of $\delta(f,\tau_X,\tau_{X'})\cdot\tau_X)$, the third by the fact that the $H$-space structure is compatible with taking pre-composition by the homeomorphism $f$, and the final equality comes from the definition of $b$.
	Since the action of $[(X,\partial X);\TOP\!/\!\tO]$ is free by \zcref{lem:lifts_action}, this completes the proof.
\end{proof}

It remains to be seen that this definition for $\delta(f)$ matches up with the definition of the Casson-Sullivan invariant, given as the Kirby-Siebenmann invariant of the mapping cylinder.

\begin{prop}\label{prop:delta=cs}
	Let $X$ and $X'$ be formally smooth topological $4$-manifolds with formal smooth structures $\tau_X$ and $\tau_{X'}$, respectively. Furthermore, let $f\colon X\to X'$ be a homeomorphism.  Then $\delta(f,\tau_X,\tau_{X'})=\cs(f)$.
\end{prop}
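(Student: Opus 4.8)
The plan is to move the whole computation onto the cylinder $X\times I$ and recognise both $\varpi\cs(f)$ and $\delta(f,\tau_X,\tau_{X'})$ as the same class. Since $f$ is a homeomorphism, the mapping cylinder $M_f$ is homeomorphic to $X\times I$ via a map $\Phi\colon M_f\to X\times I$ which is the identity on a collar of the $X$-end and is $f^{-1}$ on a collar of the $X'$-end, carrying $\partial M_f$ to $\partial(X\times I)$ and the stable tangent microbundle $t_{M_f}$ to $t_{X\times I}\simeq\pr_X^*(t_X)$. Under $\Phi$ the boundary lift $\tau_X\cup_\partial\tau_{X'}$ of $t_{M_f}$ becomes the lift of $\pr_X^*(t_X)$ equal to $\tau_X$ on $X\times\{0\}$, to $f^*(\tau_{X'})$ on $X\times\{1\}$ (in the normalised sense of \Cref{lem:microbundle_differential}), and to the product lift on $\partial X\times I$. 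By naturality of the Kirby--Siebenmann invariant under homeomorphisms (\Cref{rmk:ks_naturality}), $\varpi\cs(f)=\ks(M_f,\partial M_f)=(\Phi^{-1})^*\ks(X\times I,\partial(X\times I))$ computed with this boundary data, which is exactly the relative class ``$p\circ t_{X\times I}$ extended by the null-homotopies $\tau_X$ and $f^*(\tau_{X'})$'' of \Cref{rmk:cs_description}. A short Poincar\'e-duality bookkeeping argument then shows that $\varpi=(\Phi^{-1})^*\circ s$, where $s\colon H^3(X,\partial X;\Z/2)\xrightarrow{\cong}H^4(X\times I,\partial(X\times I);\Z/2)$ is the suspension isomorphism (cross product with the generator of $H^1(I,\partial I;\Z/2)$): indeed $s$ is Poincar\'e-dual to the slice inclusion $H_1(X;\Z/2)\hookrightarrow H_1(X\times I;\Z/2)$, $\varpi$ is by definition Poincar\'e-dual to the map $H_1(X;\Z/2)\to H_1(M_f;\Z/2)$ induced by $X\hookrightarrow M_f$, and $\Phi$ identifies this inclusion with the slice inclusion.

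With these identifications in place, it remains to show that $\ks(X\times I,\partial(X\times I))$, with the boundary data above, equals $s(\delta(f,\tau_X,\tau_{X'}))$. I would argue this directly from the fibration sequence $\TOP\!/\!\tO\xrightarrow{\iota}\BO\xrightarrow{\xi}\BTOP\xrightarrow{p}\B(\TOP\!/\!\tO)$ together with the $6$-connected map $\TOP\!/\!\tO\to K(\Z/2,3)$, which, in the dimension range at hand, gives $[X,\partial X;\TOP\!/\!\tO]\cong H^3(X,\partial X;\Z/2)$ and, via $\TOP\!/\!\tO\simeq\Omega\B(\TOP\!/\!\tO)$ and $(X\times I)/\partial(X\times I)\cong\Sigma(X/\partial X)$, an adjunction isomorphism $[X,\partial X;\TOP\!/\!\tO]\cong[X\times I,\partial(X\times I);\B(\TOP\!/\!\tO)]\cong H^4(X\times I,\partial(X\times I);\Z/2)$ which is precisely $s$. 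Because $t_{X\times I}\simeq\pr_X^*(t_X)$, a lift of $t_{X\times I}$ restricting to $\tau_X$ on $X\times\{0\}$ and to the product lift on $\partial X\times I$ is the same data as a path of lifts of $t_X$ rel $\partial X$ starting at $\tau_X$; the Kirby--Siebenmann obstruction to completing such a lift so that its value on $X\times\{1\}$ is $f^*(\tau_{X'})$ is then, by the very definition of the action in \Cref{lem:lifts_action}, the unique element $\delta(f,\tau_X,\tau_{X'})\in[X,\partial X;\TOP\!/\!\tO]$ with $\delta(f,\tau_X,\tau_{X'})\cdot\tau_X=f^*(\tau_{X'})$. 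Here one must check that the $H$-space structure $\oplus$ on $\BO$ used to define that action restricts on the fibre to the loop-concatenation structure witnessing $\TOP\!/\!\tO\simeq\Omega\B(\TOP\!/\!\tO)$, so that ``acting by $\delta$'' and ``concatenating with the loop $\delta$'' agree; granting this, $\ks(X\times I,\partial(X\times I))=s(\delta(f,\tau_X,\tau_{X'}))$.

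Combining the two paragraphs, $\varpi\cs(f)=(\Phi^{-1})^*\ks(X\times I,\partial(X\times I))=(\Phi^{-1})^*s(\delta(f,\tau_X,\tau_{X'}))=\varpi(\delta(f,\tau_X,\tau_{X'}))$, and since $\varpi$ is an isomorphism we conclude $\cs(f)=\delta(f,\tau_X,\tau_{X'})$. I expect the one genuinely delicate point to be the identification in the second paragraph of the obstruction-theoretic/mapping-cylinder description of the difference of the lifts $\tau_X$ and $f^*(\tau_{X'})$ with the $H$-space-action (Baues) description underlying the definition of $\delta$ --- that is, pinning down the compatibility of the various $H$-space and loop structures. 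Everything else (the homeomorphism $M_f\cong X\times I$, the normalisation of $f^*(\tau_{X'})$ via \Cref{lem:microbundle_differential}, and the Poincar\'e-duality identification of $\varpi$ with $s$) is routine diagram-chasing.
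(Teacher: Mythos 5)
Your proposal is correct and takes essentially the same route as the paper: the paper's proof also regards $\varpi\cs(f)$ as the relative class of \Cref{rmk:cs_description} on the cylinder, applies the suspension--loopspace adjunction (with $\B(\TOP\!/\!\tO)$ the delooping of $\TOP\!/\!\tO$) to land in $[X_+,\TOP\!/\!\tO]$, and identifies the result with $\delta(f,\tau_X,\tau_{X'})$ via the freeness of the action in \Cref{lem:lifts_action}; your extra bookkeeping ($M_f\cong X\times I$, naturality of $\ks$, $\varpi=(\Phi^{-1})^*\circ s$) just makes explicit what the paper leaves implicit. The one point you flag as delicate --- that the Baues $H$-space action agrees with the loop-concatenation description of the difference of lifts --- is exactly the step the paper dispatches with the words ``by construction,'' so your account is no less complete than the published one.
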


\begin{proof}
	By \zcref{rmk:cs_description} we can consider the Casson-Sullivan invariant as a relative homotopy class $\varpi\cs(f)\in [\Sigma (X_+,\partial X\sqcup\{\pt\}),\mathcal{B}(\TOP\!/\!\tO)]$, where $\Sigma$ denotes the reduced suspension (of a pair) and $X_+$ denotes $X$ with a disjoint basepoint $\pt$ added (addeding the extra point is only necessary if $\partial X=\emptyset$).  By the loopspace-suspension adjoint (and that $\B(\TOP/\tO)$ is the delooping of $\TOP/\tO$), this gives an element $\cs(f)\in [(X_+,\partial X\sqcup\{\pt\}),\TOP\!/\!\tO]$.  Since the action in \zcref{lem:lifts_action} is free and transitive, to finish the proof it suffices to show that $\cs(f)\cdot \tau_X=f^*(\tau_{X'})$.  Since this action was defined using the $H$-space structure on $\TOP/\tO$, under the loopspace-suspension adjoint this action can be computed using the map on the suspension $\varpi\cs(f)\in [\Sigma (X_+,\partial X\sqcup\{\pt\}),\mathcal{B}(\TOP\!/\!\tO)]$, and, by this element's construction (see \zcref{rmk:cs_description}), the action must send $\tau_{X}$ to $f^*(\tau_{x'})$ as these describe the null-homotopies on the two ends of the suspension.
\end{proof}
If we put together the previous two propositions, we immediately receive the following corollary.

\begin{cor}\label{cor:cs_dependence_ss}
	Let $X$ and $X'$ be topological manifolds each with two $($formal$)$ smooth structures $\mathscr{S}^1_X$ and $\mathscr{S}^2_X$, and $\mathscr{S}_{X'}^1$ and $\mathscr{S}_{X'}^2$, respectively.  Denote the corresponding lifts of the stable tangent microbundle by $\tau_X$ and $\widetilde{\tau}_X$, and $\tau_{X'}$ and $\widetilde{\tau}_{X'}$, respectively.  Let $f\colon X\to X'$ be a homeomorphism, and let $\cs(f,\mathscr{S}^1_X,\mathscr{S}^1_{X'})$ and $\cs(f,\mathscr{S}^2_X,\mathscr{S}^2_{X'})$ denote the Casson-Sullivan invariants of $f$ with respect to the corresponding $($formal$)$ smooth structures.  Let $a\in [(X,\partial X), \TOP\!/\!\tO]$ be the unique element given by \zcref{lem:lifts_action} such that $a\cdot \tau_X =\widetilde{\tau}_X$, and similarly let $b$ be the unique element such that $b\cdot \tau_{X'}=\widetilde{\tau}_{X'}$.  Then we have that
	\[
	a + \cs(f,\mathscr{S}^1_X,\mathscr{S}^1_{X'})=f^*(b)+\cs(f,\mathscr{S}^2_X,\mathscr{S}^2_{X'}).
	\]
\end{cor}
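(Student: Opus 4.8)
The plan is to deduce this purely formally by combining the two preceding propositions; no new geometric input is needed. First I would unwind the meaning of the Casson--Sullivan invariant with respect to a pair of genuine smooth structures: by \Cref{rmk:formal_vs_informal} each smooth structure determines an essentially unique formal smooth structure, and $\cs(f,\mathscr{S}^1_X,\mathscr{S}^1_{X'})$ is by definition the Casson--Sullivan invariant of $f$ computed with the associated formal smooth structures, whose underlying lifts of the stable tangent microbundle are $\tau_X$ and $\tau_{X'}$ (and similarly $\widetilde{\tau}_X$, $\widetilde{\tau}_{X'}$ for $\mathscr{S}^2$). Then \Cref{prop:delta=cs}, applied once to the pair $(\tau_X,\tau_{X'})$ and once to the pair $(\widetilde{\tau}_X,\widetilde{\tau}_{X'})$, gives the two identities
\[
\cs(f,\mathscr{S}^1_X,\mathscr{S}^1_{X'}) = \delta(f,\tau_X,\tau_{X'}), \qquad \cs(f,\mathscr{S}^2_X,\mathscr{S}^2_{X'}) = \delta(f,\widetilde{\tau}_X,\widetilde{\tau}_{X'}).
\]

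Next I would apply \Cref{prop:cs_dependence_ss} directly to the four lifts $\tau_X,\widetilde{\tau}_X,\tau_{X'},\widetilde{\tau}_{X'}$, with $a,b\in[X,\partial X;\TOP\!/\!\tO]$ the elements supplied by \Cref{lem:lifts_action} satisfying $a\cdot\tau_X=\widetilde{\tau}_X$ and $b\cdot\tau_{X'}=\widetilde{\tau}_{X'}$ --- these are precisely the $a$ and $b$ appearing in the statement of the corollary. That proposition then yields
\[
a+\delta(f,\widetilde{\tau}_X,\widetilde{\tau}_{X'}) = f^*(b) + \delta(f,\tau_X,\tau_{X'}).
\]
Substituting the two identities from the first step into this equation gives
\[
a + \cs(f,\mathscr{S}^2_X,\mathscr{S}^2_{X'}) = f^*(b) + \cs(f,\mathscr{S}^1_X,\mathscr{S}^1_{X'}),
\]
which, rearranged in the abelian group $[X,\partial X;\TOP\!/\!\tO]\cong H^3(X,\partial X;\Z/2)$, is exactly the claimed formula.

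There is no real obstacle in the argument; the only point needing a moment's attention is bookkeeping of conventions. The corollary pairs $\mathscr{S}^1$ with the un-tilded lifts and $\mathscr{S}^2$ with the tilded ones, and this matches the convention $a\cdot\tau_X=\widetilde{\tau}_X$, $b\cdot\tau_{X'}=\widetilde{\tau}_{X'}$ used in \Cref{prop:cs_dependence_ss}, so the substitution is literal. I would also briefly note that all sums take place in the abelian group $[X,\partial X;\TOP\!/\!\tO]$, so moving terms between the two sides to present the relation in the symmetric form stated causes no trouble.
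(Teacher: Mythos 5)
Your argument is correct and is precisely the paper's proof: the paper simply declares the corollary immediate from \Cref{prop:cs_dependence_ss} and \Cref{prop:delta=cs}, which is exactly the substitution you carry out. Your closing bookkeeping remark is also right that the direct substitution yields the relation with $\cs(f,\mathscr{S}^1_X,\mathscr{S}^1_{X'})$ and $\cs(f,\mathscr{S}^2_X,\mathscr{S}^2_{X'})$ on opposite sides from the stated formula, and swapping them is harmless because $[X,\partial X;\TOP\!/\!\tO]\cong H^3(X,\partial X;\Z/2)$ consists of $2$-torsion elements, so the two forms agree.
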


\begin{proof}
	The proof is immediate from \zcref{prop:cs_dependence_ss} and \zcref{prop:delta=cs}.
\end{proof}

If we only consider self-homeomorphisms, then $\tau_X=\tau_{X'}$, $\widetilde{\tau}_X=\widetilde{\tau}_{X'}$ and $a=b$, giving the formula \[
a+\cs(f,\mathscr{S}^1_X)=f^*(a)+\cs(f,\mathscr{S}^2_X).\]  This means, for $f$ a self-homeomorphism, we can define $\cs(f)$ not only for smooth $4$-manifolds, but also for \emph{smoothable} manifolds in the case where all self-homeomorphisms must act trivially on $H^3(-;\Z/2)$.  For example, this occurs if the fundamental group is cyclic.  In fact, in these cases one could also define $\cs(f)$ for non-smoothable manifolds by first removing a point, and then using the fact that all topological $4$-manifolds admit a smooth structure away from a point \cite[Section 8.2]{freedman_quinn_1990}.  We will not pursue this in the rest of the paper, and instead simplify matters by only considering our manifolds to be smooth.

\subsection{Properties}\label{sbs:casson_sullivan_basic_properties}

We now establish properties of the Casson-Sullivan invariant.  We begin by showing that it is a pseudo-isotopy invariant.

\begin{dfn}
	Let $X$ and $X'$ be a pair of topological manifolds and let $f,g\colon X\to X'$ be a pair of homeomorphisms.  If $X$ has boundary then $f$ and $g$ restrict to a fixed homeomorphism $f_0\colon \partial X\to \partial X'$.  We say that $f$ is \emph{pseudo-isotopic} to $g$ if there exists a homeomorphism \[
	F\colon X\times I\to X'\times I
	\] such that
	\begin{enumerate}[(i)]
		\item $F\vert_{\partial X \times I}=f_0\times \Id$,
		\item $F\vert_{X\times \{0\}}=f\colon X\times\{0\}\to X\times\{0\}$,
		\item $F\vert_{X\times\{1\}}=g\colon X\times\{1\}\to X\times\{1\}$.
	\end{enumerate}  Such an $F$ is called a \emph{pseudo-isotopy}. We say that $f$ and $g$ are \emph{isotopic} if they are pseudo-isotopic via a level-preserving pseudo-isotopy.
	
	Now let $X$ and $X'$ be smooth manifolds and $f\colon X\to X'$ be a homeomorphism such that $f$ restricts to a diffeomorphism $f_0\colon \partial X\to \partial X'$.  We say that $f$ is \emph{smoothable} if $f$ is isotopic to a diffeomorphism.  We say that $f$ is \emph{pseudo-smoothable} if $f$ is pseudo-isotopic to a diffeomorphism.
\end{dfn}

\begin{rmk}\label{rmk:relative_vs_absolute}
	The above definition gives that a homeomorphism is smoothable if it is isotopic \emph{relative to the boundary} to a diffeomorphism.  One could ask whether a homeomorphism is smoothable in the weaker sense if it is (pseudo-)isotopic to a diffeomorphism through a (pseudo-)isotopy that does not fix the boundary.  In the case of isotopies of $4$-manifolds these are equivalent (see \cite[Lemma 5.3]{galvin_ladu_2025}). In the case of pseudo-isotopies these are not the same, which is a consequence of \zcref{thm:unstable_realisation_theorem} that will not be explored in this paper, but is explored in the author's thesis \cite[Chapter 9]{galvin_thesis}.
\end{rmk}

By definition isotopy implies pseudo-isotopy, so if we can obstruct a homeomorphism from being pseudo-smoothable, it cannot be smoothable either.

\begin{prop}\label{prop:cs_pseudo-isotopy_inv}
	Let $X$ and $X'$ be smooth manifolds and let $f,g\colon X\to X'$ be a pair of homeomorphisms.
	\begin{enumerate}
		\item If $f$ is pseudo-smoothable then $\cs(f)=0$.
		\item If $f$ is pseudo-isotopic to $g$ then $\cs(f)=\cs(g)$.
	\end{enumerate}
\end{prop}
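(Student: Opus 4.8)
The plan is to prove the two statements together by a direct computation with mapping cylinders and the Kirby-Siebenmann invariant, using the characterisation of $\cs$ from \Cref{rmk:cs_description} and the naturality of $\ks$ from \Cref{rmk:ks_naturality}. For part (1), suppose $f$ is pseudo-isotopic to a diffeomorphism $\phi\colon X\to X'$, via a homeomorphism $F\colon X\times I\to X'\times I$ restricting to $f$ on the bottom and $\phi$ on the top (and to $f_0\times\Id$ on the boundary). The mapping cylinder $M_f$ and the manifold-with-corners $X\times I\times I$ glued appropriately via $F$ give a cobordism; more directly, the key observation is that $F$ identifies $M_f$ (suitably thickened) with $M_\phi$ in a way compatible with the formal smooth structures on the ends. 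Since $\phi$ is a diffeomorphism, $M_\phi$ is smoothable (indeed smooth) relative to its already-smooth boundary, so $\ks(M_\phi,\partial M_\phi)=0$, and by naturality of $\ks$ under the homeomorphism induced by $F$ together with the identification $\varpi$, we get $\ks(M_f,\partial M_f)=0$, hence $\cs(f)=0$.

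For part (2), suppose $F\colon X\times I\to X'\times I$ is a pseudo-isotopy from $f$ to $g$. I would build a single topological manifold $W$ by gluing $M_f$ and $M_g$ along a copy of $X$, or equivalently observe that $F$ produces a homeomorphism between $M_f$ and $M_g$ which is the identity near $X\hookrightarrow M_f$ and which restricts to the given boundary diffeomorphism on the $X'$-end. Concretely: $M_f \cup_{X'} (X'\times I)$ and $M_g\cup_{X'}(X'\times I)$ are both homeomorphic to $M_F$ rel the relevant pieces, so $M_f$ and $M_g$ are homeomorphic relative to a neighbourhood of $X$. Naturality of $\ks$ (\Cref{rmk:ks_naturality}) then gives $\ks(M_f,\partial M_f)=\ks(M_g,\partial M_g)$ under an identification of $H^4$ that is compatible with the two copies of $\varpi$ (both are built from Poincaré duality on $X$ and the inclusion $X\hookrightarrow M_\bullet$, which is respected by a homeomorphism that is the identity near $X$). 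Therefore $\cs(f)=\cs(g)$. Note that (1) can also be deduced from (2) by taking $g$ to be a diffeomorphism and using that $\cs$ of a diffeomorphism vanishes, since the mapping cylinder of a diffeomorphism is smooth rel boundary.

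Alternatively, and perhaps more cleanly, I would phrase both parts using \Cref{prop:delta=cs}, which identifies $\cs(f)$ with $\delta(f,\tau_X,\tau_{X'})$, the difference class between $\tau_X$ and $f^*\tau_{X'}$ in $[X,\partial X;\TOP/\tO]$ computed from the homotopy of \Cref{lem:microbundle_differential}. The content is then that this difference class only depends on the pseudo-isotopy class of $f$: a pseudo-isotopy $F$ induces a homotopy between the microbundle data for $f$ and for $g$ over $X\times I$, which on the level of classifying maps into $\BTOP$ (and its lift into $\BO$) gives a homotopy of the relevant relative maps into $\B(\TOP/\tO)$ rel basepoint. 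In particular the canonical microbundle isomorphism between $t_X$ and $f^*t_X$ from the proof of \Cref{lem:microbundle_differential} is carried to that for $g$ by $F$, up to the homotopy that is automatically unique by that lemma.

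The main obstacle I anticipate is bookkeeping rather than anything deep: one must be careful that the identification of the various $H^4$ groups (the two copies of $\varpi$, or the two copies of $[X,\partial X;\TOP/\tO]$) induced by the homeomorphism $M_f\approx M_g$ really is the identity, which comes down to the fact that the pseudo-isotopy is the identity near the source $X$ and restricts to a \emph{fixed} diffeomorphism on the boundary, so the Poincaré-duality side of $\varpi$ is untouched. The rest is the standard fact that $\ks$ is a homeomorphism invariant (\Cref{rmk:ks_naturality}) combined with the observation that a smooth manifold has vanishing relative Kirby-Siebenmann invariant.
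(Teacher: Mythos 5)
Your argument is correct in substance, and it relies on the same essential input as the paper, namely the pulled-back product structure $F^*(\mathscr{S}_{X'}\times I)$, but it packages that input differently. For (1) the paper simply observes that $F^*(\mathscr{S}_{X'}\times I)$ is itself a smooth structure on $X\times I$ restricting to $\mathscr{S}_X$ and $f^*(\mathscr{S}_{X'})$ on the two ends, so the mapping cylinder is smoothable relative to its boundary data and $\ks(M_f,\partial M_f)=0$ with no appeal to naturality; your detour through $\ks(M_\phi,\partial M_\phi)=0$ plus naturality says the same thing with one extra step, and it requires the \emph{relative} form of \Cref{rmk:ks_naturality} (naturality for homeomorphisms restricting to diffeomorphisms on the boundary with respect to the specified smoothings), which your identification, e.g.\ $(\phi^{-1}\times\Id)\circ F$, does satisfy. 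For (2) the paper never constructs a homeomorphism $M_f\approx M_g$ at all: it reads $F^*(\mathscr{S}_{X'}\times I)$ as a homotopy between the two null-homotopies appearing in \Cref{rmk:cs_description}, which identifies the relative classes $\varpi\cs(f)$ and $\varpi\cs(g)$ directly. Your route via a comparison homeomorphism also works, but the one sentence you offer as the concrete construction is garbled: the mapping cylinder of $F\colon X\times I\to X'\times I$ is a $6$-manifold, so ``both homeomorphic to $M_F$'' does not parse as stated. The homeomorphism you want does exist and is easy to build from $F$: attach an external collar $X'\times[1,2]$ with the product smooth structure to the $X'$-end of each of $M_f$ and $M_g$ (this changes neither relative Kirby--Siebenmann problem), and take the identity on $X\times[0,1]$ and $F\circ(f^{-1}\times\Id)$ on the collar. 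This map is the identity near the $X$-end and on $\partial X\times I$ (so the two copies of $\varpi$ are intertwined, exactly the bookkeeping point you flag) and is the identity, hence a diffeomorphism, on the far end; relative naturality of $\ks$ then gives $\cs(f)=\cs(g)$. With that repair your proof is complete, and your closing $\delta$-class reformulation is likewise consistent with the paper's machinery (\Cref{prop:delta=cs} together with the observation that the concordance $F^*(\mathscr{S}_{X'}\times I)$ induces a homotopy between the lifts $f^*\tau_{X'}$ and $g^*\tau_{X'}$ of $t_X$).
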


\begin{proof}
	We begin with the proof of (1).  Let $\mathscr{S}_X$ and $\mathscr{S}_{X'}$ denote the smooth structures on $X$ and~$X'$, respectively.  Note that $M_f$ is homeomorphic to $X\times I$ but with the additional specified smooth structures on the boundary, so $\cs(f)=0$ if we can find a smooth structure on $X\times I$ which restricts to the given smooth structures $\mathscr{S}_X$ and $f^*(\mathscr{S}_{X'})$ on the two boundary pieces (glued along the product smooth structure $\mathscr{S}_{\partial X\times I}$).  Let $F$ be the hypothesised pseudo-isotopy between $f$ and the diffeomorphism and denote this diffeomorphism by $\widetilde{f}$.  Use the pseudo-isotopy $F$ to pull back the product structure $\mathscr{S}_{X'}\times I$ to $F^*(\mathscr{S}_X\times I)$ on $X\times I$.  We then have that $F^*(\mathscr{S}_X\times I)\vert_{X\times \{0\}}=(\widetilde{f})^*(\mathscr{S}_{X'})=\mathscr{S}_X$ since $\widetilde{f}$ is a diffeomorphism and $F^*(\mathscr{S}_X\times I)\vert_{X\times\{1\}} = f^*(\mathscr{S}_{X'})$ and together these mean that $\cs(f)=0$.
	
	Now we prove (2).  As in \zcref{rmk:cs_description}, $\varpi\cs(f)$ is the homotopy class of $p\circ \tau_{X\times I}$ extended by the null-homotopies given by $\mathscr{S}_X$ and $f^*(\mathscr{S}_{X'})$.  Similarly, $\varpi\cs(g)$ is the homotopy class of $p\circ \tau_{X\times I}$ extended by the null-homotopies given by $\mathscr{S}_X$ and $g^*(\mathscr{S}_{X'})$.  Let $F$ be the pseudo-isotopy from $g$ to $f$.  Then $F^*(\mathscr{S}_{X'}\times I)$ describes a null-homotopy of $p\circ \tau_{X\times I}$ extended by the null-homotopies given by $f^*(\mathscr{S}_{X'})$ and $g^*(\mathscr{S}_{X'})$, i.e.\ a homotopy between the null-homotopies given by $f^*(\mathscr{S}_X)$ and $g^*(\mathscr{S}_X)$.  This gives us a homotopy relative to the boundary between the relative homotopy classes defining $\varpi\cs(f)$ and $\varpi\cs(g)$, and hence $\cs(f)=\cs(g)$.
\end{proof}

We can say more in the case of self-homeomorphisms.  Let $\widetilde{\pi}_0\Homeo(X,\partial X)$ denote the pseudo-mapping class group of $X$ relative to $\partial X$, i.e.\ the quotient of $\Homeo(X,\partial X)$ by those homeomorphisms which are pseudo-isotopic to the identity (see \zcref{dfn:mapping_class_groups} and \zcref{rmk:blockeo}).  Then we have the following result.

\begin{prop}\label{prop:cs_homomorphism}
	Let $X$ be a smooth $4$-manifold.  Then the map \[\cs\colon \widetilde{\pi}_0\Homeo(X,\partial X)\to H^3(X,\partial X;\Z/2)\] sending a representative of a pseudo-isotopy class of self-homeomorphisms to its Casson-Sullivan invariant is a crossed homomorphism.  In other words, if $f,g\colon X\to X$ are representatives of pseudo-isotopy classes of self-homeomorphisms then
	\[
	\cs(g\circ f)= \cs(f) + f^*\cs(g).
	\]
\end{prop}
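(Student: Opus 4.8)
The plan is to deduce the formula from the reformulation of $\cs$ via the difference class from \Cref{prop:delta=cs}. First note that both sides of the asserted identity depend only on the pseudo-isotopy classes of $f$ and $g$: the left-hand side by \Cref{prop:cs_pseudo-isotopy_inv}(2) (since the pseudo-isotopy class of $g\circ f$ depends only on those of $f$ and $g$), and the term $f^*$ on $H^3(X,\partial X;\Z/2)$ because it depends only on the homotopy class of $f$. So it suffices to prove the formula for chosen representatives $f,g\in\Homeo(X,\partial X)$. Fix a formal smooth structure $\tau_X\colon X\to\BO$ lifting $t_X$. By \Cref{prop:delta=cs} and \Cref{lem:lifts_action}, for any self-homeomorphism $h$ the invariant $\cs(h)\in[X,\partial X;\TOP\!/\!\tO]$ is the unique element with $\cs(h)\cdot\tau_X=h^*(\tau_X)$, where $\alpha\cdot\tau_X=\tau_X\oplus\iota\alpha$ and $h^*(\tau_X)$ denotes $\tau_X\circ h$ homotoped, in the canonical way provided by \Cref{lem:microbundle_differential}, to a lift of $t_X$. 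Since this action is free, it is enough to verify the identity $(g\circ f)^*(\tau_X)=(\cs(f)+f^*\cs(g))\cdot\tau_X$ of homotopy classes of lifts of $t_X$.

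The first step is to check the compatibility $(g\circ f)^*(\tau_X)=f^*(g^*(\tau_X))$. Unwinding \Cref{lem:microbundle_differential}: concatenating the homotopy $h(g)_t\circ f$ (from $t_X\circ g\circ f$ to $t_X\circ f$) with $h(f)_t$ (from $t_X\circ f$ to $t_X$) produces a homotopy from $t_X\circ g\circ f$ to $t_X$, which by the uniqueness clause of \Cref{lem:microbundle_differential} agrees up to homotopy rel endpoints with $h(g\circ f)_t$. Lifting these homotopies against the fibration $\xi$ and tracking endpoints starting from $\tau_X\circ g\circ f$ then identifies $f^*(g^*(\tau_X))$ with $(g\circ f)^*(\tau_X)$ as homotopy classes of lifts of $t_X$.

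Granting this, the computation is formal:
\[
(g\circ f)^*(\tau_X)=f^*\big(g^*(\tau_X)\big)=f^*\big(\tau_X\oplus\iota\,\cs(g)\big)=f^*(\tau_X)\oplus\iota\big(f^*\cs(g)\big)=\big(\tau_X\oplus\iota\,\cs(f)\big)\oplus\iota\big(f^*\cs(g)\big),
\]
where the second equality is the defining property of $\cs(g)$, the third uses that precomposition with the homeomorphism $f$ is compatible with the $H$-space structure $\oplus$ and with $\iota$ (exactly the compatibility used in the proof of \Cref{prop:cs_dependence_ss}), and the fourth is the defining property of $\cs(f)$. Using associativity of $\oplus$ and that $\iota$ carries the group addition of $[X,\partial X;\TOP\!/\!\tO]$ to $\oplus$, the right-hand side equals $\tau_X\oplus\iota\big(\cs(f)+f^*\cs(g)\big)=(\cs(f)+f^*\cs(g))\cdot\tau_X$. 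Freeness of the action in \Cref{lem:lifts_action} then gives $\cs(g\circ f)=\cs(f)+f^*\cs(g)$.

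I expect the only real obstacle to be the bookkeeping in the first step, i.e.\ making precise that the canonical corrections of \Cref{lem:microbundle_differential} compose (up to the relevant homotopy) so that $(g\circ f)^*=f^*\circ g^*$ on homotopy classes of lifts; this is a routine but slightly fiddly unwinding of the microbundle mapping-cylinder construction. Everything after that is formal manipulation of the free transitive action of \Cref{lem:lifts_action} and the $H$-space structure on $\BO$, of the kind already carried out in the proof of \Cref{prop:cs_dependence_ss}.
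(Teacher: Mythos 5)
Your proof is correct, but it takes a genuinely different route from the paper's. The paper argues directly from the mapping-cylinder definition: it first identifies the group structure on $[X\times I,\partial;K(\Z/2,4)]$ with ``stacking'' of cylinders via the suspension--loopspace adjunction, and then observes that stacking the relative class representing $\varpi\cs(f)$ (null-homotopies $\mathscr{S}_X$ and $f^*(\mathscr{S}_X)$) with $(f\times\Id)^*\varpi\cs(g)$ (null-homotopies $f^*(\mathscr{S}_X)$ and $(g\circ f)^*(\mathscr{S}_X)$) is relatively homotopic to the class representing $\varpi\cs(g\circ f)$, the homotopy being supplied by the null-homotopy corresponding to $(f\times\Id)^*(\mathscr{S}_X\times I)$. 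You instead pass through the difference-class description of \Cref{prop:delta=cs} and the free transitive action of \Cref{lem:lifts_action}, reducing the crossed-homomorphism identity to (a) functoriality $(g\circ f)^*=f^*\circ g^*$ on homotopy classes of lifts of $t_X$, and (b) equivariance of $f^*$ with respect to the action, which is exactly the compatibility already invoked in the proof of \Cref{prop:cs_dependence_ss}. What each buys: the paper's argument works with actual smooth structures, where pullback is strictly functorial, so the analogue of your step (a) is automatic, at the cost of setting up the stacking description of the group operation; your argument is the standard formal computation showing a difference cocycle is a crossed homomorphism, at the cost of the microbundle bookkeeping you flag, namely that the concatenation of the canonical homotopies of \Cref{lem:microbundle_differential} for $g$ (precomposed with $f$) and for $f$ is homotopic rel endpoints to the canonical homotopy for $g\circ f$. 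One caution there: the literal ``unique up to homotopy'' phrasing of \Cref{lem:microbundle_differential} cannot be applied to arbitrary homotopies with the given endpoints (such homotopies are not unique rel endpoints in general, since $\BTOP$ has low-dimensional homotopy); what is needed, and what is true, is that the concatenation corresponds under the Kirby--Siebenmann correspondence to the glued microbundle $f^*\mathfrak{X}(g)\cup\mathfrak{X}(f)$, which is concordant rel ends to $\mathfrak{X}(g\circ f)$ --- so the justification should be run at the microbundle level rather than by quoting uniqueness of homotopies with prescribed endpoints. With that reading, your argument goes through, and it is comparable in rigor to the paper's own treatment of these compatibilities.
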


\begin{proof}
We first note that the well-definedness of the above map follows directly from \zcref{prop:cs_pseudo-isotopy_inv}.  It suffices to show that for two self-homeomorphisms $f,g\colon X\to X$, we have that $\cs(g\circ f)=\cs(f)+f^*\cs(g)$.

We begin by giving a method for describing the group operation on $[X\times I,\partial; K(\Z/2,4)]$.  The normal way to define this is by using the highly-connected map $\mathcal{B}(\TOP\!/\!\tO)\to \Omega K(\Z/2,5)$ and then using composition of loops to define the group operation (see \cite[Section 4.3]{hatcher_2002}).  However, we also have that $[X\times I,\partial; K(\Z/2,4),\ast]= [\Sigma X_+,\{\pt\};K(\Z/2,4),\ast]$, where $X_+:=X\sqcup\{\pt\}$ and $\Sigma X_+$ denotes the reduced suspension of $X_+$.  Hence, $[X\times I,\partial; K(\Z/2,4)]$ has a natural operation by `stacking' cylinders given by the natural group operation coming from the reduced suspension.  By the suspension-loopspace adjoint relation,
\[
[\Sigma X_+,\{\pt\};K(\Z/2,4),\ast] = [X_+,\{\pt\};\Omega K(\Z/2,4),\ast] = [X_+,\{\pt\};\Omega \Omega K(\Z/2,5),\ast]
\]
and the group operation given by `stacking' corresponds to the group operation given by the outermost loopspace on the right.  The standard group operation comes from the inner loopspace structure, but these two are equivalent (this is entirely analogous to the fact that $\pi_2$ of any space is abelian, see e.g.\ \cite[Section 4.3, pages 395-396]{hatcher_2002}).

Let $\mathscr{S}_X$ denote the smooth structure on $X$.  Consider the homotopy class of maps corresponding to $\varpi(\cs(f)+f^*\cs(g))=\varpi\cs(f) + (f\times \Id)^*\varpi\cs(g)$.  (Here we used the definition of $\varpi$ from \zcref{dfn:casson_sullivan_invariant}).  We again use the terminology in \zcref{rmk:cs_description}.  This corresponds to stacking the homotopy class of $p\circ \tau_{X\times I}$ extended by the null-homotopies given by $\mathscr{S}_X$ and~$f^*(\mathscr{S}_X)$ with the homotopy class of $p\circ \tau_{X\times I}$ extended by the null-homotopies given by~$f^*(\mathscr{S}_X)$ and $(g\circ f)^*(\mathscr{S}_X)$.  But this is relatively homotopic to the homotopy class of~$p\circ \tau_{X\times I}$ extended by the null-homotopies given by $\mathscr{S}_X$ and $(g\circ f)^*(\mathscr{S}_X)$.  The relative homotopy is given by the null-homotopy corresponding to $(f\times \Id)^*(\mathscr{S}_X\times I)$.  This completes the proof.
\end{proof}

\begin{rmk}\label{rmk:uncross_cs}
	If $X$ is such that all self-homeomorphisms must act trivially on $H^3(X,\partial X;\Z/2)$, for example if $\pi_1(X)$ is cyclic, then \zcref{prop:cs_homomorphism} actually gives that the Casson-Sullivan invariant defines a group homomorphism from the pseudo-mapping class group.  In such cases, one can combine this result with \zcref{cor:cs_dependence_ss} to obtain that the Casson-Sullivan invariant defines a group homomorphism even if we start with a smoothable $X$, i.e.\ without picking a smooth structure.
\end{rmk}

Let $X$ and $X'$ be $4$-dimensional smooth manifolds and let $f\colon X\to X'$ be an orientation-preserving homeomorphism restricting to a diffeomorphism on $\partial X$.  If we use a disc $D\subset X$ to perform a connected-sum with $S^2\times S^2$, and use the same disc in $S^2\times S^2$ and the disc $f(D)\subset X'$ to form a connected-sum $X'\# (S^2\times S^2)$, this produces a well-defined homeomorphism \[f_\#\colon X\# (S^2\times S^2)\to X'\# (S^2\times S^2)\] by extending $f$ onto the~$S^2\times S^2$ summand via the identity map.


\begin{rmk}\label{rmk:connected-sum}
	Note that there are some additional subtleties concerning the above if we consider self-homeomorphisms.  In general, one would like to be able to take two self-homeomorphisms $f_1\colon X_1\to X_1$ and $f_2\colon X_2\to X_2$ and form a connected-sum self-homeomorphism $f_1\# f_2\colon X_1\# X_2\to X_1\# X_2$, but this is, in general, not well-defined.  To define a self-homeomorphism of the connected-sum, we first need to isotope the images of the two discs (that are used to perform the connected-sum) back to their original positions.  The first step in doing this is choosing an isotopy of their centres, and then using isotopy extension \cite{edwards_kirby_1971}, but there are many choices for such an isotopy, and different choices do not necessarily lead to isotopic self-homeomorphisms.  After we have isotoped the centres of the discs back to their original positions, we can proceed by using uniqueness of normal bundles \cite[Chapter 9.3]{freedman_quinn_1990}, and the calculation of the mapping class group of $S^3$ \cite{cerf_1968} to build the connected-sum homeomorphism.  Hence, the only issue concerning well-definedness is due to the choice of the initial isotopies, and this corresponds to so-called `point pushing homeomorphisms'.
	
	For certain manifolds, however, the connected-sum homeomorphism is well-defined.  For example, if one of the connected-summands is $S^2\times S^2$ and the other is simply-connected, then the connected-sum homeomorphism is well-defined.  For more information, see \cite{auckly_kim_melvin_ruberman_2014}.  This is, unfortunately, not very useful to our purposes since the simply-connected assumption renders the Casson-Sullivan invariant trivial.
\end{rmk}

Pinching off a 4-sphere yields a standard degree one map $S^2\times S^2\to S^4$ and by extending this map via the identity onto a 4-manifold $X$ we obtain a degree one collapse map $\ell\colon X\# (S^2\times S^2)\to X\# S^4$.

\begin{prop}\label{prop:cs_stable_1}
	  The standard degree one map that collapses the $S^2\times S^2$ connected-summand $\ell\colon X\#(S^2\times S^2)\to X$ (as above) induces an isomorphism \[\ell^* \colon H^3(X,\partial X;\Z/2)\to H^3(X \# (S^2\times S^2),\partial X;\Z/2)\] such that $\ell^*(\cs(f))=\cs(f_\#)$.
\end{prop}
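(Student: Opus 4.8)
The plan is to verify both claims — that $\ell^*$ is an isomorphism and that $\ell^*(\cs(f)) = \cs(f_\#)$ — by exploiting the naturality of the Kirby-Siebenmann invariant (\Cref{rmk:ks_naturality}) together with the explicit description of $\cs$ as a relative homotopy class of $p \circ \tau_{X \times I}$ extended by null-homotopies (\Cref{rmk:cs_description}). First I would dispense with the isomorphism statement: since $S^2 \times S^2$ is simply-connected and $\ell$ is a degree-one map, $\ell$ induces an isomorphism on $\pi_1$ and on $H_{n-3}(-;\Z/2) = H_1(-;\Z/2)$, so by Poincaré–Lefschetz duality it induces an isomorphism on $H^3(-,\partial(-);\Z/2)$. (Concretely, $H^3(X\#(S^2\times S^2),\partial X;\Z/2) \cong H_1(X\#(S^2\times S^2);\Z/2) \cong H_1(X;\Z/2) \cong H^3(X,\partial X;\Z/2)$, and one checks these identifications are compatible with $\ell^*$.)

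For the main equality, the key geometric observation is that the mapping cylinder $M_{f_\#}$ is obtained from $M_f$ by a controlled connect-sum operation: since $f_\#$ agrees with $f$ away from the $S^2\times S^2$ summand and is the identity on that summand, $M_{f_\#} \cong M_f \,\#\, \big((S^2\times S^2)\times I\big)$, where the connect-sum is performed along a disc $D^4 \subset X \times \{1/2\}$ on which $f$ (hence $f_\#$) restricts to the identity. The collapse map $\ell$ extends to a degree-one map $L \colon M_{f_\#} \to M_f$ which is the identity away from the $S^2 \times S^2$ region and collapses the summand there. By \Cref{rmk:ks_naturality}, the Kirby–Siebenmann invariant is natural with respect to homeomorphisms and, crucially, with respect to inclusions of open submanifolds; combining these with the fact that $(S^2\times S^2)\times I$ is smoothable (so contributes nothing to $\ks$), one obtains $\ks(M_{f_\#},\partial M_{f_\#}) = L^*\big(\ks(M_f,\partial M_f)\big)$.

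It then remains to check that the duality isomorphisms $\varpi$ of \Cref{def:casson_sullivan_invariant} for $f$ and $f_\#$ intertwine $L^*$ and $\ell^*$, i.e. that the square
\[
\begin{tikzcd}
H^3(X,\partial X;\Z/2) \arrow[r,"\varpi"] \arrow[d,"\ell^*"'] & H^4(M_f,\partial M_f;\Z/2) \arrow[d,"L^*"] \\
H^3(X\#(S^2\times S^2),\partial X;\Z/2) \arrow[r,"\varpi"'] & H^4(M_{f_\#},\partial M_{f_\#};\Z/2)
\end{tikzcd}
\]
commutes. This follows because each constituent of $\varpi$ — Poincaré–Lefschetz duality, and the isomorphism induced by the homotopy equivalence $X \hookrightarrow M_f$ — is natural with respect to the degree-one maps $\ell$ and $L$ (degree-one maps commute with cap products against the fundamental class up to the pushforward/pullback adjunction, which is exactly what is needed here). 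Chasing $\cs(f) = \varpi^{-1}(\ks(M_f,\partial M_f))$ around the square then yields $\ell^*(\cs(f)) = \varpi^{-1}(L^*\ks(M_f,\partial M_f)) = \varpi^{-1}(\ks(M_{f_\#},\partial M_{f_\#})) = \cs(f_\#)$.

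I expect the main obstacle to be the bookkeeping in the previous paragraph: verifying that the collapse map $L$ on mapping cylinders behaves correctly with respect to both the Kirby–Siebenmann naturality (one must be careful that $L$ is not a homeomorphism, so \Cref{rmk:ks_naturality} must be applied via the open-submanifold restriction statement, decomposing $M_{f_\#}$ into the part homeomorphic to an open subset of $M_f$ and the smoothable $S^2\times S^2$ region, and checking the obstruction classes glue) and with respect to the Poincaré duality isomorphisms $\varpi$. An alternative, possibly cleaner route avoids $\varpi$ entirely: use \Cref{prop:delta=cs} to identify $\cs$ with $\delta(f,\tau_X,\tau_{X'}) \in [X,\partial X;\TOP/\tO]$, equip the $S^2\times S^2$ summand with its standard smooth structure so that $\tau_{X_\#} = \tau_X \# \tau_{S^2\times S^2}$ extends $\tau_X$, observe that $f_\#^*(\tau_{X'_\#})$ likewise restricts to $f^*(\tau_{X'})$ on the $X$-part and to the standard lift on the $S^2 \times S^2$ part, and conclude $\delta(f_\#) = \ell^* \delta(f)$ directly from the fact that $\ell^* \colon [X,\partial X;\TOP/\tO] \to [X\#(S^2\times S^2),\partial X;\TOP/\tO]$ is an isomorphism (again since $S^2\times S^2$ is $2$-connected through dimension $3$ after collapsing). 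I would likely present whichever of these two arguments turns out shorter, but both hinge on the same two inputs: the connect-sum decomposition of the relevant tangential data, and the isomorphism statement already established at the start.
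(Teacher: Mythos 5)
Your proposal is correct and, in substance, the same as the paper's argument: the paper proves the identity by observing that, since $f_\#$ is the identity on the $S^2\times S^2$ summand and $T(S^2\times S^2)$ is stably trivial, the stable tangent microbundle $t_{(X\#(S^2\times S^2))\times I}$ is homotopic to $t_{X\times I}\circ(\ell\times\Id)$, so the map to $\mathcal{B}(\TOP\!/\!\tO)$ together with the boundary null-homotopies of \Cref{rmk:cs_description} all factor through $\ell\times\Id$, giving $\varpi\cs(f_\#)=(\ell\times\Id)^*\varpi\cs(f)=\varpi(\ell^*\cs(f))$; your ``alternative'' route via \Cref{prop:delta=cs} is exactly this argument translated into the $\delta$-description, and your treatment of the isomorphism statement is fine. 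One caveat on your first route: the equality $\ks(M_{f_\#},\partial M_{f_\#})=L^*\ks(M_f,\partial M_f)$ does not follow from \Cref{rmk:ks_naturality} plus smoothability of $(S^2\times S^2)\times I$ alone, because $L$ is not a homeomorphism and $\ks$ is not natural under arbitrary degree-one maps; what makes it true is precisely the factorization of the tangential data (stable triviality of $T(S^2\times S^2)$ together with $f_\#=\Id$ on the summand, or equivalently the gluing argument you sketch), which is the input your second route and the paper's proof make explicit.
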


\begin{proof}
	Let $\mathscr{S}_X$ denote the smooth structure on $X$ and let $\mathscr{S}_{X\# S^2\times S^2}$ denote the induced smooth structure on $X\# (S^2\times S^2)$.  Consider the following diagram (where we have used the fact that the stable tangent bundle of $S^2\times S^2$ is trivial to already factor the stable tangent bundle of $X\#(S^2\times S^2)$ through $\ell$ as in the top horizontal row)
	\[
	\begin{tikzcd}[row sep=large, column sep=large]
	\partial((X \# (S^2\times S^2))\times I) \arrow[r, "\ell\cup \ell"] \arrow[d] & \partial(X\times I) \arrow[r,"\tau_X \cup \tau_X\circ f"] & \BO \arrow[d,"\xi"] \\
	(X \# (S^2\times S^2))\times I \arrow[r,"\ell\times \Id"'] \arrow[rr,bend right, "t_{(X\# (S^2\times S^2))\times I}"] \arrow[urr, dotted, "\tau_{(X\# (S^2\times S^2))\times I}", near start, end anchor={[xshift=-5, yshift=2]south west}] & X \times I \arrow[from=u, crossing over] \arrow[ur, dotted, "\tau_{X\times I}"', start anchor= {[yshift=5]east}, end anchor={[xshift=-8]south}] \arrow[r,"t_{X\times I}"'] & \BTOP \arrow[d,"p"] \\
	& & \mathcal{B}(\TOP\!/\!\tO) 
	\end{tikzcd}
	\]
	Since $f_\#$ restricts to the identity map on the $S^2\times S^2$ summand and the tangent bundle of~$S^2\times S^2$ is stably trivial, the stable tangent microbundle $t_{(X\# (S^2\times S^2))\times I}$ is homotopic to the stable tangent microbundle $t_{X\times I}$ precomposed with the map $\ell\times \Id$.  This means that $\varpi\cs(f_\#)$, the homotopy class of $p\circ t_{(X\# (S^2\times S^2))\times I}$ extended by the null-homotopies given by $\mathscr{S}_{X\# (S^2\times S^2)}$ and $f_{\#}^*(\mathscr{S}_{X'\# (S^2\times S^2)})$ is relatively homotopic to the homotopy class of $p\circ t_{X\times I}\circ (\ell\times \Id)$ extended by the null-homotopies given by precomposing the null-homotopies corresponding to $\mathscr{S}_X$ and~$f^* \mathscr{S}_{X'}$ with the map $\ell$.  Since~$(\ell\times \Id)^*(\varpi\cs(f))=\varpi(\ell^*\cs(f))$, it follows that $\ell^*(\cs(f))=\cs(f_\#)$.
\end{proof}

\begin{prop}\label{prop:cs_stable_2}
	Let $X$ and $X'$ be a pair of smooth $4$-manifolds and let $f\colon X\to X'$ be a homeomorphism restricting to a diffeomorphism on $\partial X$.  If $\cs(f)=0$ then there exists a non-negative integer $k$ such that $f_\#\colon X \#( \#^k S^2\times S^2) \to X'\# (\#^k S^2\times S^2)$ is pseudo-isotopic to a diffeomorphism.
\end{prop}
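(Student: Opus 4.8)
The plan is to exhibit $f$, after stabilisation, as one end of a smooth product cobordism, by combining the smoothing of the mapping cylinder with the stable smooth $s$-cobordism theorem. Fix smooth structures $\mathscr{S}_X$ on $X$ and $\mathscr{S}_{X'}$ on $X'$ and form the mapping cylinder $M_f$ of \Cref{def:casson_sullivan_invariant}. Recall that $M_f$ is homeomorphic rel boundary to $X\times I$ and carries a preferred smooth structure on $\partial M_f$, namely $\mathscr{S}_X$ on the incoming end, $\mathscr{S}_{X'}$ on the outgoing end, and the product structure on $\partial X\times I$. By definition $\varpi(\cs(f))=\ks(M_f,\partial M_f)$, so the hypothesis $\cs(f)=0$ is exactly the statement that $\ks(M_f,\partial M_f)=0$.

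The first step is to promote this to a genuine smooth structure. Since $M_f$ is a topological $5$-manifold with smooth boundary and vanishing Kirby--Siebenmann invariant, and since in dimension $5$ this is the only smoothing obstruction (\Cref{thm:ks} and the remark following it), there is a smooth structure $\mathscr{S}$ on $M_f$ restricting to the prescribed one on $\partial M_f$. Write $W$ for the resulting smooth manifold, regarded as a cobordism from $(X,\mathscr{S}_X)$ to $(X',\mathscr{S}_{X'})$ relative to $\partial X\times I$. As $M_f$ is homeomorphic rel boundary to $X\times I$, both end inclusions into $W$ are homotopy equivalences, so $W$ is an $h$-cobordism; and since homeomorphisms of compact manifolds have vanishing Whitehead torsion (topological invariance of Whitehead torsion, Chapman), the torsion of $W$ vanishes and $W$ is a smooth $s$-cobordism.

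The second step is the stable smooth $s$-cobordism theorem, which is valid for all fundamental groups (see \cite{freedman_quinn_1990}): there is an integer $k\geq 0$ such that the stabilisation $W\#^k(S^2\times S^2\times I)$ --- formed by $k$ internal connect-sums with $S^2\times S^2\times I$ along a vertical arc in the cylinder over the disc on which $f$ restricts to the identity --- is a smooth product cobordism. With this choice of arc, the underlying topological manifold of $W\#^k(S^2\times S^2\times I)$ is precisely the mapping cylinder $M_{f_\#}$ of $f_\#\colon X\#(\#^k S^2\times S^2)\to X'\#(\#^k S^2\times S^2)$, with incoming end $X\#(\#^k S^2\times S^2)$ and outgoing end $X'\#(\#^k S^2\times S^2)$. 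A smooth product structure rel the incoming end is a diffeomorphism $\Psi$ from $(X\#(\#^k S^2\times S^2))\times I$ onto this cobordism which is the identity on the incoming end and the product on the side boundary; let $\widetilde{f}_\#$ be its restriction to the outgoing end, a diffeomorphism $X\#(\#^k S^2\times S^2)\to X'\#(\#^k S^2\times S^2)$. Comparing $\Psi$ with the canonical topological product structure $G$ on the mapping cylinder $M_{f_\#}$ --- which is the identity on the incoming end and equals $f_\#$ on the outgoing end --- one checks that $G^{-1}\circ\Psi$ is a pseudo-isotopy rel boundary from the identity to $f_\#^{-1}\circ\widetilde{f}_\#$, and composing this with $f_\#$ produces a pseudo-isotopy from $f_\#$ to the diffeomorphism $\widetilde{f}_\#$, as required.

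Beyond taking the stable smooth $s$-cobordism theorem as a black box, the step I expect to require the most care is the identification $W\#^k(S^2\times S^2\times I)\approx M_{f_\#}$: one must verify that stabilising the mapping cylinder $M_f$ internally by copies of $S^2\times S^2\times I$, in the region where $f$ is already a diffeomorphism, reproduces the mapping cylinder of the stabilised homeomorphism $f_\#$, and then track the boundary identifications carefully enough to be sure that $G^{-1}\circ\Psi$ satisfies the defining conditions of a pseudo-isotopy relative to $\partial X\times I$. The remaining ingredients --- $5$-dimensional smoothing theory and the comparison of the two product structures --- are routine.
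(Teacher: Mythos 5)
Your proof is correct and follows essentially the same route as the paper: smooth the mapping cylinder rel boundary using five-dimensional Kirby--Siebenmann theory, view it as a relative smooth $s$-cobordism, stabilise with copies of $S^2\times S^2\times I$ until it is a smooth product, and convert the product structure into a pseudo-isotopy from $f_\#$ to a diffeomorphism. The only real difference is that you invoke the stable smooth $s$-cobordism theorem from Freedman--Quinn as a black box, whereas the paper spells out exactly that step (handles, Whitney discs, Norman trick) because the details in Freedman--Quinn are scattered.
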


This result is stated and proved in \cite[Section 8.6]{freedman_quinn_1990}.  However, the proof is somewhat dispersed in the book and many of the details are not given.  For this reason, we give the full proof below.

\begin{proof}[Proof of \zcref{prop:cs_stable_2}]
	Let $W:=M_f$ be the mapping cylinder of $f$.  Since $\cs(f)=0$, then by definition we can smooth the stable topological normal bundle of $W$ relative to the given smoothings of the stable tangent bundles of the boundary.  By Kirby-Siebenmann \cite[Essay IV, Theorem 10.1]{kirby_siebenmann_1977}, we can realise this by a smooth structure on $W$ extending the given structures on the boundary.  Note that here we have crucially used that the dimension of $W$ is at least five and at most seven.  This allows us to view $W$ as a (relative) smooth $h$-cobordism $(W,X,X')$.  Note that $W$ is topologically a product (by definition) but that it is not necessarily a product smoothly, and that $W$ already restricts to a smooth product on $\partial X$.  We aim to turn $W$ into a smooth product cobordism via stabilisations.
	
	We may assume in the standard way that $W$ has only $2$- and $3$-handles in its relative (to $X$) handle decomposition (for a reference, see \cite[\S 20.1]{behrens_kalmar_kim_powell_ray_2021}).  Since $W$ is topologically a product we know that the $2$- and $3$-handles algebraically cancel.  Let $\mathcal{W}$ be a collection of immersed Whitney discs in $X\times\{1/2\}$ for the pairs of cancelling intersections of the descending manifolds for the $3$-handles and the ascending manifolds for the $2$-handles.  If these discs were embedded disjointly then we could use Whitney moves on the discs to force the $2$- and $3$-handles to geometrically cancel and hence $W$ could be made into a smooth product cobordism. Let $p\in D_1\cap D_2$ be an intersection point for two Whitney discs $D_1,D_2\in\mathcal{W}$ (potentially $D_1=D_2$) and let $\alpha$ be an arc in $W$ from $X\times\{0\}$ to $X\times\{1\}$ which intersects $D_1$ and $D_2$ exactly once at $p$ and is disjoint from all other discs in $\mathcal{W}$.  Let $q=(q_1,q_2)$ be a point in $S^2\times S^2$.  Then we form a new cobordism
	\[
	W':= \big(W\sm \nu \alpha\big)\cup_{\partial\nu \alpha = (\partial\nu q)\times I} \big((S^2\times S^2 \sm \nu q)\times I\big).
	\]
	We define a new set of Whitney discs $\mathcal{W}'$ to be the same as $\mathcal{W}$ except with $D_1$ and $D_2$ replaced with $D_1'$ and $D_2'$, respectively, defined as
	\begin{align*}
		D_1':=& \big(D_1\cap (W\sm\nu \alpha)\big) \cup \big( (S^2\times\{q_2\})\cap (S^2\times S^2\sm\nu q)\big), \\
		D_2':=& \big(D_2\cap (W\sm\nu \alpha)\big) \cup \big( (\{q_1\}\times S^2)\cap (S^2\times S^2\sm\nu q)\big).
	\end{align*}
	The Whitney discs in $\mathcal{W}'$ then pair the cancelling intersections of the $2$- and $3$-handles of $W'$.  The number of intersections between $D_1'$ and $D_2'$ is one fewer than the number of intersections between $D_1$ and $D_2$; we have effectively removed an intersection point.  This is known as the Norman trick.  Repeating this procedure for all intersections between Whitney discs, eventually we produce a smooth cobordism~$W''$ which is topologically a product $W''\approx X\# (\#^k S^2\times S^2)\times I$ for some non-negative integer~$k$.  The set of Whitney discs for the pairs of cancelling intersections of the $2$- and $3$-handles $\mathcal{W}''$ consists now of disjointly embedded Whitney discs, and hence (as noted above) $W''$ is diffeomorphic to a product cobordism relative to one end of the cobordism.  We now make this conclusion precise.
	
	Let $X_\#:=X\# (\#^k S^2\times S^2)$, $X'_{\#}:= X'\# (\#^k S^2\times S^2)$ and let $\mathscr{S}$, $\mathscr{S}'$ denote the smooth structures on $X_\#$ and $X'_{\#}$, respectively.  Now $W''$ is the mapping cylinder of $f_\#$, i.e.\ $W'' \cong X_\# \times I$, with the smooth structure $\mathscr{S}_{W''}$ where $\mathscr{S}_{W''}\vert_{X_\#\times\{0\}}=\mathscr{S}$ and $\mathscr{S}_{W''}\vert_{X_\#\times\{1\}}=f_\#^*(\mathscr{S}')$.  In the previous paragraph, we obtained a homeomorphism \[F\colon X_\#\times I \to X_\#\times I\] such that $F^*(\mathscr{S}'\times I)=\mathscr{S}_W''$ and such that $F\vert_{X_{\#}\times\{0\}\cup \partial X_{\#}\times I}$ is the identity.  By \zcref{dfn:diffeomorphism} this means we have produced a pseudo-isotopy between the smooth structures $\mathscr{S}$ and $f_\#^*(\mathscr{S}')$ and, by the same argument as in the third paragraph of the proof of \zcref{prop:iso_iff_smoothable}, this means that $f_\#$ is pseudo-isotopic to a diffeomorphism.
\end{proof}

To summarise, \zcref{prop:cs_stable_1} and \zcref{prop:cs_stable_2} together tell us that the Casson-Sullivan invariant is the stable obstruction to pseudo-smoothing homeomorphisms of $4$-manifolds, much as the Kirby-Siebenmann invariant is the stable obstruction to smoothing $4$-manifolds.

\subsubsection{Non-compact $4$-manifolds}

Non-compact $4$-manifolds have the property that they are easier to smooth than their compact counterparts, in the sense that non-compact $4$-manifolds always admit smooth structures (and hence compact $4$-manifolds can always be smoothed away from a point) \cite[Chapter 8.2]{freedman_quinn_1990}.  In light of this, one might wonder whether a stronger result than \zcref{prop:cs_stable_2} holds if we assume our manifold is non-compact.  We briefly explain what happens in this case.  We begin with some definitions (cf.\ \zcref{sec:isotopy_smooth_structures}).

\begin{dfn}
	Let $M$ be a topological manifold.  We say that smooth structures $\mathscr{S}$ and $\mathscr{S}'$ on $M$ are \emph{concordant} if there exists a smooth structure $\mathscr{T}$ on $M\times I$ such that $\mathscr{T}\vert_{M\times \{0\}}=\mathscr{S}$ and $\mathscr{T}\vert_{M\times \{1\}}=\mathscr{S}'$.
	
	We say that $\mathscr{S}$ and $\mathscr{S}'$ are \emph{sliced concordant} if they are concordant, as above, such that the projection $M\times I \to I$ is a submersion with respect to the smooth structure $\mathscr{T}$ on $M\times I$.
\end{dfn}

\begin{prop}\label{prop:sliced_concordance}
	Let $X$ be a topological $4$-manifold with $\mathscr{S}$ and $\mathscr{S}'$ smooth structures on $X$, and let $f\colon X\to X$ be a homeomorphism restricting to a diffeomorphism on $\partial X$ with respect to the smooth structures $\mathscr{S}\vert_{\partial X}$ and $\mathscr{S}'\vert_{\partial X}$.  If $\cs(f)=0$, then $\mathscr{S}$ and $f^*(\mathscr{S}')$ are concordant.  If $X$ is non-compact and $\cs(f)=0$, then  $\mathscr{S}$ and $f^*(\mathscr{S}')$ are sliced concordant.
\end{prop}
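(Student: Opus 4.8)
The plan is to reduce both statements to the main theorems of the Kirby–Siebenmann smoothing theory, exactly as in the proof of \Cref{prop:cs_stable_2}, but being careful about the submersion condition in the non-compact case. First I would observe that the mapping cylinder $W=W(f)$ of $f$ is homeomorphic to $X\times I$, carrying the smooth structures $\mathscr{S}$ on $X\times\{0\}$ and $f^*(\mathscr{S}')$ on $X\times\{1\}$ (glued along the already-fixed product structure on $\partial X\times I$). The vanishing $\cs(f)=0$ means precisely that $\ks(W,\partial W)=0$, so by \Cref{thm:ks} — using that $\dim W=5$ lies in the range $4\le n\le 7$ — the topological tangent microbundle of $W$ lifts to a $\DIFF$-microbundle extending the lift already specified on $\partial W$. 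By Kirby–Siebenmann \cite[Essay IV, Theorem 10.1]{kirby_siebenmann_1977} this formal smoothing is realised by an honest smooth structure $\mathscr{T}$ on $W$ restricting to the given ones on the ends. Transporting $\mathscr{T}$ through the homeomorphism $W\approx X\times I$ (which is the identity in the $X$-coordinate by construction of the mapping cylinder away from the collar, or more simply: $W$ \emph{is} $X\times I$ as a topological manifold after the standard identification), we obtain a smooth structure on $X\times I$ which is $\mathscr{S}$ at one end and $f^*(\mathscr{S}')$ at the other. That is exactly a concordance, proving the first claim.

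For the second claim, suppose in addition that $X$ is non-compact. Here the extra input is that the relative smooth $h$-cobordism $(W,X,f(X))$ can be arranged to be a smooth \emph{product} — not merely topologically a product — once $X$ is open, so that the projection becomes a submersion. Concretely, I would invoke the non-compact smoothing/isotopy machinery of \cite[Chapter 8.2 and Essay~V]{kirby_siebenmann_1977}: for open $n$-manifolds with $n\ge 5$ (again $n=5$ here), a formal smoothing of $W$ rel $\partial$ that agrees with a product formal structure on the ends can be promoted to a genuine product smooth structure, because the obstructions to making the cobordism a smooth product live in cohomology groups with coefficients in $\pi_*(\TOP/\tO)$, all of which vanish in the relevant range, and in the open case there is no compactly-supported obstruction remaining (equivalently, the concordance-implies-isotopy / product structure theorem applies). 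The upshot is a smooth structure $\mathscr{T}$ on $X\times I$ with $\mathscr{T}|_{X\times\{0\}}=\mathscr{S}$, $\mathscr{T}|_{X\times\{1\}}=f^*(\mathscr{S}')$, and with the projection $X\times I\to I$ a submersion, i.e.\ a sliced concordance.

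The main obstacle I anticipate is pinning down the precise statement in the literature that upgrades ``formally a product'' to ``smoothly a product'' for open $5$-manifolds rel boundary, and verifying that the relevant obstruction group indeed vanishes — in the compact case this is false (which is why \Cref{prop:cs_stable_2} needs stabilisations), and the whole point here is that non-compactness removes the last obstruction. I would phrase the argument so that it only uses: (i) $\ks=0 \Rightarrow$ formal smoothing exists (\Cref{thm:ks}); (ii) formal smoothing $\Rightarrow$ actual smoothing rel $\partial$ in dimension $5$ \cite[Essay IV, Theorem 10.1]{kirby_siebenmann_1977}; and (iii) the Kirby–Siebenmann product structure theorem in the open case \cite[Essay~V]{kirby_siebenmann_1977}, which is where the submersion (sliced) condition is extracted. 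A secondary technical point to handle cleanly is that $f$ restricts to a fixed diffeomorphism on $\partial X$, so all the smoothings are taken rel the product structure on $\partial X\times I$ throughout; this is routine but should be stated explicitly.
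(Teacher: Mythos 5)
Your first paragraph is essentially the paper's own argument for the concordance statement: $\cs(f)=0$ means $\ks(M_f,\partial M_f)=0$, and Kirby--Siebenmann smoothing in dimension $5$ yields a smooth structure on $X\times I$ restricting to $\mathscr{S}$ and $f^*(\mathscr{S}')$ on the ends; the paper simply points to the first paragraph of the proof of \Cref{prop:cs_stable_2} for exactly this.

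The sliced statement is where your proposal has a genuine gap, and you flag it yourself. The mechanism you propose --- high-dimensional Kirby--Siebenmann product-structure/concordance-implies-isotopy theory applied to the open $5$-manifold $W$ --- does not deliver the submersion condition, and the vanishing you assert is false: the relevant coefficient group is $\pi_3(\TOP\!/\!\tO)\cong\Z/2$ (this is precisely where $\cs$ lives), so the obstruction groups do not ``all vanish in the relevant range''; the hypothesis $\cs(f)=0$ only kills the one obstruction needed for the concordance. More fundamentally, making the projection $X\times I\to I$ a submersion is a statement about smoothing theory of the $4$-manifold $X$ (structures on $X\times I$ sliced over structures on $X$), and the Product Structure Theorem requires the base to have dimension at least $5$; applied to $W$ instead, it says nothing about the projection to $I$. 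Unstably in dimension $4$ this is exactly what fails in the compact case, which is why \Cref{prop:cs_stable_2} needs stabilisations. The correct input, and the one the paper uses, is the dimension-$4$-specific classification of sliced concordance classes of smooth structures on a non-compact $4$-manifold by homotopy classes of lifts of the stable tangent microbundle to $\BO$ (\Cref{thm:sliced_concordance}, i.e.\ Siebenmann and Freedman--Quinn 8.7B, resting on the $5$-connectivity of $\TOP(4)/\tO(4)\to\TOP\!/\!\tO$ and hence on Quinn's theorem, cf.\ \Cref{rmk:quinn_fix}). Given that, the conclusion is one line: by \Cref{lem:lifts_action} and \Cref{prop:delta=cs}, $\cs(f)=0$ says the lifts determined by $\mathscr{S}$ and $f^*(\mathscr{S}')$ are homotopic, hence sliced concordant. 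Without invoking this (or an equivalent Quinn-type result for open $4$-manifolds), the Essay~V route you sketch cannot be completed.
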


This proposition essentially follows (in the non-compact case) from the following theorem.

\begin{thm}[{\cite[Theorem 4.4]{siebenmann_1971},\cite[Theorem 8.7B]{freedman_quinn_1990}}]\label{thm:sliced_concordance}
	There is a one-to-one correspondence between sliced concordance classes of smooth structures on a non-compact $4$-manifold with homotopy classes of liftings of the stable tangent microbundle to $\BO$.
\end{thm}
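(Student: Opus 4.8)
The plan is to write the correspondence down explicitly, check that it descends to the stated equivalence classes, and then prove bijectivity by a handle induction that reduces everything to classical Kirby-Siebenmann smoothing theory away from one genuinely four-dimensional point. In the forward direction, a smooth structure $\mathscr{S}$ on $X$ determines, after choosing a Riemannian metric, a canonical reduction of the stable topological tangent microbundle $t_X$ to a stable vector bundle, i.e.\ a lift $\tau_{\mathscr{S}}\colon X\to\BO$ of $t_X$; this is exactly \Cref{rmk:formal_vs_informal}, and since the space of metrics is contractible $\tau_{\mathscr{S}}$ is well-defined up to homotopy through lifts. First I would check that $\mathscr{S}\mapsto\tau_{\mathscr{S}}$ depends only on the sliced concordance class. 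If $\mathscr{T}$ is a sliced concordance on $X\times I$, then each $t\in I$ is a regular value of $\pr_I\colon(X\times I,\mathscr{T})\to I$, so every level $X\times\{t\}$ is a smooth submanifold; transporting its tangent bundle back along the homeomorphism $\pr_X\colon X\times\{t\}\xrightarrow{\approx}X$ and straightening to an honest lift of $t_X$ exactly as in \Cref{lem:microbundle_differential} yields a family of lifts of $t_X$ varying continuously in $t$, hence a homotopy through lifts from $\tau_{\mathscr{S}}$ to $\tau_{\mathscr{S}'}$. Because $X$ may be non-compact, $\pr_I$ need not be proper, so this argument produces no diffeomorphism $\mathscr{S}\cong\mathscr{S}'$; only the homotopy of lifts is asserted, which is all that is required. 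Both sides of the claimed bijection are non-empty, since non-compact $4$-manifolds are smoothable \cite[Chapter 8.2]{freedman_quinn_1990}.

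For surjectivity, fix a smooth structure $\mathscr{S}_0$ with lift $\tau_0$. Since $\TOP\!/\!\tO$ is $2$-connected and admits a $6$-connected map to $K(\Z/2,3)$, and $\dim X=4$, one has $[X,\TOP\!/\!\tO]\cong H^3(X;\Z/2)$, and by the obstruction theory underlying \Cref{lem:lifts_action} (now with empty boundary) the homotopy classes of lifts of $t_X$ form an $H^3(X;\Z/2)$-torsor based at $\tau_0$. Given a target lift $\tau=\alpha\cdot\tau_0$, I would realise the class $\alpha$ geometrically: choose a properly embedded $1$-manifold $\gamma\subset(X,\mathscr{S}_0)$ that is Poincar\'{e} dual to $\alpha$, and over a tubular neighbourhood of $\gamma$ splice into $\mathscr{S}_0$ the standard local re-smoothing representing the non-trivial element of $\pi_3(\TOP\!/\!\tO)=\Z/2$, leaving $\mathscr{S}_0$ untouched outside; the resulting smooth structure $\mathscr{S}_\alpha$ then has $\tau_{\mathscr{S}_\alpha}\simeq\alpha\cdot\tau_0=\tau$. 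Making this rigorous---working over a compact exhaustion of $X$ and smoothing handle by handle so that the local modifications assemble to a global smooth structure---is where the four-dimensional difficulty concentrates: the obstruction bookkeeping is automatic, since $\pi_{\le 2}(\TOP\!/\!\tO)=\pi_4(\TOP\!/\!\tO)=\pi_5(\TOP\!/\!\tO)=0$ so $\pi_3$ is the only obstruction group, but turning algebraic cancellations of handles into geometric ones requires embedded Whitney discs in a smooth $4$-manifold, which general position does not supply; here one invokes the disc-embedding technology of Freedman-Quinn \cite[Chapter 8]{freedman_quinn_1990}, non-compactness providing precisely the room at infinity to run it.

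For injectivity, suppose $\tau_{\mathscr{S}}$ and $\tau_{\mathscr{S}'}$ are homotopic through lifts. Such a homotopy is exactly a lift of $t_{X\times I}$ to $\BO$ restricting to $\tau_{\mathscr{S}}$ and $\tau_{\mathscr{S}'}$ on the two ends, and I would realise it by a smooth structure $\mathscr{T}$ on the $5$-manifold $X\times I$ extending $\mathscr{S}$ and $\mathscr{S}'$, by the relative concordance-smoothing theorem of Kirby-Siebenmann \cite{kirby_siebenmann_1977}---applicable because the ambient dimension is five and the boundary is already smooth. Building $\mathscr{T}$ over a handle decomposition of $X\times I$ of the form (handles of $X$)$\times I$, one arranges at each stage that $\pr_I$ remains a submersion, so that $\mathscr{T}$ is a \emph{sliced} concordance; the point is that all handle smoothing now takes place in an ambient $5$-manifold, where Whitney discs embed by general position, so none of the four-dimensional subtleties of the previous paragraph intervene. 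Then $\mathscr{S}$ and $\mathscr{S}'$ are sliced concordant, completing the bijection. The main obstacle is the surjectivity step: the rest is formal obstruction theory or high-dimensional smoothing theory, whereas realising a prescribed tangential reduction by an honest smooth structure on a $4$-manifold is exactly the place where homotopy-theoretic input alone does not suffice and one must use the disc-embedding theorem.
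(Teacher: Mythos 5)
Your sketch has the right formal frame (the lift-of-$t_X$ torsor over $[X,\TOP\!/\!\tO]\cong H^3(X;\Z/2)$, the passage to $X\times I$ for injectivity), but note first that the paper offers no proof of \Cref{thm:sliced_concordance}: it is quoted from Siebenmann and Freedman--Quinn, and \Cref{rmk:quinn_fix} flags what the real proof rests on, namely the $5$-connectivity of $\TOP(4)/\tO(4)\to\TOP\!/\!\tO$ via Quinn's $\pi_4(\TOP(4)/\tO(4))=0$ (whose proof was only recently repaired). Your proposal never engages with this comparison of the dimension-$4$ and stable bundle theories, which is where the genuine content lies, and the two load-bearing steps of your argument are not actually arguments.

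Concretely: for surjectivity, the ``standard local re-smoothing representing the non-trivial element of $\pi_3(\TOP\!/\!\tO)$'' to be spliced along a curve dual to $\alpha$ does not exist off the shelf. What you need is a smoothing of a tubular neighbourhood $S^1\times\R^3$, standard outside a smaller neighbourhood, whose relative difference class generates $H^3(S^1\times D^3,\partial;\Z/2)\cong\Z/2$; the existence of such a local model is essentially a relative instance of the very classification being proved, so the step is circular unless you construct it, and constructing it is exactly where Quinn's handle-straightening enters (moreover ``non-compactness gives room at infinity to run disc embedding'' misdescribes the mechanism: disc embedding needs fundamental-group control, and the role of non-compactness in Quinn's smoothing theorem is to push unresolved handle problems toward the end). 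For injectivity, obtaining a smooth structure on the $5$-manifold $X\times I$ rel ends from Kirby--Siebenmann theory is fine (the paper does the analogous thing in \Cref{prop:cs_stable_2}), but the claim that ``one arranges at each stage that $\pr_I$ remains a submersion'' is precisely the sliced refinement that distinguishes Siebenmann's theorem from plain concordance classification --- the remark immediately following \Cref{thm:sliced_concordance} makes exactly this distinction --- and it is not automatic from building the structure over handles of the form (handle)$\times I$. So both halves of the bijection are reduced to assertions that are themselves the cited theorems, rather than proved.
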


\begin{rmk}
	Note that although in \cite{freedman_quinn_1990} they state the above theorem for concordance classes rather than sliced concordance classes, the references they refer to for the proof give it for sliced concordance classes.
\end{rmk}

\begin{proof}[Proof of \zcref{prop:sliced_concordance}]
	The fact that $\mathscr{S}$ and $f^*(\mathscr{S}')$ are concordant is clear in the context of the proof of \zcref{prop:cs_stable_2}.  In fact, this is exactly what the the first paragraph of the proof establishes.  If $\cs(f)=0$, then by \zcref{lem:lifts_action} and \zcref{prop:delta=cs} the lifts of the stable tangent microbundle corresponding to $\mathscr{S}$ and $f^*(\mathscr{S})$ are homotopic, so by \zcref{thm:sliced_concordance} these smooth structures are sliced concordant.
\end{proof}

\begin{rmk}\label{rmk:quinn_fix}
	Note that an important part of the proof of \zcref{thm:sliced_concordance} is that the map \[\TOP(4)/\tO(4) \to  \TOP/\tO\] is $5$-connected \cite[Theorem 9.7A]{freedman_quinn_1990}, and this theorem rests on the result of Quinn \cite{quinn_1986} that $\pi_4(\TOP(4)/\tO(4))=0$, the proof of which was found to contain a gap.  The proof, however, was recently corrected in \cite{gabai_gay_hartman_krushkal_powell_2026}.
\end{rmk}

A concordance, sliced or otherwise, between the smooth structures $\mathscr{S}$ and $f^*\mathscr{S}'$ does not obviously give any nice statement about the properties of $f$ itself.  Hence, the author does not know of an interpretation of \zcref{prop:sliced_concordance} in terms of the smoothability of the homeomorphism (cf.\ \zcref{sec:isotopy_smooth_structures}).

\subsection{A connected-sum formula over a circle for the Casson-Sullivan invariant}\label{sbs:casson_sullivan_connect_sum}

This subsection is devoted to proving a connected-sum along a circle formula for the Casson-Sullivan invariant.  We shall start by giving the necessary definitions.

\begin{dfn}
	Let $X_1$ and $X_2$ be a pair of smooth $4$-manifolds and let $\gamma_i \subset X_i$ be a pair of framed, embedded circles.  Then we define the \emph{connected-sum over $\gamma_1,\gamma_2$} to be the smooth manifold
	\[
	X_1\#_{\gamma_1=\gamma_2} X_2:=(X_1\sm\nu\gamma_1)\cup_\varphi(X_2\sm\nu\gamma_2)
	\]
	where the gluing is performed using the orientation reversing map $\varphi\colon S^1\times S^2\to S^1\times S^2$ which sends $(x,y)\to (x,a(y))$ for $a\colon S^2\to S^2$ the antipodal map.  For a precise description of how this gives a well-defined smooth manifold, see \cite[\S IV.4]{kosinski_1993}.
\end{dfn}

Let $X_1$, $X_2$, $X'_1$ and $X'_2$ be two pairs of smooth, orientable, compact $4$-manifolds and let $\gamma_i \subset X_i$ be a pair of embedded circles for $i=1$, $2$.  Furthermore, let $f_i\colon X_i\to X'_i$ be a pair of homeomorphisms.  Since our manifolds are orientable, these circles admit framings which we will use implicitly (the choice will have no affect from the perspective of the Casson-Sullivan invariant).  Let $\nu \gamma_i$ denote the two tubular neighbourhoods.  If we use these to perform the connected-sum over a circle $X_1\#_{\gamma_1=\gamma_2}X_2$, and use $f(\nu\gamma_i)$ perform the connected-sum over a circle $X'_1\#_{f_1(\gamma_1)=f_2(\gamma_2)}X'_2$, this gives a connected-sum over a circle homeomorphism
\[
f_\# \colon X_1\#_{\gamma_1=\gamma_2} X_2 \to X'_1\#_{f_1(\gamma_1)=f_2(\gamma_2)} X'_2
\] 
defined as $f_i$ on $(X_i\sm\nu\gamma_i)$.  A priori this homeomorphism depends on the choice of framings, but we will ignore these choices and consider any such result the connected-sum over a circle homeomorphism.  Like with the case of connected-sum homeomorphisms, there is an additional complication when one wants to consider self-homeomorphisms.  In this case, we must first assume that the $f_i$ map $\gamma_i$ to $\gamma_i$ as free homotopy classes.  Then, up to isotopy, we may assume by homotopy implies isotopy, isotopy extension \cite{edwards_kirby_1971}, uniqueness of normal bundles (\cite[Chapter 9.3]{freedman_quinn_1990}), and the calculation of the mapping class group of $S^1\times S^2$ \cite{gluck_1962} that the $f_i$ are either the identity map on a tubular neighbourhood of the curve $\gamma_i$ or they are the \emph{Gluck twist} map 
\begin{align}\label{eq:gluck_twist}
	T\colon S^1 \times D^3 \to& S^1 \times D^3 \\
	(t,x) \mapsto& (t, \Phi_t(x)) \nonumber
\end{align} 
where $\Phi_t$ denotes the (positive) rotation map of $D^3$ around the (oriented) straight line from the south pole to the north pole by an angle of $t$ (we have used the identification $S^1\cong [0,2\pi]/0\sim2\pi$).  If the homeomorphisms $f_i$ are both of the same type as above (i.e.\ both are identity maps or both are twist maps on tubular neighbourhoods of $\gamma_i$) we may then define the connected-sum of these homeomorphisms over the curves $\gamma_i$ to be 
\[
f_\# \colon X_1\#_{\gamma_1=\gamma_2} X_2 \to X_1\#_{\gamma_1=\gamma_2} X_2
\] 
as $f_i$ on $(X_i\sm\nu\gamma_i)$.  However, there was a choice in how to isotope the $f_i$ such that the homeomorphisms fix the curves $\gamma_i$, and so it is unlikely that the resulting homeomorphism is well-defined, even up to isotopy.  We will ignore this ambiguity, and whenever we write the connected-sum over a circle for self-homeomorphisms, we shall simply mean one such choice of a homeomorphism.  The above also generalises to general case, i.e.\ for $X_1$, $X_2$, $X'_1$, $X'_2$ and $\gamma_i\subset X_i$, $\gamma'_i\subset X'_i$ being two pairs of embedded circles, but one has to assume that the homeomorphisms $f_i$ send the free homotopy classes of $\gamma_i$ to $\gamma'_i$.

We now define a map that will be very important to us.

\begin{dfn}\label{dfn:Q}
	Let $X_{\#}:=X_1\#_{\gamma_1=\gamma_2}X_2$.  Consider the long exact sequence for the triple \[(X_{\#},\partial X_{\#}\sqcup \partial \ol{\nu}\gamma_1,\partial X_{\#})\] and the Mayer-Vietoris sequence for the decomposition \[\left(X_\#,\partial X_\# \sqcup\partial\ol{\nu}\gamma_1\right)=\left((X_1\sm\nu\gamma_1) \cup (X_2\sm\nu\gamma_2),\partial(X_1\sm\nu\gamma_1)\cup\partial(X_2\sm\nu\gamma_2)\right)\]
	with $\Z/2$-coefficients (which we will now suppress).  The Mayer-Vietoris sequence yields a isomorphism \[\mathfrak{a}\colon H^3(X_{\#},\partial \sqcup \partial\nu\gamma_1)\to H^3(X_1\sm\nu\gamma_1,\partial)\oplus H^3(X_2\sm\nu\gamma_2,\partial)\] and the sequence of the triple yields a surjection\footnote{That this is a surjection can be readily verified by considering the next two terms in the sequence of the triple.} \[\mathfrak{b}\colon H^3(X_{\#},\partial\sqcup\partial\nu\gamma_1)\to H^3(X_{\#},\partial).\]  The inclusion $X_i\sm\nu\gamma_i\subset X_i$ induces an isomorphism $\mathfrak{c}\colon H_1(X_1\sm\nu\gamma_1)\oplus H_1(X_2\sm\nu\gamma_2)\to H_1(X_1)\oplus H_1(X_2)$.  We then define a map
	\[Q\colon H^3(X_1,\partial)\oplus H^3(X_2,\partial)\to H^3(X_{\#},\partial)\]
	as $Q:=\mathfrak{b}\circ \mathfrak{a}^{-1}\circ (\PD^{-1},\PD^{-1})\circ \mathfrak{c}^{-1}\circ (\PD,\PD)$.
\end{dfn}

\begin{lem}\label{lem:mv_collapse}
	The map $Q\colon H^3(X_1,\partial X_1;\Z/2)\oplus H^3(X_2,\partial X_2;\Z/2)\to H^3(X_1\#_{\gamma_1=\gamma_2}X_2,\partial;\Z/2)$ is surjective and has kernel $\Z/2\langle (\PD^{-1}[\gamma_1],\PD^{-1}[\gamma_2])\rangle$.
\end{lem}

\begin{proof}
	All of the maps used in the definition of $Q$ are isomorphisms except for $\mathfrak{b}$, which is a surjection.  Hence $Q$ is surjective.  Furthermore, we see that \[\ker Q = (\PD^{-1},\PD^{-1})\circ \mathfrak{c}\circ (\PD,\PD)\circ \mathfrak{a}(\ker\mathfrak{b}).\]
	Now $\ker\mathfrak{b}$ is generated by $\PD^{-1}[\gamma_1]$, and hence (by a standard naturality of cap product argument) we see that $(\PD,\PD)\circ \mathfrak{a}(\ker\mathfrak{b})=([\gamma_1],[\gamma_2])$.  The result follows immediately.
\end{proof}

The rest of this subsection will be devoted to proving the following theorem.

\begin{thm}\label{thm:connect_sum}
	Let $X_1$, $X_2$, $X'_1$ and $X'_2$ be two pairs of compact, connected, smooth, orientable $4$-manifolds and let $\gamma_i \subset X_i$ and $\gamma'_i\subset X'_i$ be two pairs of embedded circles, with $f_i\colon X_i\to X'_i$ a pair of homeomorphisms such that $(f_i)_*[\gamma_i]=[\gamma'_i]$ as free homotopy classes, and such that the connected-sum homeomorphism 
	\[
	f_{\#}:=f_1\#_{\gamma_1=\gamma_2}f_2\colon X_1 \#_{\gamma_1=\gamma_2}X_2 \to X'_1\#_{\gamma'_1=\gamma'_2}X'_2
	\] 
	is defined. Let $Q$ be the map in \zcref{dfn:Q}.  Then
	\[
	\cs(f_\#)=Q(\cs(f_1),\cs(f_2)).
	\]
\end{thm}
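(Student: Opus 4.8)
The plan is to reduce everything to the homotopy-theoretic description of $\cs$ given in \Cref{rmk:cs_description} and then to exploit the pushout (Mayer--Vietoris) structure underlying $X_\# := X_1\#_{\gamma_1=\gamma_2}X_2$ exactly as in the proof of \Cref{lem:mv_collapse}. Recall that $\varpi\cs(f_\#)$ is represented by the map $p\circ\tau_{X_\#\times I}\colon X_\#\times I\to\mathcal B(\TOP\!/\!\tO)$, extended by the null-homotopies coming from the smooth structures on the two boundary copies of $X_\#$. Writing $X_\#\times I=\big((X_1\sm\nu\gamma_1)\times I\big)\cup_{\,(S^1\times S^2)\times I}\big((X_2\sm\nu\gamma_2)\times I\big)$ and noting that the gluing region $S^1\times S^2$ (and its product with $I$) is stably parallelisable, so the restriction of $p\circ\tau$ to it is canonically null-homotopic, we see that the relative homotopy class $\varpi\cs(f_\#)$ is assembled from the relative classes of $p\circ\tau_{(X_i\sm\nu\gamma_i)\times I}$ over the two pieces, glued along the canonical null-homotopy over the collar of $\partial\ol\nu\gamma_i$. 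That is exactly the cohomological assembly computed in \Cref{lem:mv_collapse}, with the map $Q$ of \Cref{eq:mv_collapse} being the corresponding assembly on $H^3$.

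First I would set up notation: choose a common point $q\in S^1\times S^2$ and realise each $f_i$, up to isotopy, as either the identity or the Gl\"uck twist on $\ol\nu\gamma_i\cong S^1\times D^3$, so that $f_\#$ is literally $f_i$ on $X_i\sm\nu\gamma_i$; in particular $f_\#$ is the identity on the gluing region $S^1\times S^2$ (both the identity and the Gl\"uck twist fix $S^1\times S^2$ pointwise, the latter because $\Phi_t$ fixes $\partial D^3$ only at the poles --- here one should instead record that $T$ restricts to a self-homeomorphism of $S^1\times S^2$ isotopic to the identity, which is all that is needed to identify the tangential data). Next I would observe that the inclusions $X_i\sm\nu\gamma_i\hookrightarrow X_i$ induce, after Poincar\'e duality, the identifications $H^3(X_i\sm\nu\gamma_i,\partial)\cong H^3(X_i,\partial X_i)$ used in \Cref{lem:mv_collapse}, and that under these identifications the restriction $\cs(f_\#)\vert_{X_i\sm\nu\gamma_i}$ equals the image of $\cs(f_i)$ --- this uses naturality of $\ks$ with respect to inclusions of open submanifolds (\Cref{rmk:ks_naturality}) together with the fact that the tangent microbundle of $(X_i\sm\nu\gamma_i)\times I$ is the restriction of that of $X_i\times I$, and that $f_i$ agrees with $f_\#$ there. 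Then I would assemble: the two restricted classes agree on the overlap $\partial\ol\nu\gamma_i$ (both restrict to the class over $S^1\times S^2$, which is trivial since $S^1\times S^2$ bounds and is parallelisable, so $p\circ\tau$ restricted there is null-homotopic compatibly with the chosen null-homotopies), hence by the Mayer--Vietoris diagram in \Cref{lem:mv_collapse} they glue to a well-defined class on $X_\#$, and by construction of $Q$ that class is $Q(\cs(f_1),\cs(f_2))$. Since the homotopy class $\varpi\cs(f_\#)$ is determined by its restrictions to the two pieces together with the (canonical) compatibility over the overlap --- this is the content of the exact row $0\to H^3(X_\#,\partial X_\#\sqcup\partial\ol\nu\gamma_1)\xrightarrow{\cong}H^3(X_1\sm\nu\gamma_1,\partial)\oplus H^3(X_2\sm\nu\gamma_2,\partial)\to 0$ in that proof --- we conclude $\cs(f_\#)=Q(\cs(f_1),\cs(f_2))$.

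The main obstacle I expect is the bookkeeping of null-homotopies over the gluing region: one must check that the canonical null-homotopy of $p\circ\tau$ over $(S^1\times S^2)\times I$ coming from the product smooth structure on $\nu\gamma_i$ (or its Gl\"uck-twisted cousin) is compatible on the two sides, so that the relative classes genuinely glue rather than differing by a class supported near $\partial\ol\nu\gamma_1$. Equivalently, one needs that the Gl\"uck twist $T$ is \emph{pseudo-smoothable} on $S^1\times D^3$ in a way compatible with the product structure --- which is true since $T$ is itself a diffeomorphism --- so that no extra contribution appears from the gluing; this is why the hypothesis that the $f_i$ are ``of the same type'' on $\ol\nu\gamma_i$ enters, and it is exactly what guarantees $f_\#$ is well defined and that the two sides' tangential data match over the common $S^1\times S^2$. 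Once that compatibility is pinned down, the rest is a diagram chase through \Cref{lem:mv_collapse}. I would also note that the degenerate case $[\gamma_i]=0$ in $\pi_1$, where the connect sum over a circle is just an ordinary connected sum with $S^1\times S^3$ (or $S^1\times S^2\times S^1$-type pieces), is consistent with \Cref{prop:cs_stable_1}-style collapse arguments and need not be treated separately, since the Mayer--Vietoris computation of \Cref{lem:mv_collapse} is uniform in $[\gamma_i]$.
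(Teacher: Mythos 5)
Your strategy is in essence the paper's: decompose the mapping cylinder of $f_\#$ along $E=(S^1\times S^2)\times I$, use naturality of $\ks$ (\Cref{rmk:ks_naturality}), and assemble via the Mayer--Vietoris analysis underlying \Cref{lem:mv_collapse}. However, the step you state as ``under these identifications the restriction $\cs(f_\#)\vert_{X_i\sm\nu\gamma_i}$ equals the image of $\cs(f_i)$, by naturality of $\ks$'' is exactly where the content of the theorem sits, and as written it is a gap. Naturality under inclusion of the pieces only controls the classes after forgetting relativity along $E$, i.e.\ their images in $H^4\big((X_i\sm\nu\gamma_i)\times I,F_i\big)$; the forgetful map from the group relative to the full boundary (including $E$) kills the image of $H^3(E,\partial E)\cong\Z/2$, which is in general nonzero (it is injective, for instance, in the special case $X_2=S^1\times S^3$). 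So $\varpi\cs(f_i)$ has, in general, two lifts to the relative group, and your argument must identify the lift singled out by the product smooth structure over $\ol{\nu}\gamma_i\times I$ with the lift singled out by the Poincar\'e-duality identification $H^3(X_i\sm\nu\gamma_i,\partial)\cong H_1(X_i)\cong H^3(X_i,\partial X_i)$ that enters the definition of $Q$. This is not null-homotopy bookkeeping that can be dismissed because ``$T$ is itself a diffeomorphism'': smoothness over $E$ only guarantees that a relative lift exists, not which one it is, and taking the wrong lift on one side changes the claimed formula by exactly $\PD^{-1}[\gamma]$, so the theorem itself is at stake in this comparison.

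This comparison is what the paper's proof spends most of its length establishing: it first pins down the correct lift by a special-case trick (setting $X_2=S^1\times S^3$, $f_2=\Id$, where the relevant map $r$ is injective and naturality of $\ks$ determines the preferred lift $t$), obtaining a map $P$ with $P(\cs(f_1),\cs(f_2))=\cs(f_\#)$, and then proves $P=Q$ by the diagram chase around \labelcref{eq:connect_sum_cs_diagram_big}, including a cochain-level verification that the Poincar\'e-duality lift $t'$ agrees with $t$. Supplying an argument of that kind is what your sketch is missing; everything else you outline matches the paper's route. A smaller inaccuracy: the Gl\"uck twist does not restrict to a self-map of $S^1\times S^2$ isotopic to the identity (it is the nontrivial rotation element of the mapping class group of $S^1\times S^2$); what your argument actually needs there is only that it is a diffeomorphism, so this slip does not affect the structure of the proof, but it should be corrected.
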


The proof of this theorem relies on a sequence of diagram chases, using the relevant Mayer-Vietoris exact sequences and long exact sequences of the triple.  Although we have already used these, the setup here will be more complicated and so we recall them.  The long exact sequence for the triple of $CW$-complexes $W\supset B \supset A$ in cohomology (with $\Z/2$-coefficients suppressed) is 
\begin{equation}\label{eq:les_triple}
	\cdots\to H^k(W,B) \to H^k(W,A)\to H^k(B,A) \to H^{k+1}(W,B) \to\cdots
\end{equation}and the fully relative Mayer-Vietoris sequence for the pair of $CW$-complexes $(W,Y)=(A\cup B, C\cup D)$ (with $\Z/2$-coefficients suppressed) is\begin{equation}\label{eq:les_mv}
	\cdots\to H^k(W,Y)\to \begin{matrix} H^k(A,C)\\ \oplus \\ H^k(B,D)\end{matrix} \to H^k(A\cap B,C\cap D) \to H^{k+1}(W,Y) \to\cdots
\end{equation}See \cite[p.200/204]{hatcher_2002} for these standard exact sequences.

First we give the notation for the setup.

\begin{stp}\label{stp:connect_sum}
	Let $X_1$, $X_2$, $X'_1$ and $X'_2$ be two pairs of compact, connected, smooth, orientable $4$-manifolds, let $\gamma_i \subset X_i$  be a pair of embedded circles, and let $f_i\colon X_i\to X'_i$ be a pair of homeomorphisms such that $(f_i)_*[\gamma_i]=[\gamma'_i]\in\pi_1(X_i)$ and such that the $f_i$ restrict to diffeomorphisms $(f_i)_0\colon \partial X_i\to \partial X'_i$.  Denote by $X_\#$ and $X'_\#$ the connected-sum $X_1\#_{\gamma_1=\gamma_2} X_2$ and $X'_1\#_{\gamma'_1=\gamma'_2} X'_2$, respectively, and by $f_\#\colon X_\#\to X'_\#$ the connected-sum homeomorphism.  We then set up the following notation:
	\begin{enumerate}[(i)]
		\item Let $W:= M_{f_\#}$ the mapping cylinder of $f_{\#}$.
		\item Let $E:= \partial (\nu \gamma_1) \times I \subset W$.
		\item Let $F:= \partial W = (X_\# \times \{0\}) \cup (\partial X_\# \times I) \cup (X_\# \times \{1\})\subset W$.
		\item Let $A:= (X_1\sm(\nu\gamma_1)) \times I \subset W$ and let $B:= (X_2\sm(\nu\gamma_2)) \times I \subset W$.
		\item Let $F_A:=F\cap A$ and let $F_B:=F\cap B$.
		\item Let $C:= F_A\cup E\subset A$ and let $D:= F_B\cup E\subset B$.
		\item Let $Y:= C\cup D$.
	\end{enumerate}

\end{stp}
Note that $W=A\cup B$ and $A\cap B=C\cap D=E$.  See \zcref{fig:connect_sum_diagram}.

\begin{figure}[h]
	\includegraphics{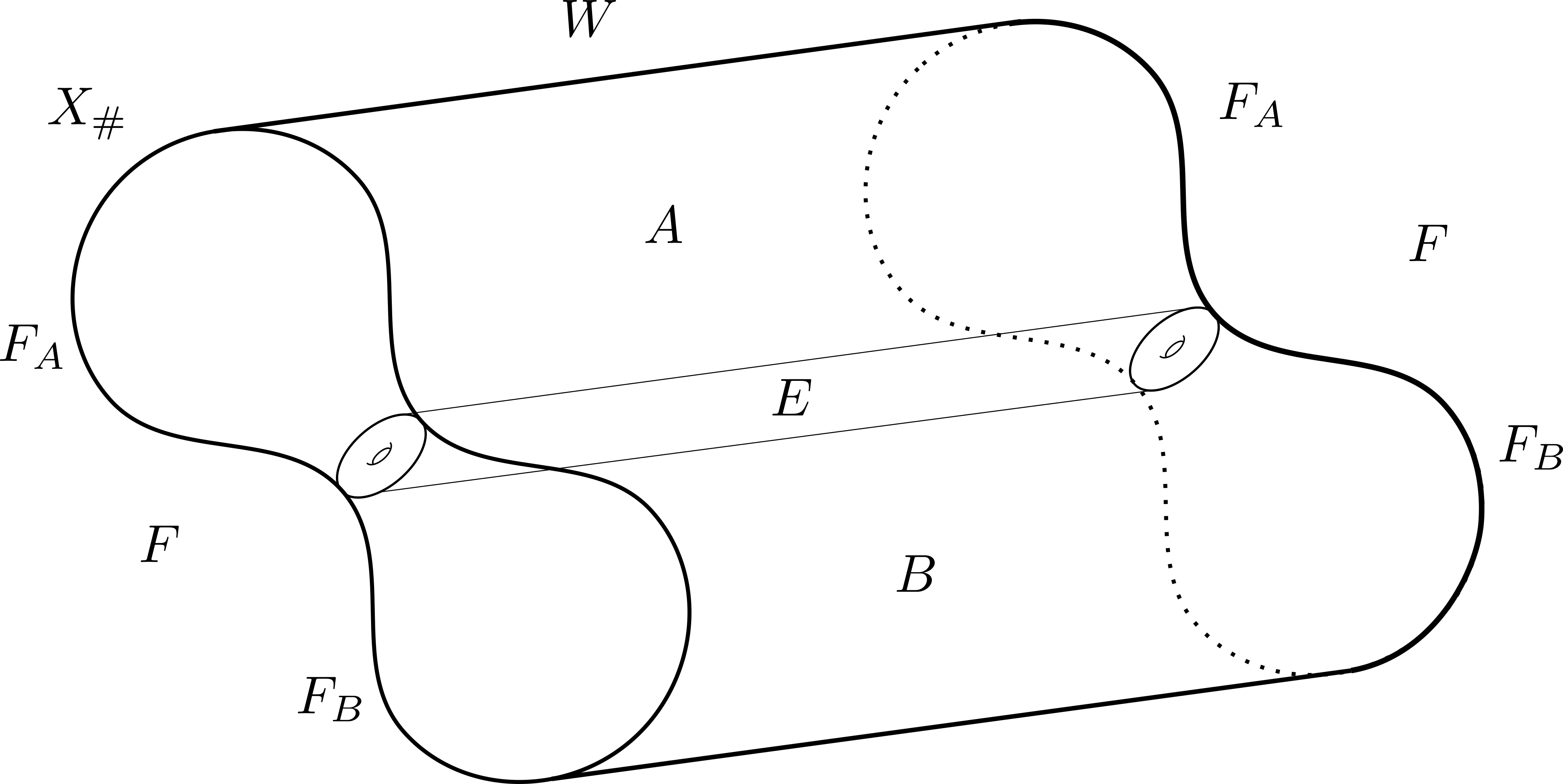}
	\centering
		\caption{A dimension-reduced picture of \zcref{stp:connect_sum} where the pictured tori denote the connected-sum $S^1\times S^2$.  Note that the labels for $C$ and $D$ have been omitted.}\label{fig:connect_sum_diagram}
\end{figure}

Combining the sequences (\ref{eq:les_triple}) and (\ref{eq:les_mv}) for the triples $(W,Y,F)$, $(A,C,F_A)$, and $(B,D,F_B)$ and the pairs $(W,Y)=(A\cup B,C\cup D)$, $(W,F)=(A\cup B, F_A\cup F_B)$, and $(Y,F)=(C\cup D, F_A\cup F_B)$ we obtain the following commutative diagram which we will use extensively.  In what follows, for all of our cohomology groups we are using $\Z/2$-coefficients but we will suppress these in the diagrams.

\begin{equation*}
\begin{adjustbox}{width=\textwidth,center}

	\begin{tikzcd}
		& H^3(Y,F) \arrow[d] \arrow[r] & H^3(C,F_A)\oplus H^3(D,F_B) \arrow[d, shift left=5ex] \arrow[d, shift right=5ex]  & \\
		H^3(A\cap B,C\cap D) \arrow[r] & H^4(W,Y) \arrow[d] \arrow[r] & H^4(A,C)\oplus H^4(B,D) \arrow[d, shift left=5ex] \arrow[d, shift right=5ex] \arrow[r] & H^4(A\cap B, C\cap D)\\
		H^3(A\cap B, F_A\cap F_B) \arrow[r] & H^4(W,F) \arrow[d] \arrow[r]  & H^4(A,F_A)\oplus H^4(B,F_B) \arrow[d, shift left=5ex] \arrow[d, shift right=5ex] \arrow[r] & H^4(A\cap B, F_A\cap F_B)\\
		&  H^4(Y,F) & H^4(C,F_A)\oplus H^4(D,F_B) & 
	\end{tikzcd}
\end{adjustbox}
\end{equation*}

It will be useful to simplify this diagram.  First, note that the leftmost and rightmost groups on the second line vanish because $A\cap B=C\cap D$ (further observe that this means the remaining non-trivial horizontal map on the second line must be an isomorphism).  The topmost groups and the leftmost group on the third line are all isomorphic to $H^3(E,\partial E)\cong \Z/2$.  We can replace all of the remaining outer groups with zeroes since these maps must be zero maps (one can explicitly see this by continuing the sequences and using commutativity along with Poincar\'{e} duality).  To illustrate this, we will show that the bottom vertical maps are zero.  Showing that the map to $H^4(A\cap B, F_A\cap F_B)$ is zero is analogous.  We start by continuing the sequences at the bottom of the diagram to obtain the following diagram.

\begin{equation*}
	\begin{tikzcd}
		& H^4(W,F) \arrow[d,"a"] \arrow[r] & H^4(A,F_A)\oplus H^4(B,F_B)  \arrow[d,"b"] & \\
		H^3(C\cap D, F_A\cap F_B) \arrow[r,"f"] & H^4(Y,F) \arrow[r,"c"] \arrow[d,"d"]  &H^4(C,F_A)\oplus H^4(D,F_B) \arrow[d,"e"] & \\
		0 \arrow[r]&H^5(W,Y) \arrow[r,"\cong"] & H^5(A,C)\oplus H^5(B,D) \arrow[r] & 0 \\
	\end{tikzcd}
\end{equation*}

We want to show that the maps $a$ and $b$ are both the zero maps.  It suffices to show that $d$ and $e$ are injections.  By Poincar\'{e} duality, $e$ is dual to direct sum of inclusion induced maps\[H_0(E)\oplus H_0(E) \to H_0(A)\oplus H_0(B)\] which is clearly an isomorphism since both $A$, $B$ and $E$ are connected.  Hence $e$ is injective.  Now we claim that $f$ is the zero map, since the previous map in the Mayer-Vietoris sequence is the diagonal map and hence is injective.  This means that $c$ is injective, and so by commutativity $d$ must be injective also, and hence $a$ and $b$ are both the zero maps.  The preceding simplification yields the following diagram.

\begin{equation}\label{eq:connect_sum_ks_diagram}
	\begin{tikzcd}
		& \Z/2 \arrow[d, "r"] \arrow[r, "\Delta"] & \Z/2 \ \ \oplus \ \ \Z/2 \arrow[d, shift left=5ex, "i_B"] \arrow[d, shift right=5ex, "i_A"]  & \\
		0 \arrow[r] & H^4(W,Y) \arrow[d, "q"] \arrow[r,"\cong"] & H^4(A,C)\oplus H^4(B,D) \arrow[d, shift left=5ex, "j_B"'] \arrow[d, shift right=5ex, "j_A"] \arrow[r] & 0\\
		\Z/2 \arrow[r] & H^4(W,F) \arrow[d] \arrow[r]  & H^4(A,F_A)\oplus H^4(B,F_B) \arrow[d, start anchor={[xshift=-5ex]}, end anchor=north west] \arrow[d, start anchor={[xshift=5ex]}, end anchor=north east] \arrow[r] & 0\\
		&  0 & 0 &
	\end{tikzcd}
\end{equation}

We note that the topmost horizontal map is the diagonal map $\Delta\colon x\mapsto (x,x)$.  Before giving the proof of \zcref{thm:connect_sum} we consider a special case in the form of a lemma.

\begin{proof}[Proof of \zcref{thm:connect_sum}]
	We will use the notation from \zcref{stp:connect_sum} throughout.  Consider the pair $(W,Y)=(A\cup B, C\cup D)$.  First, we will consider a special case, namely when $X_2=S^1\times S^3$, $\gamma_2 = S^1\times\{\pt\}$ and $f_2=\Id_{X_2}$.  We will then prove the general case of the theorem via our consideration of the special case.
	
	Let $X_2=S^1\times S^3=X'_2$, let $\gamma_2=S^1\times\{\pt\}$ and let $f_2=\Id_{X_2}$.  Then $X_2\sm (\nu\gamma_2)\cong S^1\times D^3$, so hence $X_\# \cong X_1$ and $f_\# = f_1$.  The map $i_B$ is then Poincar\'{e} dual to the inclusion induced map $H_1(S^1\times S^2\times I) \to H_1(S^1\times D^3 \times I)$ and hence is injective.  It follows by commutativity that $r$ is injective.  Consider any element $x\in H^4(W,F)$.  From the diagram and injectivity of $r$, we can see that there are two possible lifts of $x$ in $H^4(W,Y)$ that differ by $r(z)$ where $\Z/2=\Z/2\langle z\rangle$.  We will define a preferred lift $t(x)$ by specifying that the element $t(x)= (t_1(x),0)\in H^4(A,C)\oplus H^4(B,D)$, which uniquely determines $t$ (the fact that the second coordinate can be made to be zero follows from $i_B$ being surjective in this special case).  Now consider $\varpi\cs(f_1)\in H^4(W,F)$.  By naturality of the Kirby-Siebenmann invariant (\zcref{rmk:ks_naturality}), $q$ maps \[\ks(W,Y)=(\ks(A,C),\ks(B,D))\in H^4(W,Y)\] to $\varpi\cs(f_1)$, and, by the definition of $f_2$, we have that $\ks(B,D)=0$.  This means that~$t(\varpi\cs(f_1))=\ks(W,Y)$.
	
	We have produced a map $\mathfrak{A}\colon H^3(X_1,\partial X_1)\to H^4(A,C)$ as the composition
	\[
	H^3(X_1,\partial X_1)\xrightarrow{\varpi} H^4(W,F)\xrightarrow{t} H^4(A,C)\oplus H^4(B,D)\xrightarrow{\pr_1} H^4(A,C).
	\]
	Similarly, by considering the reverse special case where $X_1=S^1\times S^3=X'_1$, $f_1=\Id_{X_1}$ and where $f_2$ is any homeomorphism $f_2\colon X_2 \to X'_2$, we obtain in the same way a map $\mathfrak{B}\colon H^3(X_2,\partial X_2)\to H^4(B,D)$.
	
	Now consider general $X_1,X_2,X'_1,X'_2,f_1,f_2$.  Write $\alpha:= \mathfrak{A}(\cs(f_1))$ and $\beta:= \mathfrak{B}(\cs(f_2)$.  Consider the element $(\alpha,\beta)\in H^4(A,C)\oplus H^4(B,D)\cong H^4(W,Y)$ and map it down using $q$ to $q(\alpha,\beta)$.  We have that $q(\alpha,\beta)=\varpi\cs(f_\#)$, as by construction $(\alpha,\beta)=(\ks(A,C),\ks(B,D))$ and by naturality of the Kirby-Siebenmann invariant, this must map to $\ks(W,F)=\varpi\cs(f_\#)$ via $q$, since $q$ is inclusion induced.

	So we have constructed a map $P\colon H^3(X_1,\partial X_1)\oplus H^3(X_2,\partial X_2)\to H^3(X_{\#},\partial X_{\#})$ as the composition
	\[
	H^3(X_1,\partial X_1)\oplus H^3(X_2,\partial X_2) \xrightarrow{(\mathfrak{A},\mathfrak{B})} H^4(A,C)\oplus H^4(B,D) \xrightarrow{q} H^4(W,F)\xrightarrow{\varpi^{-1}} H^3(X_{\#},\partial X_{\#})
	\] which sends $(\cs(f_1),\cs(f_2))\mapsto \cs(f_\#)$.
	
	It remains to show that this map is equal to the map $Q$ from \zcref{dfn:Q}).  We show this now.  First, analogously to in \zcref{stp:connect_sum}, consider the long exact sequences for the triples
	\begin{enumerate}[(i)]
			\item $(X_\#,\partial X_\#\sqcup\partial\ol{\nu}\gamma_1,\partial X_\#)$,
			\item $(X_i\sm\nu\gamma_i,\partial(X_i\sm\nu\gamma_i),\partial X_i)$,
	\end{enumerate} and the relative Mayer-Vietoris sequences for the pairs
	\begin{enumerate}[(i)]
			\item $\left(X_\#,\partial X_\#\right)=\left((X_1\sm\nu\gamma_1) \cup (X_2\sm\nu\gamma_2),\partial X_1 \cup \partial X_2\right)$
			\item $\left(X_\#,\partial X_\# \sqcup\partial\ol{\nu}\gamma_1\right)=\left((X_1\sm\nu\gamma_1) \cup (X_2\sm\nu\gamma_2),\partial(X_1\sm\nu\gamma_1)\cup\partial(X_2\sm\nu\gamma_2)\right)$.
	\end{enumerate}  These sequences give the following commutative diagram analogous to (\ref{eq:connect_sum_ks_diagram}) (note that we have simplified the notation by writing $\partial$ on its own to refer to the boundary of a manifold when it is clear by context which manifold is being referred to).
		
	\begin{equation}\label{eq:connect_sum_cs_diagram}
			\begin{adjustbox}{width=\textwidth,center}
					\begin{tikzcd}
							& \Z/2 \arrow[d] \arrow[r, "\Delta"] & \Z/2 \ \ \oplus \ \ \Z/2 \arrow[d, shift left=5ex] \arrow[d, shift right=5ex]  & \\
							0 \arrow[r] & H^3(X_\#,\partial X_\# \sqcup \partial\ol{\nu}\gamma_1) \arrow[d] \arrow[r,"\cong"] & H^3(X_1\sm\nu\gamma_1,\partial)\oplus H^3(X_2\sm\nu\gamma_2,\partial)) \arrow[d, shift left=5ex] \arrow[d, shift right=5ex] \arrow[r] & 0\\
							\Z/2 \arrow[r] & H^3(X_\#,\partial) \arrow[d] \arrow[r]  & H^3(X_1\sm\nu\gamma_1,\partial X_1)\oplus H^3(X_2\sm\nu\gamma_2,\partial X_2) \arrow[d, start anchor={[xshift=-5ex]}, end anchor=north west] \arrow[d, start anchor={[xshift=5ex]}, end anchor=north east] \arrow[r] & 0\\
							&  0 & 0 &
						\end{tikzcd}
				\end{adjustbox}
		\end{equation}
	
	One can see that this diagram (\ref{eq:connect_sum_cs_diagram}) is isomorphic to (\ref{eq:connect_sum_ks_diagram}) using the isomorphism given in \zcref{dfn:casson_sullivan_invariant} (or the analogous isomorphisms).  Since those isomorphisms come from Poincar\'{e} duality and inclusion maps, the two parallel diagrams must commute.  This allows us to reinterpret the construction of our map $P$ on the level of the $4$-manifolds themselves, rather than the mapping cylinders, which then allows us to relate this map to $Q$.
	
	Consider the following diagram which, aside from the lowermost map and group, splits as a direct sum of diagrams.
	\begin{equation}\label{eq:connect_sum_cs_diagram_big}
	\begin{adjustbox}{width=\textwidth,center}
	\begin{tikzcd}
		& & \\
		\begin{matrix} H^3(X_1,\partial X_1 \sqcup \partial \ol{\nu} \gamma_1) \\ \oplus \\H^3(X_2,\partial X_2 \sqcup \partial \ol{\nu} \gamma_2)\end{matrix} \arrow[r,two heads] \arrow[d,"\cong"]& H^3(X_1,\partial)\oplus H^3(X_2,\partial) \arrow[dl,"t",dotted] \arrow[dr,"t'"',dotted] \arrow[r,"\PD", "\cong"'] & \begin{matrix} H_1(X_1) \\ \oplus \\H_1(X_2)\end{matrix}\\
		\begin{matrix} H^3(X_1\sm\nu\gamma_1,\partial) \oplus H^3(\ol{\nu}\gamma_1,\partial) \\ \oplus \\H^3(X_2\sm\nu\gamma_2,\partial)\oplus H^3(\ol{\nu}\gamma_2,\partial)\end{matrix} \arrow[rr,"\cong"', "{\left((\PD,\PD),(\PD,\PD)\right)}"] \arrow[dr,"{(\pr_1,\pr_1)}"']& & \begin{matrix} H_1(X_1\sm\nu\gamma_1)\oplus H_1(\ol{\nu}\gamma_1) \\ \oplus \\H_1(X_2\sm\nu\gamma_2)\oplus H_1(\ol{\nu}\gamma_2)\end{matrix} \arrow[u, two heads] \arrow[dl, "{(\PD^{-1}\circ\pr_1,\PD^{-1}\circ\pr_1)}"] \\
		& H^3(X_1\sm\nu\gamma_1,\partial)\oplus H^3(X_2\sm\nu\gamma_2,\partial) \arrow[d,"\mathfrak{b}\circ\mathfrak{a}^{-1}"] & \\
		& H^3(X_{\#},\partial) & 
	\end{tikzcd}\end{adjustbox}
	\end{equation}
	The two vertical maps are direct sums of the respective Mayer-Vietoris sequences.  The top-left horizontal map comes from the direct sum of the respective long exact sequences of triples.  The lowest vertical map is defined using maps from \zcref{dfn:Q}.  Ignoring the notated lifts $t$ and $t'$ for now, we show that this diagram commutes.  The commuting of the lower triangle is trivial.  We now show that the top rectangle commutes.  Since the rectangle is a direct sum of a two diagrams, it suffices to show that one of the diagram summands commutes.  We write this below, where we will drop the indices (i.e.\ $X_1=X$, etc.).
	
	\[
	\begin{tikzcd}
		H^3(X,\partial) \arrow[r,"\PD","\cong"'] & H_1(X) \\
		H^3(X,\partial X \sqcup \partial\ol{\nu}\gamma) \arrow[u,two heads] \arrow[d,"\cong"]& \\
		H^3(X\sm\nu\gamma,\partial)\oplus H^3(\ol{\nu}\gamma,\partial) \arrow[r,"{(\PD,\PD)}","\cong"']  & H_1(X\sm\nu\gamma)\oplus H_1(\ol{\nu}\gamma) \arrow[uu,two heads]
	\end{tikzcd}
	\]
	
	Let $x\in H^3(X,\partial X\sqcup \partial\ol{\nu}\gamma)$ and let $y$ denote its image in $H^3(X,\partial)$.  By the vertical isomorphism, we see that $x$ maps to an element $(\PD^{-1}\alpha_1,\PD^{-1}\alpha_2)\in H^3(X\sm\nu,\partial)\oplus H^3(\ol{\nu}\gamma,\partial)$, where $\alpha_1$ is the homology class of some curve in $X$ and $\alpha_2$ is trivial or is the homology class of $\gamma$.  Mapping this pair to the right and then up gives the class $\alpha_1+\alpha_2$, which is the homology class of some curve in $X$.  We now need to show that $y$ is Poincar\'{e} dual to $\alpha_1+\alpha_2$.  Since we are using $\Z/2$-coefficients, we will view these cohomology groups as the hom-duals of the respective homology groups.  The element $y$ is defined such that it evaluates on the relative cycle $s$ in $X$ first by considering $s$ as a further relative cycle $s'\in H_3(X,\partial X\sqcup\partial\ol{\nu}\gamma)$ and then evaluating using $x$.  By the isomorphism in the corresponding Mayer-Vietoris sequence in homology, any such cycle splits as a pair $(s_1,s_2)$ of relative cycles in $H_3(X\sm\nu\gamma,\partial)$ and $H_3(\ol{\nu}\gamma,\partial)$, respectively.  Hence, evaluating $x(s')$ is the same as evaluating $\alpha_1(s_1)+\alpha_2(s_2)$, and so $y$ is Poincar\'{e} dual to $\alpha_1+\alpha_2$.  This completes the proof that the diagram commutes. 
	
	Consider again diagram \ref{eq:connect_sum_cs_diagram_big} and let $(x_1,x_2)\in H^3(X_1,\partial)\oplus H^3(X_2,\partial)$.  This pair is mapped by $P$ by sending it down the left of the diagram, first by using the preferred lift $t$, then using the $(\pr_1,\pr_1)$ and then finally using the lowest vertical map.  The lift $t$ is such that $t(x_1,x_2)$ is of the form $\left((t(x_1),0),(t(x_2),0)\right)$.  
	
	The lift $t'$ is defined as in the beginning of \zcref{dfn:Q}: $t'$ is defined as $\mathfrak{c}^{-1}\circ (\PD,\PD)$ post-composed with the inclusion as a direct sum).  This is the unique lift of $(\PD,\PD)(x_1,x_2)$ which is of the form $(\ast,0,\ast,0)$.  This means that the pair $(x_1,x_2)$ is mapped by $Q$ by sending it down the right of the diagram, first by using the lift $t'$ and then $t'(x_1,x_2)$ is mapped down again to $H^3(X_{\#},\partial)$ via $(\PD^{-1}\circ \pr_1,\PD^{-1}\circ\pr_1))$ and the lowest vertical map.
	
	To finish the proof, we need to know that the middle triangle formed by the two lifts, $t$ and~$t'$, commutes.  The fact that the diagram (without the lifts) commutes shows that \[y:=\left((\PD,\PD),(\PD,\PD)\right)(t(x_1,x_2))\] is a lift of $(\PD,\PD)(x_1,x_2)$.  Since $y$ is also of the form $(\ast,0,\ast,0)$ (as $t(x_1,x_2)$ is) the observation in the above paragraph means that $y=t'(x_1,x_2)$, and hence the triangle commutes. 
\end{proof}

\subsection{A connected-sum formula for the Casson-Sullivan invariant}

Before finishing this section we will note the following. Similarly to \zcref{thm:connect_sum}, we also have a formula for the Casson-Sullivan invariant under the actual connected-sum operation.  Since all of the arguments are the same or simpler than for connected-summing over a circle, we will not give many details.  Let $X_1$, $X_2$, $X'_1$ and $X'_2$ be two pairs of compact, connected topological $4$-manifolds and let $f_i\colon X_i\to X'_i$ be a pair of homeomorphisms.  As mentioned previously (see \zcref{rmk:connected-sum} and the preceding paragraph), we can form a connected-sum homeomorphism  $f_\#\colon X_1\# X_2 \to X'_1\# X'_2$.  If we let $q_i\colon X_1\# X_2\to X_i$ denote the collapse maps onto the $i$th connected-summand, then we have the following formula.

\begin{thm}\label{thm:actual_connect_sum}
	Let $X_1$, $X_2$, $X'_1$ and $X'_2$ be two pairs of compact, connected, smooth, orientable $4$-manifolds with $f_i\colon X_i\to X'_i$ a pair of homeomorphisms such that the connected-sum homeomorphism \[f_{\#}:=f_1\# f_2\colon X_1\# X_2\to X'_1\# X'_2\] is defined.  Let $q_i$ be the pair of collapse maps defined above.  Then
	\[\cs(f_\#)=q_1^*\cs(f_1)+q_2^*\cs(f_2).\]
\end{thm}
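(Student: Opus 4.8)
The plan is to run exactly the argument of \Cref{thm:connect_sum}, but with the separating hypersurface $S^1\times S^2$ replaced by a separating $S^3$; the effect of this is that the Mayer-Vietoris terms which produced the quotient map $Q$ now vanish, so the formula becomes a genuine sum. Write $X_\#:=X_1\# X_2$ and $X'_\#:=X'_1\# X'_2$, let $W:=M_{f_\#}$ be the mapping cylinder of $f_\#$, and, as in \Cref{stp:connect_sum}, decompose $W=A\cup B$ where $A$ is $M_{f_1}$ with a small open ball-times-interval removed (the piece coming from the $X_1$-summand), $B$ is the analogous piece of $M_{f_2}$, and $E:=A\cap B\cong S^3\times I$. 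Set $F:=\partial W$, $F_A:=F\cap A$, $F_B:=F\cap B$, so that $F_A\cap F_B=E\cap F=\partial E$.

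First I would record the relative Mayer-Vietoris sequence for $(W,F)=(A\cup B,F_A\cup F_B)$, whose intersection term is $H^k(E,\partial E;\Z/2)\cong H_{4-k}(S^3;\Z/2)$. In the degree we care about this gives $H^3(E,\partial E;\Z/2)=0$, and a quick inspection of the neighbouring groups (using $H^5(A,F_A;\Z/2)\cong H_0(A,E;\Z/2)=0$, likewise for $B$) forces
\[
H^4(W,F;\Z/2)\xrightarrow{\ \cong\ }H^4(A,F_A;\Z/2)\oplus H^4(B,F_B;\Z/2).
\]
In particular, unlike in \Cref{thm:connect_sum}, the analogue of diagram \labelcref{eq:connect_sum_ks_diagram} collapses to this single isomorphism, with no quotient and no preferred-lift bookkeeping. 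By naturality of the Kirby-Siebenmann invariant under inclusions of open submanifolds (\Cref{rmk:ks_naturality}), this isomorphism carries $\ks(W,F)=\varpi\cs(f_\#)$ to $(\ks(A,F_A),\ks(B,F_B))$, and the same naturality identifies $\ks(A,F_A)$ with the restriction of $\ks(M_{f_1},\partial M_{f_1})$ along $A\hookrightarrow M_{f_1}$, the latter being the Kirby-Siebenmann invariant that represents $\cs(f_1)$; similarly for $B$.

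It then remains to transport this across the Poincar\'{e} duality isomorphisms $\varpi$ and match it with the collapse maps, i.e. to carry out the analogue of the ``$P=Q$'' step of \Cref{thm:connect_sum}. Running the Mayer-Vietoris sequence for $(X_\#,\partial X_\#)=((X_1\sm B^4)\cup(X_2\sm B^4),\partial X_1\sqcup\partial X_2)$, the intersection term is $H^k(S^3;\Z/2)$, and since $H^2(S^3;\Z/2)=0$ --- precisely the term that is nonzero for $S^1\times S^2$ and produced the quotient in \Cref{lem:mv_collapse} --- one obtains an isomorphism $H^3(X_\#,\partial X_\#;\Z/2)\cong H^3(X_1,\partial X_1;\Z/2)\oplus H^3(X_2,\partial X_2;\Z/2)$ under which $q_1^*$ and $q_2^*$ are the inclusions of the two summands: $q_i$ restricted to $X_j\sm B^4$ for $j\neq i$ is constant, while on $X_i\sm B^4$ it is the inclusion into $X_i$. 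Since the isomorphisms $\varpi$ are assembled from Poincar\'{e} duality and inclusion-induced maps (compare diagram \labelcref{eq:connect_sum_cs_diagram}), the two parallel splittings are compatible, so $\cs(f_\#)$ corresponds to $(\cs(f_1),\cs(f_2))$ and hence equals $q_1^*\cs(f_1)+q_2^*\cs(f_2)$.

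The only genuine work is the exact-sequence bookkeeping that makes the Mayer-Vietoris terms vanish or degenerate, together with checking that the splitting on the mapping-cylinder side matches the one on the $4$-manifold side through $\varpi$. I expect the latter compatibility to be the fiddliest point, though it is routine --- and everything here is strictly simpler than in \Cref{thm:connect_sum} because the relevant cohomology groups of $S^3$ (and of $S^3\times I$ relative to its ends) with $\Z/2$-coefficients vanish, whereas the corresponding groups for $S^1\times S^2$ do not.
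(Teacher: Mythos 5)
Your proposal is correct and follows essentially the approach the paper itself takes: the paper omits the proof of this statement, remarking only that it is the same method as \Cref{thm:connect_sum} with simpler arguments, and your argument is exactly that method with the separating $S^1\times S^2$ replaced by an $S^3$, so that the Mayer--Vietoris correction terms vanish and no quotient map or preferred-lift bookkeeping is needed. One small point of justification: the vanishing of $H^5(A,F_A;\Z/2)$ does not by itself give surjectivity of $H^4(W,F;\Z/2)\to H^4(A,F_A;\Z/2)\oplus H^4(B,F_B;\Z/2)$; you should also observe that the restriction maps to $H^4(E,E\cap F;\Z/2)\cong\Z/2$ are zero (they are Lefschetz dual to the boundary maps $H_1(A,E;\Z/2)\to H_0(E;\Z/2)$, which vanish because $E$ is connected), or simply note that both sides are isomorphic to $H_1(X_1;\Z/2)\oplus H_1(X_2;\Z/2)$ and the map is injective, so it is an isomorphism by a dimension count.
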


We will not give the proof here, as it is exactly the same method as for proving \zcref{thm:connect_sum} but with simpler arguments.  In particular, having the degree one collapse is very useful (such collapse maps do not always exist for connected-sums over circles).  This result will only be used for proving \zcref{cor:swap_not_smoothable} in \zcref{sbs:interesting_example}.

\section{Stable realisation of the Casson-Sullivan invariant}\label{sec:stable_realisation}
In this section we will prove \zcref{thm:stable_realisation_theorem} which shows that the Casson-Sullivan invariant can always be realised stably.  The idea is to use \zcref{thm:connect_sum} along with the following proposition, which relies on the work of Cappell-Shaneson and Lee \cite{cappell_shaneson_1971,lee_1970}.  In what follows we will always write $\theta$ for the submanifold $S^1\times \{\pt\}\subset S^1\times S^3$.

\begin{prop}\label{prop_cs_existence_s1xs^3}
	There exists a homeomorphism 
	\[
	\sigma\colon (S^1\times S^3) \# (S^2\times S^2)\to (S^1\times S^3) \# (S^2\times S^2)
	\] such that $\cs(\sigma)$ is the generator of $H^3(S^1\times S^3;\Z/2)$.  Furthermore, $\sigma\vert_{\nu\theta}=\Id_{\nu\theta}$.
\end{prop}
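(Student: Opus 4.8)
The plan is to produce $\sigma$ by starting from the Cappell--Shaneson--Lee construction of exotic smooth structures on $(S^1 \times S^3) \# (S^2\times S^2)$ and upgrading it, following Wang, to an honest self-homeomorphism, then computing its Casson--Sullivan invariant. Recall that Cappell--Shaneson \cite{cappell_shaneson_1971} and Lee \cite{lee_1970} constructed, via surgery theory on $S^1\times S^3$, a homotopy self-equivalence that is not homotopic to a diffeomorphism; the relevant normal invariant lives in $[S^1\times S^3, G/\mathrm{TOP}]$ or $[S^1 \times S^3, G/\mathrm{O}]$ and is detected by the degree-one piece in $H^3(S^1\times S^3;\Z/2) \cong H_1(S^1\times S^3;\Z/2) \cong \Z/2$, i.e. by the Kirby--Siebenmann-type obstruction. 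Since $\pi_1(S^1\times S^3) \cong \Z$ is good, Freedman--Quinn surgery and the $s$-cobordism theorem let us realise this normal invariant by an actual homeomorphism after a single $S^2 \times S^2$ stabilisation — this is precisely Wang's observation \cite[Chapter 6.2]{wang_93}. So the first step is to invoke this literature to obtain a self-homeomorphism $\sigma_0$ of $(S^1\times S^3)\#(S^2\times S^2)$ whose topological normal invariant is the nontrivial class pulled back from the generator of $H^3(S^1\times S^3;\Z/2)$.

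The second step is to identify the normal invariant of $\sigma_0$ with $\cs(\sigma_0)$. The Casson--Sullivan invariant, as defined in \Cref{def:casson_sullivan_invariant} (equivalently as $\delta(f,\tau_X,\tau_{X'})$ by \Cref{prop:delta=cs}), is exactly the element of $[X,\partial X;\TOP\!/\!\tO] \cong H^3(X,\partial X;\Z/2)$ measuring the difference between $\tau_X$ and $f^*\tau_{X'}$; under the fibration $G/\mathrm{O} \to G/\mathrm{TOP} \to B(\TOP\!/\!\tO)$ this is the image of the surgery-theoretic normal invariant, and the $6$-connected map $\TOP\!/\!\tO \to K(\Z/2,3)$ makes this image the $H^3(-;\Z/2)$-component of the normal invariant. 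So the fact that the Cappell--Shaneson--Lee normal invariant is exactly the generator of $H^3(S^1\times S^3;\Z/2)$ translates directly into $\cs(\sigma_0)$ being that generator. This step is essentially bookkeeping once the definitions are unwound, but care is needed to match the surgery-theoretic conventions (identification of $G/\mathrm{TOP}$ at the prime $2$, the delooping $B(\TOP\!/\!\tO)$) with the microbundle-theoretic definition used in \Cref{sbs:casson_sullivan_definition}.

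The final step is the normalisation condition $\sigma\vert_{\nu\theta} = \Id_{\nu\theta}$. Having $\sigma_0$ with the correct $\cs$, we use isotopy extension \cite{edwards_kirby_1971}, uniqueness of normal bundles \cite[Chapter 9.3]{freedman_quinn_1990}, and the mapping class group calculation for $S^1 \times S^2$ \cite{gluck_1962} (exactly as in the setup preceding \Cref{thm:connect_sum}) to arrange, up to isotopy, that $\sigma_0$ restricts on a tubular neighbourhood of $\theta$ either to the identity or to the Gl\"uck twist $T$ from \labelcref{eq:gluck_twist}. If it is the identity we are done; if it is the Gl\"uck twist, observe that $T$ is isotopic (through homeomorphisms, not diffeomorphisms — but that is irrelevant here) to the identity on $S^1\times D^3$, or alternatively compose with an ambient homeomorphism supported near $\theta$ that undoes the twist, which does not change $\cs$ since such a local modification has vanishing Casson--Sullivan invariant (the relevant mapping cylinder is a product away from a collar and the Gl\"uck twist on $S^1\times D^3$ is topologically standard). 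Thus we may take $\sigma = \sigma_0$ after this adjustment. The main obstacle is the second step: carefully reconciling the surgery-theoretic normal invariant coming from the Cappell--Shaneson--Lee--Wang construction with the microbundle-difference definition of $\cs$, and confirming that the nontrivial class really is the degree-one generator of $H^3$ rather than landing in some other summand or being killed by the stabilisation.
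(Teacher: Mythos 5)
Your Step 2 is where the real mathematical content of this proposition lies, and as written it does not go through. The Cappell--Shaneson--Lee element is not detected by a normal invariant of the $4$-manifold: $[S^1\times S^3,G/\TOP]$, and likewise $[(X\times I,\partial),G/\TOP]\cong H^2(X\times I,\partial;\Z/2)\oplus H^4(X\times I,\partial;\Z)$, has no $H^3(\,\cdot\,;\Z/2)$ summand, because the odd homotopy groups of $G/\TOP$ vanish; the nontriviality of the construction is carried entirely by the surgery obstruction, namely the generator of $L_5(\Z[\Z])\cong\Z$ realised by Lee's $2\times 2$ matrix (\Cref{thm:lee}, \Cref{cor:surgery_obstruction_generator_realisation}). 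Moreover the topological normal invariant of a homeomorphism is zero, so ``the image of the surgery-theoretic normal invariant under $G/\TOP\to\B(\TOP\!/\!\tO)$'' cannot be read off as $\cs(\sigma_0)$; and there is no formal implication from ``$\sigma$ realises the generator of $L_5(\Z[\Z])$'' to ``$\cs(\sigma)\neq 0$''. Establishing exactly that implication is the proof in the paper, and it is genuinely indirect: one identifies $\varpi\cs(\sigma)$ with $\ks(W,\partial W)$ for $W=N\cup_\sigma -N$ (\Cref{lem:ks=cs}); one connect-sums along a circle with the Shaneson-split problem $S^1\times E_8\to S^1\times S^4$, which kills the surgery obstruction while changing $\ks$ by $\ks(S^1\times E_8)=1$ (\Cref{lem:surgery_obstruction_connect_sum}, together with the long-exact-sequence/handle bookkeeping); one then surgers to an $s$-cobordism (leaving $\ks$ unchanged), applies the topological $s$-cobordism theorem over the good group $\Z$ to obtain a self-homeomorphism of $S^1\times S^3$, and uses that every such homeomorphism is pseudo-smoothable (\Cref{lem:sw_s1xs3}), forcing the final $\ks$ to vanish and hence $\ks(W,\partial W)=1$. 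None of this is ``bookkeeping once the definitions are unwound'', and your proposal offers no substitute for it; in fact the naive surgery-exact-sequence route over $S^1\times S^3$ itself is blocked precisely because the relevant obstruction in $L_5(\Z[\Z])$ is nonzero, which is why the $S^2\times S^2$ stabilisation and Lee's matrix enter in the first place.

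There is also a secondary flaw in your Step 3. The Gl\"uck twist is not isotopic to the identity, and an ambient homeomorphism ``supported near $\theta$ undoing the twist'' would restrict on a collar of $\partial\ol{\nu}\theta$ to an isotopy from the twist of $S^1\times S^2$ to the identity, which does not exist: the twist represents a nontrivial element of the mapping class group of $S^1\times S^2$ computed by Gluck. The paper avoids this by arranging the normalisation at the source: among the two Stong--Wang pseudo-isotopy classes inducing Lee's matrix, one chooses the representative preserving the spin structures, and that choice is what permits isotoping $\sigma$ to be the identity on $\nu\theta$ (\Cref{cor:surgery_obstruction_generator_realisation}).
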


We postpone the proof of this proposition until \zcref{sbs:prop_3.1}.  Now we use \zcref{thm:connect_sum} and \zcref{prop_cs_existence_s1xs^3} to prove the following theorem, from which \zcref{thm:stable_realisation_theorem} will immediately follow.  To parse this theorem, note that we will implicitly use the canonical diffeomorphisms
\[ 
	X_0\#_{\gamma=\theta}((S^1\times S^3)\#(S^2\times S^2))\cong X_0\#(S^2\times S^2)
\] 
and 
\[ 
	X'_0\#_{(f_0)_*(\gamma)=\theta}((S^1\times S^3)\#(S^2\times S^2))\cong X'_0\#(S^2\times S^2).
\]
\begin{thm}\label{thm:stable_realisation_techincal}
	Let $f_0\colon X_0\to X'_0$ be a homeomorphism of compact, connected, smooth, orientable $4$-manifolds and define $\eta_0:=\cs(f_0)$.  Let $\eta\in H^3(X_0,\partial X_0;\Z/2)$, and let $\gamma\subset X$ be a (framed) embedded curve dual to $\eta - \eta_0\in H^3(X_0,\partial X_0;\Z/2)$.  Then, after fixing a framing of $f_0(\theta)$, we have the following cases.
	\begin{enumerate}[(i)]
		\item If $f_0\vert_{\nu\gamma}$ is the identity map, then the connected-sum homeomorphism
		\[f_0\#_{\gamma=\theta}\sigma \colon X_0\# (S^2\times S^2) \to X'_0\# (S^2\times S^2)\]
		is defined and $\cs(f\#_{\gamma=\theta}\sigma)=\eta$.
		\item If $f_0\vert_{\nu \gamma} $ is the Gluck twist map, then the connected-sum homeomorphism
		\[f_0\#_{\gamma=\theta}(\sigma\circ t) \colon X_0\# (S^2\times S^2) \to X'_0\# (S^2\times S^2),\]
		where $t$ denotes the Gluck twist map extended onto the $(S^2\times S^2)$-summand, is defined and $\cs(f_0\#_{\gamma=\theta}\sigma)=\eta$.
	\end{enumerate}
\end{thm}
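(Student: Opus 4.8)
The plan is to realise both homeomorphisms as connect-sums over circles, to compute their Casson--Sullivan invariants using \Cref{thm:connect_sum} with \Cref{prop_cs_existence_s1xs^3} supplying the second summand, and then to read off the answer through the isomorphism of \Cref{lem:mv_collapse}. Write $Y:=(S^1\times S^3)\#(S^2\times S^2)$, $X_\#:=X_0\#_{\gamma=\theta}Y$ and $X'_\#:=X'_0\#_{f_0(\gamma)=\theta}Y$.

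First I would dispose of the geometry. Since the connect-sum with $S^2\times S^2$ inside $Y$ is taken away from $\theta$, we have $Y\setminus\nu\theta\cong(S^1\times D^3)\#(S^2\times S^2)$, and the gluing map $\varphi$ defining the connect-sum over circles extends over the $S^1\times D^3$ factor (as $\Id_{S^1}\times\bar a$ with $\bar a\colon D^3\to D^3$, $v\mapsto -v$). Hence $(X_0\setminus\nu\gamma)\cup_\varphi(S^1\times D^3)\cong X_0$ (independently of the framing of $\gamma$, as the two framings of $\gamma$ differ by a Gl\"uck rotation which also extends over $S^1\times D^3$), so $X_\#\cong X_0\#(S^2\times S^2)$ and likewise $X'_\#\cong X'_0\#(S^2\times S^2)$. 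The connect-sum homeomorphism is defined in case (i) because $f_0|_{\nu\gamma}=\Id$ and $\sigma|_{\nu\theta}=\Id$ (\Cref{prop_cs_existence_s1xs^3}) are both of the identity type, and in case (ii) because $f_0|_{\nu\gamma}$ is the Gl\"uck twist $T$ and $(\sigma\circ t)|_{\nu\theta}=\sigma|_{\nu\theta}\circ t|_{\nu\theta}=T$ is also of the Gl\"uck-twist type; in either case $(f_0)_*[\gamma]=[f_0(\gamma)]$ and the second homeomorphism fixes $\theta$ pointwise, so \Cref{thm:connect_sum} applies.

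Next I would compute. In case (i), \Cref{thm:connect_sum} together with \Cref{prop_cs_existence_s1xs^3} gives
\[
\cs\big(f_0\#_{\gamma=\theta}\sigma\big)=Q\big(\cs(f_0),\cs(\sigma)\big)=Q(\eta_0,\mu),
\]
where $\mu:=\cs(\sigma)=\PD^{-1}[\theta]$ generates $H^3(Y;\Z/2)\cong\Z/2$. In case (ii), \Cref{prop:cs_homomorphism} gives $\cs(\sigma\circ t)=\cs(t)+t^*\cs(\sigma)$, and here $\cs(t)=0$ because $t$ is a diffeomorphism (\Cref{prop:cs_pseudo-isotopy_inv}), while $t^*\cs(\sigma)=\mu$ because $t^*$ is an automorphism of $H^3(Y;\Z/2)\cong\Z/2$ and hence the identity; so $\cs\big(f_0\#_{\gamma=\theta}(\sigma\circ t)\big)=Q(\eta_0,\mu)$ as well.

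Finally I would identify $Q(\eta_0,\mu)$ with $\eta$. By \Cref{lem:mv_collapse} the map $Q$ is the quotient by the relator $(\PD^{-1}[\gamma],\PD^{-1}[\theta])=(\PD^{-1}[\gamma],\mu)$, so adding this relator,
\[
Q(\eta_0,\mu)=Q\big(\eta_0+\PD^{-1}[\gamma],0\big)=Q(\eta,0),
\]
using $\PD^{-1}[\gamma]=\eta-\eta_0$ from the choice of $\gamma$. The map $Q(-,0)\colon H^3(X_0,\partial X_0;\Z/2)\to H^3(X_\#,\partial X_\#;\Z/2)$ has trivial kernel, and the remaining generator $Q(0,\mu)=Q(\PD^{-1}[\gamma],0)$ of the target lies in its image, so it is an isomorphism; moreover under the diffeomorphism $X_\#\cong X_0\#(S^2\times S^2)$ it agrees with the collapse isomorphism $\ell^*$ of \Cref{prop:cs_stable_1}. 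With this identification $\cs(f_0\#_{\gamma=\theta}\sigma)=\eta$ in case (i) and $\cs\big(f_0\#_{\gamma=\theta}(\sigma\circ t)\big)=\eta$ in case (ii). I expect the main obstacle to be precisely this last identification — checking that the algebraically defined $Q(-,0)$ coincides with the geometric collapse onto $X_0$, so that the output of \Cref{thm:connect_sum} is compatible with the identifications of $H^3$ used in \Cref{prop:cs_stable_1} — together with the (standard) fact that connect-summing over $\gamma=\theta$ with $S^1\times S^3$ recovers $X_0$; once these are settled the cohomological computation is the short chase above.
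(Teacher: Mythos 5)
Your proposal follows essentially the same route as the paper: apply \Cref{thm:connect_sum} with \Cref{prop_cs_existence_s1xs^3} supplying $\cs(\sigma)$, handle case (ii) via the crossed-homomorphism formula together with $\cs(t)=0$, and conclude via the identification of $H^3(X_0\#(S^2\times S^2),\partial X_0;\Z/2)$ with $H^3(X_0,\partial X_0;\Z/2)$. Your write-up simply makes explicit a few points the paper leaves implicit (the diffeomorphism $X_0\#_{\gamma=\theta}Y\cong X_0\#(S^2\times S^2)$, the type-matching of the maps on the tubular neighbourhoods, and the compatibility of $Q(-,0)$ with the collapse isomorphism), so it is correct and in agreement with the paper's argument.
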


\begin{proof}[Proof (assuming \zcref{prop_cs_existence_s1xs^3})]
	That we only have to consider the two cases above comes from the exposition at the beginning of \zcref{sbs:casson_sullivan_connect_sum}.
	In both cases the connected-sum homeomorphism (as above) is defined by the last part of \zcref{prop_cs_existence_s1xs^3}. Let $Q$ be the map in the statement of \zcref{thm:connect_sum} for the above decomposition.  In case (i), by \zcref{thm:connect_sum} and \zcref{prop_cs_existence_s1xs^3} we have \[
	\cs(f_0\#_{\gamma=\theta}\sigma) = Q(\cs(f_0),(\cs(\sigma))) = \eta_0 + (\eta -\eta_0)=\eta.
	\]
	Similarly, in case (ii), by \zcref{prop:cs_homomorphism}, \zcref{thm:connect_sum} and \zcref{prop_cs_existence_s1xs^3} we have \begin{align*}
		\cs(f_0\#_{\gamma=\theta}(\sigma\circ t)) &= Q(\cs(f_0),\cs(\sigma\circ t))  \\ &= Q(\cs(f_0),\cs(\sigma)+\cs(t)) \\ &= Q(\cs(f_0),\cs(\sigma)) \\ &= \eta_0 + (\eta - \eta_0) = \eta.
	\end{align*}
	 In the above formulae we have used the isomorphism \[H^3(X_0,\partial X_0;\Z/2)\cong H^3(X_0\#(S^2\times S^2),\partial X_0;\Z/2)\] induced by the map collapsing the $S^2\times S^2$ summand.
\end{proof}

\begin{proof}[Proof of \zcref{thm:stable_realisation_theorem}]
	Using the isomorphism $H^3(X_0,\partial X_0;\Z/2)\cong H^3(X,\partial X;\Z/2)$ induced by collapsing the $S^2\times S^2$ summand, we can consider the given class $\eta\in H^3(X,\partial X;\Z/2)$ as an element $\eta\in H^3(X_0,\partial X_0;\Z/2)$.  By assumption there exists a homeomorphism $f_0\colon X_0\to X'_0$, and applying \zcref{thm:stable_realisation_techincal} to the class $\eta$ immediately gives the result.
\end{proof}

\subsection{Surgery}

Now we fundamentally make use of the following theorem, due to Ronnie Lee.  This subsection will contain Wall's surgery obstruction groups, the $L$-groups, but we will postpone recalling their definition (see \zcref{dfn:l_groups}) until \zcref{sec:unstable_realisation} where the surgery exact sequence will be recalled.  The only fact we need here is that $L_5(\Z[\Z])\cong \Z$ \cite[Theorem 13A.8]{wall_1970}.

\begin{thm}[\cite{lee_1970}]\label{thm:lee}
	The generator of $L_5(\Z[\Z])$ can be realised by a $2\times 2$ matrix.
\end{thm}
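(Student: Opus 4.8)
The plan is to represent elements of $L_5(\Z[\Z])$ by quadratic formations and then compress a representative of the generator down to the smallest possible rank. Since $5$ is odd, $L_5(\Z[\Z])$ is a formation group: by four-periodicity $L_5 \cong L_1$, and (see Wall \cite{wall_1970}) every class is represented by an $\epsilon$-quadratic formation $(H_\epsilon(P); P, Q)$ --- a nonsingular hyperbolic $\epsilon$-quadratic form over $\Z[t,t^{-1}]$ together with an ordered pair of Lagrangians --- and, because $\mathrm{Wh}(\Z)=0$ and $\widetilde{K}_0(\Z[\Z])=0$, the various decorations on $L_5$ coincide, so there is nothing to track there. Up to stable isomorphism every formation is of the shape $(H_\epsilon(P); P, \alpha P)$ for an automorphism $\alpha$ of $H_\epsilon(P)$; when $P = \Z[t,t^{-1}]$ is free of rank one, $H_\epsilon(P)$ is free of rank two and $\alpha$ is precisely a $2\times 2$ matrix over $\Z[t,t^{-1}]$ preserving the standard hyperbolic form. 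Thus the theorem asks for such an $\alpha$ for which $(H_\epsilon(\Z[t,t^{-1}]); \Z[t,t^{-1}], \alpha\cdot\Z[t,t^{-1}])$ generates $L_5(\Z[\Z]) \cong \Z$.

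First I would identify the generator through the Shaneson splitting $L_5(\Z[t,t^{-1}]) \cong L_5(\Z) \oplus L_4(\Z) = 0 \oplus \Z$ (this is how one sees $L_5(\Z[\Z]) \cong \Z$ in the first place), so that a formation's class is detected completely by its image in $L_4(\Z) \cong \Z$, that is, by one eighth of the associated signature. A representative of the generator is the image under the algebraic splitting map $\overline{B}\colon L_4(\Z) \to L_5(\Z[t,t^{-1}])$ of the $E_8$-form; this is a formation on a free module of rank eight, essentially the graph of $\psi - t\psi^{\mathrm{T}}$ where $\psi$ is an upper-triangular integral matrix with $\psi + \psi^{\mathrm{T}}$ the $E_8$-form. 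The substance of the proof is to shrink this: using only the moves that leave the class in $L_5$ unchanged --- isomorphism, stabilisation by trivial formations $(H_\epsilon(R); R, R)$, the elementary column operations living in the automorphism group of the hyperbolic form, and cancellation of hyperbolic summands --- one reduces the rank-eight formation to a rank-one one, reading off an explicit $2\times 2$ matrix over $\Z[t,t^{-1}]$ at the end. What makes this possible over $\Z[t,t^{-1}]$, although it is impossible over $\Z$ itself --- where the quadratic generator $E_8$ genuinely has rank eight --- is that $\Z[t,t^{-1}]$ carries the unit $t$, giving the extra room needed for the relevant cancellation and stabilisation to terminate at rank one.

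I expect the main obstacle to be carrying out this reduction together with verifying that the rank-one formation one lands on is still a generator, rather than zero or a proper multiple: having written down the candidate $\alpha$, one must compute its image under the Shaneson splitting --- equivalently, split the corresponding surgery kernel along a codimension-one submanifold dual to the generator of $\pi_1$, or pass to the induced Blanchfield-type linking pairing over $\Z[t,t^{-1}]$ and extract its signature --- and check the answer is $\pm 1 \in L_4(\Z)$. An alternative, more geometric route (closer to Lee's original) sidesteps the abstract reduction: build by Wall realisation a normal cobordism from the identity of $(S^1\times S^3)\#(S^2\times S^2)$ to a self-homotopy-equivalence using a single pair of handles, arrange the framing and linking data to involve $t$ in the minimal way, extract the $2\times 2$ matrix from the handle linking, and identify the resulting surgery obstruction with the generator by a direct signature computation on the infinite cyclic cover. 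In either approach the crux is the same: that a single hyperbolic pair, equivalently a rank-one formation, already suffices, and that the explicit matrix exhibited is not a boundary.
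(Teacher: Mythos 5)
There is a genuine gap: what you have written is a plan, not a proof. The entire content of Lee's theorem is the existence of a concrete $2\times 2$ matrix $\alpha$ over $\Z[t,t^{-1}]$ preserving the rank-two hyperbolic quadratic form whose associated formation generates $L_5(\Z[\Z])\cong\Z$, and your sketch never produces it. The step ``using only isomorphism, stabilisation, elementary operations and cancellation of hyperbolic summands one reduces the rank-eight formation to a rank-one one'' is precisely the assertion to be proved; none of these moves comes with a general cancellation or rank-minimisation theorem that would force termination at rank one (as you yourself note, over $\Z$ the analogous reduction of $E_8$ is impossible), so the reduction must be exhibited explicitly, and the resulting matrix must then be fed back through the Shaneson splitting (or an equivalent signature computation on the infinite cyclic cover) to check that its class is $\pm 1$ rather than $0$ or a proper multiple. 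You flag both of these as ``the main obstacle'' and ``the crux,'' but flagging them is not doing them; as it stands the argument establishes only that \emph{if} such a reduction exists then the theorem holds, which is circular. The alternative geometric route suffers from the same problem: ``arrange the framing and linking data to involve $t$ in the minimal way'' is exactly the construction that needs to be carried out and verified.

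For context, the paper itself does not prove this statement either: it is quoted from Lee's handwritten note, and the author points to a separate modernised write-up (\cite{galvin_2024}) for the argument, which proceeds by exhibiting the explicit matrix and computing its surgery obstruction. So your overall framing (odd $L$-groups via formations, decorations collapsing since $\mathrm{Wh}(\Z)=\widetilde{K}_0(\Z[\Z])=0$, detection through $L_5(\Z[\Z])\cong L_4(\Z)\cong\Z$) is the right setting, but to have a proof you would need to write down the candidate $2\times 2$ matrix, verify it lies in the relevant unitary group, and carry out the signature/splitting computation showing it hits a generator.
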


The proof of the above theorem has only been available in a handwritten note, which is not easily accessible.\footnote{Note the difference in notation between this paper and the handwritten note for the $L$-groups i.e.\ what we write as $L_n(\Z[\pi])$ is instead written as $L_n(\pi)$.}  For a modernised exposition of this proof, see \cite{galvin_2024} (available on the author's website).

Using this, we receive the following corollary.

\begin{cor}\label{cor:surgery_obstruction_generator_realisation}
	There exists a homeomorphism 
	\[
	\sigma\colon (S^1\times S^3) \# (S^2\times S^2)\to (S^1\times S^3) \# (S^2\times S^2)
	\]
	realising the generator of $L_5(\Z[\Z])$.  More precisely, let $N$ be the standard cobordism between $S^1\times S^3$ and $(S^1\times S^3)\#(S^2\times S^2)$.  Then the surgery problem
	\[
	W = (N \cup_\sigma -N) \to (S^1\times S^3) \times I
	\]
	has surgery obstruction the generator of $L_5(\Z[\Z])$.  Furthermore, $\sigma\vert_{\nu\theta}=\Id_{\nu\theta}$, where $\theta=S^1\times\{\pt\}\subset S^1\times S^3$.
\end{cor}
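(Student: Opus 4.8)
The plan is to run Wall's realisation theorem for the closed $4$-manifold $S^1\times S^3$ together with the generator of $L_5(\Z[\Z])$, using \Cref{thm:lee} to keep the resulting cobordism as small as possible. Since $\pi_1(S^1\times S^3)\cong\Z$ is a good group in the sense of Freedman--Quinn, topological surgery is available in this dimension, so Wall realisation (\cite[Theorem 6.5]{wall_1970}, in the topological category) applies: there is a degree-one normal map $(W;S^1\times S^3,M')\to (S^1\times S^3)\times I$, relative to the boundary, which is the identity on the bottom and a simple homotopy equivalence on $M'$, and whose rel-$\partial$ surgery obstruction in $L_5(\Z[\Z])$ is the prescribed generator. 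The content of \Cref{thm:lee} is that this generator is represented by an automorphism $\alpha$ of the rank-one hyperbolic form $H_+(\Z[\Z])$ on $\Z[\Z]^2$, that is, by a $2\times 2$ matrix over $\Z[\Z]$.

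Translating through the realisation construction, a rank-one hyperbolic kernel module means $W$ can be built from $(S^1\times S^3)\times I$ by attaching a single $2$-handle along a nullhomotopic framed circle, followed by a single $3$-handle. The $2$-handle attachment produces the intermediate level $(S^1\times S^3)\#(S^2\times S^2)$ (the hyperbolic form forces the even framing), so this $2$-handle cobordism is exactly the standard cobordism $N$ from $S^1\times S^3$ to $(S^1\times S^3)\#(S^2\times S^2)$; turning the picture upside down, the $3$-handle cobordism is a copy of $-N$. Thus $W\cong N\cup_\sigma(-N)$ for a self-homeomorphism $\sigma\colon (S^1\times S^3)\#(S^2\times S^2)\to(S^1\times S^3)\#(S^2\times S^2)$, namely the one recording the attaching data of the $3$-handle; concretely $\sigma$ is the geometric realisation of the matrix $\alpha$ on the hyperbolic summand of $H_2$ of the universal cover, and may be read off directly from Lee's construction. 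By construction the rel-$\partial$ surgery obstruction of $W=N\cup_\sigma(-N)\to (S^1\times S^3)\times I$ is the class $[\sigma_*]=[\alpha]\in L_5(\Z[\Z])$, i.e. the generator.

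It remains to arrange $\sigma|_{\nu\theta}=\Id_{\nu\theta}$ for $\theta=S^1\times\{\pt\}$. The $2$-handle can be attached along a nullhomotopic circle contained in a ball disjoint from $\theta$, so the $S^2\times S^2$-summand sits in the complement of $\ol{\nu}\theta$; the winding around the $S^1$-factor that makes $\alpha$ into a $\Z[\Z]$-linear automorphism can be supported in the complementary solid torus $S^1\times(S^3\sm\ol{\nu}\{\pt\})$. Hence $\sigma$ can be chosen to be the identity on $\ol{\nu}\theta$, as required.

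The step requiring real work is the passage from the algebraic matrix $\alpha$ to the actual self-homeomorphism $\sigma$: a naive count only guarantees realisability after some number of $S^2\times S^2$-stabilisations, whereas here we need exactly one, and we simultaneously need the homeomorphism supported away from $\nu\theta$. This is precisely where Lee's explicit construction is invoked, with the modernised exposition in \cite{galvin_2024}; once it is in hand, identifying the surgery kernel of $W\to (S^1\times S^3)\times I$ with $H_+(\Z[\Z])$ and its automorphism with $\alpha$ is bookkeeping.
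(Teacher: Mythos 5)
Your route is genuinely different from the paper's: you run Wall--Freedman--Quinn realisation of the generator over $(S^1\times S^3)\times I$ and then try to recognise the resulting cobordism as $N\cup_\sigma(-N)$, whereas the paper builds $\sigma$ first, via Stong--Wang's classification of self-homeomorphisms of $4$-manifolds with fundamental group $\Z$ (\cite[Theorem 2]{stong_wang_2000}), and only then computes the obstruction of $N\cup_\sigma(-N)$ using \cite[Theorem 3.1]{cappell_shaneson_1971}. The problem is the step you dismiss as bookkeeping. Granting that the single $2$-handle half is $N$, the $3$-handle is attached along \emph{some} topologically embedded framed sphere $S$ in the middle level $(S^1\times S^3)\#(S^2\times S^2)$ representing the class prescribed by Lee's matrix $\alpha$ (already its embeddedness needs the Freedman--Quinn sphere embedding theorem, not just Wall). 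To conclude that the upper half is a copy of $-N$ --- equivalently that $W\cong N\cup_\sigma(-N)$ for a self-homeomorphism $\sigma$ of the middle level --- you must know that $(S,\text{framing})$ is carried to the standard $S^2\times\{\pt\}$ with its standard framing by an ambient self-homeomorphism. This standardness of an embedded sphere in its homology class is not automatic, is nowhere argued in your sketch, and is essentially as deep as the statement being proved; it is precisely the point at which the paper substitutes the Stong--Wang realisation (producing a homeomorphism inducing $\alpha$ on $H_2(-;\Z[\Z])$) together with the Cappell--Shaneson computation. Relatedly, the assertion that ``$\sigma$ is the geometric realisation of the matrix $\alpha$ \dots and may be read off directly from Lee's construction'' is not accurate: Lee and Cappell--Shaneson only produce homotopy equivalences, and upgrading to a homeomorphism requires Freedman-style input (this is Wang's contribution, packaged in \cite{stong_wang_2000}), which \Cref{thm:lee} and \cite{galvin_2024} do not supply.

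The final step also has a gap: arranging $\sigma\vert_{\nu\theta}=\Id_{\nu\theta}$ is not merely a matter of supporting the construction away from $\theta$. A homeomorphism preserving $\theta$ and its tubular neighbourhood can restrict there either to the identity or to the Gl\"{u}ck twist, and the two are distinguished by the action on spin structures; the paper handles this by choosing, among the two Stong--Wang pseudo-isotopy classes inducing $\alpha$, the one preserving the spin structures of $S^1\times S^3$. Your remark about winding ``supported in the complementary solid torus'' does not address this $\Z/2$ ambiguity. A smaller point: Wall's realisation theorem as cited requires the base manifold to have dimension at least $5$, so for $S^1\times S^3$ you should invoke the Freedman--Quinn extension for good groups \cite{freedman_quinn_1990} rather than \cite{wall_1970} alone.
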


\begin{proof}
	By \zcref{thm:lee} we have a matrix $M$ with entries in $\Z[\Z]$ which represents the generator of $L_5(\Z[\Z])$.  By \cite[Theorem 2]{stong_wang_2000} there exists two pseudo-isotopy classes of self-homeomorphisms of $(S^1\times S^3)\# (S^2\times S^2)$ which induce the map $M$ on $H_2((S^1\times S^3)\# (S^2\times S^2);\Z[\Z])$, one which preserves the spin structures on $S^1\times S^3$ and one which swaps them.  Let $\sigma$ be a representative self-homeomorphism that fixes the spin structures. As noted in the beginning of \zcref{sec:stable_realisation}, $\sigma$ can be isotoped such that either~$\sigma\vert_{\nu\theta}=\Id_{\nu\theta}$ or $\sigma$ restricts to the Gluck twist map along $\nu\theta$. Since the latter induces the non-trivial map on spin structures and $\sigma$ fixes the spin structures, it must be that the former holds.  It follows from \cite[Theorem 3.1]{cappell_shaneson_1971} that $W$ has surgery obstruction the generator of $L_5(\Z[\Z])$.
\end{proof}

\subsection{Proof of \zcref{prop_cs_existence_s1xs^3}}\label{sbs:prop_3.1}

Throughout this subsection, let $\sigma\colon (S^1\times S^3) \# (S^2\times S^2)\to (S^1\times S^3) \# (S^2\times S^2)$ and $W$ be as in \zcref{cor:surgery_obstruction_generator_realisation}.  The aim is to prove \zcref{prop_cs_existence_s1xs^3} by showing that that this $\sigma$ has $\cs(\sigma)$ the generator of $H^3((S^1\times S^3)\# (S^2\times S^2);\Z/2)$.  This will be done by performing operations on $W$ given in \zcref{cor:surgery_obstruction_generator_realisation} and keeping track of what happens to the relative Kirby-Siebenmann invariant of the cobordism along the way.  We begin with some lemmas.

In what follows, we will also need the following lemma.

\begin{lem}\label{lem:sw_s1xs3}
	Every homeomorphism $f\colon S^1\times S^3 \to S^1\times S^3$ is pseudo-smoothable and hence has $\cs(f)=0$.
\end{lem}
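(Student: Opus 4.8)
The plan is to show that the mapping class group of $S^1 \times S^3$ is, up to pseudo-isotopy, generated by diffeomorphisms, so that every homeomorphism is pseudo-isotopic to a diffeomorphism and hence $\cs$ vanishes by \Cref{prop:cs_pseudo-isotopy_inv}(1). First I would identify the possible actions of $f$ on homology: since $H_*(S^1\times S^3;\Z)$ is torsion-free with $H_0 \cong H_1 \cong H_3 \cong H_4 \cong \Z$, any self-homeomorphism acts on $H_1$ and $H_3$ by $\pm 1$, and the orientation behaviour ties these signs together. Thus there are at most a few homotopy/homology types of $f$, realised for instance by the identity, by the reflection on the $S^1$-factor, by an orientation-reversing reflection on $S^3$, and compositions thereof — all of which are diffeomorphisms.

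The substantive input is that these homological invariants (together with the action on $\pi_1$ and the spin structures) suffice to determine $f$ up to pseudo-isotopy. Here I would invoke the work of Kreck on modified surgery / stable diffeomorphism classification, or more directly the computation of $\pi_0\Homeo(S^1\times S^3)$ via the surgery-theoretic exact sequence for the block homeomorphism group, comparing with $\pi_0 \Diff(S^1\times S^3)$. The relevant point is that the structure set and the relevant $L$-group contributions that could produce a ``fake'' pseudo-mapping class all vanish or are already realised smoothly: in particular the potential exotic element coming from $L_5(\Z[\Z]) \cong \Z$ only becomes visible after connect-summing with $S^2\times S^2$ (as in \Cref{cor:surgery_obstruction_generator_realisation}), and does not occur for $S^1\times S^3$ itself — consistent with the remark earlier in the paper that $\cs$ is not realisable for $S^1\times S^3$.

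Concretely, I would argue as follows. Up to isotopy (using isotopy extension and the calculation of $\pi_0\Diff(S^3)$, cf. \cite{cerf_1968}) we may assume $f$ restricts to a standard model on the $S^3$-directions, reducing $f$ to a homeomorphism well-controlled on $S^1 \times D^3$; then the obstruction to pseudo-isotoping $f$ to a diffeomorphism lies in a group built from $\widetilde{K}_0$ and Whitehead-type terms for $\Z[\Z]$, all of which vanish. Alternatively, and perhaps more cleanly, one cites that $S^1\times S^3$ has a handle decomposition with handles only in a range where the topological and smooth pseudo-isotopy theories agree, so that the topological pseudo-mapping class group equals the smooth one; combined with the fact that $\Homeo(S^1\times S^3)$ has the homotopy type realised by diffeomorphisms, this gives that $f$ is pseudo-smoothable. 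The conclusion $\cs(f) = 0$ is then immediate from \Cref{prop:cs_pseudo-isotopy_inv}(1).

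The main obstacle I anticipate is pinning down the pseudo-mapping class group (as opposed to the ordinary mapping class group) of $S^1\times S^3$ precisely enough: one must be careful that no exotic pseudo-isotopy class, detectable by an $L$-theory or $\Z/2$-cohomological invariant, is lurking. This is exactly the subtlety the paper is built around — the $S^2\times S^2$-stabilisation is what \emph{creates} room for a non-trivial Casson-Sullivan class — so the proof must genuinely use that $S^1\times S^3$ is too ``thin'' (its $H^3(\cdot;\Z/2) \cong \Z/2$ class is not realisable) for such an element to exist. I would handle this either by a direct surgery-exact-sequence computation of $\pi_0\widetilde{\Homeo}(S^1\times S^3)$ or by citing the relevant classification results (Kreck, or Stong–Wang type statements without the extra $S^2\times S^2$ summand).
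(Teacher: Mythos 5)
Your approach is essentially the paper's: the proof there simply cites the Stong--Wang classification of self-homeomorphisms of $4$-manifolds with fundamental group $\Z$ up to pseudo-isotopy \cite{stong_wang_2000}, which gives exactly four pseudo-isotopy classes on $S^1\times S^3$ (the identity, conjugation on $S^1$ composed with reflection on $S^3$, the Gl\"{u}ck twist, and their composite), all of which are smooth, so every self-homeomorphism is pseudo-smoothable and $\cs(f)=0$ by \Cref{prop:cs_pseudo-isotopy_inv}. Your decisive step --- citing a Stong--Wang-type pseudo-isotopy classification rather than your alternative sketches (the $\widetilde{K}_0$/Whitehead-group or handle-range arguments, which would not suffice as stated since the obstruction in question is not $K$-theoretic) --- is precisely what the paper does.
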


\begin{proof}
	Stong-Wang classified homeomorphisms of $4$-manifolds $M$ with $\pi_1(M)\cong \Z$ up to pseudo-isotopy \cite[Theorem 2]{stong_wang_2000}.  This directly gives us that there are four homeomorphisms on $S^1\times S^3$ up to pseudo-isotopy represented by $(1)$ the identity map; $(2)$ conjugation on the $S^1$-factor composed with the reflection map on the $S^3$-factor; $(3)$ the corresponding Gluck twist map $S^1 \times S^3 \to S^1 \times S^3$ (\ref{eq:gluck_twist}); and $(4)$ the composition of the two previously stated non-trivial maps.  All of these maps are clearly smooth, hence every self-homeomorphism of $S^1\times S^3$ is pseudo-smoothable.
\end{proof}

\begin{lem}\label{lem:ks=cs}
	Under the standard identification \[
	H^4(W,\partial W;\Z/2) \cong H^4(M_\sigma,\partial M_{\sigma};\Z/2)\xrightarrow{\varpi^{-1}} H^3((S^1\times S^3)\# (S^2\times S^2);\Z/2)
	\] $($see \zcref{dfn:casson_sullivan_invariant}$)$ we have that $\ks(W,\partial W)=\varpi\cs(\sigma)$ where $M_\sigma$ denotes the mapping cylinder.
\end{lem}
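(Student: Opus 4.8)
The plan is to localise the obstruction to smoothing $W$ at the separating hypersurface $P:=(S^1\times S^3)\#(S^2\times S^2)=\partial_+N=\partial_-(-N)\subset W$ along which the two copies of the standard cobordism are glued, and to recognise it as $\ks(M_\sigma,\partial M_\sigma)=\varpi\cs(\sigma)$. The first point is that $W$ is already smooth away from $P$: fixing an open bicollar $\nu P\cong P\times(-1,1)$ of $P$ in $W$ and letting $V$ be the closure of $W\setminus\nu P$, the submanifold $V$ is (up to collars) the disjoint union $N\sqcup(-N)$, hence smooth, and it contains $\partial W$ with its prescribed smooth structure. Since $\dim W=5$ and $\TOP\!/\!\tO\to K(\Z/2,3)$ is $6$-connected, there is a single well-defined obstruction $o\in H^4(W,V;\Z/2)$ to extending the formal smooth structure of $V$ over all of $W$; the standard bookkeeping with obstruction cocycles — extend first over $V$, at no cost, then over the bicollar — then gives $\ks(W,\partial W)=i^*(o)$, where $i^*\colon H^4(W,V;\Z/2)\to H^4(W,\partial W;\Z/2)$ is induced by $(W,\partial W)\hookrightarrow(W,V)$.

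Next I would identify $o$ with $\varpi\cs(\sigma)$. Excision gives $H^4(W,V;\Z/2)\cong H^4(\overline{\nu P},\partial\overline{\nu P};\Z/2)$ for a closed bicollar $\overline{\nu P}\cong P\times[-1,1]$, and under this isomorphism $o$ is the obstruction to extending over $P\times[-1,1]$ the two formal smooth structures that $V$ induces on its ends. The end abutting $N$ carries the standard structure $\mathscr S_P$, and the end abutting $-N$ carries $\sigma^*\mathscr S_P$: rewriting the $(-N)$-side half-collar in coordinates based on $\partial_+N$ forces a relabelling by $\sigma$, which is exactly what the gluing $W=N\cup_\sigma(-N)$ prescribes. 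Hence $\overline{\nu P}$, with these two boundary structures, is literally the mapping cylinder $(M_\sigma,\partial M_\sigma)$ equipped with the smooth structures of \Cref{def:casson_sullivan_invariant}, so $o=\ks(M_\sigma,\partial M_\sigma)=\varpi\cs(\sigma)$ under the excision isomorphism.

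Finally I would verify that $i^*$ is an isomorphism, which is the ``standard identification'' in the statement. From the long exact sequence of the triple $(W,V,\partial W)$ it suffices that $H^3(V,\partial W;\Z/2)=H^4(V,\partial W;\Z/2)=0$, and since $V\simeq N\sqcup(-N)$ with each of $N$ and $-N$ admitting a relative handle decomposition on $P$ consisting of a single $3$-handle, Lefschetz duality reduces this to the vanishing of $H_1$ and $H_2$ of the pairs $(N,P)$ and $(-N,P)$ with $\Z/2$-coefficients, which is immediate from the relative CW structures. As every group in sight is $\Z/2$, this $i^*$ agrees with the standard identification of both $H^4(W,\partial W;\Z/2)$ and $H^4(M_\sigma,\partial M_\sigma;\Z/2)$ with $H^3(P;\Z/2)$, and under it $\ks(W,\partial W)$ corresponds to $\varpi\cs(\sigma)$. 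I expect the main obstacle to be the localisation step of the first paragraph — pinning down that the single relative class $o$ maps to $\ks(W,\partial W)$ under $i^*$ — but this follows routinely from the $6$-connectivity of $\TOP\!/\!\tO\to K(\Z/2,3)$ and the naturality of $\ks$ under inclusions of open submanifolds (\Cref{rmk:ks_naturality}), everything else being exact-sequence bookkeeping of the kind already used in \Cref{sbs:casson_sullivan_connect_sum}.
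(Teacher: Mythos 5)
Your proposal is correct and follows essentially the same route as the paper: the paper's proof also localises the smoothing obstruction of $W$ rel $\partial W$ to a product region $P\times I$ inserted between the smooth pieces $N$ and $-N$, identifies that region with $(M_\sigma,\partial M_\sigma)$, and invokes the canonical identification of the two copies of $\Z/2$. You simply make explicit the obstruction-theoretic bookkeeping (the class $o\in H^4(W,V;\Z/2)$, excision to the bicollar, and the vanishing of $H^3$ and $H^4$ of $(V,\partial W)$ via the single $3$-handle structure of $N$ rel $P$) that the paper leaves implicit.
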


\begin{proof}
	Note that $N$ is smoothable relative to the standard smooth structure on the boundary.  It is clear that we have a homeomorphism relative to the boundary 
	\[
	W\approx N \cup_{(S^1\times S^3)\# (S^2\times S^2)} \left(((S^1\times S^3)\#(S^2\times S^2))\times I\right) \cup_{\sigma}-N.
	\] i.e.\ inserting a product $(S^1\times S^3\#S^2\times S^2)\times I$ to the right end of $N$ does not change the relative homeomorphism type of $W$.  We now see that the identification \[H^4(W,\partial W;\Z/2) \xrightarrow{\cong} H^4(M_\sigma,\partial M_\sigma;\Z/2)\] gives $\ks(W,\partial W)=\ks(M_\sigma)=\varpi\cs(\sigma)$ where $M_\sigma$ is the mapping cylinder of $\sigma$ (note that here we write ``$=$" as there is only one isomorphism between groups isomorphic to $\Z/2$.).
\end{proof}

\begin{lem}[{\cite[Proof of 11.6A]{freedman_quinn_1990}}]\label{lem:surgery_obstruction_connect_sum}
	Let $S^1\times E_8 \to S^1\times S^4$ be the surgery problem which has surgery obstruction the inverse of the generator of $L_5(\Z[\Z])$, let $\gamma\subset W$ be an embedded curve representing the generator of $H_1(W)$, and let $\gamma'$ be the embedded curve $S^1\times\{\pt\}\subset S^1\times E_8$.  Then the connected-sum surgery problem
	\[
	W\#_{\gamma=\gamma'}(S^1\times E_8)\to (S^1\times S^3)\times I
	\] 
	has vanishing surgery obstruction.
\end{lem}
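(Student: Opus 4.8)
The plan is to compute the surgery obstruction of the connect-sum surgery problem by an additivity argument, reducing to the known obstructions of the two pieces being glued. First I would recall that the surgery obstruction of a degree-one normal map to a product $Y \times I$ (rel boundary) lives in $L_5(\Z[\pi_1(Y)])$, and that for $\pi_1 = \Z$ this group is $\Z$ by \cite[Theorem 13A.8]{wall_1970}. The surgery problem $W \to (S^1\times S^3)\times I$ has obstruction the generator of $L_5(\Z[\Z])$ by \Cref{cor:surgery_obstruction_generator_realisation}, and by hypothesis the surgery problem $S^1\times E_8 \to S^1\times S^4$ has obstruction the inverse of the generator. The point of connect-summing along the curves $\gamma \subset W$ and $\gamma' = S^1\times\{\pt\} \subset S^1\times E_8$ — both representing generators of $H_1 \cong \Z$, so that $\pi_1$ is unchanged by van Kampen — is precisely that this operation adds the surgery obstructions in $L_5(\Z[\Z])$. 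Hence the obstruction of $W\#_{\gamma=\gamma'}(S^1\times E_8)$ is (generator) $+$ (inverse of generator) $= 0$.

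The key steps, in order, are: (1) identify the ambient manifold: removing tubular neighbourhoods $\overline{\nu}\gamma$, $\overline{\nu}\gamma'$ and regluing along $S^1\times S^2$ produces a cobordism from $(S^1\times S^3)\#(S^2\times S^2)$ — since $S^1\times S^4 \sm \nu\gamma' \cong S^1\times D^3$ contributes nothing — to itself, and on the other side we must check the result is again $(S^1\times S^3)\times I$ rel its ends, using that $S^1 \times E_8$ glued onto a product along an $S^1\times\{\pt\}$ curve keeps the bottom/top boundaries as $S^1\times S^3$; this requires being slightly careful, as in the proof of \Cref{thm:connect_sum}, about what the connect-sum-along-a-circle does to the boundary components of $W$; (2) verify the normal map structure glues: the degree-one normal maps on $W\sm\nu\gamma$ and on $(S^1\times E_8)\sm\nu\gamma'$ agree on the common $S^1\times S^2 \times I$ piece (where both are standard, since the surgery problems restrict to the standard one on a neighbourhood of the relevant curve), so they assemble to a degree-one normal map on the connect-sum; (3) apply additivity of the surgery obstruction under gluing along $\partial$ (equivalently, a Mayer--Vietoris / direct-sum argument for the surgery obstruction, as in Wall or as already used implicitly via \cite[Theorem 3.1]{cappell_shaneson_1971}) to conclude the obstructions add; (4) plug in the two known values and get $0$.

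The main obstacle I expect is step~(1)--(2): making precise that the connect-sum-along-$\gamma$ operation, applied to a \emph{relative} surgery problem over $(S^1\times S^3)\times I$, yields again a relative surgery problem over $(S^1\times S^3)\times I$ with the same boundary data, and that the two pieces really do overlap along a standard $S^1\times S^2\times I$ where both normal maps are the identity — this is the content that lets additivity apply and is exactly the kind of bookkeeping carried out in \Cref{stp:connect_sum} and the proof of \Cref{thm:connect_sum}. Once the geometric setup is pinned down, the additivity of surgery obstructions along a separating hypersurface (here $S^1\times S^2\times I$) and the computation in $L_5(\Z[\Z]) \cong \Z$ are formal. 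This is precisely the argument indicated in \cite[Proof of 11.6A]{freedman_quinn_1990}, so I would largely follow that, filling in the identification of the glued-up manifold and the compatibility of the normal structures.
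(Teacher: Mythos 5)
The paper gives no proof of this lemma at all: it is quoted directly from the proof of Theorem 11.6A in Freedman--Quinn, and your proposal reconstructs essentially that argument --- connect-summing along circles carrying the generator of $\pi_1\cong\Z$ leaves the target $(S^1\times S^3)\times I$ unchanged (since $S^1\times S^4$ minus a tubular neighbourhood of $S^1\times\{\pt\}$ is $S^1\times D^4$, which is absorbed) and adds the surgery obstructions, so the generator and its inverse cancel in $L_5(\Z[\Z])\cong\Z$. So your route is the intended one, and the additivity step together with the identification of the glued-up target is exactly where the content lies. One correction to your bookkeeping: the connect sum along a circle is here performed on $5$-manifolds, so the tubular neighbourhoods of $\gamma$ and $\gamma'$ are copies of $S^1\times D^4$ and the separating hypersurface along which additivity is applied is $S^1\times S^3$, not $S^1\times S^2\times I$ (likewise $S^1\times S^4\sm\nu\gamma'\cong S^1\times D^4$, not $S^1\times D^3$); also $W$ is a cobordism from $S^1\times S^3$ to $S^1\times S^3$ --- the manifold $(S^1\times S^3)\#(S^2\times S^2)$ is its middle level, not a boundary component --- though none of these slips affects the structure of the argument.
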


\begin{rmk}\label{rmk:shaneson_splitting}
	Note that the generator of $L_5(\Z[\Z])$ being representable by a surgery problem $S^1\times E_8 \to S^1\times S^4$ (as in \zcref{lem:surgery_obstruction_connect_sum}) follows from Shaneson splitting \cite[Theorem 5.1]{shaneson_1969}.
\end{rmk}

We can now prove the proposition.

\begin{proof}[Proof of \zcref{prop_cs_existence_s1xs^3}]
	Start with the cobordism $W$ and modify the surgery obstruction to be trivial using \zcref{lem:surgery_obstruction_connect_sum}.  Since the result has vanishing surgery obstruction, we can surger it relative to the boundary to an $s$-cobordism $W'$.  Since $\pi_1(W')\cong \pi_1(S^1\times S^3)\cong\Z$ is good we can use the $s$-cobordism theorem \cite[Theorem 7.1A]{freedman_quinn_1990}, which gives that $W'$ is homeomorphic to the mapping cylinder $M_f$ for some homeomorphism $f\colon S^1\times S^3\to S^1\times S^3$.  By \zcref{lem:sw_s1xs3}, $\cs(f)=0$ and hence $\ks(W',\partial W')=0$.
	
	We now need to keep track of how we modified the Kirby-Siebenmann invariant throughout this process.  The main tool for doing so will be the long exact sequence of the triple (see (\ref{eq:les_triple})) with $\Z/2$-coefficients suppressed throughout.  When we used \zcref{lem:surgery_obstruction_connect_sum} to kill the surgery obstruction, the connected-sum over a circle was induced by a relative cobordism $C$ between $W\sqcup (E_8\times S^1)$ and $W\#_{\gamma=\gamma'}(S^1\times E_8)$.  Let $W_\#:= W\#_{\gamma=\gamma'}(S^1\times E_8)$.  This relative cobordism $C$ is formed by attaching a single $1$-handle (at one point on $\gamma$ and at one point on $\gamma'$) and then a single $2$-handle which attaches by going along $\gamma$, then the $1$-handle, then $\gamma'$, and then back along the $1$-handle.  From the long exact sequence of the triple $(C,W\sqcup S^1\times E_8, \partial W)$ we have the sequence
	\[
	H^4(C,W\sqcup S^1\times E_8) \to H^4(C,\partial W) \to H^4(W\sqcup S^1\times E_8,\partial W) \to H^5(C,W\sqcup S^1\times E_8).
	\]
	The outer groups $H^4(C,W\sqcup S^1\times E_8)$ and $H^5(C,W\sqcup S^1\times E_8)$ must both vanish since the cobordism $C$ was made by only attaching $1$- and $2$-handles.  Hence we get an isomorphism 
	\begin{equation}\label{eq:cobordism_les_1}
		H^4(C,\partial W) \xrightarrow{\cong} H^4(W\sqcup S^1\times E_8,\partial W)\cong H^4(W,\partial W)\oplus H^4(S^1\times E_8)\cong \Z/2\oplus \Z/2.
	\end{equation}
	 By considering the long exact sequence of the triple $(C,W_{\#},\partial W_{\#})$ we have the following commutative diagram
	\[\begin{tikzcd}
		H^4(C,W_{\#}) \arrow[r] \arrow[d,"\cong"] & H^4(C,\partial W_{\#}) \arrow[r] \arrow[d,"\cong"] & H^4(W_{\#},\partial W_{\#}) \arrow[d,"\cong"] \\
		\Z/2 \arrow[r,"{1\mapsto(1,1)}"] & \Z/2\oplus \Z/2 \arrow[r,"{(a,b)\mapsto a+b}"] & \Z/2.
	\end{tikzcd}
	\]
	The leftmost vertical isomorphism again comes from the relative handle decomposition of $C$, and the rightmost vertical isomorphism is clear by the definition of $W_\#$.  The middle vertical isomorphism comes from (\ref{eq:cobordism_les_1}) and the fact that $H^*(C,\partial W_{\#})\cong H^*(C,\partial W)$.  The leftmost horizontal map can be seen to be the diagonal map since the generator of $H^4(C,W_{\#})$ is Poincar\'{e}-Lefschetz dual to the annulus with boundaries homologous to the generators of $H_1(W)$ and $H_1(S^1\times E_8)$.  The rightmost horizontal map is then given by exactness.  Using this and naturality of the Kirby-Siebenmann invariant, we can deduce that \[\ks(W_{\#},\partial W_{\#})=\ks(W,\partial W)+\ks(S^1\times E_8),\]
	where we can naturally identify the groups as there is only one isomorphism between groups isomorphic to $\Z/2$.
	
	Using a similar argument, one can see that the surgeries used to surger $W_{\#}$ to the $s$-cobordism $W'$ do not alter the Kirby-Siebenmann invariant (the relative handle decomposition for the relative cobordism given by the trace of any given surgery consists of only a single handle, so the computation is greatly simplified).  Hence, since by naturality $\ks(S^1\times E_8)=1$, \[\ks(W',\partial W')=\ks(W_{\#},\partial W_{\#})=\ks(W,\partial W) +\ks(S^1\times E_8)= \ks(W,\partial W)+1\]  As we already concluded that $\ks(W',\partial W')=0\in \Z/2$, this implies that $\ks(W,\partial W)=1$, and hence, by \zcref{lem:ks=cs}, $\cs(\sigma)$ is the generator of $H^3((S^1\times S^3)\# (S^2\times S^2);\Z/2)$, as claimed.
\end{proof}

\subsection{An interesting example}\label{sbs:interesting_example}

We can use the objects and tools that we have developed so far to illustrate an interesting example involving the Casson-Sullivan invariant that demonstrates its dependence on smooth structures, as is expected by \zcref{sbs:cs_dependence_smooth_structures}.  This will also prove \zcref{cor:swap_not_smoothable}.

\begin{exa}\label{ex:interesting}
	Let $X= (S^1\times S^3)\# (S^1\times S^3) \# (S^2\times S^2)$.  Let $f:=\Id_{S^1\times S^3}\#\sigma\colon X\to X$ be a homeomorphism where $\sigma$ denotes the homeomorphism constructed by \zcref{cor:surgery_obstruction_generator_realisation}, and let~$g\colon X\to X$ be the homeomorphism which swaps the $S^1\times S^3$ summands and leaves the $S^2\times S^2$ summand fixed.
	
	Let $\mathscr{S}$ denote the standard smooth structure on $X$ and (in an abuse of notation) also denote the induced formal smooth structure (see \zcref{rmk:formal_vs_informal}).  Then $g\colon X_\mathscr{S} \to X_\mathscr{S}$ is (isotopic to) a diffeomorphism, and hence $\cs(g)=0$ with respect to the smooth structure $\mathscr{S}$.  However, let $f^*(\mathscr{S})$ be the smooth structure on $X$ induced by $f$.  Using \zcref{prop:cs_dependence_ss} we can compute the Casson-Sullivan invariant of $g$ with respect to the smooth structure $f^*(\mathscr{S})$.  As in \zcref{prop:cs_dependence_ss}, let~$a\in [X,\TOP\!/\!\tO]$ be the unique element such that $a\cdot \mathscr{S} = f^*(\mathscr{S})$.  We then have that
	\begin{align*}
		\cs(g)&= \delta(g,f^*(\mathscr{S})) \\
		&= a + g^*(a)+\delta(g,\mathscr{S}) \\
		&= a+ g^*(a) \\
		&= \delta(f,\mathscr{S})+g^*(\delta(f,\mathscr{S})) \\
		&= \cs(f)+g^*(\cs(f)),
	\end{align*}
	which is equal to the element $(1,1)\in \Z/2\oplus \Z/2\cong H^3(X;\Z/2)$ by \zcref{thm:actual_connect_sum} and \zcref{prop_cs_existence_s1xs^3}.  So $g$ is no longer smoothable with respect to the smooth structure $f^*(\mathscr{S})$.
\end{exa}

\zcref{cor:swap_not_smoothable} follows immediately from this example.

\begin{proof}[Proof of \zcref{cor:swap_not_smoothable}]
	Take $\mathscr{S}':=f^*(\mathscr{S})$.  Let $X_{\mathscr{S}}$ denote $X$ with the standard smooth structure, and let $X_{\mathscr{S}'}$ denote $X$ with the smooth structure $\mathscr{S}'$.  By definition, $f\colon X_{\mathscr{S}}\to X_{\mathscr{S}'}$ is a diffeomorphism, so the two smooth structures are diffeomorphic (see \zcref{dfn:diffeomorphism}).  By the calculation in \zcref{ex:interesting}, $g$ is not stably pseudo-smoothable with respect to the smooth structure~$\mathscr{S}'$, since its Casson-Sullivan invariant is non-trivial.
\end{proof}

\section{Unstable realisation of the Casson-Sullivan invariant}\label{sec:unstable_realisation}

We now aim to realise the Casson-Sullivan invariant unstably in some cases.  In \zcref{sbs:ses} we will introduce the background necessary to give the proof.  In \zcref{sbs:forming_mapping_cylinders} and \zcref{sbs:realisation_proofs} we will prove \zcref{thm:unstable_realisation_theorem} and \zcref{thm:non_pseudo_smoothable_but_hom_smoothable}.  In \zcref{sbs:realisation_condition_examples} we will describe when \zcref{thm:unstable_realisation_theorem} and \zcref{thm:non_pseudo_smoothable_but_hom_smoothable} apply, and in \zcref{sbs:partial_realisation} we will describe some techniques for partial realisation of the Casson-Sullivan invariant.

\subsection{The surgery exact sequence}\label{sbs:ses}

For a reference, see \cite[Chapter 13]{ranicki_2002} in high-dimensions and for the $4$-dimensional surgery exact sequence see \cite[Chapter 11]{freedman_quinn_1990} or \cite[Chapter 22]{behrens_kalmar_kim_powell_ray_2021}.  We will use the surgery sequence in both the smooth and topological categories, so to simplify the notation we will use $\CAT$ to stand in for both $\DIFF$ and $\TOP$.

The surgery exact sequence consists of three types of objects, which we now recall: the structure set, the normal invariants, and the $L$-groups.

\begin{dfn}\label{dfn:structure_set}
	Let $(M,\partial M)$ be a connected $\CAT$ $n$-manifold with (potentially empty) boundary.  Then the $\CAT$-\emph{structure set} of $M$, denoted as $\mathcal{S}_{\CAT}(M,\partial M)$ is the set of all equivalence classes of pairs $(N,\varphi)$ where $N$ is a $\CAT$-$n$-manifold and $\varphi$ a homotopy equivalence $\varphi\colon N\xrightarrow{\simeq} M$ such that the restriction $\varphi\vert_{\partial N}\colon \partial N \to \partial M$ is a $\CAT$-isomorphism.  The equivalence relation is that $(N,\varphi)\sim (N',\varphi')$ if there exists a (relative) $\CAT$-$h$-cobordism $(W;N,N')$ with a homotopy equivalence
	\[
	\Phi \colon W \to M\times [0,1]
	\]
	such that $\Phi\vert_{N}=\varphi\colon N\to M\times\{0\}$ and $\Phi\vert_{N'}=\varphi'\colon N' \to M\times\{1\}$ and such that \[\Phi\vert_{\partial N\times [0,1]}=(\varphi\vert_{\partial N}\times \Id_{[0,1]})\colon \partial N\times[0,1] \to \partial M\times [0,1].\]
	
	Similarly, there is a \emph{simple} $\CAT$-\emph{structure set}, denoted $\mathcal{S}_{\CAT}^s$, which is defined analogously to the regular structure set but with all homotopy equivalences replaced with simple homotopy equivalences (and hence all $h$-cobordisms replaced with $s$-cobordisms).  Sometimes will we write $\mathcal{S}_{\CAT}^h$ to specify that we mean the regular structure set.
\end{dfn}

We now define the normal invariants.

\begin{prop}
	Let $G(k)$ be the monoid of homotopy equivalences $S^{k-1}\to S^{k-1}$, let $G$ denote the direct limit of the inclusions $G(k)\hookrightarrow G(k+1)$.  Then there are fibration sequences
	\[
	G/\!\tO \to \BO \to \BG \to \mathcal{B}(G/\!\tO)
	\]
	and
	\[
	G/\!\TOP \to \BTOP \to \BG \to \mathcal{B}(G/\!\TOP).
	\]
\end{prop}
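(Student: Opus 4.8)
The plan is to exhibit both displayed sequences as the classifying-space fibration sequences attached to canonical homomorphisms into $G$, and then to deloop the fibre once, exactly as in the treatment of $\TOP\!/\!\tO\to\BO\to\BTOP\to\mathcal{B}(\TOP\!/\!\tO)$ in \Cref{sbs:microbundles_classifying_spaces}. First I would recall the two structure maps. An element of $\tO(k)$ restricts to a self-homotopy equivalence of the unit sphere $S^{k-1}$, giving a map of topological monoids $\tO(k)\to G(k)$ and, in the direct limit, $\tO\to G$. For $\TOP$ one uses that, via the Kister-Mazur theorem (as in \Cref{lem:bundle_to_microbundle}), a stable $\TOP$-microbundle has an underlying stable $\R^k$-fibre bundle; taking the fibrewise one-point compactification produces an $S^k$-bundle equipped with a zero-section and a section at infinity, and deleting the zero-section leaves a fibration with fibre $\R^k\sm\{0\}\simeq S^{k-1}$, that is, a stable spherical fibration. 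This is induced by a map $\TOP\to G$ compatible with $\tO\to G$ along $\tO\to\TOP$; promoting this to an honest map of spaces rather than a zig-zag up to homotopy is the one genuinely technical point, and is standard (see e.g.\ \cite[Chapter 22]{behrens_kalmar_kim_powell_ray_2021}). Applying the classifying-space functor yields maps $\BO\to\BG$ and $\BTOP\to\BG$ compatible with $\xi\colon\BO\to\BTOP$.

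Next I would simply \emph{define} $G/\!\tO$ and $G/\!\TOP$ to be the homotopy fibres of $\BO\to\BG$ and $\BTOP\to\BG$ respectively; with this standard convention, the fibration sequences $G/\!\tO\to\BO\to\BG$ and $G/\!\TOP\to\BTOP\to\BG$ hold by construction.

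Finally, to extend each sequence one step to the right I would invoke the infinite loop space structures. The map $\tO\to G$ and the map $\TOP\to G$ are compatible with the infinite loop space structures on these grouplike monoids, so that after taking classifying spaces the homotopy-fibre sequences deloop; in particular each fibre admits a delooping $\mathcal{B}(G/\!\tO)$, $\mathcal{B}(G/\!\TOP)$, and one obtains
\[
G/\!\tO\to\BO\to\BG\to\mathcal{B}(G/\!\tO),\qquad G/\!\TOP\to\BTOP\to\BG\to\mathcal{B}(G/\!\TOP),
\]
exactly as Boardman-Vogt \cite{boardman_vogt_1968} construct $\mathcal{B}(\TOP\!/\!\tO)$ from $\TOP\!/\!\tO\to\BO\to\BTOP$. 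The main obstacle is the bookkeeping in this last step --- verifying the requisite infinite loop structures and that the maps in sight respect them --- together with the point noted above about rigidifying $\TOP\to G$; both, however, are classical inputs of surgery theory which I would cite rather than reprove.
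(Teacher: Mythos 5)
Your proposal is correct and follows essentially the same route as the paper, whose proof simply cites \cite[Chapter 9]{ranicki_2002} and \cite[Chapter 11]{freedman_quinn_1990} for the maps $\tO\to G$, $\TOP\to G$ and the resulting fibration sequences, and \cite{boardman_vogt_1968} for the deloopings $\mathcal{B}(G/\!\tO)$, $\mathcal{B}(G/\!\TOP)$ and the rightmost maps. You merely spell out the standard constructions (restriction to the unit sphere, fibrewise one-point compactification of the underlying $\R^k$-bundle) that those references contain, so there is no substantive difference.
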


\begin{proof}
	See \cite[Chapter 9]{ranicki_2002} and \cite[Chapter 11]{freedman_quinn_1990}.  Note that again $G/\!\tO$ and $G/\!\TOP$ are defined as the homotopy fibres of $\tO\to G$ and $\TOP\to G$, respectively, and that $\mathcal{B}(G/\!\tO)$, $\mathcal{B}(G/\!\TOP)$ and the rightmost fibrations exist by Boardman-Vogt \cite{boardman_vogt_1968}.
\end{proof}

\begin{dfn}\label{dfn:normal_invariants}
	Let $(M,\partial M)$ be a connected $n$-manifold with (potentially empty) boundary $\partial M$ which already has a $\CAT$-structure.  Then the \emph{normal invariants} of $M$, denoted as $\mathcal{N}_{\CAT}(M,\partial M)$ is the set of homotopy classes of maps $[(M,\partial M), G/\tO]$ if $\CAT=\DIFF$ or $[(M,\partial M), G/\TOP]$ if $\CAT=\TOP$.
\end{dfn}

\begin{rmk}\label{rmk:normal_invariants}
	The above definition is one way of defining the normal invariants for a given manifold.  An equivalent formulation is that a $\CAT$ normal invariant for a $\CAT$-manifold $(M,\partial M)$ is a $\CAT$-manifold $(N,\partial N)$ together with a so-called \emph{degree one normal map} $f\colon N \to M$ which restricts to a $\CAT$-isomorphism on $\partial N$.  For more information, see \cite[Chapter 9]{ranicki_2002}.
\end{rmk}

We now define Wall's surgery obstruction groups.

\begin{dfn}\label{dfn:l_groups}
	Let $n\in\Z$ and $\pi$ a finitely-presented group.  Then the quadratic $L$\emph{-group} $L_n(\Z[\pi])$ is defined differently depending on the residue of $n$ modulo $4$.
	
	Even case (if $n\equiv 0,2\pmod{4}$): $L_n(\Z[\pi])$ is defined as the set of stable equivalence classes of $(-1)^{n/2}$-quadratic forms over stably free $\Z[\pi]$-modules.
	
	Odd case (if $n\equiv 1,3\pmod{4}$): $L_n(\Z[\pi])$ is defined as the set of stable equivalence classes of $(-1)^{(n-1)/2}$-quadratic formations over stably free $\Z[\pi]$-modules.
\end{dfn}

For a reference on (quadratic) forms and formations see \cite[\S 1.6]{ranicki_1981}.  There are also \emph{simple} $L$-groups, denoted by $L_n^s(\Z[\pi])$.  We will not give the definition here, but for a reference see \cite[\S 9.10]{luck_2023}.

\begin{rmk}\label{rmk:l_group_notation}
	The notation used here for the $L$-groups matches up with the notation used for $K$-theory.  In principle one could consider $L$-groups of arbitrary rings with involution, but for our purposes we will only consider the $L$-theory of group rings with the standard involution and trivial orientation character.  One should be careful when reading other sources, as $L_n(\Z[\pi])$ is often written instead as $L_n(\pi)$.
\end{rmk}

We can now state the surgery exact sequence, due to Browder, Novikov, Sullivan and Wall (and Freedman-Quinn in dimension $4$).

\begin{thm}[{\cite[Theorem 10.8]{wall_1970}, \cite[Theorem 11.3A]{freedman_quinn_1990}}]\label{thm:surgery_exact_sequence}
	Let $M$ be a compact, connected, oriented $n$-dimensional $\CAT$-manifold.  Then for $\CAT=\DIFF$ and $n\geq 5$ we have the following exact sequence of pointed sets, which continues to the left in the obvious manner:
	\[
	\begin{tikzcd}
		\cdots \arrow[r] & L_{n+2}(\Z[\pi]) \arrow[r]  & \mathcal{S}_{\DIFF}(M\times I,\partial) \arrow[r] & \left[(M\times I,\partial), G/\tO\right] \arrow[r] & L_{n+1}(\Z[\pi]) \\
		& \arrow[r] & \mathcal{S}_{\DIFF}(M,\partial) \arrow[r] & \left[(M,\partial), G/\tO\right] \arrow[r] & L_{n}(\Z[\pi])		.
	\end{tikzcd}
	\]
	And for $\CAT=\TOP$ and $n\geq 5$ (and $n=4$ provided that $\pi$ is a good group; see \zcref{rmk:good}) we have the following analogous exact sequence of abelian groups:
	\[
	\begin{tikzcd}
		\cdots \arrow[r] & L_{n+2}(\Z[\pi]) \arrow[r]  & \mathcal{S}_{\TOP}(M\times I,\partial) \arrow[r] & \left[(M\times I,\partial), G/\TOP\right] \arrow[r] & L_{n+1}(\Z[\pi]) \\
		& \arrow[r] & \mathcal{S}_{\TOP}(M,\partial) \arrow[r] & \left[(M,\partial), G/\TOP\right] \arrow[r] & L_{n}(\Z[\pi]).		
	\end{tikzcd}
	\]
	Furthermore, there are simple versions of both of these exact sequences where the $L$-groups are replaced by the simple $L$-groups and the structure sets are replaced by the simple structure sets.
\end{thm}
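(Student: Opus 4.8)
The plan is to follow the classical Browder--Novikov--Sullivan--Wall development, carried out in the smooth high-dimensional case in \cite[Chapters 9--10]{wall_1970}, extended to the topological category in dimensions $n\geq 5$ using Kirby--Siebenmann \cite{kirby_siebenmann_1977}, and pushed to dimension four for good fundamental groups in \cite[Chapter 11]{freedman_quinn_1990}. One first constructs the three kinds of terms. A normal invariant is represented by a degree one normal map $f\colon N\to M$ (\Cref{rmk:normal_invariants}), and the \emph{surgery obstruction map} $\theta\colon [(M,\partial M),G/\CAT]\to L_n(\Z[\pi_1(M)])$ sends $f$ to the obstruction to completing surgery: one does surgery below the middle dimension to make $f$ highly connected, and then the surgery kernel in the middle dimension carries a $(-1)^{n/2}$-quadratic form (for $n$ even) or the surgery data determines a quadratic formation (for $n$ odd), whose class in $L_n(\Z[\pi_1(M)])$ is $\theta(f)$; one checks this is independent of the choices made and depends only on the normal cobordism class rel $\partial$. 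The action of $L_{n+1}(\Z[\pi_1(M)])$ on $\mathcal{S}_\CAT(M,\partial M)$ is defined by \emph{Wall realisation}: given $\varphi\colon N\xrightarrow{\simeq} M$ and $\sigma\in L_{n+1}(\Z[\pi_1(M)])$, one builds a degree one normal cobordism $(W;N,N')$ over $M\times I$ which is a homotopy equivalence on the $N$-end, has relative surgery obstruction $\sigma$, and has $N'\to M$ again a homotopy equivalence, and sets $\sigma\cdot[\varphi]:=[N'\to M]$. In the topological category the sequence is moreover one of abelian groups, the additions coming from the infinite loop space structure on $G/\TOP$ and the additivity of the surgery obstruction.

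Exactness at the normal invariants is the \emph{fundamental theorem of surgery}: a degree one normal map is normally cobordant rel $\partial$ to a homotopy equivalence if and only if its surgery obstruction vanishes. The ``only if'' direction is immediate, since a homotopy equivalence has trivial surgery kernel; for the ``if'' direction one uses $\theta(f)=0$ to trivialise the middle-dimensional form or formation after stabilisation, and then realises the resulting algebraic cancellations geometrically by handle moves and embedded Whitney discs, completing $f$ to a homotopy equivalence through a normal cobordism. Exactness at the structure set says that two structures $[\varphi],[\varphi']$ have the same image in $[(M,\partial M),G/\CAT]$ exactly when they lie in a single orbit of the $L_{n+1}$-action: one takes a normal cobordism between them and identifies its surgery obstruction --- computed via the sequence for $(M\times I,\partial)$ --- with the group element carrying one structure to the other. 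The continuation of the sequence to the left is then obtained formally, by replacing $(M,\partial M)$ with $(M\times I,\partial(M\times I))$ and iterating, each step shifting the $L$-group index by one. The simple versions run identically while tracking Whitehead torsion: $h$-cobordisms are replaced by $s$-cobordisms and the $L$-groups by the simple $L$-groups $L^s_n(\Z[\pi_1(M)])$.

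The main obstacle, and the whole point of the dimension-four statement, lies in the \emph{geometric} realisation steps: both the ``if'' part of the fundamental theorem and Wall realisation convert algebraic data into handle cancellations by embedding Whitney discs in the ambient $n$-manifold, or in the $(n+1)$-dimensional trace of the relevant surgeries. For $n\geq 5$ this is the classical Whitney trick, and the topological case then follows by smoothing handles away from the middle strata. For $n=4$ the Whitney discs must instead be embedded in a $4$-manifold, equivalently one must show that $5$-dimensional topological $h$- and $s$-cobordisms with the relevant fundamental group are products; this is precisely where Freedman's disc embedding theorem \cite{freedman_quinn_1990} enters, and since that theorem requires the ambient fundamental group to be good, this is exactly the source of the goodness hypothesis on $\pi_1(M)$ in the statement, and the single deepest input into the dimension-four half of the theorem.
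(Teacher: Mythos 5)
This theorem is quoted in the paper as a citation to Wall \cite[Theorem 10.8]{wall_1970} and Freedman--Quinn \cite[Theorem 11.3A]{freedman_quinn_1990}; the paper itself gives no proof. Your sketch follows exactly the standard Browder--Novikov--Sullivan--Wall argument of those references (surgery obstruction, Wall realisation, exactness via the fundamental theorem of surgery, with Freedman's disc embedding theorem supplying the $n=4$ case for good groups), so it is the same approach as the cited sources and is correct at the level of detail given.
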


\subsection{Forming mapping cylinders from the surgery exact sequence}\label{sbs:forming_mapping_cylinders}

Now we specialise to the case of interest.  Let $X$ be a compact, connected, smooth $4$-manifold with boundary $\partial X$.  The obstruction to lifting an element of $\mathcal{N}_{\TOP}$ to $\mathcal{N}_{\DIFF}$ is given by the map $\xi_*$ induced by the fibration 
\[
G/\tO \to G/\TOP \xrightarrow{\xi} \mathcal{B}(\TOP\!/\!\tO).
\]

We will consider the following augmented part of the $\TOP$ and $\DIFF$ surgery exact sequences for $X$:

\[
\begin{tikzcd}
	& \left[(X\times I,\partial), G/\tO\right] \arrow[d] & \\
	\mathcal{S}^s_{\TOP}(X\times I, \partial) \arrow[r] & \left[(X\times I,\partial),G/\TOP\right] \arrow[d, "\xi_*"] \arrow[r,"\theta"] & L^s_5(\Z[\pi]) \\
	& \left[(X\times I,\partial), \mathcal{B}(\TOP\!/\!\tO)\right] &
\end{tikzcd}
\]

The idea is to construct a mapping cylinder for a homeomorphism with non-trivial Casson-Sullivan invariant from this sequence.  The way we will do this is by finding an element $N\in \mathcal{N}_{\TOP}(X\times I,\partial)$ which has vanishing surgery obstruction $\theta(N)$, but has $\xi_*(N)\neq 0$.  First, we need to understand the map $\xi_*$ more, which we do via the following two lemmas.

\begin{lem}\label{lem:normal_invariants_isomorphism}
	We have an isomorphism \[[(X\times I,\partial), G/\TOP] \cong H^2(X\times I,\partial;\Z/2)\oplus H^4(X\times I,\partial;\Z).\]
\end{lem}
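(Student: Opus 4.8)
The plan is to reduce the computation to the homotopy type of $G/\TOP$ through dimension $5$ and then apply obstruction theory. First I would recall that $\pi_n(G/\TOP)\cong\Z$ for $n\equiv 0\pmod 4$, that $\pi_n(G/\TOP)\cong\Z/2$ for $n\equiv 2\pmod 4$, and that $\pi_n(G/\TOP)=0$ for $n$ odd (see \cite[Chapter 9]{ranicki_2002} or \cite[Chapter 11]{freedman_quinn_1990}). Since $X$ is a compact $4$-manifold, the relative cohomology groups $H^k(X\times I,\partial(X\times I);A)$ vanish for $k>5$, so when computing $[(X\times I,\partial),G/\TOP]$ by obstruction theory only the groups $\pi_k(G/\TOP)$ with $k\le 5$ matter; as $\pi_1=\pi_3=\pi_5=0$, only $\pi_2(G/\TOP)=\Z/2$ and $\pi_4(G/\TOP)=\Z$ enter.

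Next I would invoke the theorem of Kirby--Siebenmann, refining Sullivan's analysis, that the first $k$-invariant of $G/\TOP$ --- a class in the finite $2$-group $H^5(K(\Z/2,2);\Z)$ --- vanishes, in contrast with $G/\PL$, whose analogous $k$-invariant $\delta\Sq^2$ is nonzero (see \cite[Essay V]{kirby_siebenmann_1977} or \cite[Chapter 9]{ranicki_2002}). Hence the fourth Postnikov stage satisfies $P_4(G/\TOP)\simeq K(\Z/2,2)\times K(\Z,4)$, and since $\pi_5(G/\TOP)=0$ we have $P_5(G/\TOP)=P_4(G/\TOP)$. Because the pair $(X\times I,\partial(X\times I))$ has no cohomology above dimension $5$, the Postnikov truncation induces a bijection $[(X\times I,\partial),G/\TOP]\xrightarrow{\cong}[(X\times I,\partial),P_5(G/\TOP)]$. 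Using that $\partial(X\times I)\hookrightarrow X\times I$ is a cofibration, so that the relative mapping set into an Eilenberg--MacLane space is the corresponding relative cohomology group, this gives
\[
[(X\times I,\partial),G/\TOP]\cong [(X\times I,\partial),K(\Z/2,2)]\oplus[(X\times I,\partial),K(\Z,4)]\cong H^2(X\times I,\partial;\Z/2)\oplus H^4(X\times I,\partial;\Z).
\]
Finally, since $G/\TOP$ is an infinite loop space and the Postnikov truncations and the above product decomposition may be taken to be infinite-loop maps, this is an isomorphism of abelian groups, not merely of pointed sets; alternatively, the vanishing $k$-invariant makes the a priori short exact sequence $0\to H^4(X\times I,\partial;\Z)\to [(X\times I,\partial),G/\TOP]\to H^2(X\times I,\partial;\Z/2)\to 0$ split.

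The main obstacle is getting the $k$-invariant bookkeeping exactly right: one must be confident that it is precisely the vanishing of this single class, together with $\pi_3=\pi_5=0$, that collapses $G/\TOP$ to a product of two Eilenberg--MacLane spaces in the range $\le 5$, and cite the correct source --- the contrast with $G/\PL$ is exactly what makes the $\TOP$ statement clean. Everything else is routine obstruction theory. If a more concrete description is desired for the later lemmas, Lefschetz duality identifies the two summands with $H_3(X;\Z/2)$ and $H_1(X;\Z)$.
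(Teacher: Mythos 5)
Your argument is correct and is essentially the paper's: the paper likewise reduces everything to the fact that the relevant Postnikov truncation of $G/\TOP$ splits as $K(\Z/2,2)\times K(\Z,4)$ (quoting Madsen--Milgram's $2$-local statement $(G/\TOP[2])_7\simeq K(\Z_{(2)},4)\times K(\Z/2,2)$ and deducing the integral splitting, rather than invoking the vanishing of the first $k$-invariant directly as you do), and then finishes with the same obstruction-theoretic step using that $X\times I$ is a $5$-dimensional CW pair. The only difference is the citation path for the splitting; the mathematics is the same.
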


\begin{proof}
	This follows from the work of Sullivan \cite{sullivan_1967}, which can be found in \cite{ranicki_2002}.  Instead though, we will refer to \cite{madsen_milgram_1979} for its exposition on this topic.  In particular, it follows from \cite[Remark 4.36]{madsen_milgram_1979} that 
	\[
	(G/\TOP[2])_5 \simeq K(\Z_{(2)},4) \times K(\Z/2,2) 
	\]
	where $G/\TOP[2]$ denotes the $2$-localisation of $G/\TOP$ and $(G/\TOP)_5$ denotes its $5$th Postnikov stage.
	From this, one can use \cite[4.35]{madsen_milgram_1979} to see that 
	\[
	(G/\TOP)_5 \simeq K(\Z,4) \times K(\Z/2,2) 
	\]
	and, together with the fact that $X\times I$ is a $5$-dimensional CW-complex, a standard obstruction theoretic argument completes the proof of the lemma.
\end{proof}

\begin{lem}\label{lem:exact_sequence_commutative_diagram}
	We have the following commutative diagram.
	\[
	\begin{tikzcd}
		\left[(X\times I,\partial),G/\TOP\right] \arrow[r,"\cong"] \arrow[d, "\xi_*"] &  H^2(X\times I,\partial;\Z/2)\oplus H^4(X\times I,\partial;\Z) \arrow[d, "m"] \\
		\left[(X\times I,\partial), \mathcal{B}(\TOP\!/\!\tO)\right] \arrow[r,"\cong"] & H^4(X\times I,\partial;\Z/2)
	\end{tikzcd}
	\]
	where $m$ is the map sending $(x,y) \mapsto (\red_2(y))$.
\end{lem}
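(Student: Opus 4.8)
The plan is to translate the square into a statement about characteristic classes of $G/\TOP$ and then exploit that $(X\times I,\partial(X\times I))$ is, up to collapsing, a suspension. First I would pin down the two horizontal isomorphisms. The $6$-connected map $\TOP\!/\!\tO\to K(\Z/2,3)$ deloops to a $7$-connected map $\mathcal{B}(\TOP\!/\!\tO)\to K(\Z/2,4)$; since $X\times I$ is $5$-dimensional this induces the lower isomorphism $[(X\times I,\partial),\mathcal{B}(\TOP\!/\!\tO)]\xrightarrow{\cong} H^4(X\times I,\partial;\Z/2)$, and I will write $\iota$ for the generator of $H^4(\mathcal{B}(\TOP\!/\!\tO);\Z/2)\cong\Z/2$ corresponding to the fundamental class of $K(\Z/2,4)$. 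The upper isomorphism is the one from \Cref{lem:normal_invariants_isomorphism}, which, by its proof via $(G/\TOP)_7\simeq K(\Z,4)\times K(\Z/2,2)$ and obstruction theory over the $5$-complex $X\times I$, sends a map $g\colon (X\times I,\partial)\to G/\TOP$ to the pair $(g^*k_2,g^*k_4)$, where $k_2\in H^2(G/\TOP;\Z/2)$ and $k_4\in H^4(G/\TOP;\Z)$ are pulled back from the fundamental classes of the two Eilenberg--MacLane factors.

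Second, I would identify the left vertical map. The map $\xi\colon G/\TOP\to\mathcal{B}(\TOP\!/\!\tO)$ is the classifying map of the $\TOP\!/\!\tO$-fibration $G/\tO\to G/\TOP$, and the commuting square relating $B\tO\to BG$ and $B\TOP\to BG$ produces a map from this fibration to the fibration $B\tO\xrightarrow{\xi} B\TOP\xrightarrow{p}\mathcal{B}(\TOP\!/\!\tO)$ which restricts to the identity on fibres. Hence $\xi$ equals the restriction of $p$ along $G/\TOP\to B\TOP$, and therefore $\xi^*\iota\in H^4(G/\TOP;\Z/2)$ is the restriction of the universal Kirby--Siebenmann class $\ks\in H^4(B\TOP;\Z/2)$. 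Consequently the map $m$ is the operation $(x,y)=(g^*k_2,g^*k_4)\mapsto g^*(\ks|_{G/\TOP})$, so it suffices to understand $\ks|_{G/\TOP}$.

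Third, I would compute $\ks|_{G/\TOP}$ only as far as needed. From $(G/\TOP)_7\simeq K(\Z,4)\times K(\Z/2,2)$ one reads off $H^4(G/\TOP;\Z/2)=\Z/2\langle\red_2 k_4\rangle\oplus\Z/2\langle k_2^2\rangle$, so $\ks|_{G/\TOP}=\epsilon_1\,\red_2 k_4+\epsilon_2\,k_2^2$ for some $\epsilon_i\in\Z/2$. The coefficient $\epsilon_1$ is $1$: in the fibration sequence $\TOP\!/\!\tO\to G/\tO\to G/\TOP\xrightarrow{\xi}\mathcal{B}(\TOP\!/\!\tO)$ the boundary map $\pi_4(G/\TOP)\to\pi_3(\TOP\!/\!\tO)=\Z/2$ is surjective because $\pi_3(G/\tO)=0$ (the homotopy groups of $G/\tO$ through degree $4$ being $0,0,\Z/2,0,\Z$), and $\pi_4(\mathcal{B}(\TOP\!/\!\tO))\to\pi_4(K(\Z/2,4))$ is an isomorphism, so $\xi^*\iota$ is nonzero on $\pi_4$, forcing the $\red_2 k_4$-coefficient to be $1$. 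The coefficient $\epsilon_2$ I would leave undetermined, because it does not contribute: since $\partial X$ carries a fixed smooth structure, the quotient $X\times I/\partial(X\times I)$ is the (reduced) suspension of $X/\partial X$, and the reduced cohomology of a suspension has vanishing cup products, so $x^2=0$ in $H^4(X\times I,\partial;\Z/2)$ for every $x\in H^2(X\times I,\partial;\Z/2)$. Hence $m(x,y)=g^*(\red_2 k_4+\epsilon_2 k_2^2)=\red_2(y)+\epsilon_2\,x^2=\red_2(y)$, which is the claim.

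I expect the main obstacle to be the compatibility assertion in the second paragraph — that the map $\xi$ appearing in the surgery fibration really is the restriction of the Kirby--Siebenmann fibration $p$, i.e. matching the surgery-theoretic and the smoothing-theoretic descriptions of $G/\TOP$ — together with the small input $\epsilon_1=1$; once these are granted the remainder is routine bookkeeping with Postnikov towers and the observation that $X\times I$ relative to its boundary is a suspension.
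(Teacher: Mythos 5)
Your proof is correct, but it takes a genuinely different route from the paper at the key step. The paper simply quotes Morita's computation (\cite[Proposition 3]{morita_72}) that, under the two identifications, the composite $m$ is $(x,y)\mapsto \Sq^2(x)+\red_2(y)$, and then kills the $\Sq^2$-term by naturality of Steenrod squares together with instability ($\Sq^2=0$ on $H^1$), using the suspension isomorphism $H^2(X\times I,\partial;\Z/2)\cong H^1(X;\Z/2)$. You instead re-derive the relevant part of Morita's formula from scratch: writing $\xi^*\iota=\epsilon_1\,\red_2 k_4+\epsilon_2\,k_2^2$ in $H^4(G/\TOP;\Z/2)$, pinning $\epsilon_1=1$ by evaluating on $\pi_4$ via the long exact sequence of the fibration $\TOP\!/\!\tO\to G/\tO\to G/\TOP\xrightarrow{\xi}\mathcal{B}(\TOP\!/\!\tO)$ (using $\pi_3(G/\tO)=0$, i.e.\ surjectivity of the $J$-homomorphism in degree $3$), and discarding the $k_2^2$-term because $(X\times I)/\partial$ is a suspension, where cup products of positive-degree classes vanish — a substitute for the paper's instability argument. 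Note that your second paragraph, which you flag as the main obstacle (matching $\xi$ with the restriction of $p\colon \BTOP\to\mathcal{B}(\TOP\!/\!\tO)$ and identifying $\xi^*\iota$ with $\ks|_{G/\TOP}$), is not actually needed: for this lemma $\xi$ is by definition the delooped classifying map of the fibration $G/\tO\to G/\TOP$, which is all your third-paragraph computation uses; the compatibility with the Kirby--Siebenmann fibration is relevant only later (for \Cref{lem:mapping_cylinder_casson_sullivan}), so your argument is self-contained without it. The trade-off: your route avoids the external citation to Morita at the cost of the homotopy-group bookkeeping and the Postnikov normalisation of $k_4$ (that it evaluates to a generator on $\pi_4(G/\TOP)\cong\Z$), while the paper's route is shorter by citation but leaves the $\Sq^2$-term cancellation as the only computation.
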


\begin{proof}
	First, note that the bottom horizontal isomorphism is given by there being a $7$-connected map from $\mathcal{B}(\TOP\!/\!\tO)\to K(\Z/2,4)$ (see \zcref{sbs:casson_sullivan_definition}), and the top horizontal isomorphism is given by \zcref{lem:normal_invariants_isomorphism}. 
	
	Morita \cite[Proposition 3]{morita_72} gives us that $m$ is the map sending $(x,y)\mapsto \Sq^2(x)+\red_2(y)$.  Now it suffices to show that $\Sq^2(x)=0$.  We have the following commutative diagram (by the naturality of Steenrod squares):
	\[\begin{tikzcd}
		\widetilde{H}^2(X\times I,\partial; \Z/2) \arrow[r,"\Sq^2"] & \widetilde{H}^4(X\times I,\partial; \Z/2) \\
		\widetilde{H}^1(X;\Z/2) \arrow[r,"\Sq^2"] \arrow[u,"\cong"] & \widetilde{H}^3(X;\Z/2) \arrow[u,"\cong"']
	\end{tikzcd}\]
and the lower horizontal map must vanish since $\Sq^i\colon H^j(-;\Z/2)\to H^{j+i}(-;\Z/2)$ is the zero map for $i<j$.
\end{proof}

Now we construct the mapping cylinder.

\begin{prop}\label{prop:mapping_cylinder_construction}
	Let $X$ be a compact, connected, smooth $4$-manifold with good fundamental group.  Let $N\in\mathcal{N}_{\TOP}(X\times I,\partial)$ be an element of the normal invariants which has $\theta(N)=0$.  Then $N$ can be lifted to an element of the structure set which is homeomorphic to a mapping cylinder $M_{f}=(X\times I)\cup_{f} X$ for some homeomorphism $f$.
\end{prop}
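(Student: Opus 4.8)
The plan is to lift $N$ through the \emph{simple} surgery exact sequence, recognise the resulting $5$-manifold as a relative $s$-cobordism from $X$ to $X$, and then apply the topological $s$-cobordism theorem to put it in the form of a mapping cylinder. Concretely: since $\theta(N)=0$ and $\pi_1(X)$ is good (\Cref{rmk:good}), exactness of the simple topological surgery exact sequence of \Cref{thm:surgery_exact_sequence} at the term $\left[(X\times I,\partial),G/\TOP\right]$ produces a representative $(W,\varphi)\in\mathcal{S}^s_{\TOP}(X\times I,\partial)$ with normal invariant $N$; here $W$ is a compact topological $5$-manifold, $\varphi\colon W\to X\times I$ is a simple homotopy equivalence, and $\varphi|_{\partial W}$ is a homeomorphism onto $\partial(X\times I)=(X\times\{0\})\cup(\partial X\times I)\cup(X\times\{1\})$ (see \Cref{def:structure_set}). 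Writing $W_i:=\varphi^{-1}(X\times\{i\})$ and $c_i:=\varphi|_{W_i}\colon W_i\xrightarrow{\cong}X$ for $i=0,1$, we see that $W$ is a cobordism, relative to the product $\partial X\times I$, between the $4$-manifolds $W_0$ and $W_1$, each identified with $X$ via $c_i$, and that $\varphi$ restricts to the standard product homeomorphism on $\partial X\times I$.

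Next I would check that $(W;W_0,W_1)$ is a relative $s$-cobordism. Each inclusion $W_i\hookrightarrow W$ is a homotopy equivalence by a two-out-of-three argument applied to $W_i\hookrightarrow W\xrightarrow{\varphi}X\times I$, using that the composite $W_i\xrightarrow{c_i}X\times\{i\}\hookrightarrow X\times I$ is a homotopy equivalence. The Whitehead torsion $\tau(W_i\hookrightarrow W)$ vanishes: $\varphi$ is simple by choice of the simple structure set, and $c_i$ is a homeomorphism, hence simple by topological invariance of Whitehead torsion, so the composition formula for torsion forces $\tau(W_i\hookrightarrow W)=0$. Thus $W$ is a relative $s$-cobordism from $X$ to $X$ rel $\partial X\times I$.

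Finally, since $\pi_1(W)\cong\pi_1(X)$ is good, the topological $s$-cobordism theorem in dimension $5$ (\cite[Theorem 7.1A]{freedman_quinn_1990}) provides a homeomorphism $\Psi\colon W\xrightarrow{\cong}W_0\times I$ which is the identity on $W_0=W_0\times\{0\}$ and the standard product on $\partial X\times I$. Composing the induced identification $W_1\to W_0$ with $c_0$ and $c_1$ produces a self-homeomorphism $f\colon X\to X$ measuring the discrepancy between this product structure and the two identifications $c_0,c_1$ of the ends of $W$ with $X$; then chasing $\Psi$ through $c_0\times\Id_I$ and the canonical homeomorphism $X\times I\cong M_f$ collapsing the mapping cylinder yields a homeomorphism $W\xrightarrow{\cong}M_f$ that carries $W_0$ and $W_1$ to the two ends of $M_f$ via $c_0$ and $c_1$ and is the product on $\partial X\times I$. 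This exhibits the chosen lift $(W,\varphi)$ of $N$ as the mapping cylinder $M_f=(X\times I)\cup_f X$, as required.

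The only delicate point is the bookkeeping in this last step: one must pin down ordering and orientation conventions so that the product structure supplied by the $s$-cobordism theorem really is the mapping cylinder of $f$ (rather than of $f^{-1}$ or $f^2$), and so that the homeomorphism $W\cong M_f$ is compatible with the prescribed identifications of the boundary with $X$ — this compatibility is precisely what makes the lift usable for computing $\cs$ later. The surgery-theoretic and $s$-cobordism inputs themselves are routine once $\pi_1(X)$ is assumed good.
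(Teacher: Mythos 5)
Your proposal is correct and follows essentially the same route as the paper: lift $N$ through the simple topological surgery exact sequence (using that $\pi_1(X)$ is good), view the resulting structure as a relative $s$-cobordism from $X$ to $X$, apply the relative topological $s$-cobordism theorem \cite[Theorem 7.1A]{freedman_quinn_1990} to obtain a product structure, and read off $f$ from the discrepancy between the product identification and the boundary homeomorphisms. The only difference is that you explicitly verify the $s$-cobordism hypotheses via the two-out-of-three and Whitehead-torsion composition arguments, a check the paper leaves implicit in its use of the simple structure set.
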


\begin{proof}
	Assume that we have an element $N\in\mathcal{N}_{\TOP}(X\times I,\partial)$.  By \zcref{rmk:normal_invariants} this means we can consider $N$ to be a manifold $(N,\partial N)$ together with $\partial N$ homeomorphic to $\partial (X\times I)$, and hence $\partial N$ has an induced smooth structure given by $X$. Further we assume that $\theta(N)=0$. This means that we can lift $N$ to an element (also denoted by $N$) in the structure set $\mathcal{S}^s_{\TOP}(X\times I, \partial)$.  The homeomorphism $\partial N \approx \partial (X\times I)\cong X\times \{0\}\cup \partial X\times I \cup X\times\{1\}$ induces a decomposition $\partial N = \partial_+ N \cup \partial_0 N \cup \partial_- N$.  If we define $D:= \partial (\partial_+ N)$, then we may assume that $\partial_0 N \cong D\times I$.  By the relative $s$-cobordism theorem \cite[Theorem 7.1A]{freedman_quinn_1990} (which applies since $X$ has good fundamental group by assumption) there exists a homeomorphism relative to $\partial_+N \cup (D\times I)$ 
	\[
	(N,\partial_+ N) \xrightarrow{\approx} (\partial_+ N \times I,\partial_+ N\times\{0\})
	\]
	such that this homeomorphism restricts to the identity on $\partial_+N \cup (D\times I)$ .
	Let $\widetilde{f}$ denote the restriction of this homeomorphism to $\partial_- N\to \partial_+N\times\{1\}$.  Since $N\in\mathcal{S}^s_{\TOP}(X\times I, \partial)$, we also have a (simple) homotopy equivalence, restricting to a homeomorphism on the boundary
	\[
	(N;\partial_+ N,\partial_0 N,\partial_- N) \xrightarrow{\simeq} (X\times I; X\times \{0\},\partial X\times I,X\times\{1\})
	\]
	Putting this together, we have the commutative diagram
	\begin{equation*}\label{eq:structure_set_commutative_diagram}
		\begin{tikzcd}
			\partial_- N \arrow[r,"\widetilde{f}"] \arrow[d,"\approx"] & \partial_+ N \arrow[d,"\approx"] \\
			X \arrow[r,"f"] & X 
		\end{tikzcd}
	\end{equation*}
	where $f$ is defined such that the diagram commutes.
	
	It follows that our constructed element $N\in\mathcal{S}^s_{\TOP}(X\times I, \partial)$ is homeomorphic to the mapping cylinder $M_{f}:=(X\times I) \cup_{f} X$, restricting to a diffeomorphism on the boundary by construction.
\end{proof}

\subsection{Proofs of \zcref{thm:unstable_realisation_theorem} and \zcref{thm:non_pseudo_smoothable_but_hom_smoothable}}\label{sbs:realisation_proofs}

We now define the technical ``realisability condition" which was mentioned in the introduction.

\begin{dfn}\label{dfn:cs_realisation_condition}
	Let $X$ be a closed, connected, smooth, orientable $4$-manifold with $\pi_1(X)\cong\pi$ where $\pi$ is a good group.  We say that $X$ satisfies the \emph{Casson-Sullivan realisability condition} if the surgery obstruction map (after reidentifying the normal invariants using \zcref{lem:normal_invariants_isomorphism})
	\[
	\theta\colon H^2(X\times I,\partial;\Z/2)\oplus H^4(X\times I,\partial;\Z) \to L_5(\Z[\pi])
	\]
	is such that for every $y\in H^4(X\times I,\partial;\Z)$ there exists an $x\in H^2(X\times I,\partial;\Z/2)$ such that $\theta(x,y)=0$.
\end{dfn}

We begin with a simple but essential observation.

\begin{lem}\label{lem:mapping_cylinder_casson_sullivan}
	Let $N$ be an element as in \zcref{prop:mapping_cylinder_construction} and let $M_f$ be the associated mapping cylinder.  Then $\cs(f)=\varpi^{-1}\xi_*(N)$.
\end{lem}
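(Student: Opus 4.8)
The goal is to show that, under the identification $[(X\times I,\partial),\mathcal{B}(\TOP\!/\!\tO)]\cong H^4(X\times I,\partial;\Z/2)$ coming from the $7$-connected map $\mathcal{B}(\TOP\!/\!\tO)\to K(\Z/2,4)$, the class $\xi_*(N)$ agrees with the relative Kirby--Siebenmann invariant of the topological manifold underlying $M_f$; by \Cref{def:casson_sullivan_invariant} the latter is exactly $\varpi\cs(f)$, which will give the claim. Recall from \Cref{prop:mapping_cylinder_construction} that $N$ carries a simple homotopy equivalence $g\colon N\to X\times I$ restricting to a homeomorphism on the boundary, that $N$ is homeomorphic to $M_f$ compatibly with this boundary data, and that the class $N\in\mathcal{N}_{\TOP}(X\times I,\partial)$ is the normal invariant of the normal map $(N,g)$.

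First I would unwind $\xi_*$. By the construction of the fibration sequence $\TOP\!/\!\tO\to\BO\xrightarrow{\xi}\BTOP\xrightarrow{p}\mathcal{B}(\TOP\!/\!\tO)$ from \Cref{sbs:microbundles_classifying_spaces}, the map $\xi_*$ sends a normal invariant to $p_*$ of its underlying stable $\TOP$-microbundle, and $p_*$ of a stable $\TOP$-microbundle is precisely the obstruction to lifting it to a stable vector bundle. The normal invariant represented by $(N,g)$ has as underlying $\TOP$-microbundle the difference between the $\TOP$-reduction $g_*\nu_N$ of the Spivak fibration of $X\times I$ and the vector-bundle reduction $\nu_{X\times I}$; since $p_*$ is additive and $p_*\nu_{X\times I}=0$ (as $X\times I$ is smooth), and since modulo $2$ we may ignore signs and the tangent/normal distinction, naturality of $p_*$ along the homotopy equivalence $g$ gives
\[
\xi_*(N)=p_*(g_*\nu_N)=(g^{*})^{-1}\!\big(p_*(t_N)\big)\in H^4(X\times I,\partial;\Z/2).
\]
Taking the relative version of $p_*(t_N)$ with respect to the null-homotopy on $\partial N$ coming from its smooth structure --- which agrees with the given one on $\partial(X\times I)$ because $g|_{\partial N}$ is a homeomorphism respecting the smooth structures --- yields exactly $\ks(N,\partial N)$ by \Cref{def:kirby_siebenmann_invariant}; compare \Cref{rmk:cs_description}.

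Finally, \Cref{prop:mapping_cylinder_construction} provides a homeomorphism $N\approx M_f$ which is the identity on the boundary under the relevant identifications, so naturality of the Kirby--Siebenmann invariant (\Cref{rmk:ks_naturality}) gives $\ks(N,\partial N)=\ks(M_f,\partial M_f)$, and by \Cref{def:casson_sullivan_invariant} this equals $\varpi\cs(f)$. Transporting back along $g$ and applying $\varpi^{-1}$ gives $\cs(f)=\varpi^{-1}\xi_*(N)$. I expect the only genuine subtlety to be keeping track of the three natural identifications of fourth cohomology involved --- Poincar\'e--Lefschetz duality inside $\varpi$, pullback along the homotopy equivalence $g$, and the homeomorphism $N\approx M_f$ --- which are mutually compatible precisely because each is natural; verifying this compatibility is the one point needing care, the rest being a direct unravelling of the definitions of the normal invariant and of $\ks$.
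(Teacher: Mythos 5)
Your proposal is correct and takes essentially the same route as the paper: the paper's proof simply observes that $\xi_*(N)$ is the obstruction to lifting $N$ to the smooth category, hence equals $\ks(M_f,\partial M_f)$, and concludes by \Cref{def:casson_sullivan_invariant}. Your argument just unwinds that identification explicitly via the underlying $\TOP$-bundle data of the normal invariant, the relative Kirby--Siebenmann obstruction, and naturality under the homeomorphism $N\approx M_f$.
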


\begin{proof}
	Note that $\xi_*(N)$ is the obstruction to lifting $N$ to an element of the smooth structure set $\mathcal{S}^s_{\DIFF}(X\times I, \partial)$ and hence $\xi_*(N)=\ks(M_f,\partial M_f)$.  The lemma follows by the definition of the Casson-Sullivan invariant (\zcref{dfn:casson_sullivan_invariant}).
\end{proof}

Now it only remains to prove the main theorem, but most of the work has already been done.

\begin{proof}[Proof of \zcref{thm:unstable_realisation_theorem}]
	Let $X$ be as in the statement of the theorem and let $\eta\in H^3(X;\Z/2)$.  
	
	Recall the diagram from \zcref{lem:exact_sequence_commutative_diagram}. We claim that for any $z\in H^4(X\times I,\partial;\Z/2)$ there exists a $y\in H^4(X\times I,\partial; \Z)$ such that $m(0,y)=\red_2(y)=z$.  This is readily seen by considering the Bockstein exact sequence
	\[
	H^4(X\times I,\partial;\Z)\xrightarrow{\red_2} H^4(X\times I,\partial;\Z/2)\to H^5(X\times I,\partial;\Z)\xrightarrow{\cdot 2} H^5(X\times I,\partial;\Z).
	\]  The first map is surjective since the last map is injective.  This completes the proof of the claim.\footnote{We thank the anonymous referee for suggesting this simple proof of the claim.}

So, given any $\eta\in H^3(X,\partial X;\Z/2)$, we can find an element $y\in H^4(X\times I,\partial;\Z)$ such that \[\varpi^{-1}m(0,y)=\eta.\]  By the realisability condition, there exists an $x\in H^2(X\times I,\partial ;\Z/2)$ such that $\theta(x,y)=0$.  Define~$N_\eta$ such that $N_\eta$ maps to $(x,y)$ under the isomorphism given in \zcref{lem:normal_invariants_isomorphism}.  By \zcref{lem:exact_sequence_commutative_diagram}, it follows that $\xi_*(N_\eta)=\varpi\eta$.  Hence, by \zcref{prop:mapping_cylinder_construction} and \zcref{lem:mapping_cylinder_casson_sullivan}, we have that there exists a homeomorphism $f\colon X\to X$ with $\cs(f)=\eta$.
\end{proof}

We finish this section by proving \zcref{thm:non_pseudo_smoothable_but_hom_smoothable}, which follows easily from the construction of the non-pseudo-smoothable homeomorphisms produced by \zcref{thm:unstable_realisation_theorem}.

\begin{proof}[Proof of \zcref{thm:non_pseudo_smoothable_but_hom_smoothable}]
	Let $X$ be as in the statement of the theorem.  Then, as in the above proof of \zcref{thm:unstable_realisation_theorem}, for every non-zero $\eta\in H^3(X;\Z/2)$ there exists a mapping cylinder $M_f=(X\times I)\cup_f X$ such that $\cs(f)=\eta\neq0$.  Hence $f$ is not stably pseudo-isotopic to a diffeomorphism by \zcref{prop:cs_pseudo-isotopy_inv} and \zcref{prop:cs_stable_1}.  However, now also note that $M_f\in\mathcal{S}^s_{\TOP}(X\times I, \partial)$ and hence we have a homotopy equivalence
	\[
	\left((X\times I)\cup_f X;X\times \{0\}, X\times\{1\}\right) \simeq (X\times I;X\times\{0\},X\times\{1\})
	\]
	which by construction restricts to the identity $\Id_X$ on $X\times\{0\}$ and $f$ on $X\times \{1\}$.  By post-composing this homotopy equivalence with the projection to $X$, this produces a homotopy between~$f$ and~$\Id_X$.
\end{proof}

\subsection{Applications}\label{sbs:realisation_condition_examples}

In light of the previous subsection, it is natural to ask for which $4$-manifolds do the results \zcref{thm:unstable_realisation_theorem} and \zcref{thm:non_pseudo_smoothable_but_hom_smoothable} apply.  This subsection is devoted to giving such examples.  In all that follows, let $X$ be a closed, connected, smooth, orientable $4$-manifold with $\pi_1(X)\cong\pi$.

Let $\Gamma$ be a finite cyclic group.  Then it follows from Bak \cite{bak_1976} that $L^s_5(\Z[\Gamma])=0$.  Hence, for $\pi\cong \Gamma$ we have that the surgery obstruction vanishes trivially and hence \zcref{thm:unstable_realisation_theorem} and \zcref{thm:non_pseudo_smoothable_but_hom_smoothable} apply.  We can say more, however.

\begin{prop}\label{prop:sk_1=0_realisation}
	Let $X$ be a compact, connected, smooth, orientable $4$-manifold with $\pi_1(X)\cong \Gamma$ with $\Gamma$ a finite group such that $SK_1(\Z[\Gamma])=0$ or such that $SK_1(\Z[\rho])=0$ where $\rho$ denotes the Sylow 2-subgroup of $\Gamma$.  Then \zcref{thm:unstable_realisation_theorem} and \zcref{thm:non_pseudo_smoothable_but_hom_smoothable} both apply to $X$.
\end{prop}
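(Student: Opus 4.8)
The plan is to verify that such $X$ satisfies the Casson--Sullivan realisability condition (\Cref{def:cs_realisation_condition}); once this is done, \Cref{thm:unstable_realisation_theorem} and \Cref{thm:non_pseudo_smoothable_but_hom_smoothable} apply immediately, since finite groups are good (\Cref{rmk:good}). Writing $\pi_1(X)=\Gamma$, I must therefore show that the surgery obstruction map $\theta\colon H^2(X\times I,\partial;\Z/2)\oplus H^4(X\times I,\partial;\Z)\to L_5^s(\Z[\Gamma])$, which is a homomorphism by \Cref{thm:surgery_exact_sequence}, satisfies $\theta(\{0\}\oplus H^4(X\times I,\partial;\Z))\subseteq\theta(H^2(X\times I,\partial;\Z/2)\oplus\{0\})$; equivalently, that for every $y$ there is an $x$ with $\theta(x,y)=0$. (This last statement is independent of the chosen splitting of \Cref{lem:normal_invariants_isomorphism}, which is convenient.)

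First I would replace $\theta$ by its characteristic-class/assembly description. Using the Sullivan--Wall identification of normal invariants with $\Lg$-homology (compatibly with \Cref{lem:normal_invariants_isomorphism} and \Cref{lem:exact_sequence_commutative_diagram}) together with the classifying map $c\colon X\to B\Gamma$, the map $\theta$ factors through the degree-$5$ assembly map $H_5(B\Gamma;\Lg)\to L_5^s(\Z[\Gamma])$, with the summand $H^4(X\times I,\partial;\Z)\cong H_1(X;\Z)$ feeding into the ``$1$-dimensional'' (codimension-$4$) part of the assembly and the summand $H^2(X\times I,\partial;\Z/2)\cong H_3(X;\Z/2)$ into the ``$3$-dimensional'' (codimension-$2$, Arf-type) part. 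One must keep track of the $\Lg$-fundamental class of $X$ here, in particular its Wu-class contribution, but the Kirby--Siebenmann term vanishes because $X$ is smooth, and degree reasons make the remaining cross terms land in the Arf part; so the realisability condition reduces to a statement purely about the assembly map of $B\Gamma$ in degree $5$.

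The heart of the matter is the image of the $1$-dimensional assembly $H_1(B\Gamma;\Z)=\Gamma^{\mathrm{ab}}\to L_5^s(\Z[\Gamma])$. By naturality of assembly in the group, the class of $g\in\Gamma$ of order $d$ is the image, under induction along $\langle g\rangle\hookrightarrow\Gamma$, of the corresponding class in $L_5^s(\Z[\Z/d])$, which vanishes by Bak's computation \cite{bak_1976} that $L_5^s(\Z[\Z/d])=0$; this recovers the cyclic case. For non-cyclic $\Gamma$ the subtlety is that $L_5^s(\Z[\Gamma])$ itself need not vanish and the assembly filtration need not split canonically, so I would invoke the structure theory of the $L$-theory of integral group rings of finite groups: via the arithmetic square for $L$-theory together with the $K$-theory decoration sequences (Bak, Bak--Kolster, Hambleton--Taylor--Williams), the part of $L_5^s(\Z[\Gamma])$ relevant to the $1$-dimensional assembly that is not already accounted for by Arf-type classes is governed by $SK_1(\Z[\Gamma])$. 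Hence $SK_1(\Z[\Gamma])=0$ forces $\theta(\{0\}\oplus H^4(X\times I,\partial;\Z))\subseteq\theta(H^2(X\times I,\partial;\Z/2)\oplus\{0\})$, which is exactly the realisability condition. For the Sylow-$2$-subgroup version one uses that $L_5^s(\Z[\Gamma])$ is a finite $2$-group for $\Gamma$ finite, so after $2$-localisation Dress/Oliver induction detects and controls the relevant parts of both the assembly map and of $SK_1$ on the Sylow $2$-subgroup $\rho\leq\Gamma$, whence $SK_1(\Z[\rho])=0$ already suffices.

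The step I expect to be the main obstacle is the middle one: making precise exactly how $SK_1(\Z[\Gamma])=0$ places the $1$-dimensional assembly class inside the Arf part of $L_5^s(\Z[\Gamma])$. This requires either a clean citation of the $K$-theory and $L$-theory of such group rings, or an explicit accounting of Whitehead torsions in the formations picture of \Cref{def:l_groups} --- showing that a formation realising such a class has torsion lying in $SK_1$, and therefore becomes simple-equivalent to an Arf formation once $SK_1$ vanishes. The Sylow-$2$ reduction via induction theory is the secondary technical point, and the remaining bookkeeping (the $\Lg$-class twist, identifying the two summands of the normal invariants with the codimension-$2$ and codimension-$4$ parts of the assembly) is routine.
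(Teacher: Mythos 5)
Your opening reduction matches the paper's: since finite groups are good, it suffices to verify the realisability condition of \Cref{def:cs_realisation_condition}, i.e.\ to show that the $H^4(X\times I,\partial;\Z)$ summand of \Cref{lem:normal_invariants_isomorphism} can always be absorbed. But the decisive step is exactly the one you yourself flag as ``the main obstacle'', and it is left unproved. The paper closes this gap with a single precise citation: the theorem of Hambleton--Milgram--Taylor--Williams (\Cref{thm:hambleton_et_al}) states that for a $5$-dimensional manifold with boundary whose fundamental group $\pi$ is finite with $\Image\bigl(SK_1(\Z[\rho])\to SK_1(\Z[\pi])\bigr)=0$ --- a hypothesis implied by either assumption of \Cref{prop:sk_1=0_realisation} --- the surgery obstruction of a normal invariant is $\kappa_3^s(c_*(\Arf_3(x)))$, so it factors through the projection onto the $H^2(X\times I,\partial;\Z/2)\cong H_3(X;\Z/2)$ summand. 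Hence $\theta(0,y)=\theta(0,0)=0$ for every $y$, which is the realisability condition, and \Cref{thm:unstable_realisation_theorem} and \Cref{thm:non_pseudo_smoothable_but_hom_smoothable} follow.

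Your proposed substitute for this input does not constitute a proof. The assertion that the ``non-Arf part'' of the degree-$5$ assembly is ``governed by $SK_1(\Z[\Gamma])$'' via the arithmetic square and the decoration sequences is not correct as stated: Rothenberg-type sequences compare $L^s$ with $L^h$ through Tate cohomology of $Wh(\Gamma)$ and do not by themselves control the image of the codimension-four part of the assembly map, and for non-cyclic $\Gamma$ one cannot fall back on vanishing of $L_5^s(\Z[\Gamma])$. Likewise, the Sylow-$2$ reduction by induction theory and the dismissal of cross terms ``for degree reasons'' are themselves substantial parts of what Hambleton--Milgram--Taylor--Williams prove; without their characteristic-class formula (or an equivalent statement) your argument does not go through. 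Your remark that degree-one classes are induced from cyclic subgroups, where $L_5^s$ vanishes by Bak, is a suggestive heuristic, but making it rigorous requires a careful chase of the Atiyah--Hirzebruch filtration and of naturality of the $s$-decorated assembly (the kind of argument the paper only carries out in its partial-realisation subsection, \Cref{sbs:partial_realisation}), and it is in any case not the argument the paper uses here: the paper's proof is simply the application of \Cref{thm:hambleton_et_al} to $X\times I$, taking $x=0$ for every $y$.
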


\begin{rmk}\label{rmk:sk_1}
	For a group ring $\Z[\Gamma]$ we define $SK_1(\Z[\Gamma])$ to be a certain subgroup of the algebraic $K$-theory group $K_1(\Z[\Gamma])$.  More specifically, it is defined as the kernel of the inclusion induced map $K_1(\Z[\Gamma])\to K_1(\Q[\Gamma])$.  If $\Gamma$ is abelian, then this is equivalently defined as the kernel of the determinant map $\det\colon K_1(\Z[\Gamma])\to \Z[\Gamma]^\times$ (see \cite[Page 2]{oliver_1988}.
\end{rmk}

\zcref{prop:sk_1=0_realisation} follows from a result of Hambleton-Milgram-Taylor-Williams, which we now restate the relevant part of (adapted to our situation).

\begin{thm}[{\cite[Theorem A]{hambleton_milgram_taylor_williams_1988}}]\label{thm:hambleton_et_al}
	Let $M$ be a $5$-dimensional manifold with boundary $\partial M$ such that $\pi_1(M)\cong \pi$ is finite and $\Image(SK_1(\Z[\rho])\to SK_1(\Z[\pi]))=0$, where $\rho$ is the Sylow 2-subgroup of $\pi$.  Then the surgery obstruction map 
	\[
	\theta\colon [(M,\partial M),G/\TOP] \to L_5^s(\Z[\pi])
	\]
	is given by
	\[
	x \mapsto \kappa^s_3(c_*(\Arf_3(x))).
	\]
\end{thm}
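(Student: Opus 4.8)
The plan is to recognise $\theta$ as (a component of) the assembly map in quadratic $L$-theory and to evaluate it one Atiyah--Hirzebruch summand at a time; under the stated hypothesis on $SK_1$ exactly one summand will survive into $L_5^s(\Z[\pi])$, and it will produce the displayed formula. I would first invoke the algebraic theory of surgery \cite{ranicki_2002} --- equivalently, in the idiom of the present paper, Wall's surgery product formula \cite{wall_1970} together with the Morgan--Sullivan characteristic classes --- to rewrite $\theta$ as follows: for a degree-one normal map $(N,\partial N)\to(M,\partial M)$ with reference map $c\colon M\to B\pi$ classifying the fundamental group, the simple surgery obstruction $\theta(x)$ is obtained from the normal invariant $x\in[(M,\partial M),G/\TOP]$ by capping with the fundamental class to produce a class $x\cap[M,\partial M]$ in $\Lg$-homology and then applying the assembly map $A_\pi\circ c_*$ into $L_5^s(\Z[\pi])$. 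Here $\Lg=\Lg\langle1\rangle$ is the $1$-connective quadratic $L$-spectrum of $\Z$, with $\pi_q=L_q(\Z)$ for $q\ge1$ (so $\pi_2=\Z/2$, $\pi_4=\Z$, and $\pi_q=0$ for $q\in\{1,3,5\}$); decorations are irrelevant over $\Z$ since $\widetilde K_0(\Z)=0=\mathrm{Wh}(1)$, so $x\cap[M,\partial M]$ is unambiguous, while the decoration does affect the target $L_5^s(\Z[\pi])$ (this last point is where part of the work lies).

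Next I would compute the source. Since $M$ is a $5$-dimensional complex, and since $\Lg$ splits $2$-locally through degree $5$ as a product of Eilenberg--MacLane spectra (Sullivan's splitting of $G/\TOP[2]$ \cite{sullivan_1967}; see also \cite{madsen_milgram_1979}) while $\Lg[1/2]$ is a wedge of shifted $\Z[1/2]$-Eilenberg--MacLane spectra, the Atiyah--Hirzebruch spectral sequence for $\Lg$-homology collapses and exhibits the $\Lg$-homology group containing $x\cap[M,\partial M]$ as a direct sum
\[
H_3(M;\Z/2)\;\oplus\;H_1(M;\Z).
\]
Under this splitting the first coordinate of $x\cap[M,\partial M]$ is the codimension-$3$ Arf characteristic class $\Arf_3(x)\in H_3(M;\Z/2)$, Poincar\'e--Lefschetz dual to the $H^2(M,\partial M;\Z/2)$-component of $x$ (a prescribed polynomial in the characteristic classes of $x$, with the $\Sq$-corrections of Morgan--Sullivan --- in the applications of interest these degenerate exactly as the term $\Sq^2(x)$ does in the proof of \Cref{lem:exact_sequence_commutative_diagram}), and the second coordinate is dual to the $H^4(M,\partial M;\Z)$-component, consistently with \Cref{lem:normal_invariants_isomorphism}.

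It then remains --- and this is the crux --- to evaluate the assembly map $A_\pi\circ c_*$ on the two coordinates. On the $H_3(M;\Z/2)$-coordinate it is, by construction, the composite $\kappa^s_3\circ c_*$, where $\kappa^s_3\colon H_3(B\pi;\Z/2)\to L_5^s(\Z[\pi])$ is the homomorphism of the statement (the codimension-$3$ Kervaire-invariant component of the assembly). On the $H_1(M;\Z)$-coordinate it is a homomorphism $\pi^{\mathrm{ab}}=H_1(B\pi;\Z)\to L_5^s(\Z[\pi])$ which must be shown to vanish; geometrically it records the surgery obstructions of product problems $S^1\times W^4\to S^1\times S^4$ transported along $\langle g\rangle\hookrightarrow\pi$, and alongside it sits the companion task of pinning the whole formula down in the $s$-decoration rather than merely in $L_5^h(\Z[\pi])$ (controlled by the Rothenberg exact sequences, i.e.\ by Tate cohomology $\widehat H^\ast(\Z/2;-)$ of the relevant Whitehead-type $K$-groups). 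This is exactly where the $L$-theory of finite group rings enters: for $\pi$ finite, the computations of Wall and Bak \cite{bak_1976} identify both of these obstruction terms with data carried by the torsion subgroup $SK_1(\Z[\pi])\subseteq\mathrm{Wh}(\pi)$, and since $\widehat H^\ast(\Z/2;-)$ is $2$-primary while $SK_1$ of a finite group ring is detected on the Sylow $2$-subgroup $\rho$ by a transfer/restriction argument in algebraic $K$-theory \cite{oliver_1988}, the hypothesis $\Image\big(SK_1(\Z[\rho])\to SK_1(\Z[\pi])\big)=0$ forces the relevant part of $SK_1(\Z[\pi])$, and hence both obstruction terms, to vanish. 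With the $H_1$-coordinate killed, assembling the remaining pieces gives $\theta(x)=\kappa^s_3(c_*(\Arf_3(x)))$, as claimed.

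The routine parts are the collapse of the $\Lg$-Atiyah--Hirzebruch spectral sequence in this range and the identification of the $H_3$-coordinate with the Morgan--Sullivan Arf class; the genuinely hard input --- the one without which the formula simply fails --- is the $SK_1$-controlled vanishing of the $H_1$-coordinate and of the decoration-comparison terms, which rests on the structure theory of $L_5$ of finite group rings developed by Wall, Bak, and Hambleton--Milgram--Taylor--Williams themselves \cite{hambleton_milgram_taylor_williams_1988}.
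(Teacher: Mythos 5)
The statement you set out to prove is not proved in the paper at all: it is quoted (in lightly adapted form) from Hambleton--Milgram--Taylor--Williams \cite{hambleton_milgram_taylor_williams_1988}, and the paper's ``proof'' is the citation, so there is no internal argument to compare yours against. On its own terms your outline organises the problem the same way the surrounding text of the paper does in its assembly-map discussion: identify $\theta$ with the assembly map evaluated on the $\Lg_\bullet\langle1\rangle$-homology class of the normal invariant, split the relevant group $H_5(M;\Lg_\bullet\langle1\rangle)$ via the Atiyah--Hirzebruch spectral sequence into $H_3(M;\Z/2)\oplus H_1(M;\Z)$, identify the first coordinate with the Arf class, and then show that assembly annihilates the $H_1$-coordinate and that the formula holds with the $s$-decoration. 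That skeleton is sound.

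The genuine gap is that the decisive step is not proved but deferred: the vanishing of the assembled $H_1(M;\Z)$-coordinate in $L_5^s(\Z[\pi])$ and the control of the $h$-versus-$s$ decoration under the hypothesis $\Image\big(SK_1(\Z[\rho])\to SK_1(\Z[\pi])\big)=0$ \emph{is} the content of the cited theorem, and your justification for it is an appeal to ``the structure theory\ldots developed by\ldots Hambleton--Milgram--Taylor--Williams themselves'', i.e.\ to the result being proved; no independent argument (Dress induction, the oozing computations, or the Rothenberg-sequence bookkeeping) is actually carried out. Some intermediate assertions are also inaccurate as stated: $\Lg_\bullet[1/2]$ is not a wedge of Eilenberg--MacLane spectra (it is $KO[1/2]$), so the collapse of the spectral sequence must instead be argued from the $2$-local splitting together with the observation that the only potentially relevant differentials in this range (e.g.\ $d_3\colon H_4(M;\Z/2)\to H_1(M;\Z)$) have $2$-torsion sources and hence vanish integrally once they vanish $2$-locally; and $SK_1(\Z[\pi])$ of a finite group is \emph{not} in general detected on the Sylow $2$-subgroup by restriction/transfer --- which is exactly why the hypothesis is phrased as a condition on the image from $\Z[\rho]$ rather than as $SK_1(\Z[\pi])=0$. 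Repairing these points still would not supply the hard input, namely the determination of which classes of $H_*(\mathcal{B}\pi;\Lg_\bullet)$ assemble nontrivially into $L_5^s(\Z[\pi])$ for finite $\pi$, so as a self-contained proof the proposal is incomplete; as a reduction of the theorem to the literature it is reasonable, but then it reduces the statement essentially to itself.
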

In the above theorem, $\kappa^s_3$ denotes the map (constructed in \cite[$\S1$]{hambleton_milgram_taylor_williams_1988})
\[
\kappa^s_3\colon H_3(\mathcal{B}\pi;\Z/2)\to L^s_5(\Z[\pi]),
\]
where $c\colon M\to \mathcal{B}\pi$ is defined as the classifying map for the universal cover of $M$, and $\Arf_3$ denotes the (suitable) Arf invariant.

Before we prove \zcref{prop:sk_1=0_realisation}, we state a few groups $\Gamma$ for which $SK_1(\Gamma)$ vanishes.  This is the work of many mathematicians, and we direct the reader to \cite[p.3-4]{oliver_1988} and the citations within for references.
\begin{enumerate}[(i)]
	\item $\Gamma = \Z/2n$
	\item $\Gamma = \Z/2^n \times \Z/2$
	\item $\Gamma = (\Z/2)^n$.
	\item $\Gamma = D_{2n}$, the dihedral group of order $4n$.
\end{enumerate}
So \zcref{prop:sk_1=0_realisation} applies to any of the above groups.  It also follows that \zcref{thm:unstable_realisation_theorem} and \zcref{thm:non_pseudo_smoothable_but_hom_smoothable} also apply to any $\Gamma$ that has any of the above groups as its Sylow 2-subgroup.  For example, let $\rho$ be isomorphic to one of the above groups and let $O$ be an odd-order group.  Then $\Gamma:=\rho\times O$ has $\rho$ as its Sylow 2-subgroup and hence \zcref{prop:sk_1=0_realisation} also applies to $\Gamma$.

\begin{proof}[Proof of \zcref{prop:sk_1=0_realisation}]
	Let $X$ satisfy the hypotheses of \zcref{prop:sk_1=0_realisation} and let~$b\in H^4(X\times I;\Z)$.  We need to show that there exists an $a\in H^2(X\times I;\Z/2)$ such that $\theta(a,b)=0$.  We claim that we can simply take $a=0$ regardless of $b$.
	
	Note that $X\times I$ is a $5$-dimensional manifold with boundary and by assumption we have that $\Image(SK_1(\Z[\rho])\to SK_1(\Z[\Gamma]))=0$, hence \zcref{thm:hambleton_et_al} applies to $X\times I$.  \zcref{thm:hambleton_et_al} then tells us that the surgery obstruction map $\theta$ factors as
	\[
	[(X\times I,\partial),G/\TOP] \to H^2(X\times I,\partial; \Z/2)\cong H_3(X;\Z/2)\to H_3(\mathcal{B}\pi;\Z/2) \xrightarrow{\kappa_3^s} L_5^s(\Z[\pi]),
	\]
	where the first map is the projection, using \zcref{lem:normal_invariants_isomorphism}.  Hence, $\theta(a,b)$ does not depend on $b$, and so for all~$b\in H^4(X\times I,\partial;\Z/2)$ we have that $\theta(0,b)=\theta(0,0)=0$.\end{proof}

\subsection{Partial unstable realisation of the Casson-Sullivan invariant}\label{sbs:partial_realisation}

The purpose of this subsection is to find examples where we can partially realise the Casson-Sullivan invariant using the method described in \zcref{sbs:forming_mapping_cylinders}.  The way we will do this is by considering the assembly maps for the surgery obstruction map $\theta$ and comparing them using spectral sequences to surgery obstruction maps that we have full realisation for.  In all that follows, let $X$ be a closed, connected, smooth, orientable $4$-manifold with $\pi_1(X)\cong\pi$.

First we give the relevant notation.  Let $\varepsilon\in\{-\infty\}\cup\{\dots,-1, 0, 1, 2\}$ be the decoration, and recall that $\varepsilon=2$ refers to $\varepsilon=s$ and $\varepsilon=1$ refers to $\varepsilon=h$.  For more information about decorations, see \cite{luck_2023}.  Let $\Lg^\varepsilon_\bullet(R)$ denote the quadratic $L$-theory spectrum of a ring $R$ with decoration $\varepsilon$ (this is a spectrum that has homotopy groups $\pi_k(\Lg^\varepsilon_\bullet(R))=L^\varepsilon_k(R)$) and let $\Lg_\bullet:=\Lg_\bullet(\Z)$ (note that, as suggested by the notation, this is independent of our choice of decoration). We will use $\Lg_\bullet\langle1\rangle$ to denote the $1$-connective cover of $\Lg_\bullet$ (a spectrum which has $\pi_k(\Lg_\bullet\langle1\rangle)=0$ for all $k<1$ and has $\pi_k(\Lg_\bullet\langle1\rangle)=\pi_k(\Lg_\bullet)$ for $k\geq 1$).  Then these spectra determine a generalised (co)homology theory and we have the following factorisation of the surgery obstruction map $\theta^\varepsilon$ (note that the notation used will be explained below).
\[
\begin{tikzcd}
	\mathcal{N}_{\TOP}(X\times I,\partial) \arrow[r,"\theta^\varepsilon"] \arrow[d,"\cong", "a"'] & L_5^\varepsilon(\Z[\pi]) \\
	H^0(X\times I,\partial;\Lg^\varepsilon_\bullet\langle1\rangle) \arrow[d,"\cong", "b"'] & H_5^\pi(\ast_{\mathcal{ALL}};\Lg^\varepsilon_\bullet) \arrow[u,"\cong", "g"'] \\
	H_5(X\times I;\Lg^\varepsilon_\bullet\langle1\rangle) \arrow[d, "c"'] & H_5^\pi(\ast_{\mathcal{TRIV}};\Lg^\varepsilon_\bullet) \arrow[u, "\sigma^\varepsilon"'] \\
	H_5(\mathcal{B}\pi;\Lg^\varepsilon_\bullet\langle1\rangle) \arrow[r, "d"'] & H_5(\mathcal{B}\pi;\Lg^\varepsilon_\bullet) \arrow[u,"\cong", "e"']
\end{tikzcd}
\]
The fact that $\theta^{\varepsilon}$ factors first as maps $a$ then $b$ is due to Ranicki and Quinn \cite{quinn_1970, ranicki_1992}.  Further factoring through the homology of $\mathcal{B}\pi$ is contained in \cite[Appendix]{hambleton_milgram_taylor_williams_1988}, though they attribute this to \cite{ranicki_1992}.  The denoted even further factorisation of the assembly map is described in \cite{davis_luck_1998}.  We now describe the maps.

The isomorphism $a$ arises via the definition of cohomology with coefficients in a spectrum: since we have that $(\Lg_\bullet\langle 1\rangle)_0\simeq G/\TOP$ we have that \[H^0(X\times I,\partial; \Lg_\bullet)\cong [X\times I, \partial; G/\TOP]\cong \mathcal{N}_{\TOP}(X\times I,\partial).\]  The isomorphism $b$ is given by Ranicki-Sullivan duality.  To define the map $c$, we factor it as the composition of maps given in the diagram below where $\widetilde{X\times I}$ denotes the universal cover of $X\times I$.\[
\begin{tikzcd}
	H_5(X\times I;\Lg^\varepsilon_\bullet\langle1\rangle) \arrow[d,"c"] \arrow[r,"\cong"] & H^\pi_5(\widetilde{X\times I};\Lg^\varepsilon_\bullet\langle1\rangle) \arrow[r,"\cong"] & H_5(\mathcal{E}\pi\times_\pi (\widetilde{X\times I});\Lg^\varepsilon_\bullet\langle1\rangle) \arrow[ld, bend left, end anchor={east}, "c'"'] \\ H_5(\mathcal{B}\pi;\Lg^\varepsilon_\bullet\langle1\rangle) & H_5(\mathcal{E}\pi\times_\pi \{\pt\};\Lg^\varepsilon_\bullet\langle1\rangle) \arrow[l,"\cong"] & 
\end{tikzcd}
\]To define the map $d$, recall that we have a map $\Lg^\varepsilon_\bullet\langle1\rangle\to \Lg^\varepsilon_\bullet$ by the definition of a $1$-connective cover.  Then $d$ is the induced map on homology via this map.  The definition of the isomorphism $e$ follows from the definition of the $\pi$-equivariant homology of the $\Orb(\pi)$-space $\ast_{\mathcal{TRIV}}$ with coefficients in a spectrum.  For this details on this, see \cite[Ex 5.5]{davis_luck_1998}.  The isomorphism $g$ is again given by the definition of the $\pi$-equivariant homology.  The map $\sigma^\varepsilon$ is what we will call the $\varepsilon$\emph{-assembly map}.  Since we need to use the $s$-cobordism theorem to construct our mapping cylinders, we will need to consider the case $\varepsilon=s$, for which this assembly map is certainly not an isomorphism in general.  Composing all of these maps together we get a factorisation of $\theta^s$.  

Now we want to use this formalism to try to understand $\theta^s$ for some groups for which we cannot use \zcref{prop:sk_1=0_realisation}.  Let $\Gamma$ be a finite cyclic group such that we have a non-trivial homomorphism $h\colon\Gamma\to \pi$.  In what follows we will condense the `assembly map' by setting~$\ol{\sigma}:=g\circ \sigma^s\circ e$.  

Consider the following diagram.\[
\begin{tikzcd}
	\mathcal{N}_{\TOP}(X\times I,\partial) \arrow[r,"\theta^s"] \arrow[d,"\cong", "a"'] & L_5^s(\Z[\pi])  & L_5^s(\Z[\Gamma])=0 \arrow[l]\\
	H_5(X\times I; \Lg^s_\bullet\langle1\rangle) \arrow[r] & H_5(\mathcal{B}\pi;\Lg^s_\bullet) \arrow[u,"\ol{\sigma}"] & H_5(\mathcal{B}\Gamma;\Lg^s_\bullet) \arrow[l,"h_*"] \arrow[u,"\ol{\sigma}"]
\end{tikzcd}\]By naturality of the assembly maps, if a normal invariant is hit by the map $h_*$ after being mapped down to $H_5(\mathcal{B}\pi;\Lg^s_\bullet)$, then its surgery obstruction must be zero.  It follows that if the map $h_*$ is surjective then $\theta^s$ is the zero map.  

Everything we have presented so far could in principle be used to study any case where we have a non-trivial homomorphism $\Gamma\to \pi$, but we now specialise to the case where $\pi=\Z\times\Gamma$, where we will be able to make concrete findings.

To try to understand this map $h_*$ better we will use the Atiyah-Hirzebruch spectral sequence (AHSS).  Recall that the AHSS is a homology spectral sequence that computes the generalised homology of a space in terms of the regular homology of that space with coefficients in the generalised homologies of a point.  In our particular case it computes, for a space~$K$,
\[
E^2_{p,q} = H_p(K;\pi_q(\Lg_\bullet\langle1\rangle)) \implies E^\infty_{p,q}=H_{p+q}(K;\Lg_\bullet\langle1\rangle).
\]
Recall that $\pi_q(\Lg_\bullet\langle1\rangle)=L_q(\Z)$ for $q>0$ and is zero otherwise.  We first wish to compute $H_5(\mathcal{B}G;\Lg_\bullet\langle1\rangle)$ for the cases $G=\pi$ and $G=\Gamma$.  We show the relevant terms below of the $E^3$-page, along with the relevant third-page differentials, in \zcref{fig:spectral_sequence}.

\begin{figure}
	\centering
	\caption{The $E^3$-page of the spectral sequence for computing $H_*(\mathcal{B}G;\Lg_\bullet\langle1\rangle)$ with the relevant terms and differentials shown for computing $H_5(\mathcal{B}G;\Lg_\bullet\langle1\rangle)$.}\label{fig:spectral_sequence}
	\begin{tikzpicture}[scale=0.78,on top/.style={preaction={draw=white,-,line width=#1}},
		on top/.default=4pt]
		
		\draw[step=3,black] (0,0) grid (18.2, 9.2); 
		
		\draw (0,1.5) -- (18.2,1.5); 
		\draw (0,4.5) -- (18.2,4.5);
		\draw (0,7.5) -- (18.2,7.5);
		
		\node at (1.5,-0.5) {$0$}; 
		\node at (4.5,-0.5) {$1$};
		\node at (7.5,-0.5) {$2$};
		\node at (10.5,-0.5) {$3$};
		\node at (13.5,-0.5) {$4$};
		\node at (16.5,-0.5) {$5$};
		\node at (18,-0.5) {$p$};
	
		\node at (-0.5,0.75) {$0$}; 
		\node at (-0.5,2.25) {$1$};
		\node at (-0.5,3.75) {$2$};
		\node at (-0.5,5.25) {$3$};
		\node at (-0.5,6.75) {$4$};
		\node at (-0.5,8.25) {$5$};
		\node at (-0.5,9) {$q$};

		\node at (1.5,0.75) {$0$}; 
		\node at (4.5,0.75) {$0$};
		\node at (7.5,0.75) {$0$};
		\node at (10.5,0.75) {$0$};
		\node at (13.5,0.75) {$0$};
		\node at (16.5,0.75) {$0$};
		
		\node at (1.5,2.25) {$0$}; 
		\node at (4.5,2.25) {$0$};
		\node at (7.5,2.25) {$0$};
		\node at (10.5,2.25) {$0$};
		\node at (13.5,2.25) {$0$};
		\node at (16.5,2.25) {$0$};
		
		\node at (10.5,3.75) {$H_3(\mathcal{B}G;\Z/2)$}; 
		\node at (13.5,3.75) {$H_4(\mathcal{B}G;\Z/2)$}; 
		
		\node at (1.5,5.25) {$0$}; 
		\node at (4.5,5.25) {$0$};
		\node at (7.5,5.25) {$0$};
		\node at (10.5,5.25) {$0$};
		\node at (13.5,5.25) {$0$};
		\node at (16.5,5.25) {$0$};
		
		\node at (1.5,6.75) {$H_0(\mathcal{B}G;\Z)$}; 
		\node at (4.5,6.75) {$H_1(\mathcal{B}G;\Z)$};
		
		\node at (1.5,8.25) {$0$}; 
		\node at (4.5,8.25) {$0$};
		\node at (7.5,8.25) {$0$};
		\node at (10.5,8.25) {$0$};
		\node at (13.5,8.25) {$0$};
		\node at (16.5,8.25) {$0$};
				
		\draw [thick, -latex, red](9.15,4.05) [on top]-- (2.6,6.5);
		\draw [thick, -latex, red](12.15,4.05) [on top] -- (5.6,6.5);
		\node at (5.2,4.9) {$\boldsymbol{d_3^{3,2}}$};
		\node at (9.9,5.5) {$\boldsymbol{d_3^{4,3}}$};
		\draw [thick, -latex] (18.5,-0.5) -- (19,-0.5);
		\draw [thick, -latex] (-0.5, 9.5) -- (-0.5,10);
	\end{tikzpicture}
\end{figure}
The conclusions we can draw from this spectral sequence are contained in the following lemma.

\begin{lem}\label{lem:spectral_sequence_commutative_diagram}
	Let $\Gamma$ be a finite cyclic group of order $2n$, let $\pi\cong \Z\times\Gamma$ and let $d_G$ denote the differential $d^{4,3}_{3}$ denoted in \zcref{fig:spectral_sequence}.  Then we have the following commutative diagram where the rows are exact.
	\begin{equation*}
		\begin{tikzcd}
			0 \arrow[r] & \coker d_\Gamma \arrow[r] \arrow[d] & H_5(\mathcal{B}\Gamma;\Lg_\bullet\langle1\rangle) \arrow[r] \arrow[d] & H_3(\mathcal{B}\Gamma;\Z/2) \arrow[r] \arrow[d] & 0 \\
			0 \arrow[r] & \coker d_\pi \arrow[r] & H_5(\mathcal{B}\pi;\Lg_\bullet\langle1\rangle) \arrow[r] & H_3(\mathcal{B}\pi;\Z/2) \arrow[r] & 0 
		\end{tikzcd}
	\end{equation*}
	and $H_3(\mathcal{B}\Gamma;\Z/2)\cong \Z/2$,  $\coker d_\Gamma \cong \Z/2n$ or $\Z/n$, $H_3(\mathcal{B}\pi;\Z/2)\cong \Z/2\oplus \Z/2$, and $\coker d_\pi \cong \Z\oplus \Z/2n$ or $\Z\oplus\Z/n$.
\end{lem}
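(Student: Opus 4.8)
The plan is to read both short exact sequences off the Atiyah--Hirzebruch spectral sequence (AHSS) for $H_\ast(\mathcal{B}G;\Lg_\bullet\langle1\rangle)$ that was set up before the lemma, and to obtain the commutative ladder from naturality of the AHSS in $G$. First I would record the $E_2=E_3$ page: since $\pi_q(\Lg_\bullet\langle1\rangle)=L_q(\Z)$ equals $\Z$ for $q\equiv0\pmod4$ with $q\geq4$, equals $\Z/2$ for $q\equiv2\pmod4$ with $q\geq2$, and vanishes for all other $q$, every odd row of the AHSS vanishes and hence every $d_2$ is zero, so $E_2=E_3$ as in \Cref{fig:spectral_sequence}. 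On the total-degree-$5$ line the only possibly non-zero entries are $E_3^{3,2}=H_3(\mathcal{B}G;\Z/2)$ and $E_3^{1,4}=H_1(\mathcal{B}G;\Z)$, so $H_5(\mathcal{B}G;\Lg_\bullet\langle1\rangle)$ carries a two-step filtration whose bottom step (a subgroup) is $E_\infty^{1,4}$ and whose top quotient is $E_\infty^{3,2}$.

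Next I would pin down which differentials matter. By inspection of bidegrees, the only differentials that can affect these two entries are the outgoing $d_3\colon H_3(\mathcal{B}G;\Z/2)\to H_0(\mathcal{B}G;\Z)$ (the differential $d_3^{3,2}$ of \Cref{fig:spectral_sequence}) and the incoming differential $d_G\colon H_4(\mathcal{B}G;\Z/2)\to H_1(\mathcal{B}G;\Z)$ from the statement. The key point, and the step I expect to be the main obstacle, is handling $d_3$: rather than identifying it as an explicit operation I would use only that $d_3$ is the stable homology operation induced by the first $k$-invariant of $\Lg_\bullet\langle1\rangle$, a map $\Sigma^2 H\Z/2\to\Sigma^5 H\Z$ whose source is an $H\Z/2$-module spectrum and which is therefore annihilated by $2$; hence the natural transformation $d_3$ is $2$-torsion and its image always lies in the $2$-torsion subgroup of its target. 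Since $H_0(\mathcal{B}G;\Z)\cong\Z$ is torsion-free, the outgoing $d_3$ from $(3,2)$ vanishes, so $E_\infty^{3,2}=H_3(\mathcal{B}G;\Z/2)$ and $E_\infty^{1,4}=\coker d_G$, and the filtration gives the short exact sequence
\[
0\to\coker d_G\to H_5(\mathcal{B}G;\Lg_\bullet\langle1\rangle)\to H_3(\mathcal{B}G;\Z/2)\to 0 .
\]
Applying this with $G=\Gamma$ and $G=\pi$, the map $\mathcal{B}h\colon\mathcal{B}\Gamma\to\mathcal{B}\pi$ induces a morphism of AHSS's and hence a morphism of the associated filtrations; this is exactly the asserted commutative ladder, with left vertical map induced by $\mathcal{B}h$ on $H_1(-;\Z)$ (hence on the cokernels) and right vertical map $(\mathcal{B}h)_\ast$ on $H_3(-;\Z/2)$.

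Finally I would identify the groups. For $\Gamma=\Z/2n$ the classifying space $\mathcal{B}\Z/2n$ is $2$-locally $\mathcal{B}\Z/2^a$ with $2^a\parallel 2n$, so $H_k(\mathcal{B}\Gamma;\Z/2)\cong\Z/2$ for every $k\geq0$; in particular $H_3(\mathcal{B}\Gamma;\Z/2)\cong\Z/2$ and $H_4(\mathcal{B}\Gamma;\Z/2)\cong\Z/2$, while $H_1(\mathcal{B}\Gamma;\Z)\cong\Gamma\cong\Z/2n$. Thus $\Image d_\Gamma$ is a subgroup of the unique order-$2$ subgroup of $\Z/2n$, so $\coker d_\Gamma\cong\Z/2n$ or $\Z/n$. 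For $\pi\cong\Z\times\Gamma$ we have $\mathcal{B}\pi\simeq S^1\times\mathcal{B}\Gamma$, so the K\"{u}nneth theorem over $\Z/2$ gives $H_3(\mathcal{B}\pi;\Z/2)\cong\Z/2\oplus\Z/2$ and $H_4(\mathcal{B}\pi;\Z/2)\cong\Z/2\oplus\Z/2$, while $H_1(\mathcal{B}\pi;\Z)\cong\Z\oplus\Z/2n$, whose $2$-torsion subgroup is the order-$2$ subgroup of the $\Z/2n$ summand. Since $\Image d_\pi$ lies in this subgroup, $\coker d_\pi\cong\Z\oplus\Z/2n$ or $\Z\oplus\Z/n$, as claimed. (If one wanted to decide which alternative occurs one would have to compute $d_3$, equivalently the operation $\delta\Sq^2$, on the nose, but that is not needed here.)
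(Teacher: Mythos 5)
Your proof is correct and takes essentially the same route as the paper's: read the degree-$5$ line off the AHSS for $\Lg_\bullet\langle1\rangle$, observe that the outgoing $d_3$ into $H_0(-;\Z)\cong\Z$ must vanish (the paper simply notes it is a map from a torsion group to a torsion-free group, which subsumes your $k$-invariant argument), identify $E^\infty_{1,4}=\coker d_G$ and $E^\infty_{3,2}=H_3(\mathcal{B}G;\Z/2)$, and obtain the ladder from naturality, with the group computations via (2-local) lens spaces and K\"{u}nneth matching the paper's. Your bound $\Image d_\pi\subseteq 0\oplus\langle n\rangle$ inside $\Z\oplus\Z/2n$ is a slightly more direct justification of the two cokernel alternatives than the paper's comparison with the $G=\Z$ spectral sequence, but the substance is the same.
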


\begin{proof}
	First note that for any group $G$ the differential $d^{3,2}_3$ vanishes since it is map from a torsion group to a torsion-free group.  Then recall that a model for $\mathcal{B}\Gamma$ is given by the infinite lens space $L(2n)$, and that a model for $\mathcal{B}\pi$ is given by the space $S^1\times L(2n)$.  We have that $d_\Gamma\colon \Z/2\to \Z/2n$ is either the zero map or is injective.  Similarly, we get that $d_\pi\colon \Z/2\oplus \Z/2 \to \Z\oplus \Z/2n$ is either the zero map or the map $(a,b)\mapsto (0,nb)$ (the other possibilities may be ruled out by further comparing with the spectral sequence for $G=\Z$ and using naturality).  From this we can conclude that the isomorphism types of the cokernels are as described in the lemma.
	
	Finally, the diagonal $p+q=5$ computes the associated graded for $H_5(\mathcal{B}G;\Lg_\bullet\langle1\rangle)$ and hence we have the short exact sequences which form the rows of the stated commutative diagram.  The vertical maps are then induced by the obvious inclusion map $\Gamma\to \pi$ and the diagram commutes by naturality of spectral sequences.  This completes the proof.
\end{proof}

We have that $H^4(X\times I,\partial;\Z/2)\cong H^3(X;\Z/2)\cong \Z/2\oplus \Z/2$ (where the first summand is generated by the Poincar\'{e} dual to a curve representing the generator of $\Z\subset\Z\times\Gamma$ and the second is generated by the Poincar\'{e} dual to a curve representing the generator of $\Z\subset\Z\times\Gamma$) and the aim is now to realise the element $(0,1)$ as the Casson-Sullivan invariant of a homeomorphism $f\colon X\to X$.  Following the realisation procedure laid out in \zcref{sbs:forming_mapping_cylinders} and \zcref{sbs:realisation_proofs}, by \zcref{prop:mapping_cylinder_construction} and \zcref{lem:mapping_cylinder_casson_sullivan} it suffices to show that there exists an element $y\in \mathcal{N}_{\TOP}(X\times I,\partial)\cong [(X\times I,\partial),G/\TOP]$ such that the map $m$ from \zcref{lem:exact_sequence_commutative_diagram} sends $y$ to $(0,1)\in H^4(X\times I,\partial;\Z/2)$ and such that $\theta(y)=0$.

\begin{lem}\label{lem:spectral_sequence_vertical_map}
	Let $\Gamma$ be a finite cyclic group of order $2n$, $\pi=\Z\times\Gamma$ and let $\iota\colon \Gamma\to\pi$ be the obvious inclusion map.  The leftmost vertical map in the commutative diagram from \zcref{lem:spectral_sequence_commutative_diagram} is then $\iota_*$ and is given by
	\begin{align*}
		\iota_*\colon \begin{cases} \Z/2n \\ \Z/n\end{cases} \to& \begin{cases} \Z\oplus \Z/2n \\ \Z\oplus\Z/n \end{cases} \\
		a \mapsto& (0,a).
	\end{align*}
\end{lem}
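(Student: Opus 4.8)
The plan is to exploit the product decomposition $\mathcal{B}\pi\simeq\mathcal{B}\Z\times\mathcal{B}\Gamma\simeq S^1\times\mathcal{B}\Gamma$, under which $\mathcal{B}\iota\colon\mathcal{B}\Gamma\to\mathcal{B}\pi$ is (homotopic to) the inclusion $\mathcal{B}\Gamma\simeq\{\ast\}\times\mathcal{B}\Gamma\hookrightarrow S^1\times\mathcal{B}\Gamma$ at the basepoint of $S^1=\mathcal{B}\Z$. The key input is the standard stable splitting $\Sigma^\infty_+(S^1\times Z)\simeq\Sigma^\infty_+Z\vee\Sigma\Sigma^\infty_+Z$, natural in the space $Z$ and coming from $\Sigma^\infty_+S^1\simeq\mathbb{S}\vee\Sigma\mathbb{S}$, under which the inclusion of $\{\ast\}\times Z$ is the inclusion of the first summand. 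Smashing with $\Lg_\bullet\langle1\rangle$ gives a natural direct sum decomposition $H_m(S^1\times Z;\Lg_\bullet\langle1\rangle)\cong H_m(Z;\Lg_\bullet\langle1\rangle)\oplus H_{m-1}(Z;\Lg_\bullet\langle1\rangle)$ with $\mathcal{B}\iota$ inducing the first-summand inclusion; and since the Atiyah--Hirzebruch spectral sequence is additive in the spectrum smashed in and sends suspension to a shift of the filtration degree $p$ by one, the entire AHSS for $S^1\times Z$ splits as the AHSS for $Z$ plus a copy of it with $p$ shifted up by one, compatibly with $\mathcal{B}\iota$.

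Applying this with $Z=\mathcal{B}\Gamma$, the $E_\infty$-entry in bidegree $(p,q)=(1,4)$ splits as $E^\infty_{1,4}(\mathcal{B}\pi)\cong E^\infty_{1,4}(\mathcal{B}\Gamma)\oplus E^\infty_{0,4}(\mathcal{B}\Gamma)$, that is $\coker d_\pi\cong\coker d_\Gamma\oplus E^\infty_{0,4}(\mathcal{B}\Gamma)$. The only differential affecting $E^{0,4}(\mathcal{B}\Gamma)$ is $d_3^{3,2}\colon H_3(\mathcal{B}\Gamma;\Z/2)\to H_0(\mathcal{B}\Gamma;\Z)\cong\Z$, which vanishes since it maps a torsion group into a torsion-free one; hence $E^\infty_{0,4}(\mathcal{B}\Gamma)=H_0(\mathcal{B}\Gamma;\Z)\cong\Z$ and $\coker d_\pi\cong\Z\oplus\coker d_\Gamma$, with the $\Z$-summand (which comes from the shifted copy, i.e. the $H_1(S^1)\otimes H_0(\mathcal{B}\Gamma)$ part) written first as in \Cref{lem:spectral_sequence_commutative_diagram}. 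This in particular forces the case $\coker d_\Gamma\cong\Z/2n$ to pair with $\coker d_\pi\cong\Z\oplus\Z/2n$ and $\coker d_\Gamma\cong\Z/n$ with $\coker d_\pi\cong\Z\oplus\Z/n$, reconciling the two ``or''-statements of that lemma.

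The leftmost vertical map of the diagram in \Cref{lem:spectral_sequence_commutative_diagram} is by definition the map $\coker d_\Gamma\to\coker d_\pi$ induced on $E^\infty_{1,4}$ by $(\mathcal{B}\iota)_*$, which by the splitting is the inclusion of the first direct summand $\coker d_\Gamma\to\coker d_\Gamma\oplus\Z$, i.e. $a\mapsto(0,a)$ in the reordered group $\Z\oplus\coker d_\Gamma$. To recognise this as the asserted formula on the explicit groups $\Z/2n$, $\Z/n$, one notes that on the $E_2$-page this map is just $(\mathcal{B}\iota)_*\colon H_1(\mathcal{B}\Gamma;\Z)\to H_1(\mathcal{B}\pi;\Z)$, that is the abelianisation $\Z/2n=\Gamma^{\mathrm{ab}}\to(\Z\times\Gamma)^{\mathrm{ab}}=\Z\times\Z/2n$, $a\mapsto(0,a)$, and that $\coker d_\Gamma$ and $\coker d_\pi$ are the corresponding quotients of $H_1(\mathcal{B}\Gamma;\Z)$ and $H_1(\mathcal{B}\pi;\Z)$; the induced map on these quotients is therefore $a\mapsto(0,a)$, as claimed.

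I do not expect a real obstacle here. The only care needed is in the bookkeeping of the splitting: that it respects the Atiyah--Hirzebruch filtration (so that it genuinely computes the map on the subgroup $\coker d$ of $H_5$), that $\mathcal{B}\iota$ really corresponds to the first-summand inclusion under the stable splitting of $\Sigma^\infty_+S^1$, and that the $\Z$-factor is recorded in the order used in \Cref{lem:spectral_sequence_commutative_diagram} — this last point being exactly what makes the two-case phrasing of that lemma internally consistent.
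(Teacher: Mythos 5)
Your proposal is correct. Its decisive step is in fact the same as the paper's: by naturality of the Atiyah--Hirzebruch spectral sequence, the map $\coker d_\Gamma\to\coker d_\pi$ on $E^\infty_{1,4}$ is induced by the $E_2$-level map $H_1(\mathcal{B}\Gamma;\Z)\to H_1(\mathcal{B}\pi;\Z)$, i.e.\ by the abelianisation map $\Z/2n\to\Z\oplus\Z/2n$, $a\mapsto(0,a)$, which then descends to the quotients --- this is exactly the paper's two-line argument via $H_1(\mathcal{B}G;\Z)\cong\ab(G)$, and it already proves the lemma in all cases without knowing which cokernels occur. What you add on top is genuinely different machinery: the stable splitting $\Sigma^\infty_+(S^1\times Z)\simeq\Sigma^\infty_+Z\vee\Sigma\Sigma^\infty_+Z$, applied with $Z=\mathcal{B}\Gamma$, which splits the whole spectral sequence and gives $\coker d_\pi\cong\Z\oplus\coker d_\Gamma$ with $\iota_*$ the inclusion of the second factor. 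This buys a refinement of \Cref{lem:spectral_sequence_commutative_diagram} that the paper obtains differently (by comparing with the AHSS for $G=\Z$), namely that the two ``or'' cases there are correlated; but it is not needed for the statement at hand, and it carries the one technical caveat you should make explicit if you keep it: to apply a splitting of suspension spectra to the AHSS you must use a model of the spectral sequence functorial in stable maps (e.g.\ the one built from the Postnikov filtration of $\Lg_\bullet\langle1\rangle$), rather than the skeletal-filtration construction, which is only obviously functorial for maps of spaces. Since your final paragraph already closes the argument by plain naturality, the lemma is proved either way.
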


\begin{proof}
	We see this from the following commutative diagram, where we have identified $H_1(\mathcal{B}G;\Z)\cong \ab(G)$ for any group $G$, where $\ab(G)$ denotes the abelianisation of $G$.
	\[\begin{tikzcd}
		\Gamma \arrow[d,"\iota"] \arrow[r,"\cong"] & \ab(\Gamma) \arrow[d] \arrow[r] & \coker d_\Gamma \arrow[d, "\iota_*"] \\
		\pi  \arrow[r,"\cong"] & \ab(\pi)  \arrow[r] & \coker d_\pi 
	\end{tikzcd}\]
	Since $\iota(a)=(0,a)$, the commutativity of the above diagram gives the result regardless of the isomorphism types of the cokernels given in \zcref{lem:spectral_sequence_commutative_diagram}.
\end{proof}

Before the final proposition of this section, we need a technical lemma concerning how Poincar\'{e} duality interacts with homology in $L$-theory coefficients.

\begin{lem}\label{lem:duality}
	Let $X$ be a compact 4-manifold and set $\pi:=\pi_1(X)$.  Then the following diagram commutes.
	\[
	\begin{tikzcd}
		H^4(X\times I,\partial;\Z) \oplus H^2(X\times I,\partial; \Z/2) \arrow[r,"\cong"] \arrow[d,"\PD\oplus \PD","\cong"'] & H^0(X\times I,\partial; \Lg_\bullet\langle1\rangle) \arrow[d, "\RS","\cong"'] \\
		H_1(X;\Z)\oplus H_3(X; \Z/2) \arrow[r,"\cong"] & H_5(X;\Lg_\bullet\langle1\rangle)
	\end{tikzcd}
	\]
	where $\RS$ denotes the Ranicki-Sullivan duality map.
\end{lem}

\begin{proof}
	This is essentially deducible from \cite{taylor_williams_1979}.  We follow \cite{orson_powell_randal-williams_2025} where the details are spelled out more clearly, but this result is also still implicit there.  We give the necessary observations needed to deduce the above result.  For higher dimensional manifolds the above square generally does not need to commute.  The correct commutative square is given in \cite[Lemma 5.6]{orson_powell_randal-williams_2025} (note the difference in notation used to denote (co)homology with coefficients in $L$-theory, and also that we do not need to take 2-local coefficients due to our being in low dimensions; see \zcref{lem:normal_invariants_isomorphism}).  The failure for the above square to commute is measured in the square in \cite[Lemma 5.6]{orson_powell_randal-williams_2025} by an isomorphism
	\[
	\Phi \colon H^4(X\times I,\partial;\Z)\oplus H^2(X\times I,\partial;\Z/2)\xrightarrow{\cong} H^4(X\times I,\partial;\Z)\oplus H^2(X\times I,\partial;\Z/2),
	\]
	where the above square commutes if and only if $\Phi$ is the identity map.  This is true in our situation because one can verify that all of the extra terms in the definition of $\Phi$ vanish for degree reasons.
\end{proof}

\begin{prop}\label{prop:partial_realisation}
	Let $\Gamma$ be a cyclic group of order $2n$ and let $X$ be a closed, connected, smooth, orientable $4$-manifold with $\pi_1(X)\cong \pi = \Z\times \Gamma$.  Then $H^3(X;\Z/2)\cong \Z/2\oplus \Z/2$ (under the above identification) and there exists a self-homeomorphism $f\colon X\to X$ such that $\cs(f)=(0,1)\in H^3(X;\Z/2)$.
\end{prop}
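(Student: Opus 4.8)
The plan is to run the mapping-cylinder construction of \Cref{sbs:forming_mapping_cylinders}--\Cref{sbs:realisation_proofs} with one carefully chosen normal invariant, and to verify that its \emph{simple} surgery obstruction vanishes by comparing, via naturality of assembly maps, with the group $\Gamma$ (for which $L^s_5(\Z[\Gamma])=0$ by Bak \cite{bak_1976}). First, since $X$ is a closed oriented $4$-manifold and the classifying map $X\to\mathcal{B}\pi$ is an isomorphism on $\pi_1$, Poincar\'{e} duality gives $H^3(X;\Z/2)\cong H_1(X;\Z/2)\cong\ab(\pi)\otimes\Z/2\cong\Z/2\oplus\Z/2$, the first summand coming from the $\Z$-factor of $\pi$ and the second from $\Gamma\cong\Z/2n$; the class $(0,1)$ to be realised is the generator of the second summand. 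Note that $\pi=\Z\times\Gamma$ is a finite extension of $\Z$, hence elementary amenable and so good, so that \Cref{prop:mapping_cylinder_construction} and the $4$-dimensional surgery exact sequence apply.

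By \Cref{prop:mapping_cylinder_construction}, \Cref{lem:mapping_cylinder_casson_sullivan}, \Cref{lem:exact_sequence_commutative_diagram} and \Cref{lem:normal_invariants_isomorphism}, it suffices to produce an element $y\in\mathcal{N}_{\TOP}(X\times I,\partial)\cong H^2(X\times I,\partial;\Z/2)\oplus H^4(X\times I,\partial;\Z)$ with $\theta^s(y)=0$ and $\varpi^{-1}m(y)=(0,1)$; the associated $s$-cobordism is then a mapping cylinder $M_f$ for a self-homeomorphism $f$ of $X$ with $\cs(f)=\varpi^{-1}m(y)=(0,1)$. Under the splitting $H^4(X\times I,\partial;\Z)\cong H_1(X;\Z)=\ab(\pi)\cong\Z\oplus\Z/2n$, let $g$ be a generator of the torsion summand $\Z/2n$. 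The universal-coefficient diagram from the proof of \Cref{thm:unstable_realisation_theorem} shows that $\red_2\colon H^4(X\times I,\partial;\Z)\to H^4(X\times I,\partial;\Z/2)$ restricts on the torsion summand to a surjection $\Z/2n\twoheadrightarrow\Z/2$, so $\red_2(g)$ is exactly the $(0,1)$-generator of $H^4(X\times I,\partial;\Z/2)$; since $m(x,y)=\red_2(y)$, setting $y:=(0,g)$ gives $\varpi^{-1}m(y)=(0,1)$ automatically.

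It remains to prove $\theta^s(0,g)=0$. Recall the factorisation of the simple surgery obstruction map described above, $\theta^s=\ol{\sigma}\circ d\circ c\circ a$, where $a$ is the Sullivan--Ranicki/Poincar\'{e} duality identification, $c$ is induced by the classifying map $X\times I\to\mathcal{B}\pi$, $d$ by the map $\Lg_\bullet\langle1\rangle\to\Lg_\bullet$, and $\ol{\sigma}$ is the $s$-decorated assembly map. Because $y$ has trivial Kervaire component (i.e. trivial $H^2(X\times I,\partial;\Z/2)$-component) and torsion degree-$4$ component, $a(y)$ lies in the lowest Atiyah--Hirzebruch filtration stage of $H_5(X\times I;\Lg_\bullet\langle1\rangle)$, namely the part detected by $H_1(X\times I;\Z)=\ab(\pi)$, and corresponds there to the torsion generator $g$. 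Applying $c$, which is the identity on $\ab(\pi)$ and respects the filtration, $c(a(y))$ is the image of $g$ in the subgroup $\coker d_\pi\subseteq H_5(\mathcal{B}\pi;\Lg_\bullet\langle1\rangle)$ of \Cref{lem:spectral_sequence_commutative_diagram}. As $g$ generates the $\ab(\Gamma)$-summand of $\ab(\pi)$, \Cref{lem:spectral_sequence_vertical_map} together with the commutative diagram of \Cref{lem:spectral_sequence_commutative_diagram} identify this image with $\iota_*$ of a generator of $\coker d_\Gamma\subseteq H_5(\mathcal{B}\Gamma;\Lg_\bullet\langle1\rangle)$, where $\iota\colon\Gamma\hookrightarrow\pi$. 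Hence $c(a(y))\in\Image\iota_*$, and so, by naturality of $d$ and of $\ol{\sigma}$ with respect to $\iota$, $\theta^s(y)$ lies in the image of $L^s_5(\Z[\Gamma])\to L^s_5(\Z[\pi])$; since $L^s_5(\Z[\Gamma])=0$ this gives $\theta^s(y)=0$, and \Cref{prop:mapping_cylinder_construction} and \Cref{lem:mapping_cylinder_casson_sullivan} then yield the desired homeomorphism $f$.

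The main obstacle I expect is the middle of the third paragraph: showing that the single choice $y=(0,g)$ \emph{simultaneously} maps to $(0,1)$ under $m$ (a routine mod-$2$ reduction) and, after passing through Sullivan--Ranicki duality, the classifying map and the Atiyah--Hirzebruch filtrations, lands in the $\iota_*$-image so that its simple surgery obstruction is annihilated by $L^s_5(\Z[\Gamma])=0$. Checking that the ``torsion, no-Kervaire'' part of the normal invariants is detected precisely by the $\coker d_\pi$ summand and is assembled from $\mathcal{B}\Gamma$ is exactly what \Cref{lem:spectral_sequence_commutative_diagram} and \Cref{lem:spectral_sequence_vertical_map} are designed to make routine, but assembling these carefully — and observing that the $\Z/2n$-versus-$\Z/n$ ambiguity in the cokernels is harmless, since in both cases the generator of $\ab(\Gamma)$ maps to a generator of $\coker d_\Gamma$ — is the delicate step.
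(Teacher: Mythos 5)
Your proposal is correct and takes essentially the same route as the paper: you pick the normal invariant with trivial $H^2(X\times I,\partial;\Z/2)$-component and degree-four integral component the torsion generator of $H_1(X;\Z)\cong\ab(\pi)$, observe via \Cref{lem:exact_sequence_commutative_diagram} that it reduces to the class $(0,1)$, and kill its simple surgery obstruction by chasing it through the Atiyah--Hirzebruch comparison diagrams of \Cref{lem:spectral_sequence_commutative_diagram} and \Cref{lem:spectral_sequence_vertical_map} into the image of $\iota_*$, so that naturality of assembly maps and $L^s_5(\Z[\Gamma])=0$ give vanishing, after which \Cref{prop:mapping_cylinder_construction} and \Cref{lem:mapping_cylinder_casson_sullivan} produce $f$. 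This is precisely the paper's argument, with your extra universal-coefficient and filtration remarks only making explicit steps the paper leaves to a ``simple diagram chase.''
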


\begin{proof}
	First, note that we can similarly use the AHSS to compute $H_5(X\times I;\Lg_\bullet\langle1\rangle)$.  Of course, we already know the answer since $H_5(X\times I;\Lg_\bullet\langle1\rangle)\cong H_1(X;\Z)\oplus H_3(X;\Z/2)$ by \zcref{lem:normal_invariants_isomorphism}, but this means that we can fit this data into a larger commutative diagram with the one from \zcref{lem:spectral_sequence_commutative_diagram}, which we do now below.
	\begin{equation*}
		\begin{tikzcd}
			 \coker d_\Gamma \arrow[rr, tail] \arrow[dd,"\iota_*"] && H_5(\mathcal{B}\Gamma;\Lg_\bullet\langle1\rangle) \arrow[rr,twoheadrightarrow] \arrow[dr,"\theta^s"] \arrow[dd, "\iota_*"] && H_3(\mathcal{B}\Gamma;\Z/2) \arrow[dd,"\iota_*"]  \\
			&&& L_5^s(\Z[\Gamma]) \arrow[dd] & \\
			 \coker d_\pi \arrow[rr,tail] && H_5(\mathcal{B}\pi;\Lg_\bullet\langle1\rangle) \arrow[dr,"\theta^s"]\arrow[rr,twoheadrightarrow, crossing over] && H_3(\mathcal{B}\pi;\Z/2)  \\
			&&& L_5^s(\Z[\pi]) &\\
			 H_1(X;\Z) \arrow[rr,tail] \arrow[uu] && H_5(X\times I;\Lg_\bullet\langle1\rangle) \arrow[uu, "a"] \arrow[rr,twoheadrightarrow] && H_3(X;\Z/2) \arrow[uu] 
		\end{tikzcd}
	\end{equation*}

	Note that we have omitted the zeroes from the ends of the rows, but the rows are still split exact sequences.  Let $x\in H_5(X\times I;\Lg_\bullet\langle1\rangle)$ be the element corresponding to \[((0,1),0)\in H^4(X\times I, \partial;\Z)\oplus H^2(X\times I, \partial; \Z/2)\] via the isomorphism from \zcref{lem:normal_invariants_isomorphism}.  If we can show that the surgery obstruction $\theta^s(x)=0$, then by \zcref{lem:exact_sequence_commutative_diagram}, \zcref{prop:mapping_cylinder_construction} and \zcref{lem:mapping_cylinder_casson_sullivan} we can construct a homeomorphism $f\colon X\to X$ which has $\cs(f)=m((0,1),0)=(0,1)$.
	
	From \zcref{lem:duality} it follows that $x$ is hit by an element from $H_1(X;\Z)$.  Using this fact and the description of the leftmost map denoted by $\iota_*$ from \zcref{lem:spectral_sequence_vertical_map}, a simple diagram chase tells us that $a(x)$ is in the image of~$\iota_*$.  Hence by naturality of assembly maps, and the fact that $L_5^s(\Z[\Gamma])=0$, we have that $\theta(x)=0$, completing the proof. 
\end{proof}

\zcref{prop:partial_realisation} tells us that we can also find examples of homeomorphisms that are not stably (pseudo-)smoothable but are homotopic to the identity for $4$-manifolds with fundamental group~$\Z\times \Z/2n$.  Note also that the above arguments would have worked just as well with $\Z$ replaced with $\Z^n$, so we can also find non-smoothable homeomorphisms in those cases as well.

\section{An application to stable isotopy of surfaces}\label{sec:stable_isotopy}

This section is devoted to proving \zcref{thm:stable_isotopy}.  We begin with a definition.

\begin{dfn}\label{dfn:isotopy_surfaces}
	Let $X$ be a connected, compact, orientable smooth $4$-manifold and let $\Sigma_1,\Sigma_2\subset X$ be a pair of smoothly embedded surfaces, such that $\partial \Sigma_1 = \partial \Sigma_2 = L\subset \partial X$ a fixed link in $\partial X$ (which may be disconnected).  We say that $\Sigma_1$ and $\Sigma_2$ are \emph{topologically isotopic} \{smoothly isotopic\} if there exists a homeomorphism \{diffeomorphism\} of pairs $F\colon (X,\Sigma_1)\to (X,\Sigma_2)$ such that $F$ is isotopic \{smoothly isotopic\} to the identity.  We say that $\Sigma_1$ and $\Sigma_2$ are \emph{externally stably smoothly isotopic} if there exists $n\geq 0$ such that $\Sigma_1$ and $\Sigma_2$ become smoothly isotopic in $X\#(\#^n S^2\times S^2)$, where we perform the connected-sums in the complement of $\Sigma_1\cup\Sigma_2$.
\end{dfn}

The idea of proving \zcref{thm:stable_isotopy} is as follows: first, we find a stable diffeomorphism between the exteriors of the surfaces.  Then, we show that we can modify this diffeomorphism to find one such that when we glue back in the tubular neighbourhoods of the surfaces, we have a diffeomorphism which is the identity on $H_2(X;\Z)$.  Then we use the following result of Saeki and Orson-Powell which says when a diffeomorphism of a simply-connected $4$-manifold is stably isotopic to the identity.

\begin{thm}[\cite{saeki_2006,orson_powell_2025}]\label{thm:saeki_stable_isotopy}
	Let $X$ be a connected, compact, simply-connected smooth $4$-manifold with $($potentially disconnected$)$ boundary $\partial X$ and let $f\colon X\to X$ be a diffeomorphism (restricting to the identity on $\partial X$) such that the variation $($see \cite[Definition 2.11]{orson_powell_2025}$)$ induced by $f$ is trivial, and such that the induced map on relative spin structures $($see \cite[Definition 2.5]{orson_powell_2025}$)$ is trivial.  Then $f$ is stably smoothly isotopic to the identity.
\end{thm}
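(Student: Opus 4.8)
The plan is to split the statement into two steps. First, I would show that triviality of the variation together with triviality of the action on relative spin structures forces $f$ to be smoothly pseudoisotopic to $\Id_X$ relative to $\partial X$; this is essentially an algebraic input, as the relevant Whitehead and surgery obstruction groups all vanish when $\pi_1(X)=1$. Second, I would upgrade ``smoothly pseudoisotopic to the identity'' to ``stably smoothly isotopic to the identity'' by a Cerf-theoretic argument in which the $S^2\times S^2$-summands provide embedded spheres for Whitney and Norman moves — playing exactly the role they play in the proof of \Cref{prop:cs_stable_2}. The second step is the geometric heart of the matter, and the one I expect to be hard.

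\emph{Step 1.} Triviality of the variation $H_2(X,\partial X;\Z)\to H_2(X;\Z)$ implies that $f$ acts as the identity on $H_\ast(X)$ and on $H_\ast(X,\partial X)$ (using Poincaré–Lefschetz duality, the long exact sequence of the pair, and $\pi_1(X)=1$). Recasting the problem, $f$ is smoothly pseudoisotopic to $\Id_X$ rel $\partial X$ precisely when the relative mapping cylinder $M_f=(X\times I)\cup_f X$, a smooth relative $s$-cobordism from $X$ to $X$ rel $\partial X\times I$, is diffeomorphic to the product $X\times I$ rel $X\times\{0\}\cup\partial X\times I$. By Kreck's analysis of self-diffeomorphisms of closed simply-connected $4$-manifolds via modified surgery, in the closed case the only obstruction to this is the induced map on $H_2$, which we have just seen is trivial; in the presence of boundary, Orson–Powell's extension \cite{orson_powell_2023} identifies the single extra obstruction as precisely the induced map on relative spin structures, trivial by hypothesis. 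Hence $f$ is smoothly pseudoisotopic to $\Id_X$ rel $\partial X$. Note that extending $f$ by the identity over new hyperbolic summands preserves triviality of the variation and of the relative spin action, so this conclusion remains available after any stabilisation.

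\emph{Step 2 and the main obstacle.} A pseudoisotopy from $f$ to $\Id_X$ rel $\partial X$ is a diffeomorphism $F\colon X\times I\to X\times I$ restricting to $\Id$ on $X\times\{0\}\cup\partial X\times I$ and to $f$ on $X\times\{1\}$. Choosing a generic height function and running one-parameter (Cerf) theory, the obstruction to deforming $F$ rel the same part of the boundary to a level-preserving diffeomorphism — equivalently, to an isotopy from $f$ to $\Id_X$ rel $\partial X$ — is a Hatcher–Wagoner class in a quotient of $\mathrm{Wh}_1(\Z[\pi_1(X)])$ together with a $\mathrm{Wh}_2$-type group, both of which vanish since $\pi_1(X)=1$. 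What does \emph{not} vanish for free in dimension $4$ is the geometric realisation of the resulting cancellations of birth–death singularities: these require disjointly embedded Whitney disks, and the double points of such disks cannot in general be removed in a $4$-manifold. The point of the theorem is that after replacing $X$ by $X_\#:=X\#(\#^{k}S^2\times S^2)$ for suitable $k$ (the connect-sums taken in the interior, away from where the moves take place), every such double point can be removed by the Norman trick — tubing one sheet of a Whitney disk through an $S^2\times S^2$-summand, exactly as in the proof of \Cref{prop:cs_stable_2} — so that the Cerf cancellation is carried out geometrically and $F_\#$ is deformed rel $\partial X_\#$ to a level-preserving diffeomorphism, exhibiting $f_\#$ as smoothly isotopic to $\Id_{X_\#}$ rel $\partial$. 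The substantive difficulty is precisely this ``stabilised Cerf theorem'': controlling the geometry of the cancellation in dimension $4$ and verifying that finitely many $S^2\times S^2$-stabilisations make all the needed Whitney moves embedded, while keeping the whole family of functions fixed near $\partial X\times I$; the careful bookkeeping of relative spin structures in Step 1 (so that the boundary obstruction really is the only one and is detected by the stated invariant) is a secondary but nontrivial point. Both are the content of \cite{saeki_2006} and \cite{orson_powell_2023}.
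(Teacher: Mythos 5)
Your two-step decomposition is exactly how this result is established in the sources the paper relies on; note that the paper itself gives no proof of \Cref{thm:saeki_stable_isotopy} --- it is imported from \cite{saeki_2006,orson_powell_2023}, and the sentence immediately following the statement identifies the crucial ingredient as Quinn's theorem that smooth pseudo-isotopy implies stable smooth isotopy. Your Step 1 (trivial variation together with trivial action on relative spin structures yields a smooth pseudo-isotopy to the identity rel $\partial X$, via Kreck's argument as extended to the boundary case by Orson--Powell) is indeed the content of those papers, and your observation that the hypotheses persist under stabilisation is correct. Your Step 2, however, is misattributed and should not be re-proved: it is not contained in \cite{saeki_2006} or \cite{orson_powell_2023}, both of which quote it; it is precisely Quinn's stable isotopy theorem \cite{quinn_1986} (needed here in its rel-boundary form; see \cite{gabai_2022} and \cite{gabai_gay_hartman_krushkal_powell_2023} for modern treatments). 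Your sketch of a ``stabilised Cerf theorem'' --- one-parameter families whose Whitney discs are made disjointly embedded after finitely many interior $S^2\times S^2$ stabilisations, keeping everything fixed near $\partial X\times I$ --- correctly describes the shape of Quinn's argument, but it is a substantial theorem in its own right, so as written Step 2 is a citation rather than a proof, and the correct citation is Quinn.
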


The above theorem crucially relies on the theorem of Quinn \cite{quinn_1986} that smooth pseudo-isotopy implies stable smooth isotopy (see also \cite[Section 2]{gabai_2022}, and \cite{gabai_gay_hartman_krushkal_powell_2026}).

Since in \zcref{thm:stable_isotopy} we assume that our surfaces are topologically isotopic, this means that we have a homeomorphism of the surface exteriors, and we will take this as our starting point.  First, however, we need to arrange that our homeomorphism is smooth near the surfaces.

\begin{lem}\label{lem:smooth_near_surface}
	Let $X$, $\Sigma_1$ and $\Sigma_2$ be as in the statement of \zcref{thm:stable_isotopy} and let $\widehat{G}\colon X\to X$ be a homeomorphism which sends $\Sigma_1$ to $\Sigma_2$.  Then $\widehat{G}$ is isotopic relative $\partial X \cup \Sigma_1$ to a homeomorphism which sends $\nu\Sigma_1$ to $\nu\Sigma_2$ by a diffeomorphism.
\end{lem}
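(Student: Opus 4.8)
The plan is to fix the homeomorphism first on the zero-section of the normal bundle, then on a tubular neighbourhood, using uniqueness of smooth tubular (normal) neighbourhoods plus the Alexander/isotopy-extension toolkit that is already invoked elsewhere in the paper. First I would replace $\widehat G$ by an isotopic homeomorphism (relative to $\partial X$) that carries $\Sigma_1$ to $\Sigma_2$ \emph{diffeomorphically} as a map of the surfaces themselves: the restriction $\widehat G|_{\Sigma_1}\colon \Sigma_1\to\Sigma_2$ is a homeomorphism of smooth surfaces fixing $\partial\Sigma_1=\partial\Sigma_2=L$, and in dimension $2$ every such homeomorphism is isotopic rel boundary to a diffeomorphism (smoothing theory for surfaces, or Munkres); extend this isotopy of $\Sigma_1$ to an ambient isotopy of $X$ by topological isotopy extension \cite{edwards_kirby_1971}, which does not disturb $\partial X$ or the property of sending $\Sigma_1$ to $\Sigma_2$. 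So from now on I may assume $\widehat G|_{\Sigma_1}$ is a diffeomorphism onto $\Sigma_2$; call it $\phi$.

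Next I would compare the two smooth normal bundles $\nu\Sigma_1$ and $\pi^{-1}$-pullback along $\phi$ of $\nu\Sigma_2$. Because the surfaces are orientable and properly embedded, their normal bundles are $2$-plane bundles over a surface with boundary, so they are classified by their Euler number; and $\widehat G$, being a homeomorphism of pairs, identifies the topological normal bundles, hence the Euler numbers agree. Therefore $\phi^*(\nu\Sigma_2)\cong\nu\Sigma_1$ as smooth $2$-plane bundles rel $L$. Choosing such a bundle isomorphism gives a diffeomorphism $\Psi\colon \overline\nu\Sigma_1\to\overline\nu\Sigma_2$ covering $\phi$ and restricting to the given diffeomorphism on the part of the boundary lying in $\partial X$. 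Now $\widehat G|_{\overline\nu\Sigma_1}$ and $\Psi$ are two homeomorphisms $\overline\nu\Sigma_1\to\overline\nu\Sigma_2$ which agree on the zero section and on $\overline\nu\Sigma_1\cap\partial X$; I would use uniqueness of topological normal bundles in dimension $4$ \cite[Chapter 9.3]{freedman_quinn_1990} together with the contractibility of the relevant space of tubular neighbourhoods to isotope $\widehat G$, rel $\Sigma_1\cup\partial X$ and rel the complement of a slightly larger neighbourhood (by shrinking and again applying isotopy extension), so that on $\nu\Sigma_1$ it agrees with $\Psi$ and is in particular a diffeomorphism there. The resulting homeomorphism sends $\nu\Sigma_1$ to $\nu\Sigma_2$ by a diffeomorphism, as required, and the whole modification was by an ambient isotopy fixing $\partial X\cup\Sigma_1$.

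The main obstacle I expect is the bookkeeping in the middle step: ensuring that the ambient isotopy straightening $\widehat G$ on the normal neighbourhood can be taken \emph{relative to} $\Sigma_1$ and relative to $\partial X$ simultaneously, and that ``collaring'' the boundary of $\overline\nu\Sigma_1$ inside $X$ is compatible with the existing smooth structure on $\partial X$. Concretely, one must cut $X$ along $\partial\overline\nu\Sigma_1$, note that this hypersurface is a circle bundle over $\Sigma_1$ meeting $\partial X$ in a fixed collared submanifold, and run uniqueness of normal bundles for the pair $(\Sigma_1\subset X)$ rel the boundary link $L$; the delicate point is that $\Sigma_1$ has boundary, so one needs the relative version of the tubular neighbourhood theorem and the fact that the space of such neighbourhoods is contractible, which in the topological category in dimension $4$ is exactly the content of \cite[Chapter 9.3]{freedman_quinn_1990} and \cite{edwards_kirby_1971}. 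Once that is in place the argument is routine, and the lemma follows.
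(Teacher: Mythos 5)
Your first two steps coincide with the paper's proof: smooth $\widehat G\vert_{\Sigma_1}$ using the fact that homeomorphisms of surfaces are isotopic to diffeomorphisms, extend by (topological) isotopy extension, and then use uniqueness of topological tubular neighbourhoods \cite[Chapter 9.3]{freedman_quinn_1990} to arrange that $\widehat G$ carries $\nu\Sigma_1$ onto $\nu\Sigma_2$ as bundles. The gap is in your final step. You ask to isotope $\widehat G$ rel $\Sigma_1\cup\partial X$ so that on $\nu\Sigma_1$ it \emph{agrees with} a pre-chosen smooth bundle isomorphism $\Psi$, citing uniqueness of normal bundles together with ``contractibility of the space of tubular neighbourhoods''. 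Uniqueness of tubes only gives that, after an ambient isotopy, $\widehat G\vert_{\nu\Sigma_1}$ is a fibre-preserving homeomorphism onto $\nu\Sigma_2$ covering $\phi$; the residual comparison $\Psi^{-1}\circ\widehat G\vert_{\nu\Sigma_1}$ is then a topological bundle automorphism of $\nu\Sigma_1$ covering $\Id_{\Sigma_1}$, i.e.\ essentially a map $\Sigma_1\to\TOP(2)$, and this map need not be null-homotopic, so equality with a fixed $\Psi$ cannot in general be achieved. Moreover, contractibility of the space of topological tubular neighbourhoods in dimension $4$ is not what the cited sources provide (only uniqueness up to ambient isotopy), so the step as written does not go through.

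The repair is to weaken the goal, which is exactly what the paper does: once $\widehat G$ is fibre-preserving over the diffeomorphism $\phi$, you do not need it to equal $\Psi$, only to be smooth on the tube. Since $\TOP(2)\simeq\tO(2)$, the fibrewise homeomorphisms can be homotoped (rel the zero-section) into $\tO(2)$ and then smoothed, because any continuous map $\Sigma_1\to\tO(2)$ can be isotoped to a smooth one; equivalently, replace your $\Psi$ by $\Psi$ composed with a smooth approximation of the difference automorphism. With this change your argument closes, and as a side remark the Euler-number detour is unnecessary: the topological bundle identification given by $\widehat G$ together with $\TOP(2)\simeq\tO(2)$ already yields the smooth bundle isomorphism, and for properly embedded surfaces with boundary an ``Euler number'' classification rel $L$ would anyway require a relative Euler class rather than the absolute one you invoke.
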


\begin{proof}
	Let $U:=\widehat{G}(\nu\Sigma_1)$.  We begin by isotoping $\widehat{G}\vert_{\Sigma_1}$ to a diffeomorphism, which is always possible since homeomorphisms of surfaces are isotopic to diffeomorphisms \cite{epstein_1966} (see also \cite{hatcher_2014}).  This isotopy extends to $\widehat{G}$ by extending it first to a tubular neighbourhood (perform the isotopy less and less as you extend radially from $\Sigma_1$) and then as the constant isotopy on the complement of the tubular neighbourhood of $\Sigma_1$ (of course, it was clear, by the isotopy extension theorem \cite{edwards_kirby_1971}, that this extended, but we can see the extension explicitly and easily in this case).  Denote the result of this isotopy still by $\widehat{G}$.  By the uniqueness of topological tubular neighbourhoods \cite[Chapter 9.3]{freedman_quinn_1990}, $U$ and $\nu\Sigma_2$ are isotopic by an isotopy that fixes $\Sigma_2$, and hence we can isotope $\widehat{G}$ relative to $\partial X\cup\Sigma_1$ such that $\widehat{G}(\nu\Sigma_1)=\nu\Sigma_2$ as bundles.  We now smooth the map on the fibres of the normal bundles relative to the $0$-section $\Sigma_1$, which we can do since any map $\Sigma_1 \to \tO(2)$ can be isotoped to a smooth map.  The result of this isotopy now sends $\nu\Sigma_1$ to $\nu\Sigma_2$ via a diffeomorphism.
\end{proof}

\begin{rmk}\label{rmk:little_hauptvermutung}
	It should be noted that one cannot always smooth homeomorphisms of $4$-manifolds near arbitrary codimension $2$ submanifolds.  In general, one can only smooth a homeomorphism near a codimension $2$ submanifold after a small topological isotopy (see \cite[Theorem 8.1A]{freedman_quinn_1990}).  \zcref{lem:smooth_near_surface} does not contradict this since we have a much stronger hypothesis (it is a self-homemorphism of a smooth manifold and our homeomorphism already maps the submanifold in question to another smooth submanifold).
\end{rmk}

The following lemma will handle most of the technical aspects of the proof of \zcref{thm:stable_isotopy}.

\begin{lem}\label{lem:exteriors_stable_diffeo}
	Let $X$, $\Sigma_1$ and $\Sigma_2$ be as in the statement of \zcref{thm:stable_isotopy} and let $\widehat{G}\colon X\to X$ be a homeomorphism restricting to the identity on $\partial X$ which sends $\Sigma_1$ to $\Sigma_2$.  Further let $G\colon X\sm\nu\Sigma_1 \to X\sm\nu\Sigma_2$ be the restriction of $\widehat{G}$ to the surface exteriors.  Then for some $k\geq 0$ there exists a diffeomorphism \[F\colon (X\sm\nu\Sigma_1)\#(\#^k S^2\times S^2) \to (X\sm\nu\Sigma_2)\#(\#^k S^2\times S^2)\] which restricts to the identity on $\partial X$.  Furthermore, $F$ extends to a diffeomorphism \[\widehat{F}\colon X\#(\#^k S^2\times S^2)\to X\#(\#^k S^2\times S^2),\] and the induced maps on second homology fit into the following commutative diagram.
	\begin{equation}\label{eq:stable_isotopy}
		\begin{tikzcd}
			H_2(X\sm\nu\Sigma_1)\oplus \Z^2 \oplus \Z^{2k-2} \arrow[d,"\cong"] \arrow[r,"{(G_*,A,\Id)}"] & H_2(X\sm\nu\Sigma_2)\oplus \Z^2 \oplus \Z^{2k-2} \arrow[d,"\cong"] \\
			H_2((X\sm\nu\Sigma_1)\#(\#^k S^2\times S^2)) \arrow[d,"(i_1)_*"] \arrow[r,"F_*"] & H_2((X\sm\nu\Sigma_2)\#(\#^k S^2\times S^2)) \arrow[d,"(i_2)_*"] \\
			H_2(X\#(\#^k S^2\times S^2)) \arrow[d,"\cong"] \arrow[r,"(\widehat{F})_*"] & H_2(X\#(\#^k S^2\times S^2)) \arrow[d,"\cong"] \\
			H_2(X)\oplus \Z^2 \oplus \Z^{2k-2} \arrow[r, "{(\Id,A,\Id)}"] & H_2(X)\oplus \Z^2 \oplus \Z^{2k-2}
		\end{tikzcd}
	\end{equation}
	where $i_1$ and $i_2$ denote the inclusions of the exteriors, and $A\colon \Z^2\to \Z^2$ is either the identity map if $\cs(G)=0$, or is given by the map sending $(x,y)\mapsto (-y,-x)$ if $\cs(G)\neq 0$.  Furthermore, $\widehat{F}$ is topologically pseudo-isotopic to a map which differs from the stabilisation of $\widehat{G}$ only on a neighbourhood of a curve in $X\sm\nu\Sigma_1$ and a single $S^2\times S^2$ summand.
\end{lem}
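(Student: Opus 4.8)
The plan is to reduce the statement to one about the surface exteriors, kill the Casson--Sullivan invariant there, stabilise the resulting homeomorphism to a diffeomorphism, and then glue the tubular neighbourhoods back in. First I would apply \Cref{lem:smooth_near_surface}: after an isotopy we may assume $\widehat{G}$ restricts to a diffeomorphism $\ol{\nu}\Sigma_1\to\ol{\nu}\Sigma_2$, and since the surfaces are topologically isotopic relative to their boundaries, a collar argument lets us also assume $\widehat{G}\vert_{\partial X}=\Id$. Then $G:=\widehat{G}\vert_{X\sm\nu\Sigma_1}$ is a homeomorphism of smooth $4$-manifolds restricting to a fixed diffeomorphism on $\partial(X\sm\nu\Sigma_1)=\partial X\sqcup\partial\ol{\nu}\Sigma_1$, and a further isotopy in the interior makes $G$ the identity on some ball, so $\cs(G)\in H^3(X\sm\nu\Sigma_1,\partial;\Z/2)$ is defined.

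If $\cs(G)=0$ put $\widehat{G}_\bullet:=\widehat{G}$ and $G_\bullet:=G$. If $\cs(G)\neq0$, choose a framed embedded curve $\gamma$ in the interior of $X\sm\nu\Sigma_1$, disjoint from the ball, Poincar\'e--Lefschetz dual to $\cs(G)$, and isotope $G$ (as in the discussion preceding \Cref{lem:mv_collapse}) so that it is the identity or the Gl\"uck twist on $\nu\gamma$; then set $\widehat{G}_\bullet:=\widehat{G}\#_{\gamma=\theta}\sigma$ or $\widehat{G}\#_{\gamma=\theta}(\sigma\circ t)$, where $\sigma$ is from \Cref{prop_cs_existence_s1xs^3} and $\theta=S^1\times\{\pt\}$ (this is defined since $\sigma\vert_{\nu\theta}=\Id$), and let $G_\bullet$ be its restriction to the exterior, a homeomorphism $(X\sm\nu\Sigma_1)\#(S^2\times S^2)\to(X\sm\nu\Sigma_2)\#(S^2\times S^2)$ under the natural diffeomorphisms identifying the connect-sums over $\gamma$ with ordinary connect-sums with $S^2\times S^2$ (from the proof of \Cref{thm:stable_realisation_techincal}). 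In either case $G_\bullet$ restricts to a fixed diffeomorphism on its boundary and $\cs(G_\bullet)=0$, the latter by \Cref{thm:stable_realisation_techincal} realising the zero class. Applying \Cref{prop:cs_stable_2} to $G_\bullet$ then yields $k\geq1$ (stabilising once more if necessary) and a diffeomorphism $F\colon(X\sm\nu\Sigma_1)\#(\#^k S^2\times S^2)\to(X\sm\nu\Sigma_2)\#(\#^k S^2\times S^2)$ that is pseudo-isotopic relative to the boundary to the corresponding stabilisation $(G_\bullet)_\#$. As this pseudo-isotopy is relative to the boundary and $(G_\bullet)_\#$ restricts to $\widehat{G}\vert_{\partial\ol{\nu}\Sigma_1}$ and to $\Id$ on $\partial X$, so does $F$; hence $F$ glues with $\widehat{G}\vert_{\ol{\nu}\Sigma_1}$ to a diffeomorphism $\widehat{F}$ of $X\#(\#^k S^2\times S^2)$ restricting to $\Id$ on $\partial X$, and extending the pseudo-isotopy by $(\widehat{G}\vert_{\ol{\nu}\Sigma_1})\times\Id$ shows $\widehat{F}$ is pseudo-isotopic rel $\partial X$ to the stabilisation of $\widehat{G}_\bullet$, which differs from the $k$-fold stabilisation of $\widehat{G}$ only on $\nu\gamma$ and a single $S^2\times S^2$ summand (and not at all if $\cs(G)=0$).

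It remains to verify the homology diagram. The vertical isomorphisms are the usual decompositions of $H_2$ furnished by the connect-sum collapse maps together with the inclusions $i_1,i_2$ of the exteriors, and the middle square commutes because $\widehat{F}\circ i_1=i_2\circ F$ by construction. Since $F$ and $\widehat{F}$ are pseudo-isotopic, hence homotopic, to the appropriate stabilisations of $G_\bullet$ and $\widehat{G}_\bullet$, they induce the same maps on $H_2$; a Mayer--Vietoris computation, using $\sigma\vert_{\nu\theta}=\Id$ so that the connect-sum over $\gamma$ leaves $H_2(X)$ and the new $\Z^2$ unlinked, shows that $F_*$ is $G_*$ on the exterior part, the identity on the summands produced by \Cref{prop:cs_stable_2}, and the action $A$ of $\sigma$ on $H_2((S^1\times S^3)\#(S^2\times S^2);\Z)\cong\Z^2$ on the distinguished summand coming from $\sigma$ (the Gl\"uck twist acts trivially there, so both sub-cases agree); and since $\widehat{G}$ is isotopic to $\Id_X$, the bottom row reads $(\Id,A,\Id)$. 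Evaluating the $2\times2$ matrix of \Cref{thm:lee} realising $\sigma$ under the augmentation $\Z[\Z]\to\Z$ then identifies $A$ as $\Id$ when $\cs(G)=0$ and as $(x,y)\mapsto(-y,-x)$ when $\cs(G)\neq0$.

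The step I expect to be the main obstacle is precisely this last piece of homology bookkeeping: pinning down $A$ requires tracing the Cappell--Shaneson--Lee description of $\sigma$ (\Cref{cor:surgery_obstruction_generator_realisation}, \Cref{thm:lee}, \cite{cappell_shaneson_1971,stong_wang_2000}) down to ordinary second homology, and one must also check carefully that the connect-sum over $\gamma$ and the gluings over $\ol{\nu}\Sigma_1$ and over the stabilising summands respect all of the $H_2$-splittings and boundary identifications used throughout.
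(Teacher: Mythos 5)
Your proposal follows essentially the same route as the paper: smooth $\widehat{G}$ near the surfaces via \Cref{lem:smooth_near_surface}, kill $\cs(G)$ by connect-summing with $\sigma$ (or $\sigma\circ t$) over a dual curve using \Cref{thm:stable_realisation_techincal}, apply \Cref{prop:cs_stable_2} to get the stable diffeomorphism $F$, refill the tubular neighbourhoods to obtain $\widehat{F}$, and identify $A$ with the $H_2$-action of $\sigma$ on $(S^1\times S^3)\#(S^2\times S^2)$ (the paper outsources this last computation to the Lee/Cappell--Shaneson reference, exactly the step you flag). The argument is correct as proposed.
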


\begin{proof}
	We assume, by \zcref{lem:smooth_near_surface}, that $\widehat{G}\colon X\to X$ already restricts to a diffeomorphism of the tubular neighbourhoods $\widehat{G}\vert_{\nu\Sigma_1}\colon\nu \Sigma_1\to \nu \Sigma_2$.
	
	We begin by showing the existence of a stable diffeomorphism of the exteriors, and that the top square in (\ref{eq:stable_isotopy}) commutes.  Assume that $\cs(G)\neq 0$, and let $\gamma\subset X\sm\nu\Sigma_1$ be a framed embedded curve such that $[\gamma]\in H_1(X\sm\nu \Sigma_1;\Z/2)$ is dual to $\cs(G)$.  By \zcref{thm:stable_realisation_techincal}, the connected-sum homeomorphism (using the notation therein) \[G':=G\#_{\gamma=\theta}\widehat{\sigma}\colon (X\sm\nu \Sigma_1)\#(S^2\times S^2) \to (X\sm\nu \Sigma_2)\#(S^2\times S^2)
	\] has $\cs(G')=\cs(G)+\cs(G)=0$, where $\widehat{\sigma}=\sigma$ or $\sigma\circ t$, depending on $G$.
	
	By \zcref{prop:cs_stable_2}, $G'$ is stably pseudo-isotopic rel.\ boundary to a diffeomorphism \[F\colon (X\sm\nu\Sigma_1)\#(\#^k S^2\times S^2) \to (X\sm\nu\Sigma_2)\#(\#^k S^2\times S^2)\] for some $k\geq 1$.  By the computation in \cite{lee_1970} (see \cite[Lemma 2.3]{galvin_2024}), $\widehat{\sigma}$ induces the map sending $(x,y)\mapsto (-y,-x)$ on $H_2((S^1\times S^3)\# (S^2\times S^2))$ and hence the top square in (\ref{eq:stable_isotopy}) commutes.
	
	If $\cs(G)=0$ then we may immediately apply \zcref{prop:cs_stable_2} and similarly obtain a diffeomorphism $F$, though potentially needing no stabilisations, and the top half of (\ref{eq:stable_isotopy}) commutes with $A$ given by the identity map.
	
	Now $F$ extends to a diffeomorphism on $X\#(\#^k S^2\times S^2)$ since we already assumed that $\widehat{G}$ restricted to a diffeomorphism $\nu\Sigma_1\to\nu\Sigma_2$ (note that $F=G$ on $\partial(X\sm\nu\Sigma_1$).  More specifically, we can fill back in the tubular neighbourhoods of the surfaces $\Sigma_1$ and $\Sigma_2$ to obtain a diffeomorphism $\widehat{F}$ which fits into the following diagram.
	\[\begin{tikzcd}
		(X\sm\nu \Sigma_1)\#(\#^k S^2\times S^2) \arrow[r,"F"] \arrow[d,"i_1"] & (X\sm\nu \Sigma_2)\#(\#^k S^2\times S^2) \arrow[d,"i_2"] \\
		X\#(\#^k S^2\times S^2) \arrow[r,"\widehat{F}"] & X\#(\#^k S^2\times S^2)
	\end{tikzcd}\]
	This means that the middle square in (\ref{eq:stable_isotopy}) must commute, and the commutativity of the bottom square then follows immediately.  The final statement is clear by the construction.
\end{proof}

\begin{proof}[Proof of \zcref{thm:stable_isotopy}]
	By \zcref{lem:exteriors_stable_diffeo}, we have that for some $k\geq 0$ there exists a diffeomorphism \[\widehat{F}\colon X\#(\#^k S^2\times S^2)\to X\#(\#^k S^2\times S^2)\] such that on second homology it induces the map \[(\Id,A,\Id)\colon H_2(X)\oplus \Z^2\oplus \Z^{2k-2} \to H_2(X)\oplus \Z^2\oplus \Z^{2k-2}\] where $A$ is either the identity map or the map sending $(x,y)\mapsto (-y,-x)$.  Since $\widehat{F}$ is an extension of a diffeomorphism of the exteriors, it must send $\Sigma_1$ to $\Sigma_2$.
	
	We want to apply \zcref{thm:saeki_stable_isotopy}, and so we need to show that the variation induced by $\widehat{F}$, as well as the induced map on relative spin structures, is trivial.  By the last statement of \zcref{lem:exteriors_stable_diffeo}, the variation induced by $\widehat{F}$ can only differ by the variation induced by $\widehat{G}$ by its action on relative homology classes which cannot be represented by a relative surface disjoint from the neighbourhood of a curve union a single ($S^2\times S^2$)-connected-summand.  Such a class $x\in H_2(X\#(\#^k S^2\times S^2),\partial)$ can be represented as $x= x'+x''$, where $x'$ is represented by a relative surface disjoint from a curve union a single ($S^2\times S^2$)-connected-summand, and $x''$ is a (potentially trivial) homology class on that ($S^2\times S^2$)-connected-summand.  Since $\widehat{G}$ is topologically isotopic to the identity, its induced variation is trivial, and hence the action of the variation induced by $\widehat{F}$ on all relative classes disjoint from a curve union a single ($S^2\times S^2$)-connected-summand is trivial.  Putting these facts together, this means that the variation induced by $\widehat{F}$ can only differ from the trivial variation if its induced map on homology is non-trivial.  
	
	If $A$ is the identity map, then $\widehat{F}$ acts trivially on homology, and hence (by the above) its induced variation is trivial.  If $A$ is not the identity map, then post-compose $\widehat{F}$ with the map \[
	a := \Id \# a' \# \Id \colon X\# (S^2\times S^2)\#(\#^k S^2\times S^2) \to X\# (S^2\times S^2)\#(\#^k S^2\times S^2),
	\] where $a'\colon S^2\times S^2\to S^2\times S^2$ is the map defined as the antipodal map on both $S^2$-factors, composed with the diffeomorphism that swaps the two $S^2$-factors, and obtain an orientation-preserving diffeomorphism $a\circ \widehat{F}$.  This new diffeomorphism still sends $\Sigma_1$ to $\Sigma_2$, since $a$ is supported away from $\Sigma_1$, but now induces the trivial map on homology.  In either case, this means the map we have created now induces the trivial variation.
	
	That the induced map on relative spin structures by $\widehat{F}$ is trivial follows again from the last statement in \zcref{lem:exteriors_stable_diffeo} and that every arc between two distinct boundary components of $X$ can be made disjoint from a curve union a single $(S^2\times S^2)$-summand.
	
	Now apply \zcref{thm:saeki_stable_isotopy} to obtain that $\widehat{F}$ is stably smoothly isotopic to the identity, and hence $\Sigma_1$ and $\Sigma_2$ are stably smoothly isotopic.
\end{proof}

As was mentioned in \zcref{rmk:stable_isotopy}, \zcref{thm:stable_isotopy} has the following consequence when paired with certain results concerning when embedded surfaces are (stably) topologically isotopic.

\begin{cor}\label{cor:stable_isotopy}
	Let $X$ be as in \zcref{thm:stable_isotopy} and assume $\partial X=\emptyset$.  Let $\Sigma_1$, $\Sigma_2$ be a pair of closed, homologous, smoothly embedded surfaces in $X$ with the same genus, such that $\pi_1(X\sm\nu\Sigma_1)\cong \pi_1(X\sm\nu\Sigma_2)$.  Then if any of the following conditions are satisfied, the surfaces are stably smoothly isotopic relative to their boundaries $($below $b_2(X)$ denotes the second Betti number of $X$ and $\sig(X)$ denotes the signature of $X$$)$.
	\begin{enumerate}[(i)]
		\item $\Sigma_1$ and $\Sigma_2$ are both spheres and $\pi_1(X\sm\nu\Sigma_1)\cong \Z/d$ for some $d \geq 0$.
		\item $\pi_1(X\sm\nu\Sigma_1)$ is trivial.
	\end{enumerate}
\end{cor}

\begin{proof}
	If case (i) is satisfied then \cite[Theorem 4.8]{hambleton_kreck_1993} or \cite[Corollary 1.3]{lee_willczynski_1990} and \cite[Addendum 2]{lee_wilczynski_1993} shows that the surfaces are stably topologically isotopic.  Then \zcref{thm:stable_isotopy} applied to the stabilisation shows that the surfaces are stably smoothly isotopic.  If case (ii) is satisfied then \cite[Theorem F]{boyer_1993} shows that the surfaces are topologically isotopic, and then applying \zcref{thm:stable_isotopy} shows that they are stably smoothly isotopic.
\end{proof}

\section{Application to isotopy classification of smooth structures}\label{sec:isotopy_smooth_structures}

This section is devoted to writing up the correspondence between non-isotopic but diffeomorphic smooth structures and non-smoothable homeomorphisms.  We will then interpret the results from the rest of the paper to show that there exist many examples of non-isotopic but diffeomorphic smooth structures on $4$-manifolds that remain non-isotopic after arbitrarily many stabilisations (in contrast to instances of this phenomenon detected via gauge theory).

\subsection{Non-isotopic but diffeomorphic smooth structures}\label{sbs:isotopy_smooth_structures}

We start with the basic definitions.

\begin{dfn}\label{dfn:diffeomorphism}
	Let $\mathscr{S}_1,\mathscr{S}_2$ be a pair of smooth structures on a topological manifold $X$, restricting to a fixed smooth structure $\mathscr{S}_\partial$ on $\partial X$.  We denote the resulting smooth manifolds from these structures by $X_{\mathscr{S}_i}$.  A \emph{diffeomorphism} $f\colon X_{\mathscr{S}_1}\to X_{\mathscr{S}_2}$ is a self-homeomorphism $f$ of $X$, restricting to the identity on $\partial X$, such that $f^*(\mathscr{S}_2)=~\mathscr{S}_1$.  We say that these two smooth structures are \emph{isotopic} \{\emph{pseudo-isotopic}\} if there exists an isotopy $f_t\colon X\to X$ \{pseudo-isotopy $F\colon X\times I\to X\times I$\} such that $f_0=\Id$, $f_1^*(\mathscr{S}_2)=\mathscr{S}_1$ and $(f_t\vert_{\partial X})^*(\mathscr{S}_\partial)=\mathscr{S}_\partial$ \{$F\vert_{X\times \{0\}}=\Id$, $(F\vert_{\partial X\times I})^*(\mathscr{S}_\partial \times I)=\mathscr{S}_\partial \times I$ and $(F\vert_{X\times\{1\}})^*(\mathscr{S}_2)=\mathscr{S}_1$\}.  Note that, in the case of isotopy, $f_t^*(\mathscr{S}_2)$ is then a $1$-parameter family of smooth structures which continuously deforms from $\mathscr{S}_2$ into $\mathscr{S}_1$. 
\end{dfn}

Throughout we will assume that $X$ already has a smooth structure on its boundary.  For all of our applications, $X$ will be a $4$-manifold, and hence its boundary admits a unique smooth structure up to isotopy.

\begin{dfn}\label{dfn:mapping_class_groups}
	Let $\Homeo(X,\partial X)$ be the group of self-homeomorphisms of $X$ restricting to the identity map on the boundary and let $\Diff(X_\mathscr{S})$ be the group of self-diffeomorphisms of $X_{\mathscr{S}}$ restricting to the identity near $\partial X$.  We topologise these using the compact-open topology and the $C^\infty$ topology, respectively.
	
	Let $\pi_0\Homeo(X,\partial X)$ \{$\widetilde{\pi}_0\Homeo(X,\partial X)$\} denote the \emph{topological mapping class group of} $X$ \{\emph{topological pseudo-mapping class group of} $X$\} which is defined as the quotient of $\Homeo(X,\partial X)$ via the equivalence relation: $f\sim g$ if $f$ is isotopic \{pseudo-isotopic\} to $g$.  We can analogously define the \emph{smooth mapping class group} \{\emph{smooth pseudo-mapping class group}\}, which we will denote by $\pi_0\Diff(X_\mathscr{S},\partial X)$ \{$\widetilde{\pi}_0\Diff(X_{\mathscr{S}},\partial X)$\}.
\end{dfn}

\begin{rmk}\label{rmk:blockeo}
	There is a separate definition for the pseudo-mapping-class group, where we instead first define groups $\widetilde{\Diff}(X_{\mathscr{S}},\partial X)$ and $\widetilde{\Homeo}(X,\partial X)$, called the \emph{block-diffeomorphism group} and \emph{block-homeomorphism group}, respectively.  These are defined as semi-simplicial complexes, such that the pseudo-mapping-class groups $\widetilde{\pi}_0\Homeo(X,\partial X)$ and $\widetilde{\pi}_0\Diff(X_{\mathscr{S}},\partial X)$ naturally occur as $\pi_0\widetilde{\Homeo}(X,\partial X)$ and $\pi_0\widetilde{\Diff}(X_{\mathscr{S}},\partial X)$, respectively.  We will not develop this viewpoint further in this paper.
\end{rmk}

There is then a continuous inclusion of sets $\Diff(X_\mathscr{S},\partial X)\hookrightarrow\Homeo(X,\partial X)$ given by forgetting the smooth structure.  This induces the following map on the level of mapping class groups \{pseudo-mapping class groups\}.
\begin{align}\label{eq:Phi}
	\begin{split}
		 \Phi\colon \pi_0\Diff(X_\mathscr{S},\partial X) &\to\pi_0\Homeo(X,\partial X), \\
		\widetilde{\Phi}\colon \widetilde{\pi}_0\Diff(X_\mathscr{S},\partial X) &\to\widetilde{\pi}_0\Homeo(X,\partial X).
	\end{split}
\end{align}
We are particularly interested in the cokernel of this map which we will write as the quotient \[
\coker\Phi=\frac{\pi_0\Homeo(X,\partial X)}{\pi_0\Diff(X_\mathscr{S},\partial X)},\ \coker\widetilde{\Phi}=\frac{\widetilde{\pi}_0\Homeo(X,\partial X)}{\widetilde{\pi}_0\Diff(X_\mathscr{S},\partial X)}
\] which corresponds to self-homeomorphisms of $X$ which are not topologically isotopic \{pseudo-isotopic\} to any self-diffeomorphism of $X_\mathscr{S}$.  We have that this is a well-defined group, by the following lemma.

\begin{lem}\label{lem:diff_homeo_normal_subgroup}
	Let $X_\mathscr{S}$ be a smooth manifold, $f\colon X\to X$ be a self-homeomorphism and let $\Phi\colon \pi_0\Diff(X_\mathscr{S},\partial X) \to\pi_0\Homeo(X,\partial X)$ \{$\widetilde{\Phi}\colon \widetilde{\pi}_0\Diff(X_\mathscr{S},\partial X) \to\widetilde{\pi}_0\Homeo(X,\partial X)$\} be the map from \ref{eq:Phi}.  Then the smooth structures $f^*(\mathscr{S})$ and $\mathscr{S}$ are isotopic \{pseudo-isotopic\} if and only if~$[f]\in\Image\Phi$\{$[f]\in\Image\widetilde{\Phi}$\}.
\end{lem}

\begin{proof}
	We show that the image of $\Diff(X_{\mathscr{S}})$ in $\Homeo(X,\partial X)$ is normal, and the lemma then follows immediately.  Let $f\colon X_\mathscr{S}\to X_\mathscr{S}$ be a diffeomorphism and let $g\colon X\to X$ be a homeomorphism.  Then we want to show that $(g^{-1}\circ f\circ g)^*(\mathscr{S})=\mathscr{S}$.  It suffices to show that $f^*(g^*(\mathscr{S}))=g^*(\mathscr{S})$, i.e.\ that $f$ is also a diffeomorphism with respect to the smooth structure induced by $g$.  This is true, since $f$ being smooth with respect to $g^*(\mathscr{S})$ is equivalent to the function $\psi\circ g\circ f\circ g^{-1}\circ\varphi^{-1}$ being a (classically) smooth map for any charts $\psi$,$\varphi$.  By maximality of smooth structures, $\psi\circ g$ and $\varphi\circ g$ are both charts for $\mathscr{S}$, and $f$ being a diffeomorphism with respect to $\mathscr{S}$ finishes the proof. 
\end{proof}

\begin{prop}\label{prop:iso_iff_smoothable}
	Let $X_\mathscr{S}$ be a smooth manifold, $f\colon X\to X$ be a self-homeomorphism and let $\Phi\colon \pi_0\Diff(X_\mathscr{S},\partial X) \to\pi_0\Homeo(X,\partial X)$ \{$\widetilde{\Phi}\colon \widetilde{\pi}_0\Diff(X_\mathscr{S},\partial X) \to\widetilde{\pi}_0\Homeo(X,\partial X)$\} be the map from \ref{eq:Phi}.  Then the smooth structures $f^*(\mathscr{S})$ and $\mathscr{S}$ are isotopic \{pseudo-isotopic\} if and only if~$[f]\in\Image\Phi$\{$[f]\in\Image\widetilde{\Phi}$\}. 
\end{prop}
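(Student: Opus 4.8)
The plan is to prove both directions directly from the definitions, treating the isotopy and pseudo-isotopy cases in parallel since the arguments are formally identical (one simply replaces ``isotopy'' by ``pseudo-isotopy'' and $\Homeo$ by $\widetilde{\Homeo}$ throughout). First I would establish the easy direction: suppose $[f]\in\Image\Phi$, so there is a diffeomorphism $g\colon X_{\mathscr{S}}\to X_{\mathscr{S}}$ with $f$ isotopic to $g$ through homeomorphisms fixing $\partial X$; let $g_t$ be such an isotopy with $g_0=\Id$, $g_1=g\circ f^{-1}$ (or the appropriate reindexing). Since $g$ is a diffeomorphism, $g^*(\mathscr{S})=\mathscr{S}$. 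The family $h_t:=g_t\circ f$ is then an isotopy from $f$ to $g$, and pulling back $\mathscr{S}$ along $h_t$ gives a $1$-parameter family of smooth structures interpolating from $f^*(\mathscr{S})$ (at $t=0$) to $g^*(\mathscr{S})=\mathscr{S}$ (at $t=1$); this is exactly the data of an isotopy of smooth structures in the sense of \Cref{def:diffeomorphism}. One must check the boundary condition, but this is immediate since every homeomorphism in sight restricts to the identity on $\partial X$.

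For the converse, suppose $f^*(\mathscr{S})$ and $\mathscr{S}$ are isotopic, so there is an isotopy $f_t\colon X\to X$ with $f_0=\Id$ and $f_1^*(\mathscr{S})=f^*(\mathscr{S})$, compatible with the boundary. Then $(f\circ f_1^{-1})^*(\mathscr{S}) = (f_1^{-1})^*(f^*(\mathscr{S})) = (f_1^{-1})^*(f_1^*(\mathscr{S})) = \mathscr{S}$, so $g:=f\circ f_1^{-1}$ is a self-diffeomorphism of $X_{\mathscr{S}}$. Moreover $f = g\circ f_1$ is isotopic to $g$ via the isotopy $g\circ f_t$ (running from $g\circ f_0 = g$ to $g\circ f_1 = f$), which fixes $\partial X$ since $g$ and each $f_t$ do. Hence $[f] = [g] \in \Image\Phi$. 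The pseudo-isotopy case is verbatim the same computation with $f_t$ replaced by the restrictions to time slices of a pseudo-isotopy $F\colon X\times I\to X\times I$, and the ``isotopy of smooth structures'' replaced by the corresponding pull-back family $F^*(\mathscr{S}\times I)$ on $X\times I$; the third paragraph of the proof of \Cref{prop:cs_stable_2} already carries out essentially this bookkeeping and can be cited.

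I do not anticipate a genuine obstacle here — the statement is essentially a formal unwinding of the definition of isotopy of smooth structures in \Cref{def:diffeomorphism} combined with \Cref{lem:diff_homeo_normal_subgroup} (which guarantees the relevant quotient is a group, so that the statement ``$[f]\in\Image\Phi$'' is well-posed). The only point requiring a little care is making sure that pulling back a smooth structure along a homeomorphism that is not itself a diffeomorphism is a legitimate operation producing a bona fide smooth structure, and that the $1$-parameter family $f_t^*(\mathscr{S})$ genuinely varies continuously; both are standard, and the former is used implicitly throughout \Cref{sbs:cs_dependence_smooth_structures}. I would state the proof once, phrased for isotopy, and then remark that the pseudo-isotopy version follows by the identical argument applied to time-slices of a pseudo-isotopy.
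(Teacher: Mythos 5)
Your proof is correct and takes essentially the same route as the paper's: the easy direction uses that a diffeomorphism pulls back $\mathscr{S}$ to itself and transports along the topological isotopy, while the converse composes $f$ with the inverse of the isotopy's endpoint ($g:=f\circ f_1^{-1}$) to produce a self-diffeomorphism of $X_\mathscr{S}$ isotopic to $f$, exactly as in the paper. One caveat: do not lean on \Cref{prop:cs_stable_2} for the pseudo-isotopy bookkeeping, since in the paper the logical dependence runs the other way (that proof cites this proposition); instead carry out the step directly, e.g.\ via the pseudo-isotopy $(g\times\Id_I)\circ F$, which your argument already essentially does.
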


\begin{proof}
	We prove this only for $\Phi$, with the proof being exactly the same for $\widetilde{\Phi}$.  
	
	The reverse implication is straightforward.  Let $f$ be a smoothable homeomorphism relative to $\mathscr{S}$, i.e.\ $f$ is topologically isotopic to a diffeomorphism $f'\colon X_{\mathscr{S}}\to X_{\mathscr{S}}$.  Then the smooth structure $f^*(\mathscr{S})$ is isotopic to $(f')^*(\mathscr{S})$, and we have $(f')^*(\mathscr{S})=\mathscr{S}$ since $f'$ is a diffeomorphism.
	
	Now for the forwards implication.  Let $f$ be such that $\mathscr{S}$ and $f^*(\mathscr{S})$ are isotopic.  Then by the definition we have that there exists a diffeomorphism $g\colon X_{\mathscr{S}}\to X_{f^*(\mathscr{S})}$ such that $g$ is topologically isotopic to the identity.  More specifically, there exists a continuous path of homeomorphisms $g_t\colon X\to X$ such that $g_0=\Id_X$ and $(g_{1}^{-1})^*(f^*(\mathscr{S}))=\mathscr{S}$.  Consider the composition homeomorphism $f_t:= f\circ g_{t}\colon X\to X$ and note that $f_0=f$ and \[
	f_1=f\circ g\colon X_{\mathscr{S}}\to X_\mathscr{S}
	\] is a diffeomorphism by construction.
\end{proof}

\begin{cor}\label{cor:iso_to_iso}
	Let $X_\mathscr{S}$ be a smooth manifold and let $\mathcal{S}(X_\mathscr{S},\partial X)$ \{$\widetilde{\mathcal{S}}(X_\mathscr{S},\partial X)$\} denote the set of~isotopy \{pseudo-isotopy\} classes of smooth structures on $X$ diffeomorphic to $\mathscr{S}$ restricting to the given smooth structure on $\partial X$.  Then there is a bijection as defined below
	\begin{align*}
		\frac{\pi_0\Homeo(X,\partial X)}{\pi_0\Diff(X_\mathscr{S},\partial X)} &\longrightarrow \mathcal{S}(X_\mathscr{S},\partial X)\\
		[f] &\longmapsto f^*(\mathscr{S}),
		\end{align*}
	\begin{align*}
		\frac{\widetilde{\pi}_0\Homeo(X,\partial X)}{\widetilde{\pi}_0\Diff(X_\mathscr{S},\partial X)} &\longrightarrow \widetilde{\mathcal{S}}(X_\mathscr{S},\partial X)\\
		[f] &\longmapsto f^*(\mathscr{S}).
	\end{align*}
\end{cor}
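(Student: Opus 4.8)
The plan is to verify that $[f]\mapsto f^*(\mathscr{S})$ is a well-defined bijection by reducing each of well-definedness, injectivity and surjectivity to \Cref{prop:iso_iff_smoothable} together with \Cref{lem:diff_homeo_normal_subgroup}; essentially all of the real content is already contained in \Cref{prop:iso_iff_smoothable}. First I would record a small auxiliary observation: pulling back smooth structures along a homeomorphism $\phi\in\Homeo(X,\partial X)$ respects the (pseudo-)isotopy relation, i.e. if $\mathscr{S}_1$ is isotopic \{pseudo-isotopic\} to $\mathscr{S}_2$ then $\phi^*(\mathscr{S}_1)$ is isotopic \{pseudo-isotopic\} to $\phi^*(\mathscr{S}_2)$. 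This is a one-line computation: conjugating the isotopy $k_t$ \{pseudo-isotopy $K$\} realising $\mathscr{S}_1\simeq\mathscr{S}_2$ by $\phi$, that is, taking $\phi^{-1}\circ k_t\circ\phi$ (resp. $(\phi^{-1}\times\Id)\circ K\circ(\phi\times\Id)$), realises $\phi^*(\mathscr{S}_1)\simeq\phi^*(\mathscr{S}_2)$; the boundary clause of \Cref{def:diffeomorphism} survives precisely because $\phi|_{\partial X}=\Id$ and $\phi^*$ does not change the induced smooth structure on $\partial X$.

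Next I would combine well-definedness and injectivity into a single ``if and only if''. By \Cref{lem:diff_homeo_normal_subgroup} the subgroup $\Image\Phi$ \{$\Image\widetilde\Phi$\} is normal, so the quotient is a group and $[f]=[g]$ holds exactly when $[f\circ g^{-1}]$ lies in $\Image\Phi$ \{$\Image\widetilde\Phi$\}; by \Cref{prop:iso_iff_smoothable} this in turn is equivalent to $(f\circ g^{-1})^*(\mathscr{S})$ being isotopic \{pseudo-isotopic\} to $\mathscr{S}$. Since $(f\circ g^{-1})^*(\mathscr{S})=(g^{-1})^*(f^*(\mathscr{S}))$ and $(g^{-1})^*(g^*(\mathscr{S}))=\mathscr{S}$, applying the auxiliary observation with $\phi=g^{-1}$ in one direction and $\phi=g$ in the other shows this is equivalent to $f^*(\mathscr{S})$ being isotopic \{pseudo-isotopic\} to $g^*(\mathscr{S})$. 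Hence $[f]=[g]$ if and only if $f^*(\mathscr{S})$ and $g^*(\mathscr{S})$ represent the same class of $\mathcal{S}(X_\mathscr{S},\partial X)$ \{$\widetilde{\mathcal{S}}(X_\mathscr{S},\partial X)$\}, which is exactly well-definedness plus injectivity of the stated map.

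For surjectivity I would take an arbitrary smooth structure $\mathscr{S}'$ on $X$ that is diffeomorphic to $\mathscr{S}$ and restricts to the given smooth structure on $\partial X$; by \Cref{def:diffeomorphism} there is $\phi\in\Homeo(X,\partial X)$ with $\phi^*(\mathscr{S}')=\mathscr{S}$, so $(\phi^{-1})^*(\mathscr{S})=\mathscr{S}'$ and $[\phi^{-1}]$ maps to the class of $\mathscr{S}'$. The pseudo-isotopy statement would be proved by exactly the same argument, replacing ``isotopy'' by ``pseudo-isotopy'', $\pi_0$ by $\widetilde\pi_0$, and \Cref{prop:iso_iff_smoothable}, \Cref{lem:diff_homeo_normal_subgroup} and \Cref{def:diffeomorphism} by their (already available) pseudo-isotopy versions.

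I do not expect a genuine obstacle here: the weight of the argument has been front-loaded into \Cref{prop:iso_iff_smoothable}. The only places requiring a little care are keeping the composition and inverse conventions straight when passing between ``$[f]=[g]$'' and ``$[f\circ g^{-1}]\in\Image\Phi$'' (this is where normality of $\Image\Phi$ is used), and checking that the boundary condition $(k_t|_{\partial X})^*(\mathscr{S}_\partial)=\mathscr{S}_\partial$, and its pseudo-isotopy analogue, is preserved under the conjugation in the auxiliary observation — which it is, again because the conjugating homeomorphisms restrict to the identity on $\partial X$.
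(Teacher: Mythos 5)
Your proof is correct and follows essentially the same route as the paper: well-definedness and injectivity are reduced to \Cref{prop:iso_iff_smoothable} together with the normality statement of \Cref{lem:diff_homeo_normal_subgroup}, and surjectivity comes straight from \Cref{def:diffeomorphism}. The auxiliary conjugation observation you add (pullback along a boundary-fixing homeomorphism preserves (pseudo-)isotopy of smooth structures) just makes explicit a step the paper leaves implicit.
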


\begin{proof}
	The fact that the map written in the statement is well-defined and only maps the trivial element to $\mathscr{S}$ follows directly from \zcref{prop:iso_iff_smoothable}.  That the second condition is enough to ensure that the map is injective follows from \zcref{lem:diff_homeo_normal_subgroup},  That the map is surjective follows from the definition of $\mathcal{S}(X_\mathscr{S},\partial X)$ \{$\widetilde{\mathcal{S}}(X_\mathscr{S},\partial X)$\}.
\end{proof}

Note that since the quotient is a group (by \zcref{lem:diff_homeo_normal_subgroup}), this means that we get a group structure on $\mathcal{S}(X_{\mathscr{S}})$ \{$\widetilde{\mathcal{S}}(X_{\mathscr{S}})$\} by \zcref{cor:iso_to_iso}.

\begin{rmk}
	There is an alternative interpretation of this section given recently by Lin-Xie \cite{lin_xie_2023}, where they instead construct a space of smooth structures, and now interpret $\mathcal{S}(X_\mathscr{S},\partial X)$ as $\pi_0$ of this space.  We make this more precise.  We have a fibration 
	\[
	\mathcal{B}\!\Diff(X_{\mathscr{S}},\partial X) \to \mathcal{B}\!\Homeo(X,\partial X)
	\]
	induced by the inclusion and we denote the homotopy fibre of this map by $F(X_{\mathscr{S}})$.  From the long exact sequence of the fibration and the fact that $\pi_i(\mathcal{B}G)=\pi_{i-1}(G)$ for any group $G$, we get the exact sequence
	\[
	\pi_0\Diff(X_\mathscr{S},\partial X) \to \pi_0\Homeo(X,\partial X) \to \pi_0(F(X_{\mathscr{S}}))\to 0.
	\] 
	Since it is clear that the first map in this sequence is $\Phi$ we have that $\pi_0(F(X_{\mathscr{S}}))=\coker\Phi$.  We call $F(X_{\mathscr{S}})$ the \emph{space of smooth structures on} $X$ \emph{diffeomorphic to} $\mathscr{S}$, and $\coker\Phi$ corresponds to its path components.  Using block diffeomorphisms and block homeomorphisms (see \zcref{rmk:blockeo}), we can similarly define a space $\widetilde{F}(X_{\mathscr{S}})$ where $\pi_0(\widetilde{F}(X_{\mathscr{S}}))$ corresponds to $\coker\widetilde{\Phi}$.  We will not explore this interpretation further in this paper.
\end{rmk}

\subsection{Stably non-isotopic but diffeomorphic smooth structures}We now assume that $\partial X\neq \emptyset$.  In this case, all of \zcref{sbs:isotopy_smooth_structures} can be considered stably.  Let $(S^2\times S^2)_\mathscr{T}$ be $S^2\times S^2$ with smooth structure $\mathscr{T}$, the standard smooth structure on $S^2\times S^2$ given as the product of the standard smooth structures on $S^2$.  Further let $X_\mathscr{S}$ be a smooth $4$-manifold and fix a collar $C$ of $\partial X$.  Since $\partial X$ has an essentially unique smooth structure, we may assume that $\mathscr{S}$ is canonically a product on $C$.  Fix once and for all a 4-disc $D\subset C$ with $\mathscr{S}$ restricting to the standard structure on $D$.  Hence we may define a smooth structure denoted by $\mathscr{S}\#_{D}\mathscr{T}$ on the topological connected-sum $X\# S^2\times S^2$.  This motivates the following definitions.

\begin{dfn}
	Let $X_{\mathscr{S}_i}$ for $i=1,2$ be two smooth manifolds with smooth structures $\mathscr{S}_i$, respectively.  Then a \emph{stable diffeomorphism} from $X_{\mathscr{S}_1}$ to $X_{\mathscr{S}_2}$ is a diffeomorphism (in the sense of \zcref{dfn:diffeomorphism}) \[
	f\colon X_{\mathscr{S}_1}\#(\#^k (S^2\times S^2)_\mathscr{T})\to X_{\mathscr{S}_2}\#(\#^k (S^2\times S^2)_\mathscr{T})
	\] for some non-negative integer $k$.  In this case, we say that the smooth structures $\mathscr{S}_i$ are \emph{stably isotopic} \{\emph{stably pseudo-isotopic}\} if $\mathscr{S}_1\#(\#^k\mathscr{T})$ is isotopic \{pseudo-isotopic\} to $\mathscr{S}_2\#(\#^k\mathscr{T})$.
\end{dfn}

Analogously to the unstable case, we then have the stable mapping class groups.

\begin{dfn}
	Let $\pi^{\Stab(D)}_0\Homeo(X,\partial X)$ \{$\widetilde{\pi}^{\Stab(D)}_0\Homeo(X,\partial X)$\} denote the \emph{stable topological mapping class group of} $X$ \{\emph{stable topological pseudo-mapping class group of} $X$\} which is defined as the quotient of $\Homeo(X,\partial X)$ via the equivalence relation: $f\sim g$ if $f$ is stably isotopic \{pseudo-isotopic\} to $g$.  Here all stabilisations are performed using the fixed 4-disc $D$, which is assumed to be fixed by any homeomorphism/diffeomorphism.
\end{dfn}

Continuing the analogy with the unstable case, there is then the continuous inclusion of sets which induces a map of the stable mapping class groups \{pseudo-mapping class groups\}
\begin{align}\label{eq:Phi_stable}
	\begin{split}
		\Phi^{\Stab(D)}\colon \pi^{\Stab(D)}_0\Diff(X_\mathscr{S},\partial X) &\to\pi^{\Stab(D)}_0\Homeo(X,\partial X), \\
		\widetilde{\Phi}^{\Stab(D)}\colon \widetilde{\pi}^{\Stab(D)}_0\Diff(X_\mathscr{S},\partial X) &\to\widetilde{\pi}^{\Stab(D)}_0\Homeo(X,\partial X).
	\end{split}
\end{align}
Again we are interested in the cokernel of this map which we write as the quotient of the stable mapping class groups \{stable pseudo-mapping class groups\} in the obvious manner:
\[
\coker\Phi^{\Stab(D)}=\frac{\pi^{\Stab(D)}_0\Homeo(X,\partial X)}{\pi^{\Stab(D)}_0\Diff(X_\mathscr{S},\partial X)},\ \coker\widetilde{\Phi}^{\Stab(D)}=\frac{\widetilde{\pi}^{\Stab(D)}_0\Homeo(X,\partial X)}{\widetilde{\pi}^{\Stab(D)}_0\Diff(X_\mathscr{S},\partial X)}.
\]
This cokernel corresponds to self-homeomorphisms of $X$, up to stable isotopy \{pseudo-isotopy\}, which are not stably topologically isotopic \{pseudo-isotopic\} to any stable self-diffeomorphism of $X_\mathscr{S}$.  The proofs of \zcref{lem:diff_homeo_normal_subgroup}, \zcref{prop:iso_iff_smoothable} and \zcref{cor:iso_to_iso} follow through unchanged for $\Phi^{\Stab(D)}$, and hence we have the following corollary.

\begin{cor}\label{cor:stable_iso_to_iso}
	Let $X_\mathscr{S}$ be a smooth manifold and let $\mathcal{S}^{\Stab(D)}(X_\mathscr{S},\partial X)$ \{$\widetilde{\mathcal{S}}^{\Stab(D)}(X_\mathscr{S},\partial X)$\} be the set of stable isotopy classes of smooth structures on $X$ \{stable pseudo-isotopy classes of smooth structures on $X$\} stably diffeomorphic to $\mathscr{S}$, where stabilisations are taken using the disc $D$.  Then there are bijections as defined below
	\begin{align*}
		\frac{\pi^{\Stab(D)}_0\Homeo(X,\partial X)}{\pi^{\Stab(D)}_0\Diff(X_\mathscr{S},\partial X)} &\longrightarrow \mathcal{S}^{\Stab(D)}(X_\mathscr{S},\partial X)\\
		[f] &\longmapsto f^*(\mathscr{S}).
	\end{align*}
	\begin{align*}
	\frac{\widetilde{\pi}^{\Stab(D)}_0\Homeo(X,\partial X)}{\widetilde{\pi}^{\Stab(D)}_0\Diff(X_\mathscr{S},\partial X)} &\longrightarrow \widetilde{\mathcal{S}}^{\Stab(D)}(X_\mathscr{S},\partial X)\\
	[f] &\longmapsto f^*(\mathscr{S}).
	\end{align*}
\end{cor}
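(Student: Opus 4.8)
The plan is to transcribe, one stabilisation level higher, the chain of arguments \Cref{lem:diff_homeo_normal_subgroup} $\Rightarrow$ \Cref{prop:iso_iff_smoothable} $\Rightarrow$ \Cref{cor:iso_to_iso} used in the unstable case, observing that none of those proofs uses more than the defining properties of diffeomorphisms, (pseudo-)isotopies and charts, so each survives verbatim once the indexing by the number of $S^2\times S^2$-summands is handled correctly.

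First I would check that $\Image\Phi^{\Stab}$ is a normal subgroup of $\pi^{\Stab}_0\Homeo(X,\partial X)$, and similarly for $\widetilde\Phi^{\Stab}$. Given a stable diffeomorphism $f$ of $X_{\mathscr{S}}\#(\#^{k}(S^2\times S^2)_{\mathscr{T}})$ and a stable homeomorphism $g$ of $X\#(\#^{\ell}(S^2\times S^2))$, one first stabilises further so that $k=\ell$ --- legitimate because connect-summing with an extra $S^2\times S^2$ respects (pseudo-)isotopy, hence descends to the stable (pseudo-)mapping class groups --- and then the conjugate $g^{-1}\circ f\circ g$ is smooth for $g^{*}(\mathscr{S}\#(\#^{k}\mathscr{T}))$ by the same chart computation as in \Cref{lem:diff_homeo_normal_subgroup}. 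Thus the quotients $\coker\Phi^{\Stab}$ and $\coker\widetilde\Phi^{\Stab}$ are groups.

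Next I would prove the stable analogue of \Cref{prop:iso_iff_smoothable}: $f^{*}(\mathscr{S})$ is stably isotopic (respectively stably pseudo-isotopic) to $\mathscr{S}$ if and only if $[f]\in\Image\Phi^{\Stab}$ (respectively $[f]\in\Image\widetilde\Phi^{\Stab}$). The reverse implication is immediate from the definitions. For the forward implication one runs the composition-of-paths argument of \Cref{prop:iso_iff_smoothable} on $X\#(\#^{k}(S^2\times S^2)_{\mathscr{T}})$ for the $k$ witnessing stable isotopy: an isotopy from $\mathscr{S}\#(\#^{k}\mathscr{T})$ to $f^{*}(\mathscr{S})\#(\#^{k}\mathscr{T})$ supplies a path $g_{t}$ of self-homeomorphisms with $g_{0}=\Id$ along which $g_{1}$ pulls $f^{*}(\mathscr{S})\#(\#^{k}\mathscr{T})$ back to $\mathscr{S}\#(\#^{k}\mathscr{T})$, so that $f\circ g_{t}$ is a topological isotopy from the stabilisation of $f$ to a stable diffeomorphism. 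With this in hand the stable version of \Cref{cor:iso_to_iso} is formal: $[f]\mapsto f^{*}(\mathscr{S})$ is well-defined with $\mathscr{S}$ as the only preimage of the trivial class by the preceding step, it is injective by the (stable analogue of the) normality established via \Cref{lem:diff_homeo_normal_subgroup}, and it is surjective because $\mathcal{S}^{\Stab}(X_{\mathscr{S}},\partial X)$ and $\widetilde{\mathcal{S}}^{\Stab}(X_{\mathscr{S}},\partial X)$ are, by definition, the sets of stable (pseudo-)isotopy classes of smooth structures stably diffeomorphic to $\mathscr{S}$.

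The main obstacle is not conceptual but bookkeeping: one must verify that the stabilisation maps $\pi_{0}\Homeo(X,\partial X)\to\pi^{\Stab}_{0}\Homeo(X,\partial X)$ and their diffeomorphism and pseudo counterparts are well-defined, that any two stable representatives can be brought to a common number of $S^2\times S^2$-summands, and that the stable isotopy class of $f^{*}(\mathscr{S})$ is independent of the embedded disc on which the connect-sums are performed (which follows, as noted before the statement, from the fact that $\mathscr{S}$ may be taken standard near a disc, together with the fact that $S^2\times S^2$-summands may be slid freely). Once these routine points are recorded, the three statements above are precisely the stable translations of their unstable counterparts, giving the asserted bijections and the inherited group structure on $\mathcal{S}^{\Stab}(X_{\mathscr{S}},\partial X)$ and $\widetilde{\mathcal{S}}^{\Stab}(X_{\mathscr{S}},\partial X)$.
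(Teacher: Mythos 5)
Your proposal is correct and takes essentially the same route as the paper: the paper simply observes that the proofs of \Cref{lem:diff_homeo_normal_subgroup}, \Cref{prop:iso_iff_smoothable} and \Cref{cor:iso_to_iso} go through unchanged for $\Phi^{\Stab}$ and $\widetilde{\Phi}^{\Stab}$, which is precisely the stabilised transcription you carry out. The bookkeeping points you flag (bringing representatives to a common number of $S^2\times S^2$-summands, well-definedness of the stabilised smooth structure) are exactly the details the paper leaves implicit.
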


\subsection{Proofs of \zcref{thm:_stable_non_stably_pseudo_iso} and \zcref{thm:_unstable_non_stably_pseudo_iso}}

We now reinterpret the results from the rest of the paper in terms of smooth structures.  By \zcref{prop:cs_stable_1}, we know that the Casson-Sullivan invariant is an obstruction to a homeomorphism $f\colon X_\mathscr{S}\to X_\mathscr{S}$ being stably pseudo-isotopic to a diffeomorphism.  Hence, if $f\colon X\to X$ has $\cs(f)\neq 0$, then $f$ represents a non-trivial element in $\coker\widetilde{\Phi}^{\Stab(D)}$ and by \zcref{cor:stable_iso_to_iso} $f$ gives rise to a non-trivial element in $\mathcal{S}^{\Stab(D)}(X_\mathscr{S},\partial X)$.  We now give the formal proofs of the theorems.

\begin{proof}[Proof of \zcref{thm:_stable_non_stably_pseudo_iso}]
	We first recap the hypothesis of the theorem.  Let $X$ be a $4$-manifold such that $X=X'\#(S^2\times S^2)$ for some compact, connected, smooth, orientable $4$-manifold $X'$ and let $\mathscr{S}$ denote the smooth structure on $X$ induced from $X'$.  
	
	Let $\eta\in H^3(X,\partial X;\Z/2)$ be non-trivial.  Then by \zcref{thm:stable_realisation_theorem} there exists a homeomorphism $f\colon X\to X$ such that $\cs(f)=\eta$.  By \zcref{prop:cs_stable_1} we have that $f$ represents a non-trivial element in $\coker\widetilde{\Phi}^{\Stab(D)}$ and hence by \zcref{cor:stable_iso_to_iso} we see that $\mathscr{S}_\eta:=f^*(\mathscr{S})$ is a non-trivial element in $\widetilde{\mathcal{S}}^{\Stab(D)}(X_\mathscr{S},\partial X)$.  Repeating this for all non-trivial $\eta\in H^3(X,\partial X;\Z/2)$ generates the family of smooth structures $\{\mathscr{S}_\eta\}$ and these all must be distinct in $\widetilde{\mathcal{S}}^{\Stab(D)}(X_\mathscr{S},\partial X)$ since they correspond (via \zcref{cor:stable_iso_to_iso}) to homeomorphisms which all have different Casson-Sullivan invariants.
\end{proof}

The proof for \zcref{thm:_unstable_non_stably_pseudo_iso} is analogous.

\begin{proof}[Proof of \zcref{thm:_unstable_non_stably_pseudo_iso}]
	Again we recap the hypothesis of the theorem.  Let $X$ be a closed, connected, smooth, orientable $4$-manifold with $\pi_1(X)\cong\pi$ where $\pi$ which satisfies the Casson-Sullivan realisation condition (see \zcref{dfn:cs_realisation_condition}).  Let $X=X_{\mathscr{S}}$ for some smooth structure $\mathscr{S}$.  
	
	Let $\eta \in H^3(X,\partial X;\Z/2)$ be non-trivial.  Then by \zcref{thm:unstable_realisation_theorem} there exists a homeomorphism $f\colon X\to X$ such that $\cs(f)=\eta$.  The proof is then exactly the same as the proof of \zcref{thm:_stable_non_stably_pseudo_iso}.  
\end{proof}

\bibliographystyle{alpha}
\bibliography{bibliography.bib}

\end{document}